\newtheorem{theorem}{Theorem}[section]
\newtheorem{rem}[theorem]{Remark}
\newtheorem{defn}[theorem]{Definition}
\newtheorem{lemma}[theorem]{Lemma}
\newtheorem{claim}[theorem]{Claim}
\newtheorem{proposition}[theorem]{Proposition}
\newtheorem{conjecture}[theorem]{Conjecture}
\newtheorem{corollary}[theorem]{Corollary}
\newcommand\Q{{\mathbb{Q}}}
\newcommand\C{{\mathbb{C}}}
\def\Psef{\mathop{\rm Psef}\nolimits}
\def\Vol{\mathop{\rm Vol}\nolimits}
\def\Pic{\mathop{\rm Pic}\nolimits}
\def\ddbar{\partial\overline\partial}
\def\cO{{\mathcal O}}
\def\cE{{\mathcal E}}
\def\cP{{\mathcal P}}
\let\ol=\overline
\let\wt=\widetilde
\def\bQ{{\mathbb Q}}
\def\bC{{\mathbb C}}
\def\bR{{\mathbb R}}
\begin{document}
\title[Extension theorems]{Extension theorems, Non-vanishing  and the existence of good minimal models}

\author{Jean-Pierre Demailly, Christopher D. Hacon \\ and Mihai P\u aun}
\address{Universit\'e de Grenoble I \\
D\'epartement de Math\'ematiques \\
Institut Fourier\\
38402 Saint-Martin d'H\`eres, France}
\email{demailly@fourier.ujf-grenoble.fr}
\address{Department of Mathematics \\
University of Utah\\
155 South 1400 East\\
JWB 233\\
Salt Lake City, UT 84112, USA}
\email{hacon@math.utah.edu}

\address{Institut Elie Cartan \\
Universit\'e Henri Poincar\'e\\
B. P. 70239, F-54506 Vandoeuvre-l\`es-Nancy Cedex, France}
\email{Mihai.Paun@iecn.u-nancy.fr}
\date{\today}
\thanks{The second author was partially supported by NSF research grant no: 0757897. During an important part of the preparation of this article, the third named author was visiting KIAS (Seoul); he wishes to express his gratitude for the support and excellent working conditions provided by this institute. We would like to thank F.~Ambro, B.~Berndtsson and C.~Xu for interesting conversations about this article.}

\begin{abstract} 
We prove an extension theorem for effective plt pairs $(X,S+B)$ of non-negative Kodaira dimension $\kappa (K_X+S+B)\geq 0$. The main new ingredient is a refinement of the Ohsawa-Takegoshi $L^2$ extension theorem involving singular hermitian metrics. 
\end{abstract}
\maketitle

\section{Introduction}
Let $X$ be a complex projective variety with mild singularities. The aim of the minimal model program is to produce a birational map $X\dasharrow X'$ such that: \begin{enumerate}\item If $K_X$ is pseudo-effective, then $X'$ is a good minimal model so that $K_{X'}$ is
semiample; i.e. there is a morphism $X'\to Z$ and $K_{X'}$ is the pull-back of an ample $\mathbb Q$-divisor on $Z$.
\item If $K_X$ is not  pseudo-effective, then there exists a Mori-Fano fiber space $X'\to Z$, in particular $-K_{X'}$ is relatively ample.
\item The birational map $X\dasharrow X'$ is to be constructed out of a finite sequence of well understood ``elementary'' birational maps known as flips and divisorial contractions.\end{enumerate}

The existence of flips was recently established in \cite{BCHM10} where it is also proved that if $K_X$ is big then $X$ has a good minimal model and if $K_X$ is not pseudo-effective then there is a Mori-Fano fiber space.
The focus of the minimal model program has therefore shifted to varieties (or more generally log pairs) such that $K_X$ is pseudo-effective but not big.
\begin{conjecture}[Good Minimal Models]\label{c-gmm} Let $(X,\Delta )$ be an $n$-dimensional klt pair. If $K_X+\Delta $ is pseudo-effective then $(X,\Delta)$ has a good minimal model.
\end{conjecture}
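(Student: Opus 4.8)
The plan is to reduce Conjecture~\ref{c-gmm} to the \emph{Non-vanishing conjecture} --- the statement that a pseudo-effective adjoint class $K_X+\Delta$ is $\mathbb{Q}$-linearly equivalent to an effective divisor --- and to carry out the reduction by induction on $n=\dim X$, with \cite{BCHM10} supplying the Mori-theoretic input (existence of flips, of minimal models in the big case, etc.). First I would settle the case in which $K_X+\Delta$ is big: by \cite{BCHM10} it then has a minimal model whose adjoint class is big and nef, hence semiample by base-point-freeness, so a good minimal model exists. Thus assume $K_X+\Delta$ is pseudo-effective but not big. By the usual bookkeeping with the $(K_X+\Delta)$-MMP with scaling of an ample divisor, the existence of a good minimal model is equivalent to \emph{abundance}: that on one dlt birational model $(Y,\Gamma)$ the nef part of $K_Y+\Gamma$ is semiample. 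So the target is abundance.

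Granting Non-vanishing, write $K_X+\Delta\sim_{\mathbb{Q}}D\ge 0$. I would then pass to a log resolution and run an auxiliary MMP to replace $(X,\Delta)$ by a \emph{plt} pair $(X,S+B)$ with $\lfloor B\rfloor=0$, $S$ irreducible, $S$ contained in $\operatorname{Supp}D$, and $K_X+S+B\sim_{\mathbb{Q}}D'\ge 0$ --- the standard device of creating a plt centre inside the stable base locus. After this the hypothesis $\kappa(K_X+S+B)\ge 0$ required by the extension theorem of the present paper is in force. Adjunction produces an $(n-1)$-dimensional klt pair $(S,B_S)$ with $(K_X+S+B)|_S=K_S+B_S$ pseudo-effective; by the inductive hypothesis $(S,B_S)$ has a good minimal model, and after a further $(K_S+B_S)$-MMP the restricted class becomes semiample, so that a sufficiently divisible multiple of $K_S+B_S$ is globally generated.

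The crux is the \emph{extension step}. Using the refinement of the Ohsawa--Takegoshi $L^2$ extension theorem with singular hermitian metrics --- the new analytic ingredient of this paper --- one lifts pluricanonical sections of $m(K_X+S+B)|_S$ to sections of $m(K_X+S+B)$ for suitably divisible $m$, making the restriction $H^0(X,m(K_X+S+B))\to H^0(S,m(K_S+B_S))$ surjective. Combined with the semiampleness of $K_S+B_S$ and an argument that the stable base locus of $K_X+S+B$ is controlled away from $S$, this forces $K_X+S+B$ itself to become semiample after a final MMP, giving the desired good minimal model of $(X,\Delta)$. The principal obstacle is Non-vanishing, for which I see no workaround within this circle of ideas; a real secondary difficulty is keeping the hypotheses of the extension theorem stable along the induction --- one must control the singularities of the natural metric along $\operatorname{Supp}D'$ so that the extended sections pick up no poles, and it is exactly for this that an extension theorem permitting \emph{singular} weights, rather than the classical smooth Ohsawa--Takegoshi theorem, is indispensable.
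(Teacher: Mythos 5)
This statement is a \emph{conjecture}, not a theorem of the paper; the paper does not prove it, and neither can you. What the paper actually establishes (Theorem~\ref{t-good1}, proved via Theorem~\ref{t-good}) is a \emph{conditional} reduction: assuming \textbf{both} the DLT Extension Conjecture~\ref{c-dlt} in dimension $n$ \textbf{and} Non-Vanishing~\ref{c-abb} in dimension $n$ for semi-log canonical pairs, one gets Conjecture~\ref{c-gmm}$_n$. Your proposal correctly identifies Non-Vanishing as an obstruction and outlines roughly the right circle of ideas (reduce to $\kappa=0$, build a boundary supported on the effective divisor, restrict, apply an $L^2$ extension theorem), so it is not wildly off-base. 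But it misidentifies what remains open, and this is a genuine gap rather than a cosmetic one.

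The gap is your step ``run an auxiliary MMP to replace $(X,\Delta)$ by a \emph{plt} pair $(X,S+B)$ with $S$ irreducible.'' When $\kappa(K_X+\Delta)=0$ and $K_X+\Delta\sim_{\mathbb Q}D\ge0$, the divisor $D$ is typically supported on \emph{several} prime divisors, all of which lie in the stable base locus, and the paper's reduction takes $S=\operatorname{Supp}(D)$ in its entirety and forms the \emph{dlt} pair $(X,S+B)$ with $S+B=S\vee\Delta$. One cannot in general single out one component and stay plt while keeping the crucial support hypothesis $S\subset\operatorname{Supp}(D)\subset\operatorname{Supp}(S+B)$; the other components of $D$ would then have coefficient one in the boundary, destroying pltness, and perturbing them down destroys the containment needed by the extension theorem. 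Consequently the extension theorem you would need at the end is the \emph{DLT} Extension Conjecture~\ref{c-dlt}, with a reducible $S$ and a semi-dlt restriction $(S,B_S)$ --- whereas the paper's Theorem~\ref{t-ext} (the new Ohsawa--Takegoshi-based result) proves only the \emph{plt} case. The paper is explicit that the dlt case remains conjectural. So the ``principal obstacle'' is not Non-Vanishing alone: you are also tacitly assuming a form of the extension theorem that the paper does not supply. Relatedly, because $S$ is reducible, the Non-Vanishing input you need on $S$ is for semi-dlt (or semi-log canonical) pairs, not for klt pairs, which is exactly the strengthened hypothesis in Theorem~\ref{t-good1}; your appeal to ``the inductive hypothesis'' on $(S,B_S)$ as a klt pair is not available.

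Two smaller points. First, you dispatch the big case and then ``assume pseudo-effective but not big,'' but you should also treat $0<\kappa<\dim X$ separately (the paper sends this to \cite{Lai10} after induction), leaving $\kappa=0$ as the essential case. Second, after the MMP with scaling the paper must invoke special termination (Theorem~\ref{t-st}), which requires termination in dimension $n-1$; this inductive bookkeeping is absent from your sketch but is what makes ``after a further MMP the restricted class becomes semiample'' legitimate.
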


Note that in particular the existence of good minimal models for log pairs would imply the following conjecture (which is known in dimension $\leq 3$ cf. \cite{KMM94}, \cite{Kollaretal92}):
\begin{conjecture}[Non-Vanishing]\label{c-abb} Let $(X,\Delta )$ be an $n$-dimensional klt pair. If $K_X+\Delta $ is pseudo-effective then $\kappa(K_X+\Delta )\geq 0$.
\end{conjecture}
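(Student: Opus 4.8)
\medskip
\noindent\textbf{Proof strategy for Conjecture~\ref{c-abb}.}
Since the statement is itself a conjecture, what follows is a strategy rather than a self-contained proof, arranged so as to isolate the r\^ole of the extension theorem proved in this paper. The plan is an induction on $n=\dim X$ in which one assumes Conjecture~\ref{c-abb} --- and, for the scheme to close up, also Conjecture~\ref{c-gmm} --- in all dimensions $<n$. First dispose of the standard reductions. Passing to a log resolution $f\colon Y\to X$ and writing $K_Y+\Delta_Y=f^*(K_X+\Delta)+E$ with $(Y,\Delta_Y)$ log smooth and klt and $E\geq 0$ $f$-exceptional changes neither pseudo-effectivity nor the Kodaira dimension, so we may assume $(X,\Delta)$ is log smooth with rational boundary coefficients; and if $K_X+\Delta$ is big we are done by \cite{BCHM10}, so we may assume $K_X+\Delta$ is pseudo-effective but not big.

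Next invoke the divisorial Zariski (Nakayama $\sigma$-)decomposition $K_X+\Delta\equiv P+N$, with $N\geq 0$ the negative part and $P$ the movable positive part, and split on the numerical dimension $\nu=\nu(K_X+\Delta)$. If $\nu=0$ then $P\equiv 0$, so $K_X+\Delta\equiv N\geq 0$ and it is enough to upgrade this numerical equivalence to a $\mathbb{Q}$-linear one, i.e. to show that the numerically trivial class $K_X+\Delta-N$ is torsion in $\Pic(X)\ot\mathbb{Q}$. On the log smooth model this is the ``abundance in numerical dimension zero'' statement, known through work of Nakayama and of Campana--Koziarz--P\u aun; it rests on the semipositivity of the logarithmic cotangent sheaf together with an analysis of flat line bundles via the Albanese map, and does not involve the extension theorem.

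Assume therefore $\nu>0$, so $P\not\equiv 0$. Running the minimal model program with scaling of an ample divisor, and feeding in the lower-dimensional minimal model theory, the goal is to reach a birational model on which --- unless $\nu$ has meanwhile dropped into the previous case --- there are a prime divisor $S$ and an effective $\mathbb{Q}$-divisor $B$ with $K_X+S+B\sim_{\mathbb Q}\lambda(K_X+\Delta)$ for some $\lambda>0$, such that $(X,S+B)$ is plt with $S$ as its reduced part and the restriction $K_S+B_S:=(K_X+S+B)|_S$ is pseudo-effective. Since $\dim S=n-1$, the inductive hypothesis yields $\kappa(K_S+B_S)\geq 0$, hence a nonzero section of $m(K_S+B_S)$ for some $m>0$; granting that the hypotheses of the extension theorem of this paper are satisfied by $(X,S+B)$, that theorem lifts this section to a nonzero section of $m(K_X+S+B)$, whence $\kappa(K_X+\Delta)\geq 0$.

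The decisive difficulty is this last reduction. The prime divisors that present themselves most naturally --- the components of the diminished (restricted) base locus of $K_X+\Delta$ --- are precisely the negative components, i.e. they lie in $\mathrm{Supp}(N)$, and there the restriction of $K_X+\Delta$ need not be pseudo-effective; conversely a divisor in sufficiently general position, for which the restriction is pseudo-effective, carries no obvious relation to $|m(K_X+\Delta)|$. Reconciling these requirements --- by means of the minimal model program with scaling, special termination in dimension $n-1$, the length-of-extremal-rays estimates, and control of how $N$ and the base loci transform along the program --- while checking that one does not merely assume, in disguise, the non-negativity of the Kodaira dimension that the extension theorem needs, is the heart of the matter, and the point at which the extension theorem proved here is meant to be brought to bear.
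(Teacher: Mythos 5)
Conjecture~\ref{c-abb} is a conjecture, and the paper does not prove it. What the paper proves is a conditional reduction, Theorem~\ref{t-abbw}: assuming the existence of good minimal models in dimension $n-1$ (Conjecture~\ref{c-gmm}), the boundary-free non-vanishing Conjecture~\ref{c-abbw} in dimension~$n$, and the Global ACC Conjecture~\ref{c-acc1}, Conjecture~\ref{c-abb} holds in dimension~$n$. You correctly present your text as a strategy rather than a proof, but it takes a genuinely different route from the paper's, and that route runs into a structural obstacle that the paper's argument is specifically designed to avoid.

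The paper's proof of Theorem~\ref{t-abbw} never appeals to Theorem~\ref{t-ext}. After disposing of the case $K_X$ pseudo-effective (or $\Delta=0$) by invoking Conjecture~\ref{c-abbw}$_n$, it studies the pseudo-effective threshold $\tau=\tau(X,0;\Delta)$: Proposition~\ref{p-threshold} shows, using the Global ACC, that $\tau$ is rational; the MMP with scaling produces a $K_X+\tau\Delta$-trivial Mori fiber space $Y\to Z$, and one either descends to $Z$ by Ambro's canonical bundle formula and inducts, or lands on a variety with $\rho(Y)=1$ where the pseudo-effective class $K_Y+\tau f_*\Delta$ is ample or numerically trivial and hence effective. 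Your proposal, by contrast, wants in the case $\nu>0$ to reach a plt pair $(X,S+B)$ with $K_X+S+B\sim_{\mathbb Q}\lambda(K_X+\Delta)$, obtain a section on $S$ by the inductive hypothesis, and lift it by Theorem~\ref{t-ext}. The difficulty you raise in your final paragraph is not a technicality to be checked but the gap itself. Hypothesis~(2) of Theorem~\ref{t-ext} requires an effective $\mathbb{Q}$-divisor $D\sim_{\mathbb Q}K_X+S+B$ with $S\subset\mathrm{Supp}(D)\subset\mathrm{Supp}(S+B)$ --- a strengthened form of the very non-vanishing statement you are trying to prove. Even the numerical variant Theorem~\ref{t-6} weakens $\sim_{\mathbb Q}$ only to $\equiv$ and still demands an effective representative of $K_X+S+B$ supported on $S+B$; precisely when $\nu>0$ (so the movable part $P$ of the $\sigma$-decomposition is nonzero) no such representative can be manufactured from $N_\sigma$ alone. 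In this paper the extension machinery enters only downstream of non-vanishing: Theorem~\ref{t-good} assumes $\kappa(K_X+\Delta)\geq 0$ and concludes the existence of a good minimal model, never the reverse. The pseudo-effective threshold argument and the reduction to Picard rank one --- both resting on the Global ACC conjecture, an input your sketch does not invoke --- are exactly the paper's device for circumventing the circularity your strategy runs into.
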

It is expected that \eqref{c-gmm} and \eqref{c-abb} also hold in the more general context of log canonical (or even semi-log canonical) pairs $(X,\Delta )$.
Moreover, it is expected that the Non-Vanishing Conjecture implies existence of good minimal models. The general strategy for proving that \eqref{c-abb} implies \eqref{c-gmm} is explained in \cite{Fujino00}.
One of the key steps is to extend pluri-log canonical divisors from a divisor to the ambient variety. The key ingredient is the following.
\begin{conjecture}[DLT Extension]\label{c-dlt}
Let $(X,S+B)$ be an $n$-dimensional dlt pair such that $\lfloor S+B \rfloor =S$, $K_X+S+B$ is nef and $K_X+S+B\sim _{\mathbb Q}D\geq 0$ where $S\subset {\rm Supp}(D)$.
Then $$H^0(X,\mathcal O _X(m(K_X+S+B)))\to H^0(S,  \mathcal O _S(m(K_X+S+B)))  $$
is surjective for all $m>0$ sufficiently divisible.\end{conjecture}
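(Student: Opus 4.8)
The plan is to deduce Conjecture~\ref{c-dlt} from the extension theorem for effective plt pairs proved in this paper, using the birational geometry of the minimal model program to pass from the dlt to the plt setting. Fix a component $S_0$ of $S=\sum_i S_i$. Since $(X,S+B)$ is dlt, the pair $\big(X,\,S_0+B+(1-\varepsilon)\sum_{i\neq 0}S_i\big)$ is plt for every $\varepsilon\in(0,1)$, and because $S\subset\mathrm{Supp}(D)$ the $\bQ$-divisor $K_X+S_0+B+(1-\varepsilon)\sum_{i\neq 0}S_i=(K_X+S+B)-\varepsilon\sum_{i\neq 0}S_i$ is, for $0<\varepsilon\ll 1$, still $\bQ$-linearly equivalent to an effective divisor whose support contains $S_0$. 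It is no longer nef, so one runs a $\big(K_X+S_0+B+(1-\varepsilon)\sum_{i\neq 0}S_i\big)$-MMP; since $K_X+S+B$ is nef, for $\varepsilon$ small this program only contracts curves meeting $\sum_{i\neq 0}S_i$, so $S_0$ is not contracted, and (granting termination, cf.\ \cite{BCHM10}) it ends on a $\bQ$-factorial model $X'$ on which the transformed log canonical divisor is nef. On $X'$ one applies the plt extension theorem to extend sections from $S_0'$, uses adjunction $(K_{X'}+\cdots)|_{S_0'}=K_{S_0'}+B_{S_0'}$ with $(S_0',B_{S_0'})$ dlt to descend to a problem of dimension $\dim X-1$, and then recombines the components $S_i$ and removes the perturbation by a diagonal argument letting $\varepsilon\to 0$, comparing spaces of sections on $X$, on $X'$, and on the $S_i$.

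The genuinely new, unconditional ingredient is the plt extension theorem invoked above, whose proof is an $L^2$ argument. Given $\sigma\in H^0(S,\OO(m(K_X+S+B)))$, one writes $m(K_X+S+B)=(K_X+S)+\big((m-1)(K_X+S+B)+B\big)$ and equips the twisting bundle with a singular hermitian metric of semipositive curvature current for which $\sigma$ is $L^2$ on $S$; an Ohsawa--Takegoshi-type extension then lifts $\sigma$. The metric is produced by the Siu--Hacon--McKernan bootstrap: extensions of the sections of $\ell(K_X+S+B)$ for $\ell<m$, available by induction on $m$, define after normalization a metric whose restriction to $S$ dominates the one needed to integrate $\sigma$. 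The classical Ohsawa--Takegoshi theorem is not enough because this metric carries a genuine divisorial/singular part along $S$, coming from $B$ and from the asymptotic multiplier ideal of $\|K_X+S+B\|$; hence the need for the refined $L^2$ extension theorem, valid for singular metrics, with the $L^2$ condition formulated relative to the restricted multiplier ideal sheaf.

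I expect the main obstacle to lie in the non-big range $0\le\kappa(K_X+S+B)\le n-1$, where there is no ample slack to exploit: the argument above must be carried out with metrics of minimal singularities, and the crux becomes showing that the metric of minimal singularities of $K_X+S+B$ restricts to $S$ to something no more singular than the metric of minimal singularities of $(K_X+S+B)|_S$ --- equivalently, that the diminished base locus and the restricted asymptotic multiplier ideals are compatible with adjunction along $S$. A second, unavoidable point is that the reduction in the first paragraph is not self-contained: termination of the perturbed MMP and the persistence of $S_0$ are statements of the same depth as Conjectures~\ref{c-gmm}--\ref{c-abb}, so in full generality Conjecture~\ref{c-dlt} can only be obtained inductively, in tandem with them.
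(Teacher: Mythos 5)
The statement you were asked to prove is labelled a \emph{Conjecture} in the paper and is \emph{not} proved there: the introduction states explicitly that Theorem~\ref{t-ext} settles Conjecture~\ref{c-dlt} only under the additional hypothesis that $(X,S+B)$ is plt (so $S$ is a single prime divisor), and even then under the further support condition ${\rm Supp}(D)\subset{\rm Supp}(S+B)$ which Conjecture~\ref{c-dlt} does not assume. The general dlt statement remains open and enters the paper only as a hypothesis, in Theorems~\ref{t-good1} and~\ref{t-good}. There is therefore no proof in the paper to compare your proposal against.

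Your description of the plt case in the second paragraph is a fair qualitative account of Sections~\ref{s-OT}--\ref{s-pot}: the refined Ohsawa--Takegoshi Theorem~\ref{t-OT} for singular metrics, the Siu-style bootstrap in Theorem~\ref{t-5} producing a positively curved metric on $K_X+S+B$ whose restriction to $S$ is bounded below by $C+\log|u|^{2/m_0}$, and the final extension step. But your first paragraph --- the proposed reduction from dlt to plt by perturbing by $-\varepsilon\sum_{i\neq 0}S_i$ and running an MMP --- is not something the paper attempts, and, as you yourself observe, it rests on unproven inputs of the same depth as Conjectures~\ref{c-gmm} and~\ref{c-abb}: termination of the perturbed MMP is open in this setting, and one must additionally control that $S_0$ is not contracted, that the perturbed adjoint divisor stays pseudo-effective (it need not be --- subtracting an effective divisor from a nef but non-big class can exit the pseudo-effective cone), and that the support condition required by Theorem~\ref{t-ext} is preserved through the perturbation, the MMP steps, and the limit $\varepsilon\to 0$. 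So what you have is a plausible \emph{reduction strategy} conditional on deep open results, not a proof. The paper's own logic runs the other way: it proves the plt case unconditionally and records the dlt statement as a conjecture whose validity, together with Non-Vanishing, yields the existence of good minimal models.
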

We then have the following easy consequence (cf. \eqref{t-good}):
\begin{theorem}\label{t-good1} Assume \eqref{c-dlt}$_n$ holds and that \eqref{c-abb}$_{n}$ holds for all semi-log canonical pairs.
Then \eqref{c-gmm}$_n$ holds (i.e. \eqref{c-gmm} holds in dimension $n$).\end{theorem}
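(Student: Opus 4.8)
The plan is to run the standard induction that deduces existence of good minimal models from Non-Vanishing plus the DLT Extension Conjecture, following the strategy of Fujino and of \cite{BCHM10}. First I would take an $n$-dimensional klt pair $(X,\Delta)$ with $K_X+\Delta$ pseudo-effective; after a small perturbation of $\Delta$ and passing to a log resolution I may assume $(X,\Delta)$ is log smooth and, replacing $\Delta$ by $\Delta+\epsilon A$ for $A$ ample, that $K_X+\Delta$ is big in an appropriate sense or at least that $\Delta$ has a component moving in a suitable way; the key point is that by \eqref{c-abb}$_n$ applied to $(X,\Delta)$ we get $K_X+\Delta\sim_{\mathbb Q}D\geq 0$ for some effective $D$. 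Writing $D$ with its irreducible decomposition, one wants to run a $(K_X+\Delta)$-MMP with scaling of an ample divisor; by \cite{BCHM10} each step exists (flips and divisorial contractions terminate as long as we stay in the klt big setting, or more precisely the relevant MMP with scaling is known to terminate in the situations we reduce to), and the output is a minimal model $(X',\Delta')$ on which $K_{X'}+\Delta'$ is nef. It then remains to prove that this nef divisor is semiample.

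The semiampleness is where the DLT Extension Conjecture enters. The strategy is to choose a component $S$ of the support of the effective divisor $D'\sim_{\mathbb Q}K_{X'}+\Delta'$, pass to a dlt modification so that we are in the setting of \eqref{c-dlt} with $\lfloor S+B\rfloor=S$ and $S\subset\operatorname{Supp}(D)$, and then use \eqref{c-dlt}$_n$ to lift sections: sections of $m(K_X+S+B)|_S$ extend to $X$. By adjunction $(K_X+S+B)|_S=K_S+B_S$ is again of the relevant type on the lower-dimensional variety $S$, and by induction on dimension — here one needs \eqref{c-abb} and the good minimal model statement in dimension $<n$, which is why the hypothesis is phrased for semi-log canonical pairs in all lower dimensions — the restricted divisor is semiample on $S$, hence has plenty of sections; the extension conjecture promotes these to sections on $X'$. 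One then argues, via a standard argument bounding the stable base locus (every component of $\mathbf{B}(K_{X'}+\Delta')$ would have to be such an $S$, but each is removed by the extension), that $K_{X'}+\Delta'$ is semiample; the base-point-free theorem or the abundance-type argument of \cite{Fujino00} packages this.

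The main obstacle — and the reason this is only an ``easy consequence'' modulo the two conjectures — is making the reduction to the dlt, nef, $\mathbb{Q}$-linearly-trivial-on-a-divisor setting genuinely work: one must pass back and forth between $(X,\Delta)$ and a dlt modification, control what happens to the divisor $S$ and to the boundary $B$ under the birational maps of the MMP (components of $\Delta$ and of $D$ can be contracted, and $S$ must survive as a component of the support of $D'$), and ensure that the adjunction $(K_X+S+B)|_S=K_S+B_S$ really does land in a class to which the lower-dimensional good minimal model statement applies (this is exactly why \eqref{c-abb} is assumed for semi-log canonical, not merely klt, pairs, since $B_S$ need not be klt). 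Carefully, one fixes an ample $A$ and runs the MMP with scaling so that termination holds; at each stage either $K+\Delta$ becomes semiample (done) or one finds a component $S$ of $D$ along which to do adjunction and apply \eqref{c-dlt}$_n$ to reduce the size of the stable base locus, and a Noetherian/termination argument closes the loop. I would expect the write-up to consist largely of citing \cite{Fujino00} and \cite{BCHM10} for these reductions, with the new content being purely that \eqref{c-dlt}$_n$ is exactly the missing extension input, so the ``proof'' is a short orchestration rather than a long argument.
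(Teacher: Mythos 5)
Your proposal captures the high-level flavor (induction on dimension, MMP with scaling, use \eqref{c-dlt}$_n$ to lift sections from a boundary component, apply \eqref{c-abb} on the lower-dimensional divisor to know there are sections to lift), and you correctly identify why semi-log canonical/semi-dlt pairs are needed in the lower-dimensional hypothesis. But there are two structural pieces of the paper's argument that your sketch misses, and without them the ``semiampleness via base-point-free'' ending you propose does not go through.

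First, the paper reduces immediately to the case $\kappa(K_X+\Delta)=0$. By \eqref{c-abb}$_n$ one has $\kappa\geq 0$; the case $\kappa=\dim X$ is \cite{BCHM10}; and crucially the intermediate case $0<\kappa<\dim X$ is handled by \cite{Lai10} (reduce to the general fiber of the Iitaka fibration and invoke the induction hypothesis there). This reduction is not a convenience but the point on which the whole argument turns: once $\kappa=0$, ``good minimal model'' means precisely $K_{X'}+\Delta'\sim_{\mathbb Q}0$, so one does not need to prove semiampleness by a base-point-free-type argument at all — one only needs to show that after running an MMP, nothing of the $\mathbb Q$-effective representative $D$ survives. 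Your proposal instead tries to remove divisorial components of $\mathbf B(K_{X'}+\Delta')$ one at a time via \eqref{c-dlt}$_n$ and then invoke ``the base-point-free theorem or the abundance-type argument.'' That step does not follow: \eqref{c-dlt}$_n$ lets you lift sections from a divisor in the support of $D$, which removes that divisor from the base locus, but it gives no control over higher-codimensional components of the stable base locus nor over the gap between $\kappa$ and the numerical dimension $\nu$ — which is exactly what abundance is about. In the $\kappa=0$ case this problem disappears, because the contradiction argument (if $S'\neq 0$, sections of $(K_{X'}+S'+B')|_{S'}$ exist by \eqref{c-abb}$_{n-1}$ and extend by \eqref{c-dlt}$_n$, contradicting $\kappa(G')=0$) forces $S'=0$, hence $K_{X'}+S'+B'\sim_{\mathbb Q}0$.

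Second, the termination of the MMP with scaling is not automatic, and your remark that ``flips and divisorial contractions terminate as long as we stay in the klt big setting'' is not applicable (the pair is neither big nor klt once one puts $S$ into the boundary). The paper's mechanism is Special Termination (Shokurov, Theorem~\ref{t-st}): it requires the MMP to terminate in dimension $n-1$ (which follows from \eqref{c-gmm}$_{n-1}$ by induction) and — crucially — that $\operatorname{Supp}(D)\subset\operatorname{Supp}\lfloor\Delta\rfloor$. To arrange this the paper uses the reduction Lemma~\ref{l-red} to replace $\Delta$ by $S+B:=S\vee\Delta$, where $S=\operatorname{Supp}(D)$, so that $K_X+S+B\sim_{\mathbb Q}G\geq 0$ with $\operatorname{Supp}(G)=\operatorname{Supp}(S)=\operatorname{Supp}\lfloor S+B\rfloor$. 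This is the concrete content behind your one-line acknowledgment that ``one must... control what happens to the divisor $S$ and the boundary $B$,'' and it is precisely what makes Special Termination applicable. Lemma~\ref{l-red} itself needs a small argument (run a relative MMP over the minimal model $X'$ of $(X,\Delta')$ and use \cite[3.6.2.1]{BCHM10}) to pass from a good minimal model of the enlarged pair $(X,\Delta')$ back to one of $(X,\Delta)$. In short: your proposal takes the right ingredients but omits the $\kappa=0$ reduction via \cite{Lai10} and the Special Termination set-up via $S\vee\Delta$, and in the absence of those it substitutes a base-point-free finish that is not justified.
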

The main purpose of this article is to prove that Conjecture \ref{c-dlt} holds under the additional assumption that $(X,S+B)$ is plt, see Theorem \ref{t-ext} below.

\begin{rem} \eqref{c-abb} is known to hold in dimension $\leq 3$ cf. \cite{Kawamata92}, \cite{Miyaoka88}, \cite{KMM94}, \cite{Fujino00} and when $K_X+\Delta$ is nef and $\kappa _\sigma (K_X+\Delta )=0$ cf. \cite{Nakayama04}. See also \cite{Ambro04} and \cite{Fukuda02} for related results. A proof of the case when $X$ is smooth and $\Delta =0$ has been announced in \cite{Siu09} (this is expected to imply the general case cf. \eqref{t-abbw}, ).

The existence of a good minimal models is known for canonical pairs $(X,0)$ where $K_X$ is nef and $\kappa (K_X)=\nu (K_X)$ cf. \cite{Kawamata85a}, when $\kappa (K_X)=\dim (X)$ by \cite{BCHM10} and when the general fiber of the Iitaka fibration has a good minimal model by \cite{Lai10}.

Birkar has shown that \eqref{c-abb} implies the existence of minimal models (resp. Mori-Fano fiber spaces) and the existence of the corresponding sequence of flips and divisorial contractions cf. \cite[1.4]{Birkar09}. The existence of minimal models for klt $4$-folds is proven in
\cite{Shokurov09}.
\end{rem}
We also recall the following important consequence of \eqref{c-gmm} (cf. \cite{Birkar09}).
\begin{corollary}\label{c-termination} Assume \eqref{c-gmm}$_n$. Let $(X,\Delta )$ be an $n$-dimensional klt pair and $A$ an ample divisor such that $K_X+\Delta +A$ is nef. Then any $K_X+\Delta$-minimal model program with scaling terminates.\end{corollary}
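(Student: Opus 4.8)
The plan is to combine \eqref{c-gmm}$_n$ with the results of \cite{BCHM10} and \cite{Birkar09}. Run the given $(K_X+\Delta)$-minimal model program with scaling of $A$: we obtain $X=X_0\dashrightarrow X_1\dashrightarrow\cdots$, strict transforms $\Delta_i,A_i$ of $\Delta,A$, and nef thresholds $\lambda_i=\inf\{t\ge 0:\ K_{X_i}+\Delta_i+tA_i\ \text{is nef}\}$, a non-increasing sequence with limit $\lambda$. Since each divisorial contraction drops the Picard number, past some stage every step is a flip, and I will assume the program is infinite and derive a contradiction. Observe that $\lambda_0\le 1$ (as $K_X+\Delta+A$ is nef), that $K_{X_i}+\Delta_i+\lambda_i A_i$ is nef (the infimum is attained), and that each step $X_{i-1}\dashrightarrow X_i$ is a $(K_{X_{i-1}}+\Delta_{i-1}+cA_{i-1})$-flip for $c<\lambda_{i-1}$ and a flop for $c=\lambda_{i-1}$, whence inductively the pairs $(X_i,\Delta_i+cA_i)$ stay klt for $0\le c\le\lambda_{i-1}$; in particular $(X_i,\Delta_i+\lambda_i A_i)$ is klt.

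First suppose $\lambda>0$ and fix $\varepsilon$ with $0<\varepsilon<\lambda$. Then $(X,\Delta+\varepsilon A)$ is klt, its boundary $\Delta+\varepsilon A$ is \emph{big} (numerically it dominates the ample class $\varepsilon A$), and $K_X+\Delta+\varepsilon A+(1-\varepsilon)A=K_X+\Delta+A$ is nef. The point is that, as long as the current nef threshold exceeds $\varepsilon$ --- which holds at every step since $\lambda_i\ge\lambda>\varepsilon$ --- our program agrees step for step with the $(K_X+\Delta+\varepsilon A)$-MMP with scaling of $(1-\varepsilon)A$ making the same ray choices: if $(K_{X_i}+\Delta_i+\lambda_i A_i)\cdot R=0$ and $(K_{X_i}+\Delta_i)\cdot R<0$, then $A_i\cdot R>0$, so $(K_{X_i}+\Delta_i+\varepsilon A_i)\cdot R=(\varepsilon-\lambda_i)(A_i\cdot R)<0$ and the flip of $R$ is the same for both pairs, and conversely. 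But an MMP with scaling of an ample divisor for a klt pair with big boundary terminates by \cite{BCHM10}; contradiction. (The same comparison also handles the case when $K_X+\Delta$ is not pseudo-effective, for then $\lambda>0$: otherwise, since the tail flips are isomorphisms in codimension one and the pseudo-effective cone is closed, one would get $K_{X_i}+\Delta_i$, and hence $K_X+\Delta$, pseudo-effective.)

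Now suppose $\lambda=0$; then $K_X+\Delta$ is pseudo-effective (by the remark just made) and the $\lambda_i$ strictly decrease to $0$. This is where \eqref{c-gmm}$_n$ is used. For each $i$, $(X_i,\Delta_i+\lambda_i A_i)$ is klt with nef --- hence pseudo-effective --- log canonical class, so by \eqref{c-gmm}$_n$ it has a good minimal model; being already a minimal model itself, it is a good one, i.e.\ $K_{X_i}+\Delta_i+\lambda_i A_i$ is semiample, and since $K_X+\Delta+\lambda_i A$ is big (pseudo-effective plus ample) so is its transform $K_{X_i}+\Delta_i+\lambda_i A_i$; thus it defines a birational morphism $\phi_i\colon X_i\to Z_i$ onto its ample model, and each flip, being $(K_{X_i}+\Delta_i+\lambda_i A_i)$-trivial, takes place over $Z_i$. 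I would then prove that only finitely many ample models $Z_i$ arise --- equivalently, that the pairs $(X,\Delta+tA)$ for $t\in[0,\lambda_0]$ have finitely many ample models --- from finite generation of $\bigoplus_{m,k}H^0(X,\mathcal O_X(m(K_X+\Delta)+kA))$ (a consequence of the existence of a good minimal model of $(X,\Delta)$) and the attendant decomposition of the segment $\{K_X+\Delta+tA:\ t\in[0,\lambda_0]\}$ into finitely many chambers of constant ample model. Granting this, for $i\gg 0$ all flips take place over one fixed base $Z$, and I would reduce to terminating a sequence of flips for the klt pair $(X_i,\Delta_i+\lambda_i A_i)$ over $Z$ --- a pair with big boundary over $Z$, since $A_i$ is --- which again terminates by \cite{BCHM10}; contradiction.

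The main obstacle is the finiteness invoked in the preceding paragraph: for pairs with big boundary this is one of the central results of \cite{BCHM10} (Shokurov polytopes and finiteness of ample models), but here $\Delta$ need not be big and the statement must be extracted from \eqref{c-gmm}$_n$. The route taken in \cite{Birkar09} compares each $X_i$ with one fixed good minimal model $(Y,\Delta_Y)$ of $(X,\Delta)$: writing $K_{X_i}+\Delta_i\sim_{\bR}\psi_i^{*}(K_Y+\Delta_Y)+E_i$ with $E_i\ge 0$ contracted by the birational contraction $\psi_i\colon X_i\dashrightarrow Y$ (negativity lemma), the strict transforms of the $E_i$ are non-increasing along the flips and supported on one finite set of prime divisors; exploiting the scaling (the flips get arbitrarily close to $(K_X+\Delta)$-trivial as $\lambda_i\to 0$) and the semiampleness of $K_Y+\Delta_Y$, one shows the $E_i$ eventually vanish, so that some $X_i$ is already a minimal model --- contradicting infinitude. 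Making either argument precise is the delicate part, and is due to \cite{Birkar09}.
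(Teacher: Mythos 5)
Your proposal is correct in outline, but you should be aware that the paper's own proof is essentially a two-line citation: if $K_X+\Delta$ is not pseudo-effective this is case (2) of Theorem~\ref{t-term} (i.e.\ \cite{BCHM10}), and if it is pseudo-effective then \eqref{c-gmm}$_n$ provides a good minimal model and one invokes case (3) of Theorem~\ref{t-term}, that is, \cite{Lai10}. What you have written is a reconstruction of the content of \cite{Lai10} and \cite{Birkar09}, which the paper takes as a black box.

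Your reconstruction is sound in its main points. The reduction of the case $\lambda>0$ to a big-boundary MMP with scaling (by observing that each contracted ray $R$ satisfies $A_i\cdot R>0$, so it remains $(K_{X_i}+\Delta_i+\varepsilon A_i)$-negative for $\varepsilon<\lambda_i$, and the nef thresholds transform affinely) is exactly the standard rescaling trick, and it does handle both the non-pseudo-effective case and the $\lambda>0$ pseudo-effective case in one stroke, which is a slight streamlining relative to the paper's split. The observation that $\lambda=0$ forces $K_X+\Delta$ to be pseudo-effective (via the negativity lemma applied to a common resolution of $X$ and $X_i$, using that $X\dasharrow X_i$ is $(K_X+\Delta+\lambda_iA)$-non-positive) is also correct. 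Where you are honest about a gap is precisely where the real content of \cite{Lai10}/\cite{Birkar09} lies: you need either (i) finiteness of the ample models of $K_X+\Delta+tA$ for $t\in[0,\lambda_0]$, or (ii) Birkar's argument that the effective exceptional divisors $E_i$ comparing $X_i$ with a fixed good minimal model of $(X,\Delta)$ eventually vanish. On point (i), be careful: finite generation of the bigraded ring $\bigoplus_{m,k}H^0(m(K_X+\Delta)+kA)$ does \emph{not} follow immediately from $(X,\Delta)$ having a good minimal model; in the paper this is exactly Theorem~\ref{t-finite} and Theorem~\ref{t-cox}, which are proved later and themselves assume the good minimal model conjecture, so invoking it here would make the logic circular unless one is careful to track dimensions. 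The route via \cite{Birkar09} is cleaner in this respect. Finally, two minor slips: the $\lambda_i$ need not \emph{strictly} decrease (cf.\ Remark~\ref{r-s}, $t_i=t_{i+1}$ can occur), though they do tend to $0$ when the program is infinite; and in the $\lambda=0$ case the assertion that $(X_i,\Delta_i+\lambda_iA_i)$ is ``already a minimal model'' of itself requires that the contraction $X\dasharrow X_i$ be $(K_X+\Delta+\lambda_iA)$-negative, which holds only up to the first step at which $t_j=\lambda_i$; the correct statement is that it is a minimal model of some $(X,\Delta+tA)$ with $t$ slightly larger than $\lambda_i$, and this is enough.
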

\begin{proof}If $K_X+\Delta$ is not pseudo-effective, then the claim follows by \cite{BCHM10}.

If $K_X+\Delta$ is pseudo-effective, then by \eqref{t-good1}, we may assume that $(X,\Delta )$ has a good minimal model.
The result now follows from \cite{Lai10}.
\end{proof} 
We now turn to the description of the main result of this paper (cf. \eqref{t-ext}) which we believe is of independent interest.

Let $X$ be a smooth variety, and let $S+B$ be a $\mathbb Q$-divisor with simple normal crossings, such that $S=\lfloor S+B \rfloor$,
$$K_X+S+B\in \Psef(X)\qquad {\rm and} \quad S\not\subset N_\sigma(K_X+S+B).$$
We consider $\pi: \widetilde X\to X$ a log-resolution of $(X, S+B)$, so that we have
$$K_{\wt X}+ \wt S+ \wt B= \pi^\star(K_X+S+B)+ \wt E$$
where $\wt S$ is the proper transform of $S$. Moreover $\wt B$ and $ \wt E$ are effective $\Q$-divisors, the components of $\wt B$ are disjoint and $\wt E$ is $\pi$-exceptional.

Following \cite{HM10} and \cite{Paun08},
if we consider the \emph{extension obstruction} divisor
$$\Xi:= N_\sigma(\Vert K_{\wt X}+ \wt S+ \wt B\Vert _{\wt S})\wedge \wt B|_{\wt S},$$
then we have the following result.
\medskip

\begin{theorem}[Extension Theorem]\label{t-ext} Let $X$ be a smooth variety, $S+B$ a $\mathbb Q$-divisor with simple normal crossings such that
\begin{enumerate}
\item $(X,S+B)$ is plt (i.e. $S$ is a prime divisor with ${\rm mult}_S(S+B)=1$ and $\lfloor B \rfloor =0$),
\item there exists  an effective $\mathbb Q$-divisor $D\sim _{\mathbb Q}K_X+S+B$ such that $S\subset {\rm Supp }(D)\subset {\rm Supp }(S+B)$, and
\item $S$ is not contained in the support of $N_\sigma (K_X+S+B)$ (i.e., for any ample divisor $A$ and any rational number $\epsilon >0$, there is an effective $\mathbb Q$-divisor $D\sim _{\mathbb Q}K_X+S+B+\epsilon A$ whose support does not contain $S$).
\end{enumerate}
Let $m$ be an integer, such that $m(K_X+S+B)$ is Cartier, and let 
$u$ be a section of $m(K_X+S+B)|_S$, such that 
$$Z_{\pi^\star(u)}+ m\wt E|_{\wt S}\geq m\Xi,$$
where we denote by $\displaystyle Z_{\pi^\star(u)}$ the zero divisor of the section 
$\pi^\star(u)$. Then $u$ extends to $X$.
\end{theorem}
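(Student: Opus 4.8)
The plan is to reduce the statement to a singular-metric version of the Ohsawa--Takegoshi $L^{2}$ extension theorem, which is the analytic heart of the matter. First, after replacing $X$ by $\wt X$ we may assume that $(X,S+B)$ has simple normal crossing support, that $S$ is smooth, that the components of $B$ are pairwise disjoint, and that the hypothesis reads $Z_{u}\ge m\,\Xi$ on $S$, with $\Xi=N_\sigma(\Vert K_{X}+S+B\Vert_{S})\wedge B|_{S}$. Since $m(K_{X}+S+B)=(K_{X}+S)+L$ with $L:=(m-1)(K_{X}+S+B)+B$, adjunction $(K_{X}+S)|_{S}=K_{S}$ identifies $u$ with a section of $\mathcal O_{S}(K_{S}+L|_{S})$; so extending $u$ is precisely an Ohsawa--Takegoshi problem for the pair $(X,S)$ with twisting bundle $L$, and what we must supply is a singular metric $h_{L}$ on $L$ with $\sqrt{-1}\,\Theta_{h_{L}}\ge 0$ (up to a harmless twist by a small ample divisor which one lets tend to zero) such that $\int_{S}|u|^{2}_{h_{L}}<\infty$, the plt hypothesis (1) ensuring that $h_{L}$ is klt off $S$ so that no spurious vanishing is imposed.

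The natural candidate for $h_{L}$ is $(m-1)$ times the metric of minimal singularities on $K_{X}+S+B$ (available because $K_{X}+S+B\in\Psef(X)$, with divisorial part $N_\sigma(K_{X}+S+B)$) plus the singular metric attached to the effective divisor $B$; its restriction to $S$ then has divisorial part at most $(m-1)N_\sigma(\Vert K_{X}+S+B\Vert_{S})+B|_{S}$. The $N_\sigma$--part is harmless, since every section of $m(K_{X}+S+B)|_{S}$ --- in particular $u$ --- vanishes along the corresponding restricted base divisor, so the only genuine obstruction is the overlap of $N_\sigma(\Vert K_{X}+S+B\Vert_{S})$ with $B|_{S}$, which after the correct scaling is exactly $m\,\Xi$; this is why $Z_{u}\ge m\,\Xi$ is the $L^{2}$ condition. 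To break the circularity --- the metric of minimal singularities is built from global sections of multiples of $K_{X}+S+B$, the very lifting we seek --- I would run the by-now standard inductive extension scheme: fix an ample divisor $A$; invoking hypotheses (2) and (3), for each $k$ there are $\bQ$--divisors $D_{k}\sim_{\bQ}k(K_{X}+S+B)+A$ and $D\sim_{\bQ}K_{X}+S+B$ with $S\not\subset\operatorname{Supp}D_{k}$ and $S\subset\operatorname{Supp}D\subset\operatorname{Supp}(S+B)$; one manufactures from these a semipositive singular metric on $km(K_{X}+S+B)+A$, applies the refined Ohsawa--Takegoshi theorem to extend $u^{\ot k}\ot s_{A}$ (with $s_{A}$ a fixed section of $A$) from $S$ to $X$, and takes $k$-th roots; letting $k\to\infty$ and upper-regularising the resulting weights produces a singular metric $h_{\infty}$ on $m(K_{X}+S+B)$ of nonnegative curvature.

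The hard part will be to show that $h_{\infty}$ is \emph{minimal enough}: that in the limit its restriction to $S$ acquires no divisorial singularity beyond $N_\sigma(\Vert K_{X}+S+B\Vert_{S})$ and $B|_{S}$, equivalently that the $L^{2}$ constants furnished by Ohsawa--Takegoshi stay bounded uniformly in $k$ after taking $k$-th roots. This is exactly where one uses that $S\not\subset N_\sigma(K_{X}+S+B)$, the comparison between the restricted invariant $N_\sigma(\Vert K_{X}+S+B\Vert_{S})$ on $S$ and $N_\sigma(K_{X}+S+B)$ on $X$, and the inclusion $\operatorname{Supp}D\subset\operatorname{Supp}(S+B)$ of (2) to confine all singularities to $\operatorname{Supp}(S+B)$. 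Granting this, the hypothesis $Z_{u}\ge m\,\Xi$ gives $\int_{S}|u|^{2}_{h_{\infty}|_{S}}<\infty$, and a final application of the singular-metric Ohsawa--Takegoshi extension theorem to $(X,S)$ with twisting bundle $L$ carrying (a slight modification of) $h_{\infty}$ yields a section of $m(K_{X}+S+B)$ on $X$ restricting to $u$. Everything else is adjunction bookkeeping or the standard approximation of quasi-plurisubharmonic weights.
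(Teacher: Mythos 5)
Your overall strategy matches the paper's at the level of broad strokes --- reduce to an $L^2$ extension problem, construct an approximate family of metrics from extensions of $u^{\ot k}\ot s_A$, pass to a limit, then apply a singular Ohsawa--Takegoshi theorem --- but there is a genuine gap, and it is precisely the part you flag as ``the hard part'' and then grant without argument.

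The assertion that the $L^2$ constants from Ohsawa--Takegoshi ``stay bounded uniformly in $k$ after taking $k$-th roots'' is not true for arbitrary extensions. If $\tau_k = \tfrac1{k}\log|U_k|^2$ with $U_k$ any extension of $u^{\ot k}\ot s_A$, there is no a priori reason why $\sup_S \tau_k$ (or the constant relating $\tau_k|_S$ to $\log|u|^{2/m_0}$) should stay bounded as $k\to\infty$; the upper-regularized limit $h_\infty$ may acquire singularities along $S$ that make $\int_S |u|^2_{h_\infty|_S}$ diverge, destroying the final application of Ohsawa--Takegoshi. Getting a constant independent of $k$ is the central new technical content of the paper (Theorem \ref{t-5}). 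The paper achieves it not by a direct limit but by a bootstrapping iteration: at each step it takes the \emph{minimal-semi-norm} extension $U_{\rm min}^{(km_0)}$, re-extends $u^{\ot k}\ot s_A$ using $U_{\rm min}^{(km_0)}$ itself as the weight, compares norms to obtain the key estimate $\sup_S \tau^{(p+1)} \geq \frac{\delta_0}{1+\delta_0}\sup_S\tau^{(p)} - C$, and then sums the geometric series. The uniformity of $\delta_0$ and $C$ rests on the mean value inequality (Lemma \ref{l-C}) and on Tian's uniform integrability estimate (Lemma \ref{5.5}). Without an argument of this kind your limit $h_\infty$ is not known to satisfy $f_\infty|_S \geq C + \log|u|^{2/m_0}$, and the final $L^2$ condition fails.

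A secondary point: the version of Ohsawa--Takegoshi you invoke needs to be more precise. What the paper actually proves and uses (Theorem \ref{t-OT}) is a refinement where the metric on $\cO_X(S)$ is allowed to be genuinely singular, written as a difference $\varphi_{G_1}-\varphi_{G_2}$ of psh weights, under the compensating hypothesis $\varphi_F \leq \varepsilon_0 \varphi_{G_2}+C$ (the ``wrong-sign'' poles are absorbed by $h_F$). This is not the standard statement and cannot be dismissed as ``the singular-metric Ohsawa--Takegoshi''; the support condition ${\rm Supp}(D)\subset{\rm Supp}(S+B)$ in hypothesis (2) is exactly what makes this compensating hypothesis available, and it is also used repeatedly inside the iteration of Theorem \ref{t-5}, not only in the final extension step.
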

The above theorem is a strong generalization of similar results available in the literature (see for example \cite{Siu98}, \cite{Siu00}, \cite{Taka06}, \cite{Taka07}, \cite{HM07}, \cite{Paun07},  \cite{Clod07}, \cite{EP07}, \cite{dFH09}, \cite{Var08}, \cite{Paun08},  
\cite{Tsuji}, \cite{HM10}, \cite{BP10}).
The main and important difference is that we do not require any \emph{strict positivity} from $B$. The positivity of $B$ (typically one requires that $B$ contain an ample $\mathbb Q$-divisor) is of great importance in the algebraic approach
as it allows us to make use of the Kawamata-Viehweg Vanishing Theorem.
It is for this reason that \emph{so far} we are unable to give an algebraic proof of \eqref{t-ext}. 
In order to understand the connections between \eqref{t-ext} and the results quoted above, we mention here
that under the hypothesis of Theorem \ref{t-ext}, one knows that the section $u^{\otimes k}\otimes s_A$ extends to $X$, for each $k$ and each section $s_A$ of a sufficiently ample line bundle $A$. Our contribution is to show that a family 
of extensions can be constructed with a very precise estimate of their norm, as
$k\to\infty$. In order to obtain this special extensions we first prove a 
generalization of the version of the Ohsawa-Takegoshi Theorem (cf. \cite{OT87},
\cite{Oh03}, \cite{Oh04}) established in \cite{Man93}, \cite{Var08}, \cite{MV07} in which the existence of the divisor $D$,
together with the hypothesis (3) replace the strict positivity of $B$. By a limit process justified by the estimates we have just mentioned together with the classical results in \cite{Lelong69}, we obtain a metric on $K_X+S+B$ adapted to $u$, and then the extension of $u$ follows by our version of Ohsawa-Takegoshi 
(which is applied several times in the proof of \eqref{t-ext}). 
\smallskip

Theorem \ref{t-ext} will be discussed in more detail in Section \ref{s-fingen}.  
In many applications the following corollary to \eqref{t-ext} suffices.
\begin{corollary}\label{c-ext}  Let $K_X+S+B$ be a nef plt pair such that there exists an effective  $\mathbb Q$-divisor $D\sim _{\mathbb Q}K_X+S+B $  with $S\subset {\rm Supp }(D)\subset {\rm Supp }(S+B)$. 
Then $$H^0(X,\mathcal O _X(m(K_X+S+B)))\to H^0(S,  \mathcal O _S(m(K_X+S+B)))$$
is surjective for all sufficiently divisible integers $m>0$.

In particular, if $\kappa ((K_X+S+B)|_S)\geq 0$, then the stable base locus of $K_X+S+B$ does not contain $S$.
\end{corollary}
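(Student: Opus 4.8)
The plan is to deduce the Corollary from Theorem \ref{t-ext}. First observe that the last assertion is a formal consequence of the surjectivity: if $\kappa((K_X+S+B)|_S)\geq 0$ we may choose $m>0$ sufficiently divisible that $m(K_X+S+B)$ is Cartier and $H^0(S,\OO_S(m(K_X+S+B)))\neq 0$; a nonzero section $u$ there lifts, by the surjectivity, to a section $\wt u$ of $\OO_X(m(K_X+S+B))$ with $\wt u|_S=u\neq 0$, so the prime divisor $S$ is not contained in $\mathrm{Bs}\,|m(K_X+S+B)|$, hence not in the stable base locus $\mathbf B(K_X+S+B)\subseteq\mathrm{Bs}\,|m(K_X+S+B)|$. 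So the real content is the surjectivity, and I would obtain it by applying Theorem \ref{t-ext} to an arbitrary section $u\in H^0(S,\OO_S(m(K_X+S+B)))$ with $m$ divisible enough that $m(K_X+S+B)$ is Cartier. Hypothesis (1) of that theorem is our plt assumption, hypothesis (2) is given, and hypothesis (3) holds because $K_X+S+B$ is nef, so $N_\sigma(K_X+S+B)=0$ and in particular $S\not\subset\mathrm{Supp}(N_\sigma(K_X+S+B))$. Thus the only thing left to check is the numerical condition $Z_{\pi^\star(u)}+m\wt E|_{\wt S}\geq m\Xi$, and I claim it holds automatically here because the extension obstruction divisor vanishes: $\Xi=0$.

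To prove $\Xi=0$, fix an ample divisor $\wt A$ on $\wt X$. For every rational $\epsilon>0$ the $\Q$-divisor $\pi^\star(K_X+S+B)+\epsilon\wt A$ is ample, being the sum of a nef class and an ample one, so for suitably divisible $m$ the linear system $|m(\pi^\star(K_X+S+B)+\epsilon\wt A)|$ is base point free. Since
$$m\big(K_{\wt X}+\wt S+\wt B+\epsilon\wt A\big)=m\big(\pi^\star(K_X+S+B)+\epsilon\wt A\big)+m\wt E,$$
multiplying a general member of that system by the canonical section cutting out $m\wt E$ (this product does not vanish along $\wt S$, as the general member meets $\wt S$ properly and $\wt E$ is $\pi$-exceptional) produces sections of $m(K_{\wt X}+\wt S+\wt B+\epsilon\wt A)$ whose restrictions to $\wt S$ witness the bound $N_\sigma(\Vert K_{\wt X}+\wt S+\wt B+\epsilon\wt A\Vert_{\wt S})\leq\wt E|_{\wt S}$. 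Letting $\epsilon\to 0$ and using continuity of the restricted $\sigma$-decomposition at the class $K_{\wt X}+\wt S+\wt B$ (cf. \cite{Nakayama04}, \cite{HM10}; this is legitimate because $\wt S$ stays outside the relevant restricted non-nef base loci, the negative part of $K_{\wt X}+\wt S+\wt B$ being the $\pi$-exceptional divisor $\wt E$), I obtain $N_\sigma(\Vert K_{\wt X}+\wt S+\wt B\Vert_{\wt S})\leq\wt E|_{\wt S}$. Finally, $\pi$ being a log resolution, $\wt S$ and the components of $\wt B$ and of $\wt E$ are distinct components of a simple normal crossing divisor, so the effective divisors $\wt E|_{\wt S}$ and $\wt B|_{\wt S}$ on $\wt S$ have no common component; hence
$$\Xi=N_\sigma\big(\Vert K_{\wt X}+\wt S+\wt B\Vert_{\wt S}\big)\wedge\wt B|_{\wt S}\leq\wt E|_{\wt S}\wedge\wt B|_{\wt S}=0.$$

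With $\Xi=0$, the hypothesis of Theorem \ref{t-ext} on $u$ is merely $Z_{\pi^\star(u)}+m\wt E|_{\wt S}\geq 0$, which is obvious, so $u$ extends to $X$; this gives surjectivity for all sufficiently divisible $m$, and the final sentence of the Corollary follows as noted above. I expect the main obstacle to be the limiting step $\epsilon\to 0$: one has to be sure that the restricted $\sigma$-invariants stay well defined (that is, that $\wt S$ stays outside the relevant restricted non-nef base loci, which holds since these are contained in $\mathrm{Supp}(\wt E)$) and that they vary continuously down to the class $K_{\wt X}+\wt S+\wt B$. This is precisely where the hypotheses are used: the nefness of $K_X+S+B$ keeps $\pi^\star(K_X+S+B)$ nef, so it is approximated by the ample classes $\pi^\star(K_X+S+B)+\epsilon\wt A$, while the effective divisor $D$ with $S\subset\mathrm{Supp}(D)\subset\mathrm{Supp}(S+B)$ is what makes Theorem \ref{t-ext} applicable at all. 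The continuity and boundedness facts needed for the limit are available in \cite{Nakayama04} and \cite{HM10}, so this reduction uses none of the analytic input behind Theorem \ref{t-ext}.
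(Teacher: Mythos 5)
Your computation that $\Xi=0$ on the auxiliary modification $\wt X$ is correct as far as it goes (the bound $N_\sigma(\Vert K_{\wt X}+\wt S+\wt B\Vert_{\wt S})\leq\wt E|_{\wt S}$ follows from nefness exactly as you say, and $\wt E|_{\wt S}$ and $\wt B|_{\wt S}$ share no component because $\wt S$, $\wt B$, $\wt E$ are distinct components of an SNC divisor on $\wt X$), and so is the derivation of the final sentence from the surjectivity. The gap is at the very first step: you apply Theorem~\ref{t-ext} directly to $(X,S+B)$, but that theorem requires $X$ to be \emph{smooth} and $S+B$ to have \emph{simple normal crossings}, while in Corollary~\ref{c-ext} the pair $(X,S+B)$ is only assumed plt (so $X$ is merely normal). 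The auxiliary $\pi:\wt X\to X$ in the statement of Theorem~\ref{t-ext} is there only to make the components of $\wt B$ disjoint; it does not relieve the hypotheses on $X$ itself. Your argument is therefore valid only under the unstated extra hypothesis that $(X,S+B)$ is already log smooth.

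The reduction to the log smooth case is not a formality, and it is precisely where the paper's proof does genuinely different work. Passing to a log resolution $f:X'\to X$ and writing $K_{X'}+S'+B'=f^*(K_X+S+B)+E$, one loses both key hypotheses: $K_{X'}+S'+B'$ is no longer nef (it is nef plus an effective exceptional $E$), and the divisor $D'=f^*D+E$ need not have $\mathrm{Supp}(D')\subset\mathrm{Supp}(S'+B')$, because exceptional primes with nonnegative discrepancy can appear in $f^*D$ but lie in neither $B'$ nor $E$. The paper fixes both at once by replacing the boundary with $B'+\epsilon E$: then $D'=f^*D+(1+\epsilon)E$ has support inside $\mathrm{Supp}(S'+B'+\epsilon E)$, but the obstruction divisor is now nonzero, $\Xi=\epsilon E|_{S'}$, and one must also replace $u$ by its product with the canonical section of $m\epsilon E|_{S'}$ so that the numerical hypothesis $Z_{\pi^\star(\cdot)}\geq m\Xi$ of Theorem~\ref{t-ext} is met; one then pushes the resulting extension forward along $f$. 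None of this appears in your argument. If you restrict the statement to log smooth $(X,S+B)$, your route is actually a bit cleaner than the paper's (you avoid the $\epsilon E$ perturbation entirely), but as written the proposal does not prove the corollary in the stated generality.
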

This paper is organized as follows:
In Section \ref{s-pre} we recall the necessary notation, conventions and preliminaries. In Section \ref{s-fingen} we give some background on the analytic approach and in particular we explain the significance of good minimal models in the analytic context. In Section \ref{s-OT} we prove a Ohsawa-Takegoshi extension theorem which generalizes a result of L. Manivel and D. Varolin. 
In Section \ref{s-pot} we prove the Extension Theorem \ref{t-ext}. Finally, 
in Section \ref{s-good} we prove Theorem \ref{t-good1}.
\section{Preliminaries}\label{s-pre}
\subsection{Notation and conventions}
We work over the field of complex numbers $\mathbb C$.

Let $D=\sum d_iD_i$ and $D'=\sum d_i'D_i$ be  $\mathbb Q$-divisors on a normal variety $X$, then the {\bf round-down} of $D$ is given by $\lfloor D \rfloor 
:=\sum \lfloor d_i \rfloor D _i$ where $\lfloor d_i \rfloor={\rm max}\{ z\in \mathbb Z | z\leq d_i \}$. Note that by definition we have $|D|=|\lfloor D \rfloor |$.
We let $D\wedge D':=\sum {\rm min}\{d_i,d_i' \}D_i$ and $D\vee D':=\sum {\rm max}\{d_i,d_i' \}D_i$.
The $\mathbb Q$-Cartier divisor $D$ is {\bf nef} if $D\cdot C\geq 0$ for any curve $C\subset X$.
The $\mathbb Q$-divisors $D$ and $D'$ are {\bf numerically equivalent} $D\equiv D'$ if and only if $(D-D')\cdot C=0$ for any curve $C\subset X$.
The {\bf Kodaira dimension} of $D$ is $$\kappa (D):={\rm tr.deg}_{\mathbb C}\left( \oplus _{m\geq 0}H^0(X,\mathcal O _X(mD))\right) -1.$$ If $\kappa (D)\geq 0$, then $\kappa (D)$ is the smallest integer $k>0$ such that
 $\liminf h^0(\mathcal O_X(mD))/{m^{k}}>0$. We have $\kappa (D)\in \{ -1,0, 1,\ldots , \dim X \}$. If $\kappa (D)=\dim X$ then we say that $D$ is {\bf big}. 
 If $D\equiv D'$ then $D$ is big if and only if $D'$ is big.
If $D$ is numerically equivalent to a limit of big divisors, then we say that $D$ is {\bf pseudo-effective}. 

Let $A$ be a sufficiently ample divisor, and $D$ is a pseudo-effective $\mathbb Q$-divisor, then we define $$\kappa _\sigma (D):={\rm max }\{k>0| \limsup_{m\to \infty} \frac{h^0(\mathcal O _X(mD+A))}{m^k}<+\infty \}.$$
It is known that $\kappa (D)\leq \kappa _\sigma (D)$ and equality holds when $\kappa _\sigma (D)=\dim X$.

Let $V\subset |D|$ be a linear series, then we let ${\rm Bs} (V)=\{x\in X| x\in {\rm Supp }(E)\ \forall\ E\in V \}$ be the {\bf base locus of} $V$ and ${\rm Fix}(V)=\wedge _{E\in V}E$ be the {\bf fixed part} of $|V|$. In particular $|V|=|V-F|+F$ where $F={\rm Fix}(V)$.
If $V_i\subset |iD|$ is a sequence of (non-empty) linear series such that $V_i\cdot V_j\subset V_{i+j}$ for all $i,j>0$, then we let ${\mathbf B}(V_\bullet )=\cap _{i>0}{\rm Bs}(B_i)$ be the {\bf stable base locus} of $V_\bullet$ and $\mathbf{ Fix}(V_\bullet )=\cap _{i>0}{\rm Supp}({\rm Fix}(B_i))$ be the {\bf stable fixed part} of $V_\bullet$.
When $V_i=|iD|$ and $\kappa (D)\geq 0$, we will simply write $\mathbf {Fix}(D)=\mathbf{ Fix}(V_\bullet )$ and ${\mathbf B}(D)={\mathbf B}(V_\bullet )$.
If $D$ is pseudo-effective and $A$ is an ample divisor on $X$, then we let 
${\mathbf B}_-(D)=\bigcup _{\epsilon \in \mathbb Q _{>0}}{\mathbf B}(B+\epsilon A )$
be the {\bf diminished stable base locus}.
If $C$ is a prime divisor, and $D$ is a big $\mathbb Q$-divisor, then
 we let $$\sigma _C(D)={\rm inf}\{ {\rm mult} _C (D')|D'\sim _{\mathbb Q}D,\ D'\geq 0 \},$$
and if $D$ is pseudo-effective then we let $$\sigma _C(D)=\lim _{\epsilon \to 0}\sigma _C(D+\epsilon A).$$
Note that $\sigma _C(D)$ is independent of the choice of $A$ and is determined by the numerical equivalence class of $D$. Moreover the set of prime divisors
for which $\sigma _C(D)\ne 0$ is finite (for this and other details about $\sigma _C(D)$, we refer the reader to \cite{Nakayama04}).
One also defines the $\mathbb R$-divisor $${N}_\sigma (D)=\sum _{C}\sigma _C(D)C$$ so that the support of ${N}_\sigma (D)$ equals $\bigcup _{\epsilon \in \mathbb Q _{>0}}\mathbf {Fix}(B+\epsilon A )$.

If $S$ is a normal prime divisor on a normal variety $X$, $P$ is a prime divisor on $S$ and $D$ is a divisor such that $S$ is not contained in ${\bf B}_+(D)$ then we define 
$$\sigma _P(||D||_S)={\rm inf }\{ {\rm mult }_P(D'|_S)|D'\sim _{\mathbb Q}D,\ D'\geq 0,\ S\not \subset {\rm Supp}(D') \}.$$
If instead $D$ is a pseudo-effective divisor such that $S\not \subset {\mathbf B}_-(D)$ then we let $$\sigma _P(||D||_S)=\lim _{\epsilon \to 0}\sigma _P(||D+\epsilon A||_S).$$
Note that $\sigma _P(||D||_S)$ is determined by the numerical equivalence class of $D$ and independent of the choice of the ample divisor $A$.
One can see that the set of prime divisors such that $\sigma _P(||D||_S)>0$ is countable. For this and other details regarding $\sigma _P(||D||_S)$ we refer the reader to Section 9 of \cite{HK10}.
We now define $N_\sigma (||D||_S)=\sum _P\sigma _P(||D||_S)P$. Note that $N_\sigma (||D||_S)$ is a formal sum of countably many prime divisors on $S$ with positive real coefficients.
\subsection{Singularities of the mmp}
If $X$ is a normal quasi-projective variety and $\Delta$ is an effective  $\mathbb Q$-divisor such $K_X+\Delta$ is $\mathbb Q$-Cartier, then we say that 
$(X,\Delta )$ is a {\bf pair}.
We say that a pair $(X,\Delta )$ is {\bf log smooth} if $X$ is smooth and the support of $\Delta$ has simple normal crossings.
A {\bf log resolution} of a pair $(X,\Delta )$ is a projective birational morphism $f:Y\to X$ such that $Y$ is smooth, the exceptional set ${\rm Exc}(f)$ is a divisor with simple normal crossings support and $f^{-1}_*\Delta +{\rm Exc}(f)$
has simple normal crossings support.
We will write $K_Y+\Gamma =f^*(K_X+\Delta )+E$ where $\Gamma $ and $E$ are effective with no common components.
We say that $(X,\Delta)$ is {\bf Kawamata log terminal} or {\bf klt} (resp. {\bf log canonical} or {\bf lc}) if there is a log resolution (equivalently for any log resolution)
of $(X,\Delta )$ such that the coefficients of $\Gamma $ are $<1$ (resp. $\leq 1$). We say that  $(X,\Delta)$ is {\bf divisorially log terminal} or {\bf dlt} if the coefficients of $\Delta$ are $\leq 1$ and there is a log resolution such that the coefficients of $\Gamma - f^{-1}_*\Delta$ are $<1$. In this case if we write $\Delta =S+B$ where $S=\sum S_i=\lfloor \Delta \rfloor$ then each component of a stratum $S_I=S_{i_1}\cap \ldots \cap S_{i_k}$ of $S$ is normal and $(S_I,\Delta _{S_I})$ is dlt where
$K_{S_I}+\Delta _{S_I}=(K_X+\Delta )|_{S_I}$.
If $(X,\Delta)$ is dlt and $S$ is a disjoint union of prime divisors, then we say that $(X,\Delta)$ is  {\bf purely log terminal} or {\bf plt}. This is equivalent to requiring that
$(S_i,\Delta _{S_i})$ is klt for all $i$. Often we will assume that $S$ is prime.

\subsection{The minimal model program with scaling}\label{s-mmps}
A proper birational map $\phi :X\dasharrow X'$ is a {\bf birational contraction} if $\phi ^{-1}$ contracts no divisors. 
Let $(X,\Delta )$ be a projective  $\mathbb Q$-factorial dlt pair and $\phi :X\dasharrow X'$ a birational contraction to a normal $\mathbb Q$-factorial variety $X'$, then $\phi$ is $K_X+\Delta$-{\bf negative} (resp. {\bf non-positive}) if
$a(E,X,\Delta)<a(E,X',\phi _* \Delta )$ (resp. $a(E,X,\Delta)\leq a(E,X',\phi _* \Delta )$)
for all $\phi$-exceptional divisors.
 If moreover
$K_{X'}+\phi _* \Delta$ is nef then $\phi$ is a   {\bf minimal model} for $(X,\Delta )$ (or equivalently a $K_X+\Delta$-minimal model).
Note that in this case (by the Negativity Lemma),
we have that $a(E,X,\Delta)<a(E,X',\phi _* \Delta )$ for all divisors $E$ over $X$ (resp. $a(E,X,\Delta)\leq a(E,X',\phi _* \Delta )$)
and $(X' , \phi _* \Delta)$ is dlt.
If moreover $K_X+\phi _* \Delta$ is semiample, then we say that $\phi$ is a 
 {\bf good minimal model} for $(X,\Delta )$.
Note that if $\phi$ is a 
good minimal model for $(X,\Delta )$, then \begin{equation}\label{e-ex}{\rm Supp}(\mathbf {Fix}(K_X+\Delta))={\rm Supp}(N_\sigma (K_X+\Delta))={\rm Exc}(\phi )\end{equation} is the set of $\phi$-exceptional divisors.
Another important remark is that if $\phi$ is a minimal model, then $H^0(X,\mathcal O _X(m(K_X+ \Delta )))\cong H^0(Y,\mathcal O _Y(m(K_Y+\phi _* \Delta )))$. More generally we have the following:
\begin{rem} \label{l-eq}If $\phi :X\dasharrow Y$ is a birational contraction such that $a(E,X,\Delta)\leq a(E,X',\phi _* \Delta )$ for every divisor $E$ over $X$, then
$$H^0(X,\mathcal O _X(m(K_X+ \Delta )))\cong H^0(Y,\mathcal O _Y(m(K_Y+\phi _* \Delta )))$$
for all $m>0$.
\end{rem}

Let $f:X\to Z$ be a proper morphism surjective with connected fibers from a $\mathbb Q$-factorial dlt pair $(X,\Delta )$ such that $\rho (X/Z)=1$ and $-(K_X+\Delta)$ is $f$-ample.
\begin{enumerate}
\item If $\dim Z<\dim X$, we say that $f$ is a {\bf Fano-Mori contraction}.
\item If $\dim Z=\dim X$ and $\dim {\rm Exc}(f)=\dim X -1$, we say that $f$ is a {\bf divisorial contraction}.
\item If $\dim Z=\dim X$ and $\dim {\rm Exc}(f)<\dim X -1$, we say that $f$ is a {\bf flipping   contraction}.\end{enumerate}
If $f$ is a divisorial contraction, then $(Z,f_*\Delta )$ is a $\mathbb Q$-factorial dlt pair.
If $f$ is a flipping contraction, then by \cite{BCHM10}, the flip $f^+:X^+\to Z$ exists, it is unique and given by $$X^+={\rm Proj}_Z\oplus _{m\geq 0}f_*\mathcal O _X(m(K_X+\Delta )).$$
We have that the induced rational map $\phi :X\dasharrow X^+$ is an isomorphism in codimension $1$ and $(X^+,\phi _* \Delta )$ is a $\mathbb Q$-factorial dlt pair.

Let $(X,\Delta )$ be a projective $\mathbb Q$-factorial dlt pair (resp. a klt pair), and $A$ an ample (resp. big) $\mathbb Q$-divisor such that $K_X+\Delta +A$ is nef.
By \cite{BCHM10}, we may run the {\bf minimal model program with scaling} of $A$, so that we get a sequence of birational contractions
$\phi _i:X_i\dasharrow X_{i+1}$  where $X_0=X$ and of rational numbers $t_i\geq t_{i+1}$ such that
\begin{enumerate}
\item if $\Delta _{i+1}:={\phi _i}_*\Delta _i$ and $H_{i+1}={\phi _i}_*H_i$, then
$(X_i, \Delta _i)$ is a $\mathbb Q$-factorial dlt pair (resp. a klt pair) for all $i\geq 0$,
\item $K_{X_i}+\Delta _i +tH_i$ is nef for any $t_i\geq t\geq t_{i+1}$,
\item if the sequence is finite, i.e. $i=0,1,\ldots , N$, then $K_{X_N}+\Delta _N +t_NH_N$ is nef or there exists a Fano-Mori contraction $X_N\to Z$,
\item if the sequence is infinite, then $\lim t_i=0$. 
\end{enumerate}
If the sequence is finite, we say that the minimal model program with scaling terminates. Conjecturally this is always the case.
\begin{rem}\label{r-s} Note that it is possible that $t_i=t_{i+1}$. Moreover it is known that there exist infinite sequences of flops (cf. \cite{Kawamata97}), i.e. $K_X+\Delta$ trivial maps.
\end{rem}
\begin{rem}\label{r-n} Note that if $K_X+\Delta$ is pseudo-effective then the support of $N_{\sigma}(K_X+\Delta )$ contains finitely many prime divisors and it coincides with the support of ${\bf Fix}(K_X+\Delta +\epsilon A)$ for any $0<\epsilon \ll 1$ (cf. \cite{Nakayama04}). It follows that if the sequence of flips with scaling is infinite, then $N_{\sigma}(K_{X_i}+\Delta _i)=0$ for all $i\gg 0$.
\end{rem}
\begin{theorem} \label{t-term}
If either \begin{enumerate}\item no component of $\lfloor S \rfloor $ is contained in ${\bf B}_-(K_X+\Delta )$ (eg. if $K_X+\Delta $ is big and klt), or 
\item $K_X+\Delta $ is not pseudo-effective, or 
\item $(X,\Delta )$ has a good minimal model,\end{enumerate} then the minimal model program with scaling terminates.\end{theorem}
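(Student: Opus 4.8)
I would treat the three sufficient conditions separately. Condition (2) is immediate: if $K_X+\Delta$ is not pseudo-effective, then by \cite{BCHM10} the minimal model program with scaling of $A$ terminates with a Fano-Mori contraction $X_N\to Z$, so there is nothing to prove.

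For condition (1), if $K_X+\Delta$ is not pseudo-effective we are back in case (2); so assume $K_X+\Delta$ is pseudo-effective and that no component of $S:=\lfloor\Delta\rfloor$ lies in $\mathbf{B}_-(K_X+\Delta)$. The plan is to reduce to the termination of the minimal model program with scaling on $\mathbb{Q}$-factorial klt pairs with big boundary established in \cite{BCHM10}. Since $A$ is ample, $K_X+\Delta+\epsilon A$ is big for every rational $\epsilon>0$; since $\mathbf{B}_-(K_X+\Delta)=\bigcup_{\epsilon>0}\mathbf{B}(K_X+\Delta+\epsilon A)$ contains no component of $S$ and the big cone is open, for suitably small rational $\epsilon,\delta>0$ and general $A'\sim_{\mathbb Q}A$ the pair $(X,\Delta-\delta S+\epsilon A')$ is klt, its boundary is big, and $K_X+\Delta-\delta S+\epsilon A'$ is still big. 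I would then delete an initial segment of the program --- only finitely many of its steps can be divisorial contractions, since the Picard number drops at each one --- and use Remark \ref{r-n} to see that the remaining tail consists of flips along which $N_\sigma$ vanishes and the scaling parameters satisfy $t_i\to 0$; this tail should run as an (infinite) sequence of flips with scaling for such a perturbed klt pair, contradicting \cite{BCHM10}. The main obstacle is precisely this matching: one must fix the perturbation $\delta S$ once and for all and keep it compatible with the scaling coefficients $t_i$ as $t_i\to0$, which is exactly where the vanishing of $N_\sigma$ is used.

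For condition (3), suppose $(X,\Delta)$ has a good minimal model $X'$, so that $K_{X'}+\Delta'$ is semiample. The morphism $X'\to Z$ defined by a sufficiently divisible multiple of $K_{X'}+\Delta'$ is then a model of the Iitaka fibration of $(X,\Delta)$, and on a general fibre $F'$ one has $K_{F'}+\Delta_{F'}=(K_{X'}+\Delta')|_{F'}\sim_{\mathbb Q}0$; thus the general fibre of the Iitaka fibration of $(X,\Delta)$ has a (trivial) good minimal model. Termination of the minimal model program with scaling of $A$ for $(X,\Delta)$ then follows from \cite{Lai10} (see also \cite{Birkar09}), after reducing to the klt case in the standard way if $(X,\Delta)$ is merely dlt. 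Combining the three cases proves the theorem.
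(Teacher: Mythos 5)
The paper's own proof of this theorem is a bare pointer to \cite{BCHM10} and \cite{Lai10}, so you are supplying detail that the paper does not. Case (2) is fine. Case (3) reaches the right conclusion, but the detour through the Iitaka fibration is misplaced: what you want from \cite{Lai10} (see also \cite{Birkar09}) is the direct statement that a dlt pair which already has a good minimal model has a terminating MMP with scaling, which is proved there via finiteness of log terminal models. The observation that the generic fibre of the Iitaka fibration has a trivial good minimal model is the hypothesis of a \emph{different} theorem of Lai, one whose conclusion is the existence of a good minimal model for $(X,\Delta)$; invoking it after you have already assumed $(X,\Delta)$ has one is circular window-dressing, even if harmless.

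Case (1) is where the obstacle you yourself flag is a genuine gap, not a cosmetic one. Discarding an initial segment to reach a tail of flips on which $N_\sigma$ vanishes and $t_i\to 0$ is correct, but the claim that this tail ``should run as an (infinite) sequence of flips with scaling for such a perturbed klt pair'' is exactly what requires proof and does not follow from choosing constants $\delta$, $\epsilon$ once. The steps of the program are selected by $(K_X+\Delta)$-negativity together with $(K_X+\Delta+t_iA)$-triviality on the contracted rays; replacing $\Delta$ by $\Delta'=\Delta-\delta S+\epsilon A'$ changes both sign conditions, and in particular $(K_X+\Delta')\cdot R<0$ is not automatic for a flipping ray $R$ of the original program, since $S\cdot R$ can have either sign. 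The missing ingredient, and the one actually used in \cite{BCHM10} and \cite{Lai10}, is the finiteness of log terminal models for a rational polytope of big klt boundaries containing both the $\Delta+t_iA$ and the perturbations $\Delta-s\lfloor\Delta\rfloor+tA'$. The hypothesis that no component of $\lfloor\Delta\rfloor$ lies in $\mathbf{B}_-(K_X+\Delta)$ enters precisely to ensure that subtracting a small multiple of $\lfloor\Delta\rfloor$ does not change $N_\sigma$ nor the resulting log terminal models, so the tail of the program is trapped among the finitely many models attached to that polytope and must stabilise. Without invoking that finiteness, the perturbation argument as sketched does not close.
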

\begin{proof} See \cite{BCHM10} and \cite{Lai10}.\end{proof}
\begin{rem}\label{r-rel} It is important to observe that in \cite{BCHM10} the above results are discussed in the relative setting.
In particular it is known that if $(X,\Delta )$ is a klt pair and $\pi:X\to Z$ is a birational projective morphism, then $(X,\Delta )$ has a good minimal model over $Z$.
More precisely there exists a finite sequence of flips and divisorial contractions over $Z$ giving rise to a birational contraction $\phi:X\dasharrow X'$ over $Z$ such that $K_{X'}+\phi _* \Delta$ is semiample over $Z$ (i.e. there is a projective morphism $q:X'\to W$ over $Z$ such that $K_{X'}+\phi _* \Delta\sim _{\mathbb Q}q^*A$ where $A$ is a $\mathbb Q$-divisor on $W$ which is ample over $Z$).
\end{rem}
\begin{rem}\label{r-good} It is known that the existence of good minimal models for pseudo-effective klt pairs is equivalent to the following conjecture (cf. \cite{GL10}): If $(X,\Delta )$ is a pseudo-effective klt pair, then  $\kappa _\sigma (K_X+\Delta )=\kappa (K_X +\Delta )$. 

Suppose in fact that $(X,\Delta )$ has a good minimal model say $(X',\Delta ')$ and let $f:X'\to Z={\rm Proj}R(K_{X'}+\Delta ')$ so that $K_{X'}+\Delta '=f^*A$ for some ample $\mathbb Q$-divisor $A$ on $Z$.
Following Chapter V of \cite{Nakayama04}, we have 
$$\kappa _\sigma (K_X+\Delta )=\kappa _\sigma (K_{X'}+\Delta ')=\dim Z=\kappa (K_{X'}+\Delta ')=\kappa (K_{X}+\Delta ).$$
Conversely, assume that $\kappa _\sigma (K_X+\Delta )=\kappa (K_X +\Delta )\geq 0$.
If $\kappa _\sigma (K_X+\Delta )=\dim X$, then the result follows from \cite{BCHM10}.
If $\kappa _\sigma (K_X+\Delta )=0$, then by \cite[V.1.11]{Nakayama04}, we have that $K_X+\Delta$ is numerically equivalent to $N_\sigma(K_X+\Delta)$.
By \cite{BCHM10} (cf. \eqref{r-n}), after finitely many steps of the minimal model program with scaling, we may assume that  $N_\sigma(K_{X'}+\Delta ')=0$ and hence that $K_{X'}+\Delta' \equiv 0$. Since $\kappa (K_X+\Delta) =0$, we conclude that $K_{X'}+\Delta ' \sim _{\mathbb Q}0$ and hence $(X,\Delta)$ has a good minimal model.
Assume now that $0<\kappa _\sigma (K_X+\Delta )<\dim X$.
Let $f:X\to Z={\rm Proj}R(K_{X}+\Delta )$ be a birational model of the Iitaka fibration with very general fiber $F$. By Chapter V of \cite{Nakayama04}, we have that $\kappa _\sigma (K_X+\Delta )=\kappa _\sigma (K_F+\Delta |_F)+\dim Z$, but since $\dim Z=\kappa (K_X+\Delta )$, we have that $\kappa _\sigma (K_F+\Delta |_F)=0$. Thus $(F,\Delta |_F)$ has a good minimal model.
By \cite{Lai10}, $(X,\Delta )$ has a good minimal model.
\end{rem}
Recall the following result due to Shokurov known as Special Termination:
\begin{theorem}\label{t-st} Assume that the minimal model program with scaling for klt pairs of dimension $\leq n-1$ terminates. 
Let $(X,\Delta )$ be a $\mathbb Q$-factorial $n$-dimensional dlt pair and $A$ an ample divisor such that $K_X+\Delta +A$ is nef.
If $ \phi _i:X_i\dasharrow X_{i+1}$ is a minimal model program with 
scaling, then $\phi _i$ is an isomorphism on a neighborhood of $\lfloor \Delta _i \rfloor$ for all $i\gg 0$.

If moreover $K_X+\Delta \equiv D\geq 0$ and the support of $D$ is 
contained in the support of $\lfloor \Delta \rfloor$ then the minimal model program with 
scaling terminates.\end{theorem}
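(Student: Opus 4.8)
The plan is to prove the two assertions in turn, the second using the first. For the first --- Shokurov's special termination --- assume the program is infinite. Since $\rho(X_i)$ is a positive integer, unchanged by a flip and dropping by one at each divisorial contraction, only finitely many $\phi_i$ are divisorial contractions; so it suffices to treat the $\phi_i$ that are $(K_{X_i}+\Delta_i)$-flips, say with flipping locus $Z_i\subset X_i$ and flipped locus $Z_i^+\subset X_{i+1}$, and to show $Z_i\cap\lfloor\Delta_i\rfloor=\emptyset$ and $Z_i^+\cap\lfloor\Delta_{i+1}\rfloor=\emptyset$ for $i\gg 0$. The first step is the standard discrepancy input: for a fixed divisor $E$ over $X$ the log discrepancy $a(E,X_i,\Delta_i)$ is nondecreasing in $i$ and strictly increases exactly when the centre of $E$ on $X_i$ lies in $Z_i$; since a dlt pair has only finitely many log canonical centres, namely the strata $S_I$ of $\lfloor\Delta\rfloor$, and the set of these centres does not grow under flips, it follows that for $i\gg 0$ neither $Z_i$ nor $Z_i^+$ contains a log canonical centre of the corresponding pair.

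The heart of the argument is an induction on the dimension $d$ of the strata of $\lfloor\Delta\rfloor$: for each $d$, after finitely many flips the non-isomorphism locus of $\phi_i$ is disjoint from the union $\mathcal{S}_d$ of all strata of dimension $\le d$; the case $d=n-1$ is the desired statement. Granting this for $d-1$ and fixing a $d$-dimensional stratum $W$, with strict transforms $W_i\subset X_i$, dlt adjunction gives that $W_i$ is normal, that $(K_{X_i}+\Delta_i)|_{W_i}=K_{W_i}+\Delta_{W_i}$ with $(W_i,\Delta_{W_i})$ dlt of dimension $d$, and that $K_{W_i}+\Delta_{W_i}+tH_i|_{W_i}$ is nef with $H_i|_{W_i}$ ample; moreover the components of $\lfloor\Delta_{W_i}\rfloor$ are strata of $\lfloor\Delta_i\rfloor$ of dimension $<d$, so the inductive hypothesis makes $\phi_i$, hence its restriction $\phi_i|_{W_i}\colon W_i\dasharrow W_{i+1}$, an isomorphism near $\lfloor\Delta_{W_i}\rfloor$ for $i\gg 0$. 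Therefore the non-isomorphism loci of the $\phi_i|_{W_i}$ eventually lie in the locus where $(W_i,\Delta_{W_i})$ is klt, and the $\phi_i|_{W_i}$ form the (possibly trivial) steps of a minimal model program with scaling of $A|_{W_0}$ on $(W_0,\Delta_{W_0})$ in dimension $d\le n-1$; by the termination hypothesis --- which applies after reducing to the klt locus just produced --- this program is finite, so $\phi_i|_{W_i}$ is an isomorphism for $i\gg 0$, whence $\phi_i$ is an isomorphism near $W_i$. Ranging over the finitely many $d$-dimensional strata completes the induction step, and $d=n-1$ gives the first assertion.

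For the second assertion, let $D_i$ be the strict transform of $D$ on $X_i$; one checks that $K_{X_i}+\Delta_i$ stays numerically equivalent to $D_i$, which is effective with $\mathrm{Supp}(D_i)\subseteq\mathrm{Supp}(\lfloor\Delta_i\rfloor)$ --- this is clear at a flip (an isomorphism in codimension one), and at a divisorial contraction only a component of $\mathrm{Supp}(D_i)\subseteq\lfloor\Delta_i\rfloor$ gets contracted and numerical equivalence passes to the pushforward. Fix $i\gg 0$ as in the first assertion. If $\phi_i$ were a flip, any curve $C\subset Z_i$ would satisfy $D_i\cdot C=(K_{X_i}+\Delta_i)\cdot C<0$, forcing $C\subseteq\mathrm{Supp}(D_i)\subseteq\lfloor\Delta_i\rfloor$ and contradicting $Z_i\cap\lfloor\Delta_i\rfloor=\emptyset$; hence every $\phi_i$ with $i\gg 0$ is a divisorial contraction, and as there are only finitely many of those the program terminates. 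I expect the main obstacle to be the induction step of the first assertion: verifying that an honest flip upstairs restricts to a genuine --- and, when it meets $W$, nontrivial --- step of a minimal model program with scaling on the stratum $W$, so that the termination hypothesis in dimension $\le n-1$ truly applies. This is exactly why the preliminary discrepancy argument (flipping and flipped loci eventually avoiding every log canonical centre) is indispensable, and it is also where one must be careful with the scaling divisor, with the $\mathbb{Q}$-factoriality of $W$, with the reduction of the dlt program on $W$ to a klt one, and with the fact that a single flip upstairs may correspond to several program steps on $W$.
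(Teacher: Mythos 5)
The paper does not prove this theorem; it simply cites Fujino's article \cite{Fujino07} on special termination. Your proposal reconstructs the standard Shokurov--Fujino argument, and most of the structure is right: the reduction to flips via the drop in Picard number, the monotonicity of log discrepancies and the finiteness of log canonical centres forcing the flipping and flipped loci to eventually avoid every stratum, the descending induction on the dimension of strata using dlt adjunction, and the deduction of the second assertion from the first (the point about $D_i\cdot C<0$ forcing $C\subset\mathrm{Supp}(D_i)\subset\lfloor\Delta_i\rfloor$ is correct). You have, however, correctly identified the step where the argument is not yet a proof, and that step is a genuine gap rather than a routine verification.

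The assertion that the restrictions $\phi_i|_{W_i}\colon W_i\dasharrow W_{i+1}$ form the steps of an MMP with scaling of $A|_{W_0}$ on $(W_0,\Delta_{W_0})$ is false in general. Even after the discrepancy argument guarantees that the non-isomorphism locus of $\phi_i|_{W_i}$ avoids $\lfloor\Delta_{W_i}\rfloor$, one only knows that $\phi_i|_{W_i}$ is a $(K_{W_i}+\Delta_{W_i})$-non-positive birational contraction which is an isomorphism in codimension one; it need not be an extremal (relative Picard number one) contraction, a single flip upstairs may restrict to a composition of several elementary maps on $W$ or to nothing at all, and $W_i$ need not be $\mathbb Q$-factorial in the required way to even formulate the MMP downstairs. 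So one cannot simply invoke the termination hypothesis. The actual Shokurov--Fujino mechanism replaces this step by a difficulty-type invariant: a nonnegative, finite quantity attached to $(W_i,\Delta_{W_i})$ (finiteness being where the lower-dimensional termination/abundance input enters) that is non-increasing along the restricted sequence and drops strictly at every nontrivial step, forcing stabilization. An alternative, closer to the route you sketched, compares the restricted sequence to an honest MMP with scaling run on $W_{i_0}$ and transfers termination by a discrepancy comparison; but that comparison is exactly the content that is missing. Similarly, ``reducing to the klt locus'' is not automatic: $(W_i,\Delta_{W_i})$ is dlt, not klt, and applying a hypothesis stated for klt pairs requires either perturbing the boundary away from $\lfloor\Delta_{W_i}\rfloor$ (possible precisely because the interesting action is away from it) or working with a dlt formulation of termination. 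Until this step is filled in the induction does not close.
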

\begin{proof} See \cite{Fujino07}.
\end{proof}
We will also need the following standard results about the minimal model program.
\begin{theorem}\label{t-er}[Length of extremal rays]
Let $(X,\Delta )$ be a lc and $(X,\Delta _0)$ be a klt pair and $f:X\to Z$ be a projective morphism surjective with connected fibers such that $\rho (X/Z)=1$ and $-(K_X+\Delta )$ is $f$-ample.

Then there exists a curve $\Sigma$ contracted by $F$ such that $$0<-(K_X+\Delta )\cdot \Sigma \leq 2\dim X.$$
\end{theorem}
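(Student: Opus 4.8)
The plan is to reduce the statement to the cone theorem and Kawamata's bound on the lengths of extremal rays in the \emph{klt} case, by perturbing the boundary $\Delta$ towards the given klt boundary $\Delta_0$, and then to recover the sharp constant $2n$, where $n:=\dim X$, by a short limiting argument exploiting the rationality of the relative extremal ray. For the set-up, note that since $\rho(X/Z)=1$ the space $N_1(X/Z)$ is one-dimensional, and since $-(K_X+\Delta)$ is $f$-ample it is strictly positive on $\overline{NE}(X/Z)\setminus\{0\}$; hence $\overline{NE}(X/Z)$ is a single ray $R$, which is $(K_X+\Delta)$-negative. In particular every curve $C$ contracted by $f$ has $[C]\in R$ and automatically satisfies $-(K_X+\Delta)\cdot C>0$, so it suffices to exhibit one such curve with $-(K_X+\Delta)\cdot C\le 2n$. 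Write $H:=-(K_X+\Delta)$, an $f$-ample $\mathbb Q$-divisor.

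For the perturbation, fix a rational $t\in(0,1)$ and put $\Delta_t:=(1-t)\Delta_0+t\Delta$. This is effective, $K_X+\Delta_t$ is $\mathbb Q$-Cartier, and since for every divisor $E$ over $X$ one has $a(E,X,\Delta_t)=(1-t)\,a(E,X,\Delta_0)+t\,a(E,X,\Delta)$ with $a(E,X,\Delta_0)>-1$ and $a(E,X,\Delta)\ge-1$, the pair $(X,\Delta_t)$ is klt for all $t\in(0,1)$. Using $\rho(X/Z)=1$ we may write $K_X+\Delta_0\equiv_Z\lambda H$ for some $\lambda\in\mathbb Q$, so that $-(K_X+\Delta_t)=(1-t)\bigl(-(K_X+\Delta_0)\bigr)+tH\equiv_Z\mu_t H$ with $\mu_t:=t-(1-t)\lambda$, and $\mu_t\to1$ as $t\to1$. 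Thus for all $t$ close enough to $1$ we have $\mu_t>0$, hence $-(K_X+\Delta_t)$ is $f$-ample and $f$ is a $(K_X+\Delta_t)$-negative extremal contraction, with the same contracted ray $R$. The relative cone theorem for klt pairs together with Kawamata's length bound (see \cite{BCHM10}) then shows that $R$ is rational — let $r$ be its primitive integral generator in $N_1(X/Z)$ and $\delta:=-(K_X+\Delta)\cdot r\in\mathbb Q_{>0}$ — and that $R$ contains a rational curve $\Sigma_t$ with $0<-(K_X+\Delta_t)\cdot\Sigma_t\le 2n$.

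It then remains to let $t\to1$. Since $-(K_X+\Delta_t)\equiv_Z\mu_t H$ and $\Sigma_t$ is contracted by $f$, we get $-(K_X+\Delta)\cdot\Sigma_t=\mu_t^{-1}\bigl(-(K_X+\Delta_t)\cdot\Sigma_t\bigr)\le 2n/\mu_t$. As $[\Sigma_t]\in R$ is an integral class, $[\Sigma_t]=\beta_t\,r$ with $\beta_t\in\mathbb Z_{>0}$, hence $\beta_t\delta\le 2n/\mu_t$; since $\mu_t\to1$, the positive integers $\beta_t$ are bounded for $t$ near $1$, so there is a sequence $t_k\to1$ along which $\beta_{t_k}$ takes a constant value $\beta$. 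Letting $k\to\infty$ in $\beta\delta\le 2n/\mu_{t_k}$ gives $\beta\delta\le 2n$, and then $\Sigma:=\Sigma_{t_1}$ satisfies $0<-(K_X+\Delta)\cdot\Sigma=\beta\delta\le 2n$, as required.

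The step I expect to be the main obstacle is the reduction itself: one must check that the perturbed pair $(X,\Delta_t)$ is genuinely klt — which is exactly where the hypothesis that a klt structure $(X,\Delta_0)$ exists is used — and, more delicately, that $-(K_X+\Delta_t)$ stays $f$-ample after perturbation; the latter is painless here only because $\rho(X/Z)=1$ reduces every numerical comparison over $Z$ to one dimension. The concluding limit argument is by contrast soft, relying only on the integrality of $1$-cycle classes on the rational extremal ray. Alternatively, one could simply invoke the cone theorem for log canonical pairs that admit a klt structure, of which the present statement is the relative special case; the perturbation argument above has the advantage of taking as input only the classical klt cone theorem.
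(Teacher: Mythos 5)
Your argument is correct, and it is essentially the content of the reference that the paper cites in lieu of a proof (the paper's entire proof is ``See for example [BCHM10, 3.8.1]''). The perturbation $\Delta_t=(1-t)\Delta_0+t\Delta$ to reach the klt case, together with the observation that $\rho(X/Z)=1$ forces $-(K_X+\Delta_t)$ to stay $f$-ample for $t$ close to $1$, is precisely the standard route. Your computation of discrepancies, $a(E,X,\Delta_t)=(1-t)a(E,X,\Delta_0)+t\,a(E,X,\Delta)>-1$, correctly shows $(X,\Delta_t)$ is klt, and applying Kawamata's length bound to $(X,\Delta_t)$ is the right move.

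The one place where you genuinely add value over a bare citation is the concluding limit step, and you handle it correctly: since $\mu_t$ need not be $\ge 1$ (that depends on the sign of $1+\lambda$, which is not controlled by the hypotheses), a single $t$ does not suffice, and you need the discreteness of $\{\beta_t\delta\}\subset\delta\mathbb Z_{>0}$ together with $\mu_t\to 1$ to extract a curve with $-(K_X+\Delta)\cdot\Sigma\le 2n$ on the nose. One tiny wording nit: the ``primitive integral generator'' $r$ should be taken in $R\cap N_1(X)_{\mathbb Z}$ (the ray $R$ is rational because the klt cone theorem hands you an integral curve class on it), rather than in $N_1(X/Z)$, which carries no intrinsic lattice; but this is cosmetic and does not affect the argument.
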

\begin{proof} See for example \cite[3.8.1]{BCHM10}.
\end{proof}
\begin{theorem}\label{t-cone} Let $f:X\to Z$ be a flipping contraction and $\phi :X\dasharrow X^+$ be the corresponding flip. If $L$ is a nef and Cartier divisor such that $L\equiv _Z 0$, then so is $\phi _*L$.
\end{theorem}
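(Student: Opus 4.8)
The plan is to descend $L$ to a Cartier divisor on $Z$ along $f$ and then pull it back along $f^+$. Since $\rho(X/Z)=1$ and $-(K_X+\Delta)$ is $f$-ample, the hypothesis $L\equiv_Z 0$ says precisely that $L\cdot\Sigma=0$ for the curves $\Sigma$ spanning the $K_X+\Delta$-negative extremal ray contracted by $f$; in particular $L$ is $f$-nef, while $L-(K_X+\Delta)\equiv_Z-(K_X+\Delta)$ is $f$-ample, hence $f$-nef and $f$-big. I would therefore invoke the relative Base-Point-Free (Contraction) Theorem for the dlt pair $(X,\Delta)$ to conclude that $L$ is $f$-semiample. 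As $L$ is numerically trivial over $Z$, on each (connected) fibre of $f$ the line bundle $\mathcal O_X(mL)$ is globally generated and numerically trivial, hence trivial, so $f_*\mathcal O_X(mL)$ has rank one; combined with the relative Kawamata--Viehweg vanishings $R^{>0}f_*\mathcal O_X(\pm L)=0$ (valid because $\pm L-(K_X+\Delta)\equiv_Z-(K_X+\Delta)$ is $f$-ample) and cohomology-and-base-change, this shows that $M:=f_*\mathcal O_X(L)$ is an invertible sheaf $\mathcal O_Z(L_Z)$ on $Z$. The canonical morphism $f^*M\to\mathcal O_X(L)$ is then an isomorphism: being a nonzero map of line bundles it is injective, so its cokernel is cut out by an effective divisor supported on ${\rm Exc}(f)$, a set of codimension $\ge 2$ in $X$ (since $\dim{\rm Exc}(f)<\dim X-1$), which forces that divisor to be $0$. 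Thus $L=f^*L_Z$ with $L_Z$ Cartier on $Z$.

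For the second half I would use that $\phi$ is an isomorphism in codimension one and that both $f$ and $f^+$ are isomorphisms over $U:=Z\setminus f({\rm Exc}(f))$, compatibly with $\phi$; since $f^{-1}(U)\subset X$ and $(f^+)^{-1}(U)\subset X^+$ have complements of codimension $\ge 2$, the Weil divisors $\phi_*L$ and $(f^+)^*L_Z$ on $X^+$ agree on $(f^+)^{-1}(U)$ and are therefore equal. Consequently $\phi_*L=(f^+)^*L_Z$ is Cartier, it is nef over $Z$ (any curve contracted by $f^+$ meets it in degree $0$), and $\phi_*L\equiv_Z 0$.

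The step I expect to be the main obstacle is obtaining $L_Z$ as an honest Cartier divisor rather than merely a $\mathbb Q$-Cartier (or Weil) divisor on $Z$, since $Z$ is typically not $\mathbb Q$-factorial; this is exactly where the hypothesis $L\equiv_Z 0$ — as opposed to mere $f$-nefness — is used, together with the smallness of the flipping contraction and the vanishing $R^{>0}f_*\mathcal O_X(\pm L)=0$. An alternative route to the numerical part of the conclusion, which avoids vanishing theorems, is to work on a common resolution $p:W\to X$, $q:W\to X^+$: one has $p^*L\equiv_{W/Z}0$, and also $p^*L\equiv_{W/X^+}0$ because any curve contracted by $q$ is mapped by $p$ either to a point or into the flipping locus of $f$ (where $L$ is trivial); since $\phi$ is small, $q^*\phi_*L-p^*L$ is $q$-exceptional and numerically trivial over $X^+$, hence vanishes by the Negativity Lemma, so $q^*\phi_*L=p^*L$ is nef over $Z$ and $\phi_*L\equiv_Z 0$ follows. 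However, the Cartier assertion still requires the descent argument above.
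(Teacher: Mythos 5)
Your overall strategy---descend $L$ along $f$ to a Cartier divisor $L_Z$ on $Z$ and then pull back along $f^+$---is exactly what the paper means by its one-line citation of the Cone Theorem; part~(4) of \cite[Thm.~3.7]{KM98} is precisely the descent statement that $L$ numerically trivial on the contracted face implies $L\cong f^*L_Z$ for some line bundle $L_Z$ on $Z$. So the approach matches the paper, and the second half (identifying $\phi_*L$ with $(f^+)^*L_Z$ over the common big open set, plus your negativity-lemma alternative on a common resolution) is fine.

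However, the specific route you take to invertibility of $M=f_*\mathcal O_X(L)$ has a gap. Flipping contractions are never flat (the fibre dimension jumps from $0$ generically to something positive over the flipped locus), so the cohomology-and-base-change theorem you invoke---which needs $\mathcal O_X(L)$ to be flat over $Z$---does not apply, and $R^1f_*\mathcal O_X(\pm L)=0$ alone does not yield local freeness of $f_*\mathcal O_X(L)$. There is a second, more minor, slip in the same sentence: relative base-point-freeness gives that \emph{some} multiple $mL$ is $f$-free, not $L$ itself, so you cannot immediately conclude that the evaluation map $f^*M\to\mathcal O_X(L)$ is surjective. The standard repair, and the one implicit in \cite[3.7(4)]{KM98}, is the two-consecutive-multiples trick: for $m\gg 0$ both $mL$ and $(m+1)L$ are $f$-free; since each is $f$-numerically trivial, the associated morphism over $Z$ is finite, and by Zariski's Main Theorem (connected fibres, $Z$ normal) it is an isomorphism onto $Z$, so $mL=f^*N_m$ and $(m+1)L=f^*N_{m+1}$ for honest line bundles $N_m,N_{m+1}$ on $Z$; then $L=f^*(N_{m+1}\otimes N_m^{-1})$. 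With this substitution your proof is complete and coincides in substance with the paper's.
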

\begin{proof} Easy consequence of the Cone Theorem, see for example \cite[3.7]{KM98}.\end{proof}

\subsection{A few analytic preliminaries}\label{sing_metrics}

We collect here some definitions and results concerning (singular) metrics on line bundles, which
will be used in the sections that follow. For a more detailed presentation and discussion,
we refer the reader to \cite{Dem90}.

\begin{defn} Let $L\to X$ be a line bundle on a compact complex manifold. A singular hermitian metric $h_L$ on 
$L$ is given in any trivialization $\theta: L|_{\Omega}\to \Omega\times {\C}$ by
$$\vert \xi\vert^2_{h_L}:= |\theta(\xi)|^2e^{-\varphi_L(x)}, \quad \xi \in L_x$$
where $\varphi_L\in L^1_{\rm loc}(\Omega)$  is the local weight of the metric 
$h_L$ and  $h_L= e^{-\varphi_L}$.
\end{defn}

The difference between the notions of smooth and singular metrics is that 
in the latter case the local weights are only assumed to verify a weak regularity property.
The hypothesis $\varphi_L\in L^1_{\rm loc}(\Omega)$ is needed in order to define the \emph{curvature current} of $(L, h_L)$, as follows:
$$\Theta_{h_L}(L)|_{\Omega}:= {\sqrt {-1}\over 2\pi}\ddbar \varphi_L.$$
If the local weights $\varphi_L$ of $h_L$ are \emph{plurisubharmonic}
(``psh" for short, see \cite{Dem90} and the references therein), then we have $\Theta_{h_L}(L)\geq 0$; conversely, if we know that if
$\Theta_{h_L}(L)\geq 0$, then each $\varphi_L$ coincides almost everywhere
with a psh function. 
\smallskip

We next state one of the important properties of the class of psh functions, which will be used several times in the proof of \eqref{t-ext}. Let $\beta$ be a $\mathcal C^\infty$-form
of (1, 1)-type, such that 
$d\beta= 0$. Let $\tau_1$ and $\tau_2$ be two functions in $L^1(X)$, such that
$$\beta+ \sqrt{-1}\ddbar \tau_j\geq 0$$
on $X$, for each $j= 1, 2$. We define $\displaystyle \tau:= \max(\tau_1, \tau_2)$,
and then we have
$$\beta+ \sqrt{-1}\ddbar \tau\geq 0$$
on $X$ (we refer e.g. to \cite{Dem09} for the proof).

\subsection{Examples}\label{ss-e}One of the best known and useful examples of singular metrics appears in the context of algebraic geometry: we assume that $L^{\otimes m}$ has some global holomorphic sections 
say $\{ \sigma_j\}_{j\in J}$. Then there is a metric on $L$, whose local weights can be described by
$$\varphi_L(x):= {1\over m}\log\sum_{j\in J}|f_j(x)|^2$$
where the holomorphic functions $\{ f_j\}_{j\in J} \subset {\mathcal O}(\Omega)$ are the local expressions of the global sections $\{ \sigma_j\}_{j\in J}$. The singularities of the metric defined
above are of course the common zeroes of $\{ \sigma_j\}_{j\in J}$. One very important property of these metrics is the \emph{semi-positivity of the curvature current}
$$\Theta_{h_L}(L)\geq 0,$$
as it is well known that the local weights induced by the sections $\{ \sigma_j\}_{j\in J}$ above are psh.
If the metric $h_L$ is induced by one section $\sigma\in H^0(X, L^{\otimes m})$ with zero set $Z_\sigma$, then we have that 
$$\Theta_{h_L}(L)= {1\over m}[Z_\sigma],$$
hence the curvature is given (up to a multiple) 
by the current of integration over the zero set of $\sigma$. From this point of view, the curvature of a singular hermitian metric is a natural generalization of  
an effective ${\Q}$-divisor
in algebraic geometry. 

\medskip

\noindent A slight variation on the previous example is the following. Let $L$ be a line bundle, which is \emph{numerically equivalent} to an effective $\Q$-divisor 
$$D= \sum_j\nu^jW_j.$$

Then $D$ and $L$ have the same 
first Chern class, hence there is an integer $m>0$ such that $L^{\otimes m}= \mathcal O_X(mD)\otimes \rho^{\otimes m}$, for some topologically trivial line bundle $\rho\in \Pic^0(X)$.

In particular, there exists a metric $h_\rho$ on the line bundle $\rho$ whose curvature is equal to zero (i.e. the local weights $\varphi_\rho$ of $h_\rho$ are real parts of holomorphic functions).
Then the expression
$$\varphi_\rho+ \sum_j\nu^j\log |f_j|^2$$
(where $f_j$ is the local equation of $W_j$) is the local weight of a metric on $L$; we call it \emph{the metric induced by} $D$
(although it depends on the choice of $h_\rho$).

\medskip

\noindent The following result is not strictly needed in this article, but we mention it because we feel that it may help to understand the structure of the curvature currents associated with singular metrics.
\begin{theorem}\label{t-curr} \cite {Siu74} Let $T$ be a closed positive current of $(1,1)$-type. Then we have
$$T= \sum_{j\geq 1}\nu^j[Y_j]+ \Lambda$$
where the $\nu^j$ are positive real numbers, and $\{ Y_j\}$ is a (countable) family
of hypersurfaces of $X$ and $\Lambda$ is a closed positive current whose singularities are concentrated along a countable union of analytic subsets of codimension at least two.
\end{theorem}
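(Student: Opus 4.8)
The plan is to reproduce Siu's original argument, organized entirely around Lelong numbers and their semicontinuity. For a closed positive $(1,1)$-current $T$ on $X$ and a point $x\in X$, write $\nu(T,x)$ for the Lelong number of $T$ at $x$; locally $T=\frac{\sqrt{-1}}{2\pi}\ddbar\varphi$ for a plurisubharmonic potential $\varphi$, and $\nu(T,x)$ records the logarithmic singularity of $\varphi$ at $x$. The decisive input is \textbf{Siu's semicontinuity theorem}: for every $c>0$ the sublevel set $E_c(T):=\{x\in X:\nu(T,x)\ge c\}$ is an analytic subset of $X$. Granting this, let $\{Y_j\}_{j\ge 1}$ be the (at most countable, after exhausting $X$ by relatively compact opens if necessary) family of irreducible hypersurfaces occurring as codimension-one components of $E_c(T)$ for some $c\in\bQ_{>0}$, and set $\nu^j:=\inf_{y\in Y_j}\nu(T,y)>0$, the generic Lelong number of $T$ along $Y_j$; the infimum is attained on a Zariski-dense open subset of $Y_j$, again by semicontinuity applied inside $Y_j$. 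These produce the data in the statement.

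Next I would invoke two standard structural facts about closed positive currents. First, for any closed analytic set $A\subset X$ the restriction $\mathbf 1_A T$ is again a closed positive current (equivalently, by the Skoda--El Mir extension theorem, the trivial extension $\mathbf 1_{X\setminus A}T$ is closed), so $T-\mathbf 1_A T=\mathbf 1_{X\setminus A}T\ge 0$ as well. Second, the support theorem: a closed positive $(1,1)$-current supported on an irreducible hypersurface $Y$ is a constant multiple $\lambda[Y]$ with $\lambda\ge 0$, while a closed positive $(1,1)$-current supported on an analytic set of codimension $\ge 2$ vanishes. Applying these, $\mathbf 1_{Y_j}T=\nu^j[Y_j]$ (the multiple being exactly the generic Lelong number), and since $Y_i\cap Y_j$ has codimension $\ge 2$ for $i\ne j$ the cross terms $\mathbf 1_{Y_i}\mathbf 1_{Y_j}T$ vanish; hence for the analytic set $A_N:=Y_1\cup\dots\cup Y_N$ one obtains $\mathbf 1_{A_N}T=\sum_{j\le N}\nu^j[Y_j]$.

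Finally I would set $\Lambda:=T-\sum_{j\ge 1}\nu^j[Y_j]$. The partial remainders $T-\sum_{j\le N}\nu^j[Y_j]=\mathbf 1_{X\setminus A_N}T$ form a decreasing sequence of closed positive currents, hence converge to a closed positive current $\Lambda$, giving $T=\sum_j\nu^j[Y_j]+\Lambda$. It remains to check that $E_c(\Lambda)$ has codimension $\ge 2$ for every $c>0$, i.e. that no hypersurface $Y$ satisfies $\nu(\Lambda,Y)>0$. If such a $Y$ existed, then $\mathbf 1_Y\Lambda=\lambda[Y]$ with $\lambda>0$, whence $\mathbf 1_Y T=\mathbf 1_Y\big(\sum_j\nu^j[Y_j]\big)+\lambda[Y]$ has positive generic Lelong number along $Y$; this forces $Y$ to coincide with some $Y_{j_0}$, and then comparing with $\mathbf 1_{Y_{j_0}}T=\nu^{j_0}[Y_{j_0}]$ yields $\lambda=0$, a contradiction. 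The same bookkeeping gives uniqueness of the decomposition. The essential difficulty is concentrated entirely in Siu's semicontinuity theorem: its proof is genuinely analytic, relying on H\"ormander--Bombieri--Skoda type $L^2$ estimates (or, alternatively, on Kiselman's minimum principle or Demailly's regularization of closed positive currents), whereas the extraction of the divisorial part $\sum\nu^j[Y_j]$ and the verification of the residual codimension estimate for $\Lambda$ are, by comparison, formal consequences of the support theorem and the Skoda--El Mir theorem.
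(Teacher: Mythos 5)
The paper does not actually prove this statement; it is quoted verbatim from Siu's 1974 paper with a citation, and the authors explicitly decline to make the notion of ``singularity'' precise, so there is no in-paper argument to compare against. That said, your sketch is the standard route, and it deserves scrutiny on its own merits.

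Your argument is correct in outline (semicontinuity of Lelong number sublevel sets, Skoda--El Mir, the support theorem, and a residual estimate), but you have misallocated where the difficulty lies. The step you describe parenthetically---``$\mathbf 1_{Y_j}T=\nu^j[Y_j]$, the multiple being \emph{exactly} the generic Lelong number''---is not a formal consequence of the support theorem and Skoda--El Mir, and it is precisely Siu's second main result, not a corollary of the first. What those two tools give you for free is only $\mathbf 1_{Y_j}T=c_j[Y_j]$ with $c_j\ge 0$, and, by additivity of Lelong numbers, the inequality $\nu^j\ge c_j$. The reverse inequality $c_j\ge\nu^j$ is equivalent to the assertion that the trivial extension $\mathbf 1_{X\setminus Y_j}T$ has vanishing generic Lelong number along $Y_j$, which is exactly the claim $T\ge\nu^j[Y_j]$; nothing in the support theorem or Skoda--El Mir rules out a closed positive current that does not charge $Y_j$ yet has strictly positive Lelong numbers at every point of $Y_j$. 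Establishing $T\ge\nu^j[Y_j]$ requires a genuine local argument---typically one slices by complex lines transverse to $Y_j$ and applies Jensen's formula to each one-variable subharmonic slice to show the local potential $\varphi$ satisfies $\varphi\le\nu^j\log|g_j|+O(1)$ with locally uniform constants, so that $\varphi-\nu^j\log|g_j|$ extends to a plurisubharmonic function across $Y_j$. This is comparable in difficulty to the semicontinuity theorem itself (Demailly's treatment presents them together as the two halves of Siu's theorem).

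The same gap recurs in your closing paragraph: from ``$\nu(\Lambda,y)>0$ for all $y$ in a hypersurface $Y$'' you conclude ``$\mathbf 1_Y\Lambda=\lambda[Y]$ with $\lambda>0$,'' but again the support theorem only guarantees $\lambda\ge 0$, and $\lambda>0$ is once more the key lemma. Once you state and cite (or prove) the lemma ``a closed positive $(1,1)$-current with Lelong number $\ge c$ at every point of an irreducible hypersurface $Y$ dominates $c[Y]$,'' the rest of your bookkeeping---including the cross-term vanishing, the monotone limit defining $\Lambda$, and the uniqueness---is correct and indeed formal.
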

We will not make precise the notion of ``singularity" appearing in the statement above. 
We just mention that it is the analog of the multiplicity of a divisor.
By Theorem \ref{t-curr}
we infer that if the curvature current of a singular metric is positive, then it can be decomposed into a divisor-like part (however, notice that the sum above may be infinite), together with a \emph{diffuse} 
part $\Lambda$, which --very, very roughly--corresponds to a differential form. 
\smallskip

As we will see in Section \ref{s-OT} below, it is crucial to be able
to work with singular metrics in full generality: the hypothesis of all vanishing/extension theorems that we are aware of, are mainly concerned with the diffuse part of the curvature current, and not the singular one. Unless explicitly mentioned otherwise, all the metrics in this article
are allowed to be singular.
\bigskip

\subsection{Construction of metrics}We consider now the following set-up. Let $L$ be a 
$\Q$-line bundle, such that:

\begin{enumerate}
\item[(1)] $L$ admits a metric $h_L= e^{-\varphi_L}$ with positive curvature current $\Theta_{h_L}(L)$.

\item[(2)] The $\Q$-line bundle $L$ is numerically equivalent to the effective $\Q$-divisor
$$D:= \sum_{j\in J}\nu^jW_j$$ 
where $\nu^1> 0$ and the restriction of $h_L$ to the generic point of 
$W_1$ is well-defined (i.e. not equal to $\infty$). We denote by $h_D$ the metric on 
$L$ induced by the divisor $D$.

\item[(3)] Let $h_0$ be a non-singular metric on $L$; then we can write
$$h_L= e^{-\psi_1}h_0, \quad h_D= e^{-\psi_2}h_0$$ 
where 
$\psi_j$ are global functions on $X$. Suppose that we have
$$\psi_1\geq \psi_2.$$
Working locally on some coordinates 
open set $\Omega\subset X$, if we let 
$\varphi_L$
be the local weight of the metric $h_L$, and for each $j\in J$ we let $f_j$ be an equation of $W_j\cap \Omega$, then  the above inequality is equivalent to
\begin{equation}\tag{$\dagger$}\varphi_L\geq \varphi_\rho+ \sum_{j\in J}\nu^j\log |f_j|^2\end{equation}
(cf. the above discussion concerning the metric induced by a $\Q$-divisor
numerically equivalent to $L$). 
\end{enumerate}

\medskip

\noindent In this context, we have the following simple observation.

\begin{lemma}\label{l-weight} Let $\Omega\subset X$ be a coordinate open set.
Define functions $\varphi_{W_1}\in L^1_{\rm loc}(\Omega)$ which are the local weights of a metric on $\mathcal O_X(W_1)$, via the equality
$$\varphi_L= \nu^1\varphi_{W_1}+ \varphi_\rho+ \sum_{j\in J\setminus 1}\nu^j\log |f_j|^2.$$
Then $(\dagger )$ is equivalent to the inequality
$$|f_1|^2e^{-\varphi_{W_1}}\leq 1$$
at each point of $\Omega$.
\end{lemma}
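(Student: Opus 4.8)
The plan is to obtain the equivalence by a one-line rearrangement of the defining equality for $\varphi_{W_1}$, together with division by the positive constant $\nu^1$; the only point needing a word of care is to check first that the objects involved are well defined. So I would begin by noting that the equality in the statement, rewritten as
$$\nu^1\varphi_{W_1}= \varphi_L- \varphi_\rho- \sum_{j\in J\setminus 1}\nu^j\log |f_j|^2,$$
does determine $\varphi_{W_1}$ as an element of $L^1_{\rm loc}(\Omega)$: the right-hand side is the sum of the weight $\varphi_L\in L^1_{\rm loc}(\Omega)$, the pluriharmonic (hence locally integrable) function $\varphi_\rho$, and finitely many logarithms $\log|f_j|^2$ of moduli of holomorphic functions, and $\nu^1>0$. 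Moreover, under a change of trivialization the weight $\varphi_L$ changes by $\log|g|^2$ for the corresponding transition function $g$, and $\varphi_\rho$ and the $\log|f_j|^2$, $j\ne 1$, change correspondingly, so that $\varphi_{W_1}$ transforms precisely like the local weight of a metric $h_{W_1}$ on $\mathcal O_X(W_1)$. In particular $|f_1|^2e^{-\varphi_{W_1}}$ is the squared $h_{W_1}$-norm of the tautological section of $\mathcal O_X(W_1)$ cutting out $W_1$, hence an intrinsically defined non-negative function on $\Omega$.

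Then I would substitute and cancel. By the definition of $\varphi_{W_1}$, the inequality $(\dagger)$ reads
$$\nu^1\varphi_{W_1}+ \varphi_\rho+ \sum_{j\in J\setminus 1}\nu^j\log |f_j|^2\ \geq\ \varphi_\rho+ \sum_{j\in J}\nu^j\log |f_j|^2,$$
and removing the common summand $\varphi_\rho+ \sum_{j\in J\setminus 1}\nu^j\log |f_j|^2$ from both sides leaves $\nu^1\varphi_{W_1}\geq \nu^1\log|f_1|^2$. Since $\nu^1>0$, this is equivalent to $\varphi_{W_1}\geq \log|f_1|^2$, i.e. to $\log\bigl(|f_1|^2e^{-\varphi_{W_1}}\bigr)\leq 0$, which is exactly $|f_1|^2e^{-\varphi_{W_1}}\leq 1$. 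Both steps are reversible, so $(\dagger)$ and the asserted inequality are equivalent; the natural sense is ``almost everywhere'', but after replacing $\varphi_L$ by its psh representative — which is legitimate since $\Theta_{h_L}(L)\geq 0$ — both sides are upper semicontinuous and the inequality then holds at every point of $\Omega$, as stated.

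There is no genuine obstacle here: the content is the cancellation above and the positivity $\nu^1>0$. The only item deserving attention is the bookkeeping in the first paragraph, namely verifying that the right-hand side of the defining equality transforms as the weight of an honest singular metric on $\mathcal O_X(W_1)$, so that the concluding inequality $|f_1|^2e^{-\varphi_{W_1}}\leq 1$ is independent of the chosen trivialization of $\Omega$.
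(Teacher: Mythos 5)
Your proof is correct and follows essentially the same approach as the paper, which only gives a one-sentence sketch pointing to the fact that $\log|f_j|^2$ are the weights of the metrics induced by the tautological sections and that $L$ and $\mathcal O_X(D)$ are numerically equivalent; your substitution and cancellation (using $\nu^1>0$) is exactly the rearrangement the paper has in mind, and the bookkeeping about transformation rules is the justification implicit in the paper's appeal to ``local weights of a singular metric.''
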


\begin{proof}
It is a consequence of the fact that $\log |f_j|^2$ are the local weights of the singular metric on $\mathcal O_X(W_j)$ induced by the tautological section of this line bundle,
combined with the fact that $L$ and $\mathcal O_X(D)$ are numerically equivalent. The inequality above
is equivalent to $(\dagger)$.
\end{proof}

\bigskip

\subsection{Mean value inequality}We end this subsection by recalling a form of the mean value inequality for 
psh functions, which will be particularly useful in Section \ref{s-pot}.

Let $\alpha$ be a smooth $(1, 1)$ form on $X$, such that $d\alpha= 0$, and let $f\in L^1(X)$
be such that 
\begin{equation}\tag{+}\alpha +\sqrt {-1}\ddbar f\geq 0.\end{equation}
We fix the following quantity
$$ I(f):= \int_Xe^fdV_\omega$$
where $dV_\omega$ is the volume element induced by a metric $\omega$ on $X$. We have the following well-known result.

\begin{lemma}\label{l-C}
There exists a constant $C= C(X, \omega, \alpha)$ such that for any function 
$f\in L^1(X)$ verifying the condition $(+)$ above we have
$$f(x)\leq C+ \log I(f), \quad \forall x\in X.$$
\end{lemma}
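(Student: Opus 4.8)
The plan is to recognize that condition $(+)$ says precisely that $f$ is $\alpha$-plurisubharmonic, so that locally the sum of $f$ with a smooth local potential of $\alpha$ is an honest psh function, and then to combine the classical sub-mean value inequality for psh functions with Jensen's inequality applied to the concave function $\log$.

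First I would replace $f$ by its upper semicontinuous $\alpha$-psh representative, which agrees with $f$ almost everywhere (cf. the discussion in Subsection~\ref{sing_metrics}), so that pointwise values $f(x)$ are meaningful. Since $X$ is compact, such a function is bounded above; hence $e^f$ is bounded and $I(f)<\infty$, so the assertion is not vacuous (and if $I(f)=\infty$ there is nothing to prove). Using compactness again, fix a finite cover of $X$ by coordinate charts with balls $B_i''\Subset B_i'\Subset B_i$ on which, by the local $\ddbar$-lemma, $\alpha=\sqrt{-1}\ddbar\psi_i$ for some smooth function $\psi_i$; then $u_i:=\psi_i+f$ is psh on $B_i$, and $\sup_{\overline{B_i'}}|\psi_i|=:M_i<\infty$.

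Given $x\in X$, choose $i$ with $x\in B_i''$ and a Euclidean ball $B=B(x,r)\subset B_i'$ with $r>0$ bounded below uniformly in $i$ and $x$. The sub-mean value inequality for the psh function $u_i$, followed by Jensen's inequality for the probability measure $d\lambda/|B|$ (with $\lambda$ Lebesgue measure on the chart and $|B|=\lambda(B)$), gives
$$u_i(x)\le \frac{1}{|B|}\int_B u_i\,d\lambda\le \log\Big(\frac{1}{|B|}\int_B e^{u_i}\,d\lambda\Big).$$
Since $e^{u_i}=e^{\psi_i}e^f\le e^{M_i}e^f$ on $B_i'$, and Lebesgue measure is comparable to $dV_\omega$ on the finitely many charts, we obtain $f(x)\le C_i+\log\int_X e^f\,dV_\omega=C_i+\log I(f)$ for a constant $C_i$ depending only on $M_i$, the chart, $r$ and $\omega$. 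Taking $C:=\max_i C_i$, which then depends only on $X$, $\omega$ and $\alpha$, completes the proof.

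The argument involves no genuine obstacle; the only points to watch are (i) passing to the usc representative so that the inequality makes sense pointwise, and (ii) the uniformity of the radius $r$ and of the bounds $M_i$ — hence of $C$ — over all of $X$, which is exactly where compactness of $X$ is used and is what guarantees that $C$ is independent of $f$.
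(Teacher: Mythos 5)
Your proof is correct and follows essentially the same route as the paper's: introduce a local smooth potential for $\alpha$ so that $f$ plus the potential is psh, apply the sub-mean value inequality together with Jensen's inequality (concavity of $\log$), and invoke compactness of $X$ to make the radius and the bounds on the potentials uniform. The only cosmetic difference is that you spell out the passage to the usc representative and the finite-cover bookkeeping more explicitly, which the paper leaves implicit.
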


\begin{proof}
We consider a coordinate system $z:= \{ z^1,\ldots , z^n\}$ defined on 
$\Omega\subset X$ and centered at some point $x\in X$. 
Let $B_r:= \{ \Vert z\Vert< r\}$ be the Euclidean ball of radius $r$, and let $d\lambda$ be
the Lebesgue measure corresponding to the given coordinate system. Since $X$ is a compact manifold, 
we may assume that the radius $r$ is independent of the particular point $x\in X$.

By definition of $I(f)$ we have
$$I(f)\geq {1\over \Vol(B_r)}\int_{z\in B_r}e^{f(z)+ C(X, \omega)}d\lambda$$
where $C(X, \omega)$ takes into account the distortion between 
the volume element $dV_\omega$ and the local Lebesgue measure $d\lambda$, together with the Euclidean volume of $B_r$.

We can assume the existence of a function $g_\alpha\in \mathcal C^\infty (\Omega)$
such that $\alpha|_{\Omega}= \sqrt{-1}\ddbar g_\alpha$. By (+), the function
$f+ g_\alpha$ is psh on $\Omega$. We now modify the inequality above as follows
$$I(f)\geq {1\over \Vol(B_r)}\int_{z\in B_r}e^{f(z)+ g_\alpha(z)+ C(X, \omega, \alpha)}d\lambda.$$
By the concavity of the logarithm, combined with the mean value inequality 
applied to $f+ g_\alpha$ we infer that
$$\log I(f)\geq f(x)- C(X, \omega, \alpha)$$
where the (new) constant $C(X, \omega, \alpha)$ only depends on the geometry of
$(X, \omega)$ and on a finite number of potentials $g_\alpha$ (because of the compactness of $X$). The proof of the lemma is therefore finished. A last remark is that the constant $``C(X, \omega, \alpha)"$ is uniform with respect to $\alpha$:
given $\delta> 0$, there exists a constant $C(X, \omega, \alpha, \delta)$ such that 
we can take $C(X, \omega, \alpha^\prime):= C(X, \omega, \alpha,\delta)$
for any closed $(1,1)$-form $\alpha^\prime$ such that $\Vert \alpha- \alpha^\prime\Vert< \delta$. 

\end{proof}

\section{Finite generation of modules}\label{s-fingen}

According to Remark \ref{r-good}, in order to establish the existence of good minimal 
models for pseudo-effective, klt pairs it suffices to show that 
$$\kappa _\sigma (K_X+\Delta )=\kappa (K_X +\Delta ).$$ 
In this section we will provide a direct argument for the equality above
in the case where $\Delta$ is big. Even if this result is well known to experts and implicit in some of the literature, our point of view is slightly different (see however \cite{CL10} for a related point of view), and 
it turns out to be very useful as a guiding principle for the arguments that we will invoke
in order to prove Theorem \ref{t-ext}. 
\smallskip

Let $X$ be a smooth, projective variety, and let $\Delta$ be a big ${\mathbb Q}$-divisor, 
such that $(X, \Delta)$ is klt. Analytically, this just means that $\Delta$ can be endowed with a  
metric $h_\Delta= e^{-\varphi_\Delta}$ whose associated curvature current dominates
a metric on $X$, and such that $e^{-\varphi_\Delta}\in L^1_{\rm loc}(X)$.
To be precise, what we really mean at this point is that the line bundle associated to
$d_0\Delta$ can be endowed with a metric whose curvature current is greater than a 
K\"ahler metric, and whose $d_0-{\rm th}$ root is $h_\Delta$.

Let $A\subset X$ be an ample divisor. We consider the following vector space
$${\mathcal M}:= \bigoplus _{m\in d_0{\mathbb N}}H^0\big(X, \mathcal O _X(m(K_X+ \Delta)+ A)\big)$$
which is an ${\mathcal R}$-module, where
$\displaystyle {\mathcal R}:= \bigoplus _{m\in d_0{\mathbb N}}H^0\big(X, \mathcal O _X(m(K_X+ \Delta))\big).$

\medskip

\noindent In this section we will discuss the following result.

\begin{proposition}\label{p-fgm} ${\mathcal M}$ is a finitely generated 
${\mathcal R}$-module.

\end{proposition}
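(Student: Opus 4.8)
The plan is to deduce finite generation of $\mathcal{M}$ over $\mathcal{R}$ from the finite generation results of \cite{BCHM10}. The key point is that $\Delta$ is big, so we can trade the bigness for an ample piece: since $(X,\Delta)$ is klt and $\Delta$ is big, by Kodaira's lemma we may write $\Delta \sim_{\mathbb{Q}} A' + E$ with $A'$ ample and $E \geq 0$, and after perturbing we may in fact assume that $K_X + \Delta$ differs by a small ample from a klt pair; more precisely, for a suitable small rational $\varepsilon > 0$ the pair $(X, \Delta - \varepsilon E)$ is still klt and $\Delta - \varepsilon E \equiv (1-\varepsilon)\Delta + \varepsilon A'$ contains an ample $\mathbb{Q}$-divisor. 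This is the step where bigness of $\Delta$ is essential, and it is why the argument does not directly generalize to the pseudo-effective case.

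First I would set up the two graded rings on an auxiliary variety. Choose a general $\mathbb{Q}$-divisor $A'' \sim_{\mathbb{Q}} A$ (the ample divisor in the definition of $\mathcal{M}$) with small coefficients so that $(X, \Delta + A'')$ is still klt, and similarly absorb $A$ into the boundary. Then $\mathcal{R}$ is, up to the Veronese/truncation by $d_0$, the section ring of $K_X + \Delta$, and $\mathcal{M}$ is a finitely generated module over it provided the section ring of the klt pair $(X, \Delta)$ together with the twist by $A$ is finitely generated. By \cite{BCHM10}, the adjoint ring $\bigoplus_{m} H^0(X, \mathcal{O}_X(m(K_X+\Delta)))$ is finitely generated because $K_X+\Delta$ is the adjoint divisor of a klt pair with big boundary (this is exactly the finite generation theorem of \cite{BCHM10}).

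Next I would identify $\mathcal{M}$ with a finite sum of pieces each of which is a rank-one module pulled back from a good minimal model. Concretely: run the $(K_X+\Delta)$-MMP with scaling (legitimate by Theorem~\ref{t-term}(1), since $\Delta$ is big and klt so no component of any $\lfloor\cdot\rfloor$ obstructs, and in any case $K_X+\Delta$ is big klt hence has a good minimal model by \cite{BCHM10}). Let $\phi\colon X \dashrightarrow X'$ be the resulting good minimal model, with $K_{X'}+\Delta' = f^\star H$ for $f\colon X' \to Z$ and $H$ ample on $Z$. By Remark~\ref{l-eq}, $H^0(X, \mathcal{O}_X(m(K_X+\Delta))) \cong H^0(X', \mathcal{O}_{X'}(m(K_{X'}+\Delta')))$ for all $m$, so $\mathcal{R} \cong \bigoplus_m H^0(Z, mH)$ is a finitely generated ring (Veronese of a polynomial-type ring). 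For $\mathcal{M}$, push forward: the divisor $A$ on $X$ transforms to $\phi_\star A =: A'$ on $X'$, and $H^0(X, m(K_X+\Delta)+A)$ is identified with a subspace of $H^0(X', m(K_{X'}+\Delta')+A') = H^0(X', f^\star(mH) + A')$. By the projection formula this is $H^0(Z, mH \otimes f_\star \mathcal{O}_{X'}(A'))$ for $m$ large, and since $f_\star \mathcal{O}_{X'}(A')$ is a coherent sheaf on the projective variety $Z$, the module $\bigoplus_m H^0(Z, mH \otimes f_\star\mathcal{O}_{X'}(A'))$ is finitely generated over $\bigoplus_m H^0(Z, mH)$ by Serre's theorem on finite generation of the module of sections of a coherent sheaf twisted by an ample line bundle.

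The main obstacle will be the bookkeeping needed to pass between $X$ and $X'$: one must check that the birational contraction $\phi$ preserves the relevant spaces of sections not just for $K_X+\Delta$ but for $K_X+\Delta$ twisted by the \emph{fixed} divisor $A$, i.e. that the exceptional divisors of $\phi$ do not interfere, and that after truncating by $d_0$ the identifications are ring/module isomorphisms rather than mere equalities of dimensions. This requires a careful use of the negativity lemma and of the fact that $A$ can be chosen general; alternatively one can avoid running the MMP explicitly and instead invoke directly the finite generation of the adjoint ring in \cite{BCHM10} applied to the pair $(X, \Delta + tA'')$ for a one-parameter family of boundaries, extracting $\mathcal{M}$ as a graded piece. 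Either way, once the module is realized as $H^0$ of a coherent sheaf on $Z$ twisted by powers of an ample bundle, Serre's theorem finishes the proof.
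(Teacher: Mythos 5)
Your argument is correct in outline but takes a genuinely different route from the paper's. The paper proves Proposition \ref{p-fgm} by analytic methods: it decomposes the currents $\Theta_m$ associated to bases of $|m(K_X+\Delta)+A|$ into divisorial parts $\sum a_m^j[Y_j]$ plus diffuse parts $\Lambda_m$, proves via Claim \ref{c-lim} (whose proof uses an extension statement, Theorem \ref{t-3}, together with tie-breaking on the ample part $A_\Delta$) that the coefficients $a_m^j$ converge to the coefficients $a^j_{\rm min}$ of the metric with minimal singularities, and then invokes the global Skoda division theorem to deduce that sections of $m(K_X+\Delta)+A$ divide by sections of $m_0(K_X+\Delta)$ once $m\gg 0$. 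You instead pass to a good minimal model $X'\to Z$ and use Serre finiteness of $\bigoplus_m H^0(Z, mH\otimes f_*\mathcal{O}_{X'}(A'))$ over $\bigoplus_m H^0(Z,mH)$. Both are valid; the paper deliberately chooses the analytic route because, as it explains, the structure of that proof (singular metrics, Lelong-number decompositions, the role of the ample part $A_\Delta$ in Claim \ref{c-lim}) is exactly what it will try to salvage in the non-big case when proving Theorem \ref{t-ext}, so the analytic argument serves as a blueprint. Your algebraic argument is more self-contained for this proposition but sheds no light on what replaces $A_\Delta$ when $\Delta$ is only pseudo-effective — which is the whole point of the section and of Remark \ref{r-set}.

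Two things to tighten. First, the parenthetical ``$K_X+\Delta$ is big klt hence has a good minimal model by \cite{BCHM10}'' is not justified: you are only assuming $\Delta$ big, not $K_X+\Delta$ big. The correct invocation is that $(X,\Delta)$ klt with $\Delta$ big and $K_X+\Delta$ pseudo-effective has a good minimal model (this is in \cite{BCHM10}); termination of the MMP with scaling alone, via Theorem \ref{t-term}(1), only gives a minimal model, and semiampleness requires the full force of \cite{BCHM10}. Second, the isomorphism $H^0(X,m(K_X+\Delta)+A)\cong H^0(X',m(K_{X'}+\Delta')+A')$ needs justification beyond Remark \ref{l-eq}: on a common resolution $p\colon W\to X$, $q\colon W\to X'$, the difference $mE+p^\star A-q^\star A'$ (with $E\geq 0$ the discrepancy divisor) is $q$-exceptional but not obviously effective. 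Choosing $A$ general so that it avoids the finitely many centers of $p$- and $q$-exceptional divisors kills the $p^\star A-q^\star A'$ contribution on divisors exceptional for both maps, and the negativity lemma makes $E$ strictly positive on the $\phi$-exceptional ones, so the difference is effective once $m$ is large; this gives the identification for $m\gg 0$, which is enough since the low-degree part of $\mathcal{M}$ is finite-dimensional. With these two points filled in, your proof is complete.
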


\medskip

\noindent Since the choice of the ample divisor $A$ is arbitrary, the above proposition implies that
we have $\kappa _\sigma (K_X+\Delta )=\kappa (K_X +\Delta )$.
\smallskip

\noindent We provide a sketch of the proof \eqref{p-fgm} below. As we have already mentioned,
the techniques are well-known, so we will mainly highlight the features relevant to our arguments. 
The main ingredients are the finite generation of ${\mathcal R}$, coupled with the extension techniques originated in \cite{Siu98} and Skoda's division theorem \cite{Skoda}.

\medskip
\noindent
\begin{proof}[Sketch of the proof of Proposition \ref{p-fgm}]
We start with some reductions; in the first place, we may assume that $\kappa _\sigma (K_X+\Delta )\geq 0$ and hence that $\kappa (K_X+\Delta )\geq 0$ cf. \cite{BCHM10}.
Next, we can assume that:

\noindent $\bullet$ There exists a finite
set of normal crossing hypersurfaces $\{ Y_j\}_{j\in J}$ of $X$, such that
\begin{equation}\label{5}\Delta= \sum_{j\in J}\nu^jY_j+ A_\Delta\end{equation}
where $0\leq \nu^j< 1$ for any $j\in J$, and $A_\Delta$ is an ample ${\mathbb Q}$-divisor.
This can be easily achieved on a modification of $X$.

\smallskip
\noindent $\bullet$ Since the algebra ${\mathcal R}$ is generated by
a finite number of elements, we may assume that it is generated  by the sections of $m_0(K_X+ \Delta)$,
where $m_0$ is sufficiently large and divisible. The corresponding metric of $K_X+ \Delta$ (induced by the generators of $\mathcal R$) is
denoted by $h_{\rm min}= e^{-\varphi_{\rm min}}$ (the construction was recalled in the Subsection \ref{ss-e}; see \cite{Dem09} for a more detailed presentation). Hence, we may assume that 
\begin{equation}\label{6}\Theta_{h_{\rm min}}(K_X+ \Delta)= \sum a^j_{\rm min}[Y_j]+
\Lambda_{\rm min}\end{equation} 
(after possibly replacing $X$ by a further modification). 
In relation \eqref{6} above, we can
take the set of $\{ Y_j\}_{j\in J}$ to coincide with the one in \eqref{5} (this is why we must allow some of the coefficients $\nu^j, a^j_{\rm min}$ to be equal to zero). 
$\Lambda_{\rm min}$ denotes a non-singular, semi-positive $(1,1)$-form.

\medskip

\noindent For each integer $m$ divisible enough, let $\displaystyle \Theta_m$ be the current induced by (the normalization of) a basis of sections of the divisor
$m(K_X+ \Delta)+ A$; it belongs to the cohomology class associated to
$\displaystyle K_X+ \Delta+ {1\over m}A$. We can decompose it according to the family of hypersurfaces
$\{ Y_j\}_{j\in J}$ as follows
\begin{equation}\label{7}\Theta_m= \sum_{j\in J}a_m^j[Y_j]+ \Lambda_m\end{equation}
where $a^j_m\geq 0$, and $\Lambda_m$ is a closed positive current, which may be singular, despite the fact that $\Theta_m$ is less singular than $\Theta_{h_{\rm min}}(K_X+ \Delta)$. Note that $a_m^j\leq a^j_{\rm min}$.
\medskip

\noindent An important step in the proof of Proposition \ref{p-fgm} is the following statement.

\begin{claim}\label{c-lim} We have
\begin{equation}\label{8}\lim_{m\to \infty}a^j_m= a^j_{\rm min}\end{equation}
for each $j\in J$ (and thus $\Lambda_m$ converges weakly to $\Lambda_{\rm min}$).

\end{claim}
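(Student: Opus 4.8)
The plan is to show that the limit current $\Theta_\infty := \lim_{m\to\infty}\Theta_m$ (which exists after passing to a subsequence, by weak compactness of positive currents in a fixed cohomology class, and whose class is $K_X+\Delta$ since $\frac 1m A\to 0$) has the same Lelong numbers along each $Y_j$ as $\Theta_{h_{\min}}(K_X+\Delta)$. Since $a^j_m\le a^j_{\min}$ for every $m$, the inequality $\limsup_m a^j_m\le a^j_{\min}$ is automatic, so the content is the reverse inequality $\liminf_m a^j_m\ge a^j_{\min}$, equivalently that no Lelong mass is lost in the limit. The key point is that $h_{\min}$ is, by construction, the metric with \emph{minimal singularities} among metrics on $K_X+\Delta$ with positive curvature current: any such metric $h$ satisfies $\varphi_h\le \varphi_{\min}+O(1)$, hence its Lelong numbers are $\le$ those of $\Theta_{h_{\min}}$. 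So I must instead produce, from the sections of $m(K_X+\Delta)+A$, enough sections of (a multiple of) $K_X+\Delta$ itself to recover the asymptotic vanishing order $a^j_{\min}$.

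First I would fix $j$ and set $a:=a^j_{\min}=\sigma_{Y_j}(K_X+\Delta)=\lim_{\epsilon\to 0}\sigma_{Y_j}(K_X+\Delta+\epsilon A)$, using that $\mathrm{Supp}\,N_\sigma(K_X+\Delta)$ is governed by the $\mathbf{Fix}$ of the approximating linear systems. The numbers $a_m^j$ are, up to the normalization, exactly $\mathrm{mult}_{Y_j}\mathbf{Fix}\big(m(K_X+\Delta)+A\big)/m$, and by Nakayama's theory (Chapter III of \cite{Nakayama04}) one has $\lim_{m\to\infty}\frac 1m\,\mathrm{mult}_{Y_j}\mathbf{Fix}\big(m(K_X+\Delta)+A\big)=\sigma_{Y_j}(K_X+\Delta)$: indeed $\sigma_{Y_j}(D)$ for a big class $D$ is by definition $\inf\{\mathrm{mult}_{Y_j}D':D'\sim_{\mathbb Q}D,\ D'\ge 0\}$, and adding the ample $\frac 1mA$ makes $K_X+\Delta+\frac 1mA$ big with $\sigma_{Y_j}\big(K_X+\Delta+\frac 1mA\big)\to\sigma_{Y_j}(K_X+\Delta)$ by continuity of $\sigma_{Y_j}$ on the big cone and lower semicontinuity at the boundary, while for a big class the infimum defining $\sigma_{Y_j}$ is realized asymptotically by actual sections. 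Combining, $a^j_m\to\sigma_{Y_j}(K_X+\Delta)$. It then remains to identify $\sigma_{Y_j}(K_X+\Delta)$ with $a^j_{\min}$, which is precisely the statement (recalled in the preliminaries) that $\mathrm{Supp}\,N_\sigma(K_X+\Delta)=\bigcup_{\epsilon>0}\mathbf{Fix}(K_X+\Delta+\epsilon A)$ together with the fact that $h_{\min}$, built from a basis of $|m_0(K_X+\Delta)|$ with $m_0$ large and divisible, has $a^j_{\min}=\mathrm{mult}_{Y_j}\mathbf{Fix}(K_X+\Delta)=\sigma_{Y_j}(K_X+\Delta)$ — the last equality because $h_{\min}$ is the metric of minimal singularities and Nakayama's $N_\sigma$ computes exactly the generic Lelong number of that metric. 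The final sentence of the claim, weak convergence $\Lambda_m\to\Lambda_{\min}$, then follows: writing $\Theta_m=\sum_j a^j_m[Y_j]+\Lambda_m$ and passing to the limit, $\Theta_\infty=\sum_j a^j_{\min}[Y_j]+\Lambda_\infty$ with $\Lambda_\infty\ge 0$ and $\{\Lambda_\infty\}=\{\Lambda_{\min}\}$; since $h_{\min}$ has minimal singularities, $\Theta_\infty\ge$ (a current with the same Lelong numbers along the $Y_j$ and no worse singularities), forcing $\Lambda_\infty=\Lambda_{\min}$ as the diffuse parts agree in cohomology and $\Lambda_\infty$ cannot be more singular.

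The main obstacle I anticipate is the passage from ``multiplicities of fixed parts of $m(K_X+\Delta)+A$'' to ``$\sigma_{Y_j}(K_X+\Delta)$'': one must rule out that the ample perturbation $\frac 1mA$ artificially \emph{lowers} the asymptotic multiplicity below $\sigma_{Y_j}(K_X+\Delta)$ in the limit. Concretely, if $s\in H^0\big(m(K_X+\Delta)+A\big)$ has $\mathrm{mult}_{Y_j}(s)$ much smaller than $m\,\sigma_{Y_j}(K_X+\Delta)$, one needs to derive a contradiction with the definition of $\sigma_{Y_j}$; the clean way is to quote Nakayama's result that for pseudo-effective $D$, $\sigma_{Y_j}(D)=\lim_m \frac 1m\inf\{\mathrm{mult}_{Y_j}D':D'\in|mD+A|\}$ and that this coincides with the generic multiplicity of the minimal-singularities metric — i.e. to reduce the whole claim to results already in \cite{Nakayama04} and the metric-of-minimal-singularities formalism of \cite{Dem09}. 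I would therefore structure the proof as: (i) reduce to showing $\liminf a^j_m\ge a^j_{\min}$; (ii) interpret $a^j_m$ as normalized $\mathrm{mult}_{Y_j}\mathbf{Fix}(m(K_X+\Delta)+A)$; (iii) invoke $\lim_m a^j_m=\sigma_{Y_j}(K_X+\Delta)$ from \cite{Nakayama04}; (iv) identify $\sigma_{Y_j}(K_X+\Delta)=a^j_{\min}$ via minimal singularities; (v) deduce $\Lambda_m\rightharpoonup\Lambda_{\min}$.
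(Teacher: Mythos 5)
The proposal has a genuine circularity. The crux is your step (iv), "identify $\sigma_{Y_j}(K_X+\Delta)=a^j_{\min}$ via minimal singularities." In this paper $h_{\min}$ is \emph{not} the Demailly--Peternell--Schneider metric with minimal singularities on $K_X+\Delta$; it is, by construction, the metric induced by a basis of $H^0(X,m_0(K_X+\Delta))$ for a large divisible $m_0$ generating $\mathcal R$. Consequently $a^j_{\rm min}$ is the multiplicity of $Y_j$ in the \emph{linear-series} stable fixed part $\mathbf{Fix}(K_X+\Delta)$. On the other hand, Nakayama's $\sigma_{Y_j}(K_X+\Delta)$ is the \emph{numerical} fixed multiplicity defined via the perturbations $K_X+\Delta+\epsilon A$, and it is this quantity that the sequence $a^j_m$ converges to, and that equals the generic Lelong number of the actual minimal-singularities (analytic) metric. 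One always has $\sigma_{Y_j}(K_X+\Delta)\le \mathrm{mult}_{Y_j}\mathbf{Fix}(K_X+\Delta)=a^j_{\min}$, but the reverse inequality is exactly the content of Claim~\ref{c-lim}: it is the statement that the numerical and linear-series fixed parts agree for $K_X+\Delta$, which is essentially the $\kappa_\sigma=\kappa$ phenomenon Proposition~\ref{p-fgm} is designed to establish. Invoking it as a known fact from \cite{Nakayama04} together with finite generation of $\mathcal R$ is assuming what needs to be proved; finite generation of $\mathcal R$ controls the multiplicities of $\mathbf{Fix}(m(K_X+\Delta))$ but says nothing a priori about how they compare with the multiplicities of $\mathbf{Fix}(m(K_X+\Delta)+A)$ as $m\to\infty$.

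The paper closes this gap by a contradiction argument that does real geometric work. Assuming $a^j_\infty<a^j_{\rm min}$ for some $j$, one tie-breaks using the ample summand $A_\Delta$ of $\Delta$ to normalize $K_X+Y_1+\sum_{j\neq 1}\tau^j Y_j+A_\Delta\equiv (1+t^1)\Lambda_\infty$ with all $\tau^j<1$, and then applies Theorem~\ref{t-3} (restriction to $S=Y_1$, induction on dimension, Diophantine approximation, and the extension theorems of \cite{Paun08}, \cite{HM10}) to manufacture an effective $\mathbb R$-divisor $\sim_{\mathbb R}K_X+\Delta$ whose multiplicity along $Y_1$ equals $a^1_\infty$, strictly smaller than $a^1_{\rm min}$ — contradicting the description of $a^1_{\rm min}$ coming from generators of $\mathcal R$. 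This use of the ample part of $\Delta$ and of extension/nonvanishing across the divisor is the heart of the matter; your plan bypasses it by citing a coincidence of $\sigma$ and $\mathbf{Fix}$ multiplicities that is, at this stage of the argument, exactly what remains to be shown.
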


\begin{proof}
\noindent We consider an element $j\in J$. The sequence $\{ a^j_m\}_{m\geq 1}$ is bounded, and can be assumed to be convergent, so we denote by $a^j_\infty$ its limit. We observe that we have
\begin{equation}\label{9}a^j_\infty\leq a^j_{\rm min}\end{equation}
for each index $j$. Arguing by contradiction, we assume that at least one of the inequalities \eqref{9} above is strict.

Let $\Lambda_\infty$ be any weak limit of the sequence $\{ \Lambda_m\}_{m\geq 1}$.
We note that in principle
$\Lambda_\infty$ {\sl will be singular} along some of the $Y_j$, even if the Lelong number of each $\Lambda_m$ at the generic point of $Y_j$ is equal to zero, for any
$j$: the reason is that any weak limit of $\{ \Theta_m\}_{m\geq 1}$ it is expected to be at least as singular as
$\Theta_{\rm min}$.

In any case, we remark that given any positive real number $t\in {\mathbb R}$, we have the following numerical identity
\begin{equation}\label{10}K_X+ \sum_{j\in J}\big(\nu^j+ t(a^j_{\rm min}- a^j_{\infty})- a^j_{\infty}\big)Y_j+ A_\Delta+ t\Lambda_{\rm min}\equiv
(1+ t)\Lambda_\infty.\end{equation}
By using the positivity of $A_\Delta$ to tie-break, we can assume that for all $j\in J$ such that $a^j_{\rm min}\ne a^j_{\infty}$, the quantities
\begin{equation}\label{11}t^j:= {1-\nu^j+ a^j_\infty\over a^j_{\rm min}- a^j_{\infty}}\end{equation}
 are distinct, and we moreover assume that the minimum is achieved for $j= 1$.
The relation \eqref{10} with $t= t^1$ becomes
\begin{equation}\label{12}K_X+ Y_1+ \sum_{j\in J, j\neq 1}\tau^jY_j+ A_\Delta\equiv
(1+ t^1)\Lambda_\infty\end{equation}
where $\tau^j:= \nu^j+ t^1(a^j_{\rm min}- a^j_{\infty})- a^j_{\infty}< 1$ are real numbers,
which can be assumed to be positive (since we can ``move" the negative 
ones on the right hand side).
But then we have the following result (implicit in \cite{Paun08}).

\begin{theorem}\label{t-3} There exists
an effective ${\mathbb R}$-divisor
$$D:= a^1_\infty Y_1+ \Xi$$
linearly equivalent to $K_X+ \Delta$ and such that $Y_1$ does not belong to the support of $\Xi$.
\end{theorem}
\noindent We will not reproduce here the complete argument of the proof, instead we highlight the main steps of this proof.
\smallskip

\noindent $\bullet$ Passing to a modification of $X$, we can assume that the hypersurfaces $\{ Y_j\}_{j\neq 1}$ are mutually disjoint and $A_\Delta$ is semi-positive (instead of ample), such that
$A_\Delta-\sum_{j\neq 1} \varepsilon^jY_j$ is ample (for some $0<\varepsilon^j\ll 1$ where the  corresponding $Y_j$ are exceptional divisors). We denote
$S:= Y_1$.
\smallskip

\noindent $\bullet$ We can assume that $[\Lambda_m]= [\Lambda_\infty]$
i.e. the cohomology class is the same
 for any $m$, but for ``the new" $\Lambda_m$ we only have
\begin{equation}\label{13}\Lambda_m\geq -{1\over m}\omega_{A_\Delta}.\end{equation}
\smallskip

\noindent $\bullet$ The restriction $\displaystyle \Lambda_{m}|_S$ is well-defined,
and can be written as
$$\Lambda_{m}|_S= \sum_{j\neq 1}\rho^j_mY_{j}|_S+ \Lambda_{m, S}.$$
\smallskip

\noindent $\bullet$ By induction, we obtain an effective ${\mathbb R}$-divisor $D_S$ {\sl linearly equivalent to} $(K_X+ S+ \sum_{j\in J, j\neq 1}\tau^jY_j+ A_{\Delta})|_S$, whose order of vanishing along $Y_j|_S$ is at least $\min \{ \tau^j, \rho^j_{\infty}\}$. We note that in \cite{Paun08}, we only obtain an effective ${\mathbb R}$-divisor $D_S$ which is {\sl numerically equivalent to}
$(K_X+ S+ \sum_{j\in J, j\neq 1}\tau^jY_j+ A_{\Delta})|_S$. By a standard argument, we may however assume that $D_S$ is $\mathbb R$-{\sl linearly equivalent to}
$(K_X+ S+ \sum_{j\in J, j\neq 1}\tau^jY_j+ A_{\Delta})|_S$ (see also \cite{CL10}).
\smallskip

\noindent $\bullet$ The ${\mathbb R}$-divisor $D_S$ extends to $X$, by the ``usual" procedure, namely Diophantine approximation and extension theorems
(see e.g. \cite{Paun08} and \cite{HM10}). This last step ends the discussion of the proof of Theorem \ref{t-3}.

\medskip

\begin {rem} The last bullet above is the heart of the proof of the claim. We stress the fact that the factor $A_{\Delta}$ of the boundary is essential, even if the coefficients $ \tau^j$ and the divisor $D_S$
are rational.

\end{rem}

\noindent An immediate Diophantine approximation argument 
shows that the divisor $D$ produced by Theorem \ref{t-3} should not exist: its 
multiplicity along $Y_1$ is \emph{strictly smaller than} $a^1_{\rm min}$, and this is a contradiction.
\end{proof}

\medskip

\noindent The rest of the proof is based of the following global version of the
H.~Skoda division theorem (cf. \cite{Skoda}), established in \cite{Siu08}.

Let $G$ be a divisor on $X$, and let $\sigma_1,\ldots , \sigma_N$ be a set of holomorphic sections of $\mathcal O_X(G)$. Let $E$ be a divisor on $X$, endowed with a possibly singular metric
$h_E= e^{-\varphi_E}$ with positive curvature current.

\begin {theorem} \cite{Skoda} Let $u$ be a holomorphic section of the divisor
$K_X+ (n+1)G+ E$, such that
\begin{equation}\label{15}\int_X{|u|^2e^{-\varphi_E}\over \big(\sum_j |\sigma_j|^2\big)^{n+1}}< \infty\end{equation}
(we notice that the quantity under the integral sign is a global measure on $X$).
Then there exists sections $u^1, \ldots , u^N$ of $K_X+ nG+ E$
such that 
\begin{equation}\label{16}u= \sum_ju^j\sigma_j.\end{equation}
\end{theorem}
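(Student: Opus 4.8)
The plan is to run Skoda's $L^2$ division method, which works on any weakly pseudoconvex K\"ahler manifold and hence on our projective $X$; the only point to keep in mind is the one noted in the statement, that the integrand in \eqref{15} is an honest global measure (up to local frames, $e^{-\varphi_E}$ trivializes the $E$-part and $\big(\sum_j|\sigma_j|^2\big)^{-(n+1)}$ is the singular metric on $\mathcal O_X((n+1)G)$ induced by the sections $\sigma_j$, so the integrand is a global $(n,n)$-form). Fix a K\"ahler metric $\omega$ on $X$, write $|\sigma|^2:=\sum_{j=1}^N|\sigma_j|^2$ in local trivializations of $\mathcal O_X(G)$, so that $\log|\sigma|^2$ is the local weight of the $\{\sigma_j\}$-induced singular metric on $\mathcal O_X(G)$ and in particular $\sqrt{-1}\ddbar\log|\sigma|^2\geq 0$, and observe that $|\sigma|^2$ vanishes exactly along the analytic set $Z:=\{\sigma_1=\cdots=\sigma_N=0\}$, of codimension $\geq 1$. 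Set $G':=K_X+nG+E$ and $G'':=K_X+(n+1)G+E=G'+G$, and let $\Phi\colon\mathcal O_X(G')^{\oplus N}\to\mathcal O_X(G'')$, $(v^1,\dots,v^N)\mapsto\sum_j v^j\sigma_j$; we must show that $u\in H^0(X,\mathcal O_X(G''))$ lies in the image of $\Phi$ on global sections. Over $X^\circ:=X\setminus Z$ there is the obvious smooth (non-holomorphic) solution $h=(h^j)$, $h^j:=\bar\sigma_j\,u/|\sigma|^2$, with $\Phi(h)=u$; applying $\dbar$ and using holomorphy of the $\sigma_j$ and of $u$ gives $\dbar h=f$, where $f$ is a smooth $(0,1)$-form with values in $\mathcal O_X(G')^{\oplus N}$, which is $\dbar$-closed and satisfies $\Phi(f)=\dbar u=0$; thus $f$ takes its values pointwise in the rank-$(N-1)$ kernel subbundle $\mathcal S:=\ker\Phi\subset\mathcal O_X(G')^{\oplus N}$ over $X^\circ$.

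Next I would correct $h$ within $\mathcal S$: the aim is to solve $\dbar w=f$ with $w$ a section of $\mathcal S$ (so $\Phi(w)=0$), together with
\[ \int_{X^\circ}|w|^2\,e^{-\varphi_E}\,\frac{dV_\omega}{|\sigma|^{2(n+1)}}\;\leq\;C\int_X\frac{|u|^2e^{-\varphi_E}}{\big(\textstyle\sum_j|\sigma_j|^2\big)^{n+1}}\;<\;\infty . \]
One equips each of the $N$ summands of $\mathcal O_X(G')^{\oplus N}$ with the metric built from the canonical pairing on $K_X$, the $n$-th power of the $\{\sigma_j\}$-induced metric on $\mathcal O_X(G)$, and $e^{-\varphi_E}$ on $E$ — chosen precisely so that the induced metric on the quotient $\mathcal O_X(G'')$ is the one under the integral sign — restricts it to $\mathcal S$, and applies the H\"ormander--Demailly $L^2$ existence theorem for $\dbar$, using that $X^\circ$, the complement of an analytic subset of a compact K\"ahler manifold, carries a complete K\"ahler metric and that the $L^2$ estimate does not depend on the choice of complete K\"ahler metric (solve with $\omega+\varepsilon\widehat\omega$ and let $\varepsilon\to 0$). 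The positivity needed is Skoda's curvature computation: $\log|\sigma|^2$ is plurisubharmonic and $\sqrt{-1}\Theta_{\varphi_E}(E)\geq 0$, so the ambient bundle has semipositive curvature; passing to the subbundle $\mathcal S$ introduces a negative second fundamental form, but that term is controlled by $\sqrt{-1}\ddbar\log|\sigma|^2$, and the exponent $n+1$ — which lies in the admissible range $\geq\min(\dim X,N-1)+1$ for which Skoda's estimate holds — is exactly large enough that the resulting Bochner--Kodaira--Nakano curvature operator on $(n,1)$-forms is positive; the $K_X$ already sitting inside $G'$ is absorbed by the canonical twist, which is why no positivity of $G$ intervenes. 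Since $\varphi_E$ is only a weight with positive curvature current, one first regularizes it (or invokes the version of the $L^2$ theorem valid for singular metrics) and passes to the limit; a routine computation bounds the weighted $L^2$ norm of $f$ by the right-hand side above, which is what drives the estimate.

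Finally, put $u^j:=h^j-w^j$ on $X^\circ$: then $\dbar u^j=0$, so each $u^j$ is holomorphic on $X^\circ$, and $\Phi(u^1,\dots,u^N)=\Phi(h)-\Phi(w)=u$, i.e. $u=\sum_j u^j\sigma_j$ on $X^\circ$. The estimate gives $\int_{X^\circ}|u^j|^2e^{-\varphi_E}\,dV_\omega/|\sigma|^{2(n+1)}<\infty$, and since $|\sigma|^{-2(n+1)}\to+\infty$ along $Z$, finiteness forces $u^j$ to be locally square-integrable near $Z$ with respect to a smooth metric; hence by the $L^2$ Riemann extension theorem across the analytic set $Z$ (here $X$ is smooth) each $u^j$ extends to a holomorphic section of $\mathcal O_X(G')=\mathcal O_X(K_X+nG+E)$ on all of $X$, and $u=\sum_j u^j\sigma_j$ holds on $X$ by continuity. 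I expect the middle step to be the main obstacle: fixing the metric on $\mathcal O_X(G')^{\oplus N}$ and verifying the exact curvature inequality on $\ker\Phi$ — that the exponent $n+1$ absorbs the second fundamental form and leaves Bochner--Kodaira--Nakano positivity to spare — while simultaneously accommodating the singular weight $e^{-\varphi_E}$ and the complete absence of positivity on $G$.
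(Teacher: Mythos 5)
The paper does not prove this statement; it is quoted as a black box, citing Skoda's original theorem \cite{Skoda} and its globalization in \cite{Siu08}, so there is no internal proof to compare you against. What you have written is the recognizable skeleton of Skoda's $L^2$ division argument: the non-holomorphic division $h^j=\bar\sigma_j u/|\sigma|^2$ on $X\setminus Z$, the observation that $f=\dbar h$ takes values in the kernel sub-bundle $\mathcal S=\ker\Phi$, solving $\dbar w=f$ in $\mathcal S$ on the complete K\"ahler manifold $X\setminus Z$ with weighted $L^2$ estimates, setting $u^j=h^j-w^j$, and removing the singularity across $Z$. That is the right architecture, and your remark that the $K_X$ inside $G'=K_X+nG+E$ is absorbed by the canonical twist (so no positivity of $G$ is needed) is correct.

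Where the sketch has a genuine gap is in the sentence asserting that the exponent $n+1$ ``is exactly large enough that the resulting Bochner--Kodaira--Nakano curvature operator\dots is positive.'' Skoda's a priori estimate for the twisted $\dbar$-equation in $\mathcal S$ produces the constant $\tfrac{\alpha}{\alpha-1}$, where the exponent on $|\sigma|^2$ in the weight is $\alpha q+1$ with $q=\min(n,N-1)$. The hypothesis \eqref{15} has exponent $n+1$; hence as soon as $N\ge n+1$ (which is exactly the situation in the paper's application, where $N$ is the number of generators of $\mathcal R$ in degree $m_0$) one is in the endpoint case $\alpha=1$. There the curvature contribution $(\alpha-1)\sqrt{-1}\ddbar\log|\sigma|^2$, which is supposed to dominate the negativity of the second fundamental form of $\mathcal S$, vanishes identically; the curvature operator is only non-negative, not positive, and the constant $\tfrac{\alpha}{\alpha-1}$ diverges, so the bare existence of a square-integrable solution $w$ does not follow from the estimate you invoke. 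Some additional strict positivity must be fed in. In the paper's application this comes for free because $\varphi_E$ contains $\varphi_\Delta$ and $\Delta$ has an ample part $A_\Delta$, so $\sqrt{-1}\Theta_{h_E}(E)$ is bounded below by a K\"ahler form; it is this strict positivity of the weight on $E$, not the exponent $n+1$ alone, that restores the Bochner--Kodaira--Nakano inequality (alternatively, Siu's globalization handles the endpoint by a limiting device). Your sketch should make this dependence explicit, or at least flag the $\alpha=1$ degeneracy, since the statement as phrased in the paper only assumes $\Theta_{h_E}(E)\ge 0$ and a reader following your curvature claim would be misled into thinking the endpoint case is automatic.
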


\noindent This result together with Claim \ref{c-lim} above prove the finite generation of
${\mathcal M}$, along the same lines as in \cite{Dem90}; we provide next the details.
\smallskip

Let $m$ be a sufficiently big and divisible integer (to be specified in a moment), and let $u$
be a section of 
$m(K_X+ \Delta)+ A.$
We recall that $m_0$ denotes a positive integer, such that the metric on $K_X+ \Delta$ induced by
the sections $\{ \sigma_1,\ldots , \sigma _N\}$ of $m_0(K_X+\Delta)$ is equivalent to $\varphi_{\rm min}$.
We have
$$
m(K_X+ \Delta)+ A= K_X+ (n+1)G+ E$$
where 
$$G:= m_0(K_X+ \Delta)$$ and
$$E:= \Delta+ \big(m- (n+1)m_0-1\big)\Big(K_X+ \Delta+ {1\over m}A\Big)+ {m_0(n+1)+ 1\over m}A$$
are endowed respectively with the metrics 
$\displaystyle \varphi_G$ induced by the sections $\{ \sigma_1,\ldots , \sigma _N\}$ above, and
$$\displaystyle \varphi_E:= \varphi_\Delta+ \big(m- (n+1)m_0-1\big)\varphi_m+
{m_0(n+1)+1\over m}\varphi_A.$$ Here we denote by $\varphi_m$
the metric on $K_X+ \Delta+ {1\over m}A$ induced by the 
global sections of $\mathcal O _X(m(K_X+\Delta)+ A)$. 
We next check that condition \eqref{15} is satisfied. Notice that 
$$\int_X{|u|^2\over (\sum_j |\sigma_j|^2)^{n+1}}e^{-\varphi_E}\leq 
C\int_Xe^{(m_0(n+1)+1)\varphi_m- (n+1)m_0\varphi_{\rm min}-\varphi_\Delta}$$
since we clearly have $|u|^2\leq Ce^{m\varphi_m}$ (we skip the non-singular 
weight corresponding to $A$ in the expression above).
The fact that $(X, \Delta)$ is klt, together with Claim \ref{c-lim} implies that there exists some fixed index $m_1$ such that we have
\begin{equation}\label{17}\int_Xe^{(m_0(n+1)+1)\varphi_m- (n+1)m_0\varphi_{\rm min}-\varphi_\Delta}d\lambda< \infty,\end{equation}
as soon as $m\geq m_1$.
In conclusion, the relation \eqref{15} above holds true; hence, as long as $m\geq m_1$,
Skoda's Division Theorem can be applied, and Proposition \ref{p-fgm} is proved.
\end{proof}


\medskip

\begin{rem}\label{r-set}{\rm  As we have already mentioned, in the following sections we will show that a consistent part of the proof of \eqref{p-fgm} is still valid in
the absence of the ample part $A_\Delta$. Here we highlight the properties of
$K_X+ \Delta$ which will replace the strict positivity. We consider the following context.
Let
\begin{equation}\label{19}\Delta\equiv \sum_{j\in J}\nu^j[Y_j]+ \Lambda_\Delta\end{equation}
be a ${\mathbb Q}$-divisor, where $0\leq \nu^j< 1$ and $\Lambda_\Delta$ is a
semi-positive form of $(1,1)$-type. We assume as always that the hypersurfaces
$ Y_j$ have simple normal crossings. The difference between this set-up and the hypothesis
of \eqref{p-fgm} is that
$\Delta$ is not necessarily big.

We assume that the $K_X+ \Delta$ is ${\mathbb Q}$-effective. Recall that by \cite{BCHM10}, the associated canonical ring $R(K_X+ \Delta)$ is finitely generated. The reductions performed at the beginning of the proof of \eqref{p-fgm} do not use $A_\Delta$. However, difficulties arise when we come to the proof of Claim \ref{c-lim}. Indeed, the assumption
$$a^j_\infty< a^j_{\rm min}$$
for some $j\in J$ implies that we will have
\begin{equation}\label{20}K_X+ Y_1+ \sum_{j\in J, j\neq 1}\tau^jY_j\equiv
(1+ t^1)\Lambda_\infty\end{equation}
cf. \eqref{12}, but in the present context, the numbers $\tau^j$ {\sl cannot be assumed to be strictly smaller than $1$}. Nevertheless we have that 

\begin{enumerate}
\item[\rm (a)] The $\Q$-divisor $K_X+ Y_1+ \sum_{j\in J, j\neq 1}\tau^jY_j$ is pseudo-effective, and
$${Y_1}\not\in N_\sigma(K_X+ Y_1+ \sum_{j\in J, j\neq 1}\tau^jY_j).$$
\smallskip
\item[\rm (b)] There exists an effective ${\mathbb R}$-divisor say $G:= \sum_i\mu^iW_i$
which is ${\mathbb R}$-linearly equivalent to $K_X+ Y_1+ \sum_{j\in J, j\neq 1}\tau^jY_j$
such that 
$$Y_1\subset {\rm Supp}(G) \subset \{ Y_j\}_{j\in J}.$$
\end{enumerate}

}

\end{rem}

\noindent The properties above are consequences of the fact that
we have assumed that Claim \ref{c-lim} fails to hold. They indicate that the $\Q$-divisor
$$L:= K_X+ Y_1+ \sum_{j\in J, j\neq 1}\tau^jY_j$$ has some kind of positivity:  property
(a) implies the existence of a sequence of metrics $h_m= e^{-\varphi_m}$ on the $\Q$-line bundle $L$ such that
\begin{equation}\label{21}\Theta_{h_m}(L)= \sqrt{-1}\ddbar \varphi_m\geq -{1\over m}\omega\end{equation}
and this combined with (b) shows that the line bundle $\cO _X(Y_1)$ admits a sequence of singular metrics
$g_m:= e^{-\psi_{1, m}}$ such that
\begin{equation}\label{22}\varphi_m= \mu^1\psi_{1, m}+ \sum_{i\neq 1}\mu^i\log|f_{W_i}|^2,\end{equation}
where we assume that $W_1= Y_1$
(see Lemma \ref{l-weight}). So the curvature of $(L, h_m)$ is not just
bounded from below by $\displaystyle -{1\over m}\omega$, but we also have
\begin{equation}\label{23}\Theta_{h_m}(L)\geq \mu^1\Theta_{g_m}(Y_1)\end{equation}
 as shown by \eqref{22} above.
 \medskip

\noindent The important remark is that the relations \eqref{21} and \eqref{23} are very similar to the curvature requirement in the geometric version of the 
Ohsawa-Takegoshi-type theorem, due to L. Manivel (cf. \cite{Man93}, see \cite{Dem09} as well).
During the following section, we will establish the relevant generalization. As for the 
\emph{tie-breaking} issue (cf.  \eqref{20}), we are unable to bypass it with purely analytic methods. It will be treated by a different technique in Section \ref{s-good}.

\medskip

\section{A version of the Ohsawa-Takegoshi extension theorem}\label{s-OT}

\bigskip

\noindent The main building block of the proof of the ``invariance of plurigenera" (cf. \cite{Siu98}, \cite{Siu00})  is given by the Ohsawa-Takegoshi theorem cf. \cite{OT87} and \cite{BoB96}. In this section, we will prove a version of this important extension theorem, which will play a fundamental role
in the proof of Theorem \ref{t-ext}.
\smallskip

Actually, our result is a slight generalization of the corresponding statements 
in the articles quoted above,
adapted to the set-up described in Remark \ref{r-set}. For clarity of exposition, 
we will change the notations as follows.

Let $X$ be a projective manifold, and let $Y \subset X$ be a non-singular
hypersurface. We assume that there exists a metric
$h_{Y}$ on the line bundle ${\mathcal O}_X(Y)$ associated to $Y$,
denoted by $h_{Y}= e^{-\varphi_{Y}}$ with respect to any local 
trivialization, such that:

\begin{enumerate}

\item [(i)] If we denote by $s$ the tautological section associated to $Y$, then
\begin{equation}\label{24}|s|^2e^{-\varphi_{Y}}\leq e^{-\alpha}\end{equation}
where $\alpha\geq 1$ is a real number.

\item [(ii)] There exist two semi-positively curved hermitian $\bQ$-line bundles,
say $(G_1, e^{-\varphi_{G_1}})$ and $ (G_2, e^{-\varphi_{G_2}})$, such that
\begin{equation}\label{25}\varphi_{Y}= \varphi_{G_1}- \varphi_{G_2}\end{equation}
(compare to \eqref{22} above).

\end{enumerate}

\medskip
\noindent
Let $F\to
X$ be a line bundle, endowed with a
metric $h_F$ such that the following curvature requirements are satisfied
\begin{equation}\label{26}\Theta_{h_F}(F)\geq 0, \quad \Theta_{h_F}(F)\geq {1\over \alpha}
\Theta_{h_{Y}}(Y).\end{equation}
Moreover, we assume the existence of
positive real numbers $\varepsilon_0> 0$ and $C$ such that
\begin{equation}\label{27}\varphi_F\leq \varepsilon_0\varphi_{G_2}+C;\end{equation}
that is to say, the poles of the
metric which has the ``wrong'' sign in the decomposition \eqref{25} are part of the singularities
of $h_F$.
\medskip

\noindent We denote by $\ol h_{Y}= e^{-\ol\varphi_Y}$ a non-singular metric on the line bundle corresponding to $Y$. We have the following result.

\begin{theorem}\label{t-OT}Let $u$ be a section of the line bundle $\mathcal O _Y(K_{Y}+ F|_{Y})$, such that
\begin{equation}\label{28}\int_{Y}|u|^2e^{-\varphi_F}< \infty,\end{equation}
and such that the hypothesis \eqref{24}$\,$--$\,$\eqref{27} are satisfied.
Then there exists a section $U$ of the line bundle $\mathcal O _X(K_{X}+ {Y}+ F)$, such that
$U|_{Y}= u\wedge ds$ and such that
\begin{equation}\label{29}\int_{X}|U|^2e^{-\delta\varphi_{Y}-(1-\delta)\ol \varphi_{Y}-\varphi_F}\leq C_\delta
\int_{Y}|u|^2e^{-\varphi_F}\end{equation}
where $1\geq \delta> 0$ is an arbitrary real number and
the constant $C_\delta$ is given explicitly by
\begin{equation}
C_\delta=C_0\delta^{-2}\big( \max_X|s|^2e^{-\ol\varphi_Y}\big)^{1-\delta}
\end{equation}
for some numerical constant $C_0$ depending only on the dimension $($in 
particular, the estimate does not depend on $\varepsilon_0$ or $C$ 
in~\eqref{27}$)$.

\end{theorem}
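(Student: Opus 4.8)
The strategy is the standard Ohsawa--Takegoshi $\dbar$-technique with twisted Bochner--Kodaira estimates, following Manivel, Ohsawa, McNeal--Varolin and Berndtsson--Paun, but carefully tracking the singularities coming from the decomposition \eqref{25} and exploiting the hypothesis \eqref{27}. First I would set up the geometry: pick a smooth extension $\tilde u$ of $u\wedge ds$ to a neighbourhood of $Y$, cut off with a function supported near $Y$, and seek the desired section in the form $U = \tilde u\,\chi - v$, where $v$ solves a suitable $\dbar$-equation $\dbar v = \dbar(\tilde u\,\chi) =: g$. The source term $g$ is supported in an annular neighbourhood of $Y$ and, crucially, the $L^2$ norm of $g$ against a weight with a pole of the form $\log|s|^2$ along $Y$ is controlled by $\int_Y |u|^2 e^{-\varphi_F}$ (this is where $U|_Y = u\wedge ds$ will come from, via the usual residue/division argument). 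The weight I would use on $X$ is essentially $\varphi_F + \delta\varphi_Y + (1-\delta)\ol\varphi_Y$ modified by a term like $-\log\big(\varepsilon + |s|^2 e^{-\ol\varphi_Y}\big)$ and a "Donnelly--Fefferman" type twisting factor $\eta = \eta(|s|^2 e^{-\ol\varphi_Y})$; the parameter $\varepsilon\to 0$ at the end, and the explicit constant $C_\delta = C_0\delta^{-2}(\max_X|s|^2e^{-\ol\varphi_Y})^{1-\delta}$ is extracted from the optimization of $\eta$ and the auxiliary function in the twisted estimate (the $\delta^{-2}$ is the familiar loss from the Ohsawa--Takegoshi weight $(\delta$-)regularization, and the factor $(\max|s|^2 e^{-\ol\varphi_Y})^{1-\delta}$ absorbs the difference between $\varphi_Y$ and $\ol\varphi_Y$).

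The analytic heart is the a priori inequality. On $X\setminus Y$ (where all weights are smooth enough) one writes the twisted Bochner--Kodaira--Nakano identity with the weight $\Phi := \varphi_F + \delta\varphi_Y + (1-\delta)\ol\varphi_Y$ and twisting function $\eta + \lambda^{-1}$ for suitable $\eta,\lambda$ depending only on $t := |s|^2 e^{-\ol\varphi_Y}\in[0,\,\max_X t]$; one needs the curvature term
\[
\eta\,\sqrt{-1}\,\Theta_\Phi - \sqrt{-1}\,\partial\dbar\eta - \sqrt{-1}\lambda\,\partial\eta\wedge\dbar\eta
\]
to be $\geq 0$ (or $\geq$ a small negative multiple of $\omega$ that is later absorbed). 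Here \eqref{26} is exactly what makes $\sqrt{-1}\,\Theta_{h_F}(F) + \delta\,\sqrt{-1}\,\Theta_{h_Y}(Y) \geq 0$ — note $F$ carries enough positivity that even after adding $\delta\,\Theta_{h_Y}(Y)$ (which may be negative) one stays nonnegative because $\Theta_{h_F}(F)\geq \frac1\alpha\Theta_{h_Y}(Y)$ and $\delta\leq 1\leq\alpha$. The term $(1-\delta)\ol\varphi_Y$ contributes a bounded smooth curvature, harmless after shrinking. The functions $\eta$ and $\lambda$ are chosen as in McNeal--Varolin / Ohsawa: something like $\eta(t) = -\log t + $ (a function making the $\partial\dbar\eta$ term cooperate), so that the pointwise inequality for the twisted estimate holds on the region $t$ small; the bound \eqref{24}, i.e. $t \leq e^{-\alpha}\le e^{-1}<1$ everywhere when $\ol\varphi_Y$ is replaced appropriately — more precisely $|s|^2 e^{-\varphi_Y}\le e^{-\alpha}$ together with comparing $\varphi_Y$ and $\ol\varphi_Y$ — guarantees we are always in the good range and keeps $\log$-type quantities bounded, which is what produces the clean constant. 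Solving $\dbar v = g$ with this twisted weight, one gets $\int_X |v|^2 e^{-\Phi}(\eta+\lambda^{-1})^{-1} \leq \int (\text{twist})\,|g|^2 e^{-\Phi}$, and the right side is $\leq C_\delta \int_Y |u|^2 e^{-\varphi_F}$ by the annulus computation; finally one checks $v|_Y = 0$ (forced by finiteness of $\int |v|^2 e^{-\Phi}$ since $\Phi$ has a $\delta\varphi_Y$, hence a $\delta\log|s|^2$, pole along $Y$), so $U|_Y = u\wedge ds$, and $\int_X |U|^2 e^{-\Phi}\leq C_\delta'\int_Y|u|^2e^{-\varphi_F}$ after combining the $\tilde u\chi$ and $v$ contributions and letting $\varepsilon\to 0$ (monotone convergence).

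The step I expect to be the main obstacle is making the singular weights in \eqref{25} legitimate in the $L^2$ estimate. The metric $h_Y = e^{-\varphi_Y} = e^{-\varphi_{G_1}+\varphi_{G_2}}$ has a genuinely singular "negative part" $\varphi_{G_2}$, so $\sqrt{-1}\,\partial\dbar\varphi_Y$ is not a positive current and the naive Bochner argument breaks down. The resolution is precisely hypothesis \eqref{27}: since $\varphi_F \leq \varepsilon_0\varphi_{G_2} + C$, the combined weight $\varphi_F + \delta\varphi_Y = \varphi_F - \delta\varphi_{G_2} + \delta\varphi_{G_1}$ can be rearranged — I would write $\varphi_F = (\varphi_F - \varepsilon_0\varphi_{G_2}) + \varepsilon_0\varphi_{G_2}$, where the first bracket is bounded above and can be regularized/handled as an "honest" psh-type correction with bounded-below curvature, while $\varepsilon_0\varphi_{G_2}$ together with $\delta\varphi_{G_1}$ has curvature bounded below by a small negative multiple of $\omega$ (both $G_1, G_2$ being semipositive, $\varepsilon_0\Theta_{G_2}\geq 0$ and $\delta\Theta_{G_1}\geq 0$, but the sign of $-\delta\Theta_{G_2}$ from expanding $\varphi_Y$ is the problem — so one keeps $+\varphi_{G_2}$ terms only with nonnegative coefficient, which requires $\varepsilon_0 \geq \delta$; if $\varepsilon_0 < \delta$ one first shrinks $\delta$, noting the statement allows any $0<\delta\le1$ and the interesting regime is $\delta$ small anyway, or alternatively absorbs the deficit into the ambient positivity of $F$ via the second inequality in \eqref{26}). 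The point is that the estimate is applied on $X\setminus(\text{poles})$ with a regularized weight, the constant comes out independent of $\varepsilon_0$ and $C$ because those only enter through a term that is bounded above (hence only helps), and a Fatou/monotone-limit argument at the very end removes the regularization. Carrying this bookkeeping through while keeping the constant in the stated closed form is the delicate part; everything else is the by-now-classical twisted $\dbar$ machinery.
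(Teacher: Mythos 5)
Your proposal takes a different route from the paper: you want to rebuild the Ohsawa--Takegoshi estimate from scratch via a twisted Bochner--Kodaira inequality, with twisting functions $\eta,\lambda$ chosen to accommodate the singular weights directly. The paper instead takes a ready-made \emph{smooth} version of the Ohsawa--Takegoshi theorem (Demailly's statement with the $|f|^2_\rho\log^2|f|^2_\rho$ denominator) as a black box and concentrates the entire novelty in a regularization procedure, after which the limits $\varepsilon\to 0$ and $k\to\infty$ are taken via Montel, Fatou and the Cauchy integral formula. This is why the explicit constant drops out of a single elementary inequality: $t(\log t)^2\leq\delta^{-2}t^{1-\delta}$ for $t\in[0,e^{-1}]$, together with $\big(\max_X|s|^2e^{-\ol\varphi_Y}\big)^{1-\delta}$ absorbing the remaining power of the smooth weight. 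Your version would have to extract the same constant from the optimization of $\eta,\lambda$, which is more work but not unreasonable.

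The concrete gap is the regularization step, which your proposal does not actually specify and which the paper explicitly identifies as ``the main issue which we have to address.'' You write that the estimate is ``applied on $X\setminus(\text{poles})$ with a regularized weight,'' but $X$ minus the polar sets of $\varphi_F$ and $\varphi_{G_2}$ is in general not Stein, the relevant line bundles are not trivial on it, and there is no convolution operator on a compact manifold or on an arbitrary Zariski open subset. Demailly-type global regularizations lose curvature positivity, which this estimate cannot afford. The missing idea is the paper's: choose a finite union $H$ of very ample divisors so that $F$, $\mathcal O_X(Y)$ and $G_i^{\otimes N}$ all become trivial on the \emph{affine} Zariski open $X\setminus H$; then invoke Siu's theorem on Stein neighborhoods with holomorphic retracts to pull the weight functions to a Stein $W\supset X\setminus H$ inside $\bC^m$ and regularize by honest convolution there, checking that $(\ref{24})$--$(\ref{27})$ survive the convolution. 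Without that device (or an explicit replacement), your twisted $\dbar$-argument does not get off the ground with singular $\varphi_{G_2}$.

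A smaller point: the theorem asserts the estimate for every $\delta\in(0,1]$, while you suggest restricting to $\delta\leq\varepsilon_0$ and remark that small $\delta$ is ``the interesting regime anyway.'' In the paper's argument the restriction $\delta\leq\varepsilon_0$ enters only through $(\ref{35bis})$, to obtain the uniform unweighted $L^2$ bound needed to extract the limit $U$ by Montel and to show that $U$ extends across $H$; the section $U$ does not depend on $\delta$, so once it exists the weighted estimate $(\ref{39})$ is established for all $\delta$ since $(\ref{34})$ is. Your write-up should make the same separation explicit rather than trading away the case $\delta>\varepsilon_0$.
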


\noindent Perhaps the closest statement of this kind in the literature is due to 
D. Varolin,
cf. \cite{Var08}; in his article, the  metric $h_{Y}$ is allowed to be singular,  but the weights of this metric are assumed to be {\sl bounded from above}. This hypothesis is not verified in our case; however, the assumption \eqref{27} plays a similar role in the proof of Theorem \ref{t-OT}.
\medskip

\begin{proof} We will closely follow the ``classical" arguments and show that the
proof goes through (with a few standard
modifications) in the more general setting of Theorem \ref{t-OT}.
The main issue which we have to address is  the regularization procedure.
Although the technique is more or less standard, since this is the key new ingredient, we will provide a complete treatment.

\subsection{Regularization procedure}
Let us first observe that every line bundle $B$ over $X$ can be written as a 
difference $B={\mathcal O}_X(H_1-H_2)$ of two very ample divisors 
$H_1$, $H_2$. It follows that $B$ is trivial 
upon restriction to the complement $X\setminus(H'_1\cup H'_2)$ for
any members $H'_1\in|H_1|$ and $H'_2\in|H_2|$ of the corresponding linear systems.
Therefore, one can find a finite family  $H_j\subset X$ of very 
ample divisors, 
such that $Y\not\subset H_j$ and each of the line bundles
under consideration $F$, ${\mathcal O}_X(Y)$ and $G_i^{\otimes N}$
(choosing $N$ divisible enough so that $G_i^{\otimes N}\in\Pic(X)$) is trivial on
the affine Zariski open set $X\setminus H$, where
$H=\bigcup H_j$. We also fix a proper embedding 
$X\setminus H\subset \bC^m$ in order to regularize 
the weights $\varphi_F$ and~$\varphi_{Y}$ of our metrics
on $X\setminus H$. The $L^2$ estimate will
be used afterwards to extend the sections to $X$ itself.

The arguments which follow are first carried out on a fixed affine open set 
$X\setminus H$ selected as above. In this respect, estimate \eqref{27} is 
then to be understood as valid only with a uniform constant $C=C(\Omega)$
on every relatively compact open subset $\Omega\subset\!\subset 
X\setminus H$. In order to regularize all of our weights 
$\varphi_F$ and $\varphi_Y=\varphi_{G_1}-\varphi_{G_2}$ respectively,
we invoke the following well known result which enables us
to employ the usual convolution kernel in Euclidean space.

\begin{theorem} \cite{Siu76} Given a Stein submanifold $V$ of a complex analytic space
$M$, there exist an open, Stein neighborhood $W\supset V$ of $V$, together
with a holomorphic retract $r: W\to V$.
\end{theorem}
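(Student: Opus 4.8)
The plan is to prove the statement in two stages. First I would produce \emph{some} open Stein neighborhood $W_0\supset V$ in $M$; then I would cut $W_0$ down to a smaller open Stein neighborhood $W$ carrying a holomorphic retraction onto $V$. The first stage is the substantial one; the second follows formally from Cartan theory together with the classical holomorphic tubular neighborhood theorem for closed submanifolds of affine space. Throughout I use that a Stein manifold carries a smooth strictly plurisubharmonic exhaustion, Narasimhan's characterization of Stein spaces by such exhaustions, Cartan's Theorems A and B, Remmert's embedding theorem, and the fact that a closed analytic subspace of a Stein space is Stein (so in particular that a product of Stein spaces is Stein). We may assume $V$ is closed in $M$, after replacing $M$ by a neighborhood of $V$ in which $V$ is relatively closed.

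\textbf{Stage 1 (Stein neighborhood).} Fix a smooth strictly plurisubharmonic exhaustion $\psi\colon V\to[0,\infty)$ and extend it to a smooth function $\tilde\psi$ on a neighborhood of $V$ in $M$. Cover $V$ by a locally finite family of open sets $M_\alpha\subset M$, each admitting a closed embedding $M_\alpha\hookrightarrow D_\alpha\subset\C^{N_\alpha}$ into a polydisc, chosen so that $V\cap M_\alpha$ is the common zero set in $D_\alpha$ of holomorphic functions $g_{\alpha,1},\dots,g_{\alpha,r_\alpha}$ with $\C$-linearly independent differentials along $V\cap M_\alpha$. Put $\rho_\alpha:=\sum_i|g_{\alpha,i}|^2$ on $M_\alpha$: it is non-negative, smooth, vanishes exactly on $V\cap M_\alpha$, satisfies $\sqrt{-1}\ddbar\rho_\alpha\geq 0$, and is positive definite in the directions transverse to $V$ near $V$. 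Gluing the $\rho_\alpha$ by a partition of unity produces $\rho\geq 0$ with $\rho^{-1}(0)=V$, smooth near $V$, and with $\sqrt{-1}\ddbar\rho$ non-negative up to partition-of-unity error terms of size $O(\rho)+O(|\partial\rho|)$, which vanish along $V$. The plan is then to choose a thin tube $W_0:=\{\rho<\eta(\tilde\psi)\}$ around $V$, with $\eta>0$ decreasing sufficiently fast, and a sufficiently convex increasing function $\chi$, so that a combination of the shape
$$\chi(\tilde\psi)+\lambda\rho-\log\bigl(\eta(\tilde\psi)-\rho\bigr)\qquad(\lambda\gg 0)$$
is a strictly plurisubharmonic exhaustion of $W_0$; Narasimhan's theorem then shows $W_0$ is Stein.

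\textbf{This is the step I expect to be the main obstacle.} In the transverse directions the negative contribution $-\sqrt{-1}\ddbar\rho$ coming from the logarithmic barrier must be dominated near $\partial W_0$, where it blows up, which is possible only because the accompanying squared-gradient term has the right magnitude there; simultaneously the tangential directions must draw their strict positivity from $\chi''(\tilde\psi)$ and the tangential part of $\sqrt{-1}\ddbar\tilde\psi$, while the partition-of-unity errors in $\sqrt{-1}\ddbar\rho$ must be absorbed by taking the tube thin enough. Keeping all these constraints compatible all the way out to infinity along $\psi$ is the heart of the matter, and would be organized as a bumping/induction argument over the sublevel sets $\{\psi<c\}$: having built a Stein neighborhood of $\{\psi\le c\}$, one enlarges it past the next sublevel set without destroying strict plurisubharmonicity.

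\textbf{Stage 2 (the retraction).} By Remmert's embedding theorem fix a closed embedding $\iota\colon V\hookrightarrow\C^n$. Since $\iota(V)$ is a closed submanifold of $\C^n$, the classical holomorphic tubular neighborhood theorem yields an open Stein neighborhood $T$ of $\iota(V)$ in $\C^n$ together with a holomorphic retraction $p\colon T\to\iota(V)$. As $V$ is a closed analytic subspace of the Stein space $W_0$, Cartan's Theorem B lets each coordinate function $\iota_k\in\OO(V)$ be extended to some $\hat\iota_k\in\OO(W_0)$; set $F:=(\hat\iota_1,\dots,\hat\iota_n)\colon W_0\to\C^n$, a holomorphic map with $F|_V=\iota$. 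Put $W:=F^{-1}(T)$. Then $W$ is open in $W_0$ (hence in $M$), contains $V$ since $F(V)=\iota(V)\subset T$, and is Stein, because $x\mapsto(x,F(x))$ identifies it biholomorphically with the intersection of the graph of $F$ with $W_0\times T$, a closed analytic subset of the Stein space $W_0\times T$. Finally set $r:=p\circ F|_W\colon W\to\iota(V)$; under the identification $\iota(V)\cong V$ this is a holomorphic map $W\to V$, and for $x\in V$ we get $r(x)=p(\iota(x))=\iota(x)$, i.e. $r|_V=\mathrm{id}_V$. Hence $W$ is an open Stein neighborhood of $V$ in $M$ and $r\colon W\to V$ is a holomorphic retraction, which is the assertion. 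Note that the hypothesis that $V$ be Stein enters exactly twice, through the exhaustion $\psi$ in Stage 1 and through Remmert's embedding in Stage 2, and that smoothness of $M$ is nowhere used, only smoothness of $V$.
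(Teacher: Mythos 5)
The paper itself gives no proof of this statement; it simply cites \cite{Siu76}, so there is no in-paper argument to compare against. Your Stage~2 is correct and is the standard reduction: once one has \emph{some} Stein neighborhood $W_0$ of the closed submanifold $V$, Remmert embedding, extension of the embedding by Cartan~B on $W_0$, pulling back a Stein tubular neighborhood of $\iota(V)\subset\C^n$ (Docquier--Grauert, which is genuinely prior and independent since the ambient there is smooth and Stein), and the graph trick for Steinness of $W=F^{-1}(T)$ all go through exactly as you say.

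The gap is Stage~1, and you flag it yourself. Producing a Stein neighborhood of a Stein subvariety in an arbitrary complex space \emph{is} Siu's theorem; it occupies the entirety of \cite{Siu76} and is not an exercise. Your sketch identifies the right ingredients (local defining functions, the nonnegative function $\rho$ with $\rho^{-1}(0)=V$, a logarithmic barrier for a shrinking tube $\{\rho<\eta(\tilde\psi)\}$, and strict plurisubharmonicity to be salvaged via $\chi$ and $\lambda$), but the actual verification that one can choose $\eta,\chi,\lambda$ coherently out to infinity is exactly what you defer to an unspecified ``bumping/induction argument,'' and this is where the substance lies: the partition-of-unity error in $\sqrt{-1}\ddbar\rho$ involves second derivatives of the cutoffs against differences $\rho_\alpha-\rho_\beta$ and is not obviously $O(\rho)+O(|\partial\rho|)$ uniformly; the sign analysis of the Hessian of $-\log(\eta(\tilde\psi)-\rho)$ (you write the $\ddbar\rho$ term with the wrong sign --- it enters positively in the first-order term, the genuinely dangerous piece is $-\ddbar\bigl(\eta(\tilde\psi)\bigr)/(\eta-\rho)$ together with the glueing errors); and one must simultaneously arrange the exhaustion property as $\psi\to\infty$ along $V$ and as $\rho\to\eta$ near the boundary. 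None of this is carried out, so as written the argument proves the retraction claim \emph{conditional on} the Stein neighborhood theorem, but does not prove the Stein neighborhood theorem itself. Since that is the nontrivial content of the cited result, the proposal has a real gap rather than a complete proof.
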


\smallskip

\noindent In our setting, the above theorem shows the existence of
a Stein open set $W\subset \bC^m$, such that  $X\setminus H\subset W$, together with a holomorphic retraction $r:W\to X\setminus H$.
We use the map $r$ in order to extend the objects we have constructed on $X\setminus H$; we define
\begin{equation}\label{30}\wt \varphi_F:= \varphi_F\circ r, \quad \wt \varphi_{G_i}:= \varphi_{G_i}\circ r,\quad \wt \varphi_{Y}:= \wt \varphi_{G_1}- \wt \varphi_{G_2}.
\end{equation}
Next we will use a standard convolution kernel
in order to regularize the functions above.
We consider exhaustions
\begin{equation}\label{31}W= \bigcup _kW_k,\qquad X\setminus H=\bigcup X_k
\end{equation}
of $W$ by {\sl bounded} Stein domains (resp.\ of $X\setminus H$
by the relatively compact Stein open subsets $X_k:=(X\setminus H)\cap W_k$).
Let
$$\varphi_{F, \varepsilon}:=\wt \varphi_F*\rho_\varepsilon: W_k\to \bR$$
be the regularization of $\wt \varphi_F$, where for each $k$ we assume that $\varepsilon\leq \varepsilon (k)$
is small enough, so that the function above is well-defined. 
We use similar notations for the regularization of the other functions involved in the picture.
\smallskip

\noindent We show next that the normalization and curvature properties of the functions above
are preserved by the regularization process. In first place, 
the assumption $\Theta_{h_F}(F)\geq 0$ means that $\varphi_F$ is psh, hence
$\varphi_{F, \varepsilon}\geq\varphi_F$ and we still have
\begin{equation}\tag{\ref{24}$_\varepsilon$}|s|^2e^{-\varphi_{Y, \varepsilon}(z)}\leq 
|s|^2e^{-\varphi_{Y}(z)}\leq e^{-\alpha}.\end{equation}
on $X_k$. (Here of course $|s|$ means the absolute value of the section
$s$ viewed as a complex valued function according to the trivialization
of $\mathcal O _X(Y)$ on $X\setminus H$). Further, \eqref{25}-\eqref{26} implies that all functions
\begin{equation}\tag{\ref{26}$_\varepsilon$}z\to \varphi_{F, \varepsilon}(z), \quad z\to \varphi_{G_i, \varepsilon}(z), \quad
z\to \varphi_{F, \varepsilon}(z)- {1\over \alpha}\varphi_{Y, \varepsilon}(z)\end{equation}
are psh on $X_k$, by stability of plurisubharmonicity under convolution.
Finally, \eqref{27} leads to
\begin{equation}\tag{\ref{27}$_\varepsilon$}\varphi_{F, \varepsilon}\leq\varepsilon_0\varphi_{G_2, \varepsilon}+C(k)
\quad\hbox{on $W_k$,}\end{equation}
by linearity and monotonicity of convolution.
\smallskip

\noindent In conclusion, the hypothesis of \eqref{t-OT} are preserved by the particular regularization
process we have described here. We show in the following subsection that the ``usual" Ohsawa-Takegoshi theorem
applied to the regularized weights allows us to conclude.

\subsection{End of the proof of Theorem \ref{t-OT}}

We view here the section $u$ of $\mathcal O _Y(K_Y+F|_{Y})$ as a $(n-1)$-form on
$Y$ with values in $F|_{Y}$. Since $F$ is trivial on $X\setminus H$,
we can even consider $u$ as a complex valued $(n-1)$-form on
$Y\cap(X\setminus H)$.  The main result used in the proof of \eqref{t-OT} is
the following technical version of the Ohsawa-Takegoshi theorem.

\begin{theorem} \cite{Dem09} Let $M$ be a weakly pseudoconvex $n$-dimen\-sional manifold, and let 
$f: M\to \bC$ be a holomorphic function, such that $\partial f\neq 0$ on $f=0$. Consider two smooth 
functions $\varphi$ and $\rho$ on $M$, such that 
$$\sqrt{-1}\ddbar \varphi\geq 0, \quad \sqrt{-1}\ddbar \varphi\geq {1\over \alpha} \sqrt{-1}\ddbar \rho$$
and such that $\displaystyle |f|_\rho^2:= |f|^2e^{-\rho}\leq e^{-\alpha}$, where $\alpha\geq 1$ is a constant.
Then given a $n-1$ form $\gamma$ on $M_f:= \{f=0\}$, there exists a $n$-form $\Gamma$ on $M$ such that 
\item{\rm (a)}  $\displaystyle \Gamma|_{M_f}= \gamma\wedge df$;

\item{\rm (b)} We have
$$\int_M{|\Gamma|^2e^{-\rho- \varphi}\over |f|_\rho^2\log^2|f|_\rho^2}\leq C_0\int_{M_f}|\gamma|^2e^{-\varphi}$$
where $C_0$ is a numerical constant depending only on the dimension.

\end{theorem}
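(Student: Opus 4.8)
The plan is to run the classical Ohsawa--Takegoshi argument in its twisted $\dbar$--formulation (as in \cite{OT87}, \cite{Man93}, \cite{Dem09}), taking care only at the points where the hypotheses $\sqrt{-1}\ddbar\varphi\ge 0$, $\sqrt{-1}\ddbar\varphi\ge\frac1\alpha\sqrt{-1}\ddbar\rho$ and $|f|_\rho^2\le e^{-\alpha}$ actually get used. First I would make the usual reductions: since $M$ is only weakly pseudoconvex, exhaust it by relatively compact sublevel sets $M_k$ of a smooth psh exhaustion, prove the estimate on each $M_k$ with a constant depending on $n$ alone, and recover the statement on $M$ by extracting a locally uniformly convergent subsequence of the extensions (the uniform $L^2$ bound makes the family normal). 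On each $M_k$ one may moreover assume $\varphi$ and $\rho$ are smooth up to the boundary and that $\int_{M_f}|\gamma|^2e^{-\varphi}<\infty$, otherwise there is nothing to prove.

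Next, fix a small $\epsilon>0$. Choose a smooth (non-holomorphic) extension $\tilde\gamma$ of $\gamma$ to a tubular neighbourhood of $M_f$, and a cutoff $\chi_\epsilon$ equal to $1$ where $|f|^2e^{-\rho}\le\epsilon$ and $0$ where $|f|^2e^{-\rho}\ge 2\epsilon$; set $v_\epsilon:=\chi_\epsilon\,\tilde\gamma\wedge df$, an $n$-form, so that $g_\epsilon:=\dbar v_\epsilon=\dbar\chi_\epsilon\wedge\tilde\gamma\wedge df$ is a $\dbar$-closed $(n,1)$-form supported in the shell $\{\epsilon\le|f|^2e^{-\rho}\le 2\epsilon\}$. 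I would then solve $\dbar w_\epsilon=g_\epsilon$ with an $L^2$ estimate against the weight
$$\Phi_\epsilon:=\varphi+\rho+\log\big(|f|^2e^{-\rho}+\epsilon\big),$$
the decisive $\log^{-2}$ density being produced by the twisted Bochner--Kodaira--Nakano inequality with multiplier functions $\eta(t)$, $\lambda(t)$ of $t:=\log(|f|^2e^{-\rho}+\epsilon)$, chosen in the classical way (roughly $\eta(t)\sim -t+\log(-t)$) so that $\eta+\lambda$ is convex in the $t$--direction, the combination $\eta-\lambda^{-1}(\eta')^2$ beats the coefficient responsible for the $1/(|f|_\rho^2\log^2|f|_\rho^2)$ weight, and the extra term $\eta''(t)\,\sqrt{-1}\partial t\wedge\dbar t$ has the right sign. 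This is exactly where the hypotheses enter: away from $\{f=0\}$ one has $\sqrt{-1}\ddbar t=-\sqrt{-1}\ddbar\rho$, and $\sqrt{-1}\ddbar\varphi\ge 0$ together with $\sqrt{-1}\ddbar\varphi\ge\frac1\alpha\sqrt{-1}\ddbar\rho$ and $t\le\log(e^{-\alpha}+\epsilon)$ guarantee both that $\Phi_\epsilon$ is plurisubharmonic (so Hörmander applies) and that the twisted curvature operator dominates $g_\epsilon$; the scaling $|f|_\rho^2\le e^{-\alpha}$ paired with the factor $\frac1\alpha$ is precisely what makes the final constant depend on $n$ only, and not on $\alpha$, $\varepsilon_0$ or $C$.

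Having solved $\dbar w_\epsilon=g_\epsilon$, set $\Gamma_\epsilon:=v_\epsilon-w_\epsilon$, a holomorphic $n$-form. A residue/coarea computation (relating $\int_{\{|f|^2e^{-\rho}=\delta\}}$ to $\int_{M_f}$ via $\partial f\ne 0$ on $M_f$) shows that the shell contribution of $g_\epsilon$ to the estimate converges, for the chosen $(\eta,\lambda)$, to $C_0\int_{M_f}|\gamma|^2e^{-\varphi}$. Letting $\epsilon\to 0$ and then $k\to\infty$, extract a weak limit $\Gamma:=\lim\Gamma_\epsilon$; it is holomorphic, and since in the limit the density $1/|f|_\rho^2$ forces $\int|w_\epsilon|^2/|f|^2<\infty$, the correction vanishes on $M_f$, so $\Gamma|_{M_f}=v_\epsilon|_{M_f}=\gamma\wedge df$. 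Finally $\log(|f|^2e^{-\rho}+\epsilon)\downarrow\log|f|_\rho^2$ monotonically, so Fatou gives the asserted estimate with the $1/(|f|_\rho^2\log^2|f|_\rho^2)$ density and constant $C_0=C_0(n)$. As always in Ohsawa--Takegoshi, the main obstacle is the explicit choice of the multiplier pair $(\eta,\lambda)$: one must simultaneously keep $\Phi_\epsilon$ (modified by the multipliers) plurisubharmonic, make the twisted curvature strictly dominate the $(n,1)$-datum $g_\epsilon$, produce exactly the gain $1/(|f|_\rho^2\log^2|f|_\rho^2)$ with a purely dimensional constant, and verify that every trace of $\alpha$ cancels — all of this being robust under the passage from smooth to the weakly regular weights allowed here.
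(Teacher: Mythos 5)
The paper does not prove this statement: it is cited directly from \cite{Dem09} as Demailly's version of the Ohsawa--Takegoshi theorem, and is then applied as a black box. Your sketch is a faithful reconstruction of the standard twisted Bochner--Kodaira--Nakano argument underlying that reference (and going back to \cite{OT87}, \cite{Man93}): exhaust by $M_k$, cut off with $\chi_\epsilon$ near $M_f$, solve $\dbar w_\epsilon=g_\epsilon$ via a twisted $L^2$ estimate governed by a multiplier pair $(\eta,\lambda)$ in the variable $t=\log(|f|_\rho^2+\epsilon)$, take $\epsilon\to0$ and then $k\to\infty$, and read off the final density from a residue/coarea computation on the shell. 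So this is essentially the same route as the reference, not a different one.

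Two places where the sketch is imprecise and would need tightening if written out. First, the identity ``$\sqrt{-1}\ddbar t=-\sqrt{-1}\ddbar\rho$ away from $\{f=0\}$'' holds only at $\epsilon=0$; for $\epsilon>0$ one has instead $\sqrt{-1}\ddbar t=\frac{|f|_\rho^2}{|f|_\rho^2+\epsilon}\bigl(-\sqrt{-1}\ddbar\rho\bigr)+\frac{\epsilon}{(|f|_\rho^2+\epsilon)^2}\sqrt{-1}\partial|f|_\rho^2\wedge\dbar|f|_\rho^2$, and keeping track of the extra nonnegative term and the $\frac{|f|_\rho^2}{|f|_\rho^2+\epsilon}$ factor is exactly what makes the regularized curvature inequality uniform in $\epsilon$ — this is the technical heart of Demailly's writeup. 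Second, the crucial uniformity claim that every trace of $\alpha$ cancels (so that $C_0$ is purely dimensional) is asserted but not verified; it is precisely the pairing of the normalization $|f|_\rho^2\le e^{-\alpha}$ with the curvature hypothesis $\sqrt{-1}\ddbar\varphi\ge\frac1\alpha\sqrt{-1}\ddbar\rho$ inside the choice of $(\eta,\lambda)$ that makes this work, and since that cancellation is the whole content of the statement (as opposed to the $\alpha=1$ case), a complete proof would have to carry out the computation. These are presentation gaps in a sketch rather than wrong turns; the approach itself is correct.
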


\smallskip

\noindent We apply the above version of the Ohsawa-Takegoshi theorem in our setting:
for each $k$ and for each $\varepsilon\leq \varepsilon(k)$ there exists a holomorphic $n$-form $U_{k, \varepsilon}$ on the Stein manifold $X_k$, such that
\begin{equation}\label{32}\int_{X_k}{|U_{k, \varepsilon}|^2e^{-\varphi_{Y, \varepsilon}- \varphi_{F,\varepsilon}}\over |s|^2e^{-\varphi_{Y, \varepsilon}}
\log^2\big(|s|^2e^{-\varphi_{Y, \varepsilon}}\big)}\leq C_0
\int_{Y\cap X_k}|u|^2e^{-\varphi_{F, \varepsilon}}\end{equation}
and such that $\displaystyle U_{k, \varepsilon}|_{Y\cap X_k}=u\wedge ds$.
Notice that 
$\varphi_{F, \varepsilon}\geq \wt \varphi_{F}= \varphi_F$ 
on $Y\cap X_k$, hence we get the $(\varepsilon,k)$-uniform upper bound
\begin{equation}\label{33}\int_{Y\cap X_k}|u|^2e^{-\varphi_{F, \varepsilon}}\leq
\int_{Y\cap(X\setminus H)}|u|^2e^{-\varphi_F}.\end{equation}
Our next task is to take the limit for $\varepsilon\to 0$ in the relation \eqref{32},
while keeping $k$ fixed at first. To this end, an important observation
is that
\begin{equation}\label{34}\int_{X_k}|U_{k, \varepsilon}|^2e^{-\delta \varphi_{Y, \varepsilon}-(1-\delta)\ol\varphi_Y-\varphi_{F,\varepsilon}}\leq C_\delta\int_{Y\cap(X\setminus H)}|u|^2e^{-\varphi_F}\end{equation}
for any $0<\delta\leq 1$. Indeed, the function
$t\to t^\delta\log^2(t)$ is bounded from above by $e^{-2}(2/\delta)^2\le \delta^{-2}$ when $t$ belongs to the fixed interval $[0, e^{-\alpha}]\subset[0,e^{-1}]$, 
so that $t(\log t)^2\le \delta^{-2}t^{1-\delta}$ and hence
\begin{equation}\label{35}{e^{-\varphi_{Y, \varepsilon}- \varphi_{F,\varepsilon}}\over |s|^2e^{-\varphi_{Y, \varepsilon}}
\log^2\big(|s|^2e^{-\varphi_{Y, \varepsilon}}\big)}\geq \delta^2
{e^{-\varphi_{Y, \varepsilon}- \varphi_{F,\varepsilon}}\over 
|s|^{2(1-\delta)}e^{-(1-\delta)\varphi_{Y, \varepsilon}}}.\end{equation}
We have used the uniform bound (\ref{24}$_\varepsilon$). We further
observe that by compactness of $X$ the continuous function $z\to |s(z)|^{2(1-\delta)}e^{-(1-\delta)\ol\varphi_Y}$ is bounded from above on $X$ by $M^{1-\delta}=
\max_X (|s|^2e^{-\ol\varphi_Y})^{1-\delta}<+\infty$ and we can take $C_\delta=C_0M^{1-\delta}\delta^{-2}$. Therefore, \eqref{34} follows from \eqref{32} and \eqref{35}. 
If we choose $\delta\le\varepsilon_0$ and recall that
$\varphi_Y=\varphi_{G_1}-\varphi_{G_2}$, we see that inequality 
(\ref{27}$_\varepsilon)$ implies
\begin{equation}
\label{35bis}
\delta\varphi_{Y,\varepsilon}+(1-\delta)\ol\varphi_Y+\varphi_{F,\varepsilon}\le
\delta\varphi_{G_1,\varepsilon}+(\varepsilon_0-\delta)\varphi_{G_2,\varepsilon}
+(1-\delta)\ol\varphi_Y+C(k).
\end{equation}
For $i\in \{1, 2\}$ the function $\varphi_{G_i,\varepsilon}$ is psh, thus in particular uniformly
bounded from above on $X_k$ by a constant independent of $\varepsilon$.
Hence $\delta\varphi_{Y,\varepsilon}+(1-\delta)\ol\varphi_Y+\varphi_{F,\varepsilon}$ is 
uniformly bounded from above by a constant~$C_3(k)$ if we fix e.g.\ $\delta=\varepsilon_0$, and thanks to \eqref{34}
the {\sl unweighted} norm of $U_{k, \varepsilon}$ admits a bound
$$\int_{X_k}|U_{k, \varepsilon}|^2\leq C_4(k).$$
We stress here the fact that the constant is independent of $\varepsilon$. Therefore we can extract a subsequence that is uniformly convergent on all compact subsets of~$X_k$. 
Indeed, this follows from the classical Montel theorem, which in turn is a consequence of the Cauchy integral formula to prove equicontinuity (this is where we use the fact that $C_4(k)$
is independent of $\varepsilon$), coupled with
Arzel\`a-Ascoli theorem. 
Let $U_k$ be the corresponding limit. Fatou's lemma shows that we have the estimate
\begin{equation}\label{36}\int_{X_k}{|U_{k}|^2e^{-\varphi_{Y}- \varphi_{F}}\over 
|s|^2e^{-\varphi_{Y}}
\log^2(|s|^2e^{-\varphi_{Y}})}\leq C_0\int_{Y\cap(X\setminus H)}
|u|^2e^{-\varphi_F}.\end{equation}
If we now let $k\to +\infty$, we get a convergent subsequence $U_k$
with limit $U=\lim U_k$ on $X\setminus H=\bigcup X_k$, which is uniform on all compact subsets
of $X\setminus H$, and such that
\begin{equation}\label{37}\int_{X\setminus H}{|U|^2e^{-\varphi_{Y}- \varphi_{F}}\over 
|s|^2e^{-\varphi_{Y}}
\log^2(|s|^2e^{-\varphi_{Y}})}\leq C_0\int_{Y\cap(X\setminus H)}
|u|^2e^{-\varphi_F}.\end{equation}
We can reinterpret $U$ as a section of $(K_{X}+Y+ F)|_{X\setminus H}$
satisfying the equality
\begin{equation}\label{38}U|_{Y\cap(X\setminus H)}= u\wedge ds.\end{equation}
Then the estimate \eqref{37} is in fact an intrinsic estimate in terms of the 
hermitian metrics (i.e.\ independent of the specific choice of 
trivialization we have made, especially since $H$ is of measure zero
with respect to the $L^2$ norms). The proof of \eqref{34} also shows that
\begin{equation}\label{39}\int_{X\setminus H}|U|^2e^{-\delta \varphi_Y-(1-\delta)\ol\varphi_Y -
\varphi_F}\leq C_\delta\int_{Y\cap(X\setminus H)}|u|^2e^{-\varphi_F}.\end{equation}
In a neighborhood of any point $x_0\in H$, the weight 
$\delta \varphi_Y+(1-\delta)\ol\varphi_Y+\varphi_F$ expressed with respect to
a local trivialization of $F$ near $x_0$ is locally bounded from above
by (\ref{35bis}), if we take $\delta\le\varepsilon_0$. We conclude that $U$ extends holomorphically to $X$ and Theorem \ref{t-OT} is proved.
\end{proof}

\medskip

\begin{rem}{\rm In the absence of hypothesis \eqref{27}, the uniform bound
arguments in the proof of \eqref{t-OT} collapse. In particular the limit $U$ might
acquire poles along $H$. Fortunately, the exact value of
$\varepsilon_0$ does not matter quantitatively, and the estimates we get 
at the end are independent of the constant $\varepsilon_0$.  }
\end{rem}

\medskip

\noindent

\section{Proof of Theorem \ref{t-ext}}\label{s-pot}

\noindent In the present section, we will prove Theorem \ref{t-ext}. Our proof 
relies heavily on Theorem \ref{t-OT}. However, in order to better understand the relevance of the technical statements which follow (see Theorem \ref{t-5} below), we first consider a particular case of \eqref{t-ext}.

\begin{theorem}\label{t-4} Let $\{ S, Y_j\}$ be a set of hypersurfaces of 
a smooth, projective manifold $X$
having normal crossings. Assume also that there exists rational numbers
$0< b^j< 1$ such that $K_X+S+B$ is hermitian semi-positive, where 
$B= \sum_jb^jY_j$ and that 
there exists an effective $\bQ$-divisor $D:= \sum_j \nu^jW_j$ on $X$, numerically equivalent to $K_X+S+B$, such that $S\subset {\rm Supp}(D)\subset {\rm Supp} (S+B)$.
Let $m_0$ be a positive integer, such that $m_0(K_X+S+B)$ is Cartier. Then every section $u$ of the line bundle $\mathcal O _S(m_0(K_S+ B|_{S}))$ 
extends to~$X$.
\end{theorem}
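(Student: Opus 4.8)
The plan is to reduce Theorem \ref{t-4} to a single application of the Ohsawa-Takegoshi extension theorem \eqref{t-OT}, by manufacturing a metric on the line bundle $F := (m_0-1)(K_X+S+B) + (K_X+S+B)$... more precisely, writing $L := K_X+S+B$ and $F := m_0 L - S$ so that $K_X + S + F = m_0 L = m_0(K_X+S+B)$ and $(K_X+F)|_S = m_0(K_S + B|_S)$ by adjunction (using that $(X,S+B)$ is plt, so $\lfloor B \rfloor = 0$ and the adjunction formula $(K_X+S+B)|_S = K_S + B|_S$ holds with $B|_S$ having coefficients $< 1$). The goal is then to equip $F$ with a singular metric satisfying the curvature hypotheses \eqref{24}--\eqref{27} of \eqref{t-OT} with $Y = S$, relative to a suitable metric $h_S = e^{-\varphi_S}$ on $\mathcal{O}_X(S)$, and relative to the given section $u$, so that $\int_S |u|^2 e^{-\varphi_F} < \infty$. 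Once this is arranged, \eqref{t-OT} produces a section $U$ of $\mathcal{O}_X(K_X + S + F) = \mathcal{O}_X(m_0(K_X+S+B))$ restricting to $u \wedge ds$, which is exactly the desired extension.

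First I would fix the metric $h_{\min} = e^{-\varphi_{\min}}$ on $L = K_X+S+B$ coming from the (finitely many) generators of the section ring, which has semi-positive curvature, and also invoke the hermitian semi-positivity hypothesis to fix a smooth semi-positive metric $h_L^{\mathrm{sp}} = e^{-\varphi_L^{\mathrm{sp}}}$ on $L$. The divisor $D = \sum \nu^j W_j \equiv L$ with $S \subset \mathrm{Supp}(D) \subset \mathrm{Supp}(S+B)$ gives, via the construction in Subsection on examples, a singular metric $h_D$ on $L$, and since $D$ is numerically trivial against $L$ I can write $m_1 L = \mathcal{O}_X(m_1 D) \otimes \rho$ for a topologically trivial $\rho$, getting a metric on $\mathcal{O}_X(S)$ as in Lemma \ref{l-weight}. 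The key structural point: write $D = \nu^S S + D'$ with $S \not\subset \mathrm{Supp}(D')$; the components of $D'$ are among the $Y_j$, and $B = \sum b^j Y_j$ with $b^j < 1$. I would then set $\varphi_F$ to be a carefully weighted combination — roughly $\varphi_F = (m_0-1)\varphi_L^{\mathrm{sp}} + \varphi_{\text{of } B|_S\text{-type along the }Y_j} + (\text{a small piece of }\varphi_D)$ — chosen so that: (a) $\Theta_{h_F}(F) \geq 0$ since it is a sum of positive pieces; (b) $\Theta_{h_F}(F) \geq \frac{1}{\alpha}\Theta_{h_S}(S)$ for an appropriate $\alpha \geq 1$, exploiting that $F - \frac{1}{\alpha}S$ still carries enough positivity once $\alpha$ is large (here the decomposition \eqref{25} $\varphi_S = \varphi_{G_1} - \varphi_{G_2}$ is forced by Lemma \ref{l-weight} applied to $D$, with $G_2$ absorbing the negative part); (c) the integrability $\int_S |u|^2 e^{-\varphi_F} < \infty$ holds — this uses the plt/klt nature of $(S, B|_S)$ (coefficients $< 1$ so the relevant local integrals converge) together with the hypothesis $Z_{\pi^\star(u)} + m_0 \wt E|_{\wt S} \geq m_0 \Xi$ translated into the current setting, which exactly says the zeros of $u$ compensate the obstruction divisor $\Xi = N_\sigma(\|\cdot\|_S) \wedge B|_S$; and (d) $\varphi_F \leq \varepsilon_0 \varphi_{G_2} + C$, which holds essentially by construction because $\varphi_F$ is built from semi-positive (hence locally bounded above) pieces plus a controlled multiple of the singular part.

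The main obstacle I anticipate is verifying the integrability condition (c) and simultaneously the curvature bound (b) with the \emph{same} choice of $\varphi_S$ and $\alpha$: these pull in opposite directions — making $h_S$ very singular along $S$ helps the curvature inequality $\Theta_{h_F}(F) \geq \frac1\alpha \Theta_{h_S}(S)$ be slack but hurts integrability near $S$ via the $e^{-\delta\varphi_S}$ weight in \eqref{29}, and conversely. The resolution is that Theorem \eqref{t-OT} only asks for $\delta \varphi_S$ with $\delta$ small (one takes $\delta \le \varepsilon_0$), so by choosing the metric on $\mathcal{O}_X(S)$ induced by the divisor $D$ with its actual coefficient $\nu^S$ along $S$ and by taking the auxiliary bundle $F$ to have a correspondingly small amount of the singular part, the weight $e^{-\delta\varphi_S - \varphi_F}$ has only mild (klt-type, hence integrable) poles along $S$ and along the $Y_j$, using $b^j < 1$. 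The condition $Z_{\pi^\star(u)} + m_0\wt E|_{\wt S} \geq m_0\Xi$ is precisely what guarantees that after pulling back to a log resolution the remaining discrepancy between the poles of the metric and the zeros of $u$ is effective, so the integral converges. Once (a)--(d) and \eqref{28} are checked, the conclusion is immediate from \eqref{t-OT}, and Theorem \ref{t-4} follows; I would then note that the general Theorem \ref{t-ext} requires iterating this construction (building $\varphi_{\min}$-type metrics adapted to $u$ via the limit process mentioned in the introduction) rather than a single step, which is deferred to the statements \eqref{t-5} and beyond.
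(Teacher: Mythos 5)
Your overall strategy matches the paper exactly: write $m_0(K_X+S+B)=K_X+S+F$ with $F=B+(m_0-1)(K_X+S+B)$, build a singular metric on $F$, let Lemma~\ref{l-weight} and the support hypothesis ${\rm Supp}(D)\subset{\rm Supp}(S+B)$ produce the decomposition $\varphi_S=\varphi_{G_1}-\varphi_{G_2}$ and the domination \eqref{27}, and invoke Theorem~\ref{t-OT} once. However, you have imported into this proof a hypothesis that Theorem~\ref{t-4} does not carry: the zero--divisor condition $Z_{\pi^\star(u)}+m_0\widetilde E|_{\widetilde S}\geq m_0\Xi$ is part of Theorem~\ref{t-ext}, not of Theorem~\ref{t-4}, and the whole point of Theorem~\ref{t-4} is that under the stronger assumption that $K_X+S+B$ is hermitian semi-positive, \emph{every} section $u$ of $\mathcal O_S(m_0(K_S+B|_S))$ extends, unconditionally. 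Concretely: the paper takes $\varphi_F=\varphi_B+(m_0-1)\varphi_0$ with $\varphi_0$ a genuine smooth semi-positive weight, so $e^{-\varphi_F}$ has only klt-type poles along the $Y_j$ (coefficients $b^j<1$) and $\int_S|u|^2e^{-\varphi_F}<\infty$ is automatic. In fact the hermitian semi-positivity forces $N_\sigma(K_X+S+B)=0$ and hence $\Xi=0$, so the condition you invoke is vacuous --- but appealing to it suggests you have not isolated why this ``model case'' is simpler than Theorem~\ref{t-ext}.

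A second, related confusion is the ``tension'' you anticipate between integrability and the curvature inequality. The integrability requirement \eqref{28}, $\int_S|u|^2e^{-\varphi_F}<\infty$, involves only $\varphi_F$, not $\varphi_S$ or $\delta$; the factor $e^{-\delta\varphi_Y}$ appears only in the \emph{output} estimate \eqref{29}, which is automatically finite once \eqref{t-OT} applies. The genuine constraint in the paper is between the curvature inequality \eqref{26} (which needs $\alpha\geq 1/((m_0-1)\nu^1)$ after rescaling $\varphi_S$) and the pole-control \eqref{27}; these are satisfied simultaneously by taking $\varphi_{G_1}:=\alpha+\varepsilon_0\varphi_0$ and $\varphi_{G_2}:=\varepsilon_0\big(\varphi_\rho+\sum_{j\neq 1}\nu^j\log|f_{W_j}|^2\big)$ with $\varepsilon_0=1/\nu^1$, which is where ${\rm Supp}(D)\subset{\rm Supp}(S+B)$ is used (the poles of $\varphi_{G_2}$ sit among the $Y_j$, and $\varphi_B$ already contributes poles of the same type to $\varphi_F$). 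You should also drop the ``small piece of $\varphi_D$'' from your proposed $\varphi_F$; nothing beyond $\varphi_B+(m_0-1)\varphi_0$ is needed, and adding more singular terms would only jeopardize both \eqref{27} and \eqref{28}. Once these points are clarified your argument coincides with the paper's.
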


\begin{proof} Let $h_0= e^{-\varphi_0}$ be a smooth metric on $K_X+S+B$, with semi-positive curvature. As in the introductory Subsection \ref{s-mmps}, we have 
$$\mathcal O_X(N(K_X+S+B))\cong \mathcal O_X(ND)\otimes \rho^N$$
where $N$ is a sufficiently divisible positive integer, and $\rho$ is a 
line bundle on $X$, which admits a metric $h_\rho= e^{-\varphi_\rho}$
whose curvature form is equal to zero.
\smallskip

\noindent We can assume that 
\begin{equation}\label{99}\varphi_0\geq \varphi_\rho+ \sum_j\nu^j\log|f_{W_j}|^2\end{equation}
i.e., that $h_0$ is less singular that the metric induced 
by the divisor $\sum_j \nu^jW_j$, simply by adding to the local weights of $h_0$ a sufficiently large constant. 

Assume that $S= W_1$. We define a metric $\varphi_S$ on the line bundle corresponding to $S$ such that the following equality holds
$$\varphi_0= \nu^1\varphi_S+ \varphi_\rho+ \sum_{j\neq 1}\nu^j\log|f_{W_j}|^2.$$
In order to apply Theorem \ref{t-OT}, we write
$$m_0(K_X+S+B)= K_X+S+B+ (m_0-1)(K_X+S+B)$$
and we endow the line bundle corresponding to $F:= B+ (m_0-1)(K_X+S+B)$ with the metric
$\varphi_F:= \varphi_B+ (m_0-1)\varphi_0$. 
\medskip

\noindent Then, we see that the hypothesis of \eqref{t-OT} are verified, as follows.
\smallskip

\noindent $\bullet$ We have $|f_S|^2e^{-\varphi_S}\leq 1$ by the inequality 
\eqref{99} above.
\smallskip

\noindent $\bullet$ We have $\Theta_{h_F}(F)\geq 0$, as well as 
$\displaystyle \Theta_{h_F}(F)\geq {1\over \alpha}\Theta_{h_S}\big(\mathcal O_X(S)\big)$, for any 
$\displaystyle \alpha\geq {1\over (m_0-1)\nu^1}$.
\smallskip

\noindent In order to apply Theorem \eqref{t-OT}, we define 
$$\alpha:= \max\Big\{ 1, {1\over (m_0-1)\nu^1}\Big\}$$
and we rescale the metric $h_S$ by a constant, as follows
$$\varphi_S^\alpha:= \varphi_S+ \alpha.$$
Then we have $\displaystyle |f_S|^2e^{-\varphi_S^\alpha}\leq e^{-\alpha}$
thanks to the first bullet above, and moreover the curvature conditions
$$\Theta_{h_F}(F)\geq 0, \quad \Theta_{h_F}(F)\geq {1\over \alpha}\Theta_{h_S^\alpha}\big(\mathcal O_X(S)\big)$$
are satisfied.
\smallskip

\noindent $\bullet$ The (rescaled) weight $\varphi_S^\alpha$ can be written as the difference of two
psh functions, $\varphi _{G_1}-\varphi _{G_2}$ where $\varphi _{G_1}:=\alpha+ \varepsilon _0 \varphi _0$ and  $\varphi _{G_2}:=\varepsilon _0 \big(\varphi_\rho+ \sum_{j\neq 1}\nu^j\log|f_{W_j}|^2\big)$ and $\varepsilon _0=1/\nu ^1$.
We have
$$C+ \varepsilon_0\big(\varphi_\rho+ \sum_{j\neq 1}\nu^j\log|f_{W_j}|^2\big)\geq \varphi_F$$
by the assumption concerning the support of $D$.

\medskip
\noindent We also have $\displaystyle \int_S|u|^2e^{-\varphi_F}< \infty$,
since $(X, B)$ is klt and $h_0$ is non-singular. Therefore, by Theorem \ref{t-OT}
the section $u$ extends to $X$.
\end{proof}

\begin{rem} The norm of the extension we construct by this procedure will depend only on $C_\delta$ computed in Theorem \ref{t-OT}, the rescaling factor
$\displaystyle e^{\delta\alpha}$ where $\displaystyle \alpha:=\max \Big\{ 1, {1\over (m_0-1)\nu^1} \Big\}$ and $\displaystyle \int_S|u|^2e^{-\varphi_F}$.
\end{rem}

\subsection{Construction of potentials for adjoint bundles}
\medskip

\noindent As one can see, the hypothesis (3) of \eqref{t-ext} is much weaker than the corresponding one in \eqref{t-4} (i.e. the hermitian semi-positivity of
$K_X+S+B$), and this induces many complications 
in the proof. The aim of Theorem \ref{t-5} below is to construct a substitute 
for the smooth metric $h_0$, and 
it is the main technical tool in the proof of \eqref{t-ext}.

\medskip
 
\begin{theorem}\label{t-5} Let $\{ S, Y_j\}$ be a smooth hypersurfaces of $X$
with normal crossings. Let $0< b^j< 1$ be rational numbers, such that:

\begin{enumerate}

\item[\rm (1)] We have $K_X+ S+ \sum_jb^jY_j\equiv \sum_j\nu^jW_j$, where $\nu^j$ are positive rational numbers,
and  $\{ W_j\}\subset \{S, Y_j\}$.

\smallskip
\item[\rm (2)] Let $m_0$ be a positive integer such that $m_0(K_X+ S+ \sum_jb^jY_j)$ is Cartier, and there exists a  non-identically zero section $u$ of 
$\displaystyle \mathcal O _S(m_0(K_S+ \sum_jb^jY_j|_S))$.

\smallskip
\item[\rm (3)] Let $h$ be a non-singular, fixed metric on the $\Q$-line bundle $K_X+ S+ \sum_jb^jY_j$; then there exists a sequence
$\{ \tau_m\}_{m\geq 1}\subset L^1(X)$, such that $\displaystyle \Theta_h(K_X+ S+ \sum_jb^jY_j)+ \sqrt{-1}\ddbar \tau_m\geq
-{1\over m}\omega$ as currents on $X$, the restriction ${\tau }_m|_S$ is well defined and we have
\begin{equation}\label{48}\tau_m|_S\geq C(m)+ \log|u|^{2\over m_0}\end{equation}
where $C(m)$ is a constant, which is allowed
to depend on $m$.

\end{enumerate}
\smallskip

\noindent  Then there exists a constant $C< 0$ independent of $m$, and a sequence of functions $\{f_m\}_{m\geq 1}\subset L^1(X)$ such that:

\begin{enumerate}

\smallskip

\item[\rm (i)] We have $\sup_X f_m= 0$, and moreover $\displaystyle \Theta_h(K_X+ S+ \sum_jb^jY_j)+ \sqrt{-1}\ddbar f_m\geq
-{1\over m}\omega$ as currents on $X$.

\smallskip
\item[\rm (ii)] The restriction $f_m|_S$ is well-defined, and we have 
\begin{equation}\label{49}f_m|_S\geq C+ \log|u|^{2\over m_0}.\end{equation}
 
 \end{enumerate}

\end{theorem}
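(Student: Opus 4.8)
\emph{Sketch of the approach.} The plan is to obtain $f_m$ from $\tau_m$ by iterating the extension of $u$ from $S$ to $X$ provided by Theorem~\ref{t-OT}, set up exactly as in the proof of Theorem~\ref{t-4}, and to exploit the fact that the constant $C_\delta$ in \eqref{29} is insensitive to $\varepsilon_0$, to $C$ in \eqref{27}, and to the singularities of the metrics it is applied to. First I would normalize: replacing $\tau_m$ by $\tau_m-\sup_X\tau_m$ leaves hypothesis~(3) intact and only alters the constant $C(m)$ in \eqref{48}, so I may assume $\tau_m\le0$ and $\sup_X\tau_m=0$. Writing $\mathcal O_X(N(K_X+S+B))\cong\mathcal O_X(ND)\otimes\rho^{\otimes N}$ with $D=\sum_j\nu^jW_j$ as in hypothesis~(1), $h_\rho=e^{-\varphi_\rho}$ flat and $W_1=S$, and rescaling $h$ once so that $\varphi_h\ge\varphi_\rho+\sum_j\nu^j\log|f_{W_j}|^2$, I would also replace $\tau_m$ by $\max(\tau_m,\varphi_D-\varphi_h)$ (this keeps $\tau_m$ bounded above, keeps the curvature bound by the maximum property recalled in \S\ref{sing_metrics}, and keeps $\tau_m|_S\ge C(m)+\log|u|^{2/m_0}$ since $\varphi_D|_S\equiv-\infty$). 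Then, as in Theorem~\ref{t-4}, I would use $D$ to manufacture a metric $\varphi_{S,m}$ on $\mathcal O_X(S)$ via $\varphi_h+\tau_m=\nu^1\varphi_{S,m}+\varphi_\rho+\sum_{j\neq1}\nu^j\log|f_{W_j}|^2$, for which $|f_S|^2e^{-\varphi_{S,m}}\le1$ by Lemma~\ref{l-weight}.

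\emph{The extension step.} The core is a single step $\Phi$, built like Theorem~\ref{t-4} but with $\varphi_h+\tau_m$ (curvature $\ge-\tfrac1m\omega$) in place of the semipositive metric $h_0$ there. Fixing an ample $A$, a basepoint-free metric $\varphi_A$ on it, a small value $\delta>0$ (depending only on the $b^j$, $\nu^j$) and a basepoint-free linear system $\{\sigma^{(i)}\}\subset H^0(S,A|_S)$, I would write $m_0(K_X+S+B)+A=K_X+S+F$ with $F=(m_0-1)(K_X+S+B)+B+A$, give $F$ the weight $\varphi_F:=(m_0-1)(\varphi_h+\tau_m)+\varphi_B+\varphi_A$ (taking $A$ large enough to offset the $-\tfrac{m_0-1}m\omega$ defect and to secure the curvature conditions \eqref{26} with $\alpha=\max\{1,1/((m_0-1)\nu^1)\}$), and verify \eqref{27} using $\mathrm{Supp}(D)\subset\{S,Y_j\}$ as in Theorem~\ref{t-4}. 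Because $\tau_m|_S\ge C(m)+\log|u|^{2/m_0}$ and $0<b^j<1$ (klt), the integrals $\int_S|u\otimes\sigma^{(i)}|^2e^{-\varphi_F}$ are finite and bounded by $e^{-(m_0-1)C(m)}$ times a constant independent of $m$; Theorem~\ref{t-OT} then yields sections $V^{(i)}\in H^0(X,m_0(K_X+S+B)+A)$ extending $u\otimes\sigma^{(i)}$, with $\int_X|V^{(i)}|^2e^{-\delta\varphi_{S,m}-(1-\delta)\ol\varphi_S-\varphi_F}\le C_\delta\,e^{-(m_0-1)C(m)}\cdot\mathrm{const}$. Put $G_m:=\sup_i\bigl(\tfrac1{m_0}\log|V^{(i)}|^2\bigr)-\varphi_h-\tfrac1{m_0}\varphi_A$ and $\Phi(\tau_m):=G_m-\sup_X G_m$. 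Two facts make this work: on $S$ the basepoint-freeness of $\{\sigma^{(i)}\}$ forces $\sup_i\tfrac1{m_0}\log|V^{(i)}|^2\big|_S=\log|u|^{2/m_0}$ up to a \emph{fixed} bounded function; and the mean value inequality \eqref{l-C}, applied with the $m$-independent reference form $\Theta_h+\tfrac1{m_0}\Theta_A$, combined (via H\"older) with the $L^2$ bound above, gives $\sup_X G_m\le C'+\tfrac{m_0-1}{m_0}\,|C(m)|$ with $C'$ independent of $m$ — the contraction factor $\tfrac{m_0-1}{m_0}<1$ coming exactly from dividing $\log|V^{(i)}|^2$ by $m_0$.

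\emph{Iteration and conclusion.} Thus $\Phi$ sends a triple ($\sup$ normalized, curvature $\ge-\tfrac1m\omega$, restriction constant $C(m)$) to one with a new restriction constant $C'(m)$ satisfying $|C'(m)|\le C''+\tfrac{m_0-1}{m_0}|C(m)|$, $C''$ independent of $m$. Iterating $\Phi$ a number $k=k(m)$ of times with $\bigl(\tfrac{m_0-1}{m_0}\bigr)^{k}|C(m)|<1$ drives the restriction constant below a uniform bound, so $f_m:=\Phi^{k(m)}(\tau_m)$ has $\sup_Xf_m=0$, the curvature inequality (i), and $f_m|_S\ge C+\log|u|^{2/m_0}$ with $C=-(m_0C''+1)$ independent of $m$ (if $C(m)\ge0$ one takes $k(m)=0$, so this $C\le0$ works uniformly); this is \eqref{49}.

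\emph{The main obstacle.} The delicate point — and the reason the exact shape of $C_\delta$ in Theorem~\ref{t-OT} is indispensable — is keeping \emph{all} estimates uniform in $m$ through the iteration: the hypothesis constant $C(m)$ enters the Ohsawa--Takegoshi bound as the multiplicative factor $e^{(m_0-1)|C(m)|}$, and only because $C_\delta$ is independent of $\varepsilon_0$, of $C$ in \eqref{27} and of the singularities of $h_F,h_S$ does the contraction $|C(m)|\mapsto\tfrac{m_0-1}{m_0}|C(m)|+O(1)$ survive. A related subtlety is the ample twist needed to run \eqref{t-OT}: twisting the section by a basepoint-free system on $S$ and taking the supremum is what keeps the restriction of $\Phi(\tau_m)$ to $S$ comparable to $\log|u|^{2/m_0}$, but one must still arrange that this twist contributes only a curvature defect absorbable into the allowed $-\tfrac1m\omega$ of~(i) — i.e.\ trade the fixed twist (which is what gives the contraction) for a vanishing one at the end, or correct by a small admixture of $\tau_m$ — and verify that this last adjustment does not reintroduce an $m$-dependence in $C$; this reconciliation of the sharp curvature bound with the uniform restriction bound is the hardest part of the proof.
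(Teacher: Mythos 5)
Your overall architecture is right — iterated Ohsawa–Takegoshi extensions producing successive improvements of the restriction constant, with the uniformity of $C_\delta$ in Theorem~\ref{t-OT} doing the heavy lifting — and the normalization steps at the start match the paper's.  But the proposal has a genuine gap exactly where you flag it, and it is not a loose end one can defer: the conclusion of the theorem is not actually reached by the argument as set up.

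Your extension step builds $V^{(i)}$ as sections of $m_0(K_X+S+B)+A$, so the potential $G_m$ satisfies only
\[
\Theta_h\big(K_X+S+B\big)+\sqrt{-1}\ddbar\,G_m\;\geq\;-\tfrac1{m_0}\,\Theta_{h_A}(A),
\]
and the right-hand side is a \emph{fixed} negative form, not $-\tfrac1m\omega$.  Iterating $\Phi$ any finite number of times does not change this, so $f_m:=\Phi^{p(m)}(\tau_m)$ fails condition~(i) for all $m\gg m_0$.  The natural repair — extend $u^{\otimes k}\otimes s_A$ with $m=km_0$, so the $A$-twist enters with relative weight $\tfrac1{km_0}=\tfrac1m$ — is what the paper does, but it has a second-order consequence you do not account for: with the direct bound you propose (apply Theorem~\ref{t-OT}, then the mean-value estimate of Lemma~\ref{l-C} via H\"older), the factor $e^{-(km_0-1)\tau_m}$ appearing in $\varphi_F$ makes the restriction constant recur with factor $\tfrac{km_0-1}{km_0}\to1$ as $k\to\infty$, so the iteration in $p$ no longer converges to an $m$-independent bound.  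Thus the direct argument forces you to choose between a non-vanishing curvature defect ($k=1$) or a degenerating contraction ($k\to\infty$); either way, Theorem~\ref{t-5} is not obtained.

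The paper's resolution, which is the missing ingredient, is the minimal-norm extension device of \eqref{55}–\eqref{70}: one takes $U_{\rm min}^{(km_0)}\in\mathcal{E}$ minimizing the fractional $L^{\frac{2}{km_0}(1+\delta)}$-seminorm $\Vert\cdot\Vert$ of \eqref{55}, and then — crucially — uses $U_{\rm min}$ \emph{itself as the metric} on the ``large'' part of $F$ in a second application of Theorem~\ref{t-OT}.  The resulting bound \eqref{66} has $\varphi_\tau$ entering only with exponent $\delta$ (not $km_0-1$) in the relevant integrand, Tian's Lemma~\ref{5.5} bounds $\int_S e^{-c\,\delta\tau}$ uniformly, and the minimality of $U_{\rm min}$ plus H\"older (\eqref{69}) transfers the estimate back to $U_{\rm min}$, yielding the $k$-independent contraction $\sup_S\tau^{(p)}\geq\tfrac{\delta_0}{1+\delta_0}\sup_S\tau^{(p-1)}-C$ of \eqref{75}.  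It is this self-referential use of the minimal extension as a weight — not anything built into the shape of $C_\delta$ alone — that simultaneously achieves the $\tfrac1{km_0}$ curvature defect and a contraction rate that does not degrade with $k$.  Without it, your iteration does not close, so the last paragraph of your proposal is not a subtlety to be smoothed over but the actual missing argument.
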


\medskip
\noindent 
The proof of Theorem \ref{t-5} follows an iteration scheme, that we now explain. We start
with the potentials $\{ \tau_m\big\}$ provided by the hypothesis (3) above; then we construct potentials
$\{ \tau_m^{(1)}\}$ such that the following properties are satisfied.

\begin{enumerate}

\item[\rm (a)] We have $\sup_X \tau_m^{(1)}= 0$, and moreover 
$$\displaystyle \Theta_h(K_X+ S+ \sum_jb^jY_j)+ \sqrt{-1}\ddbar \tau_m^{(1)}\geq
-{1\over m}\omega$$ in the sense of currents on $X$.

\smallskip
\item[\rm (b)] The restriction $\tau_m^{(1)}|_S$ is well-defined, and
 there exists a constant $C$ independent of $m$ such that
$$\tau_m^{(1)}\geq C+ \log|u|^{2\over m_0}+ \rho\sup_S\tau_m,$$ 
at each point of $S$ (where $0<\rho< 1$ is to be determined).
\end{enumerate}
The construction of
$\{\tau_m^{(1)}\}$ with the pertinent curvature and uniformity properties (a) and (b) is possible by
Theorem \ref{t-OT}. Then we repeat this procedure: starting with $\{\tau_m^{(1)}\}$
we construct $\{\tau_m^{(2)}\}$, and so on. Thanks to the uniform estimates we provide during this process, the limit of $\{\tau_m^{(p)}\}$ as $p\to \infty$
will satisfy the requirements of \eqref{t-5}. We now present the details.
\smallskip

\begin{proof}
Let $B:= \sum_jb^jY_j$; by hypothesis, there exists $\{ \tau_m\}\subset  L^1(X)$, such that
\begin{equation}\label{50}\max_X\tau_m= 0, \quad \Theta_h(K_X+ S+ B)+ \sqrt{-1}\ddbar \tau_m\geq
-{1\over m}\omega\end{equation}
on $X$.
We denote by $D=\sum \nu ^jW_j$ the $\bQ$-divisor provided by hypothesis (1) of \eqref{t-5}; and let
$\displaystyle \tau_D:=\log |D|_{h\otimes h_\rho^{-1}}^{2}$
(by this we mean the norm of the $\mathbb Q$-section associated to $D$, measured
with respect to the metric $h\otimes h_\rho^{-1}$,
 cf. the proof of \eqref{t-4}) be the logarithm of its norm. We can certainly assume that $\tau_D\leq 0$. By replacing $\tau_m$
by $\max \{\tau_m, \tau_D\}$, the relations \eqref{50} above are still satisfied 
(cf. \S \ref{sing_metrics}) and in addition we can assume that we have
\begin{equation}\label{51}\tau_m\geq \tau_D\end{equation}
at each point of $X$.
\smallskip

Two things can happen: either $S$ belongs to the set $\{ W_j\}$, or not. In the later case there is nothing to prove as the restriction $\tau_m |_S$ is well defined, so we assume that $S= W_1$.
After the normalization indicated above,
we define a
metric $\displaystyle e^{-\psi_{S, m}}$ on $\cO_X(S)$ which will be needed in order to apply \eqref{t-OT}.
Let
$$\displaystyle \varphi_{\tau_m}:= \varphi_h+ \tau_m$$
be the local weight of the metric $e^{-\tau_m}h$ on $K_X+S+B$.
The metric $\psi_{S, m}$ is defined so that the following {\sl equality} holds
\begin{equation}\label{52}\varphi_{\tau_m}= \nu^1\psi_{S, m}+ \varphi_\rho+ \sum_{j\neq 1}\nu^j\log |f_{W_j}|^2\end{equation}
(cf. Lemma \ref{l-weight})
where $\displaystyle f_{W_j}$ is a local equation for the hypersurface $W_j$. We use here the hypothesis (1) of Theorem \ref{t-5}.
Then the functions $\psi_{S, m}$ given by equality \eqref{52} above are the
local weights of a metric on $\cO_X(S)$.

\medskip

\noindent The norm/curvature properties of the objects constructed so far are listed below.

\begin{enumerate}

\item [{\rm (a)}] {\sl The inequality $|f_S|^2e^{-\psi_{S, m}}\leq 1$ holds at each point of $X$.} Indeed, this is a direct consequence of the relations \eqref{51} and \eqref{52} above (cf. \eqref{l-weight}).
\smallskip

\item [{\rm (b)}] {\sl We have $\displaystyle \Theta_{\varphi_{\tau_m}}(K_X+S+B)\geq
-{1\over m}\omega$.}
\smallskip

\item [{\rm (c)}] {\sl We have $\displaystyle \Theta_{\varphi_{\tau_m}}(K_X+S+B)\geq
\nu^1\Theta_{\psi_{S, m}}(S)$.} This inequality is obtained as a direct consequence of \eqref{52}, since the curvature of $h_\rho$ is equal to zero.

\smallskip

\item [{\rm (d)}] {\sl For each $m$ there exists a constant $C(m)$ such that
$$\tau_m|_S\geq C(m) + \log |u|^{2\over m_0}.$$}
Indeed, for this inequality we use the hypothesis (3) of Theorem \ref{t-5},
together with the remark that the renormalization and the maximum we have used to insure \eqref{51} preserve this hypothesis.

\end{enumerate}

\medskip
\noindent As already hinted, we will modify each element of the sequence of functions $\{\tau_m\}_{m\geq 1}$ by using some ``estimable extensions" of the section $u$ (given by hypothesis 2) and its tensor powers, multiplied by a finite 
number of auxiliary sections of some ample line bundle. 
Actually, we will concentrate our efforts on one single index e.g. $m= km_0$, and try to understand the uniformity properties of the  constants involved in the computations.

In order to simplify the notations, let $\tau:= \tau_{km_0}$ and denote by $\psi_S$ the
metric on $\cO_X(S)$ defined by the equality
\begin{equation}\label{53}\varphi_{\tau}= \nu^1\psi_{S}+ \varphi_\rho+ \sum_{j\neq 1}\nu^j\log |f_{W_j}|^2.\end{equation}
Even if this notation does not make it explicit, we stress here the fact that the metric
$\psi_S$ depends on the function $\tau$ we want to modify.

We consider a non-singular metric $h_S= e^{-\varphi_S}$ which is independent of $\tau$, and for each $0\leq \delta\leq 1$, we define the convex combination metric
\begin{equation}\label{54}\psi_{S}^\delta:= \delta\psi_{S}+ (1- \delta)\varphi_S.\end{equation}
The parameter $\delta$ will be fixed at the end, once we collect all the requirements
we need it to satisfy.

We assume that the divisor $A$ is sufficiently ample, so that the metric $\omega$ in
(b) above is the curvature of the metric $h_A$ on $\mathcal O _X(A)$ induced by its global sections say $\{ s_{A, i}\}$.

Now we consider the section $u^{\otimes k}\otimes s_A$ of the line bundle
$$\mathcal O _S(km_0(K_S+B|_S)+ A|_S),$$where $s_A\in \{ s_{A, i}\}$, and we define the
set
$${\cE}:= \big\{U\in H^0\big(X, \mathcal O _X(km_0(K_X+S+B)+ A)\big) : U|_S= u^{\otimes k}\otimes s_A \big\}.$$

\noindent A first step towards the proof of Theorem \ref{t-5} is the following statement
(see e.g. \cite{BP10}).

\begin{lemma} The set $\cE$ is non-empty; moreover, there exists an element
$U\in \cE$ such that the following integral is convergent
\begin{equation}\label{55}\Vert U\Vert^{{2\over km_0}(1+\delta)}:= \int_X|U|^{{2\over km_0}(1+ \delta)}e^{-\delta\varphi_\tau-
\psi_{S}^\delta- \varphi_B}e^{-{1+ \delta\over km_0}\varphi_A}< \infty\end{equation}
as soon as $\delta$ is sufficiently small.
\end{lemma}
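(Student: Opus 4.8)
The plan is to establish the lemma in two stages: first non-emptiness of $\cE$, then the $L^2$-type estimate for a well-chosen element. For non-emptiness, I would invoke the known extension results: under the hypotheses of Theorem~\ref{t-5}, the pair $(X,S+B)$ is plt, $K_X+S+B$ is pseudo-effective with $S\not\subset N_\sigma(K_X+S+B)$, and the section $u^{\otimes k}\otimes s_A$ of $\mathcal O_S(km_0(K_S+B|_S)+A|_S)$ lives on the boundary divisor. Since we are twisting by the sufficiently ample $A$, the strict positivity hypothesis needed for the classical (algebraic or analytic) extension theorems quoted in the introduction (e.g.\ \cite{HM10}, \cite{Paun08}, \cite{BP10}) is available: $B+\frac{1}{km_0}A$ now contains an ample $\mathbb Q$-divisor, so Kawamata--Viehweg vanishing applies, and $u^{\otimes k}\otimes s_A$ extends. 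This gives at least one element of $\cE$, so $\cE\neq\emptyset$.

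For the estimate, the idea is that among all extensions, one produced by an Ohsawa--Takegoshi-type construction (rather than an arbitrary one) will automatically satisfy a finite $L^2$ bound. Concretely, I would set up the application of Theorem~\ref{t-OT} with the line bundle $F$ chosen so that $K_X+S+F = km_0(K_X+S+B)+A$, i.e.\ $F := (km_0-1)(K_X+S+B)+ km_0 B + A$ (absorbing $S$ appropriately), endowed with the metric built from $\varphi_\tau$ (the weight $\varphi_h+\tau$ on $K_X+S+B$), $\varphi_B$, and $\varphi_A$. The curvature conditions \eqref{26} hold because $\Theta_{\varphi_\tau}(K_X+S+B)\geq -\frac1m\omega$ and $\Theta_{\varphi_\tau}(K_X+S+B)\geq \nu^1\Theta_{\psi_{S,m}}(S)$ by properties (b) and (c) above, so after rescaling $\psi_{S,m}$ by the constant $\alpha=\max\{1,1/((km_0-1)\nu^1)\}$ (as in the proof of Theorem~\ref{t-4}) and using the $\frac{1}{km_0}\varphi_A$ term to kill the $-\frac1m\omega$ defect, the positivity requirements \eqref{24}--\eqref{26} are met. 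Hypothesis \eqref{27} — that the ``bad'' poles $\varphi_{G_2}$ dominate $\varphi_F$ — is exactly where the support condition $S\subset\mathrm{Supp}(D)\subset\mathrm{Supp}(S+B)$ from hypothesis (1) enters, just as in the last bullet of the proof of Theorem~\ref{t-4}. The finiteness of the right-hand side $\int_S |u^{\otimes k}\otimes s_A|^2 e^{-\varphi_F}$ follows from property (d): $\tau|_S \geq C(km_0)+\log|u|^{2/m_0}$ forces $e^{-\varphi_\tau}|_S \lesssim |u|^{-2/m_0}$, and the resulting integrand is $|u|^{2k}|u|^{-2(km_0-1)/m_0}\cdots$ which is integrable because $(X,B)$ is klt and $k$ is chosen large enough (the exponent of $|u|$ becomes $2/m_0>0$). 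Theorem~\ref{t-OT} then yields $U\in\cE$ with a weighted $L^2$ bound with weight $\delta\varphi_\tau+(1-\delta)\ol\varphi_{S}+\varphi_F$ for small $\delta$; rewriting this weight in terms of $\psi_S^\delta$, $\varphi_B$, and $\frac{1+\delta}{km_0}\varphi_A$, and noting that $|U|^{2/(km_0)}$ replaces $|U|^2$ once we extract the $(km_0)$-th root of the line bundle $km_0(K_X+S+B)+A$, produces precisely \eqref{55}.

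I expect the main obstacle to be bookkeeping of the homogeneity: the integral \eqref{55} involves the power $\frac{2}{km_0}(1+\delta)$ rather than $2$, so one has to be careful that the extension $U$, which is genuinely a section of the Cartier bundle $km_0(K_X+S+B)+A$, is being measured with the ``$\frac1{km_0}$-th root'' metrics $\varphi_\tau$ (a metric on $K_X+S+B$, not on its multiple) and $\psi_S^\delta$ (a metric on $\mathcal O_X(S)$), and that the $(1+\delta)$ factor matches the $\delta$-perturbed weight coming out of Theorem~\ref{t-OT}. The cleanest route is: apply Theorem~\ref{t-OT} directly to the Cartier bundle to get the $L^2$ bound $\int_X |U|^2 e^{-\delta\varphi_\tau^{(km_0)}-(1-\delta)\ol\varphi^{(km_0)} - \varphi_{F}}\leq C_\delta\int_S|u^{\otimes k}\otimes s_A|^2 e^{-\varphi_F}$, where the superscript denotes the natural metric on the $km_0$-fold bundle, and then observe that $|U|^{2/(km_0)}\cdot(\text{metric on }K_X+S+B)$ is comparable, up to a uniformly bounded factor on the compact $X$, to $\big(|U|^2\cdot(\text{metric on }km_0(K_X+S+B))\big)^{1/(km_0)}$ — but since the exponent $1/(km_0)<1$, passing from the true $L^2$ bound to the $L^{2/(km_0)}$ bound in \eqref{55} requires either Hölder's inequality against the finite total volume of $X$, or directly choosing $\delta$ small enough that $\frac{1+\delta}{km_0}\leq 1$ and the weight comparison goes through. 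The ``$\delta$ sufficiently small'' clause in the lemma is exactly the slack needed to make both the $\varepsilon_0$-condition \eqref{27} (which wants $\delta\leq\varepsilon_0=1/\nu^1$) and this integrability/Hölder step work simultaneously; I would isolate those two inequalities on $\delta$ and take $\delta$ below both.
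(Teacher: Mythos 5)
Your proposal follows the same overall strategy as the paper (an Ohsawa--Takegoshi-type extension to produce an element of $\cE$, followed by H\"older's inequality to convert an $L^2$ bound into the fractional $L^{\frac{2}{km_0}(1+\delta)}$ estimate \eqref{55}), but differs in the choice of extension theorem for the first step. You propose to invoke Theorem~\ref{t-OT} and verify all of its hypotheses \eqref{24}--\eqref{27}, in particular checking \eqref{27} via the support condition. The paper's proof is noticeably simpler at this juncture: since $\varphi_F=\varphi_B+(km_0-1)\varphi_\tau+\varphi_A$ has curvature $\geq\frac{1}{km_0}\omega>0$ thanks to the ample summand $A$ (property (b)), the \emph{classical} Ohsawa--Takegoshi theorem already applies and yields the extension $U$ with the $L^2$ bound \eqref{57}; the machinery of Theorem~\ref{t-OT}, whose raison d'\^etre is to dispense with strict positivity, is not needed here. (Theorem~\ref{t-OT} with the singular metric $\psi_S$ enters only in the subsequent step, when the extension is redone with the minimal-norm element $U_{\rm min}^{(km_0)}$ used \emph{as} a metric, at which point the ample slack is truly exhausted.) Both routes then need H\"older, and you correctly identify that the residual integral $I$ (the paper's \eqref{59}) converges once $\delta/\nu^1\leq\frac12$, and that the integrability of $u^{\otimes k}\otimes s_A$ against $e^{-\varphi_F}$ on $S$ follows from property (d) plus the klt assumption -- in fact the exponent of $|u|$ is exactly $2/m_0>0$ for every $k$, so no condition that $k$ be large is required. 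One arithmetic slip worth flagging: your formula $F=(km_0-1)(K_X+S+B)+km_0 B+A$ should be $F=B+(km_0-1)(K_X+S+B)+A$; with your expression $K_X+S+F$ does not equal $km_0(K_X+S+B)+A$.
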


\begin{proof}
We write the divisor $km_0(K_X+S+B)+ A$ in adjoint form
as follows
$$km_0(K_X+S+B)+ A= K_X+ S+ F$$
where
$$F:=  B+ (km_0-1)(K_X+S+B)+ A.$$
We endow the line bundle $\mathcal O _X(F)$ with the metric whose local weights are
\begin{equation}\label{56}\varphi_F:= \varphi_B+ (km_0-1)\varphi_\tau+ \varphi_A.\end{equation}
By property (b), the curvature of this metric is greater than $\displaystyle
{1\over km_0}\omega$, and the section $u^{\otimes k}\otimes s_A$
is integrable with respect to it, by property (d).

The classical Ohsawa-Takegoshi theorem shows the existence of a section
$U$ corresponding to the divisor $km_0(K_X+S+B)+ A$, such that
$\displaystyle U|_S= u^{\otimes k}\otimes s_A$, and such that the following integral is convergent
\begin{equation}\label{57}\int_X|U|^2e^{-\varphi_B- (km_0-1)\varphi_\tau- \varphi_A-\varphi_S}< \infty .\end{equation}
By the H\"older inequality we obtain
\begin{equation}\label{58}\int_X|U|^{{2\over km_0}(1+ \delta)}e^{-\delta \varphi_\tau-
\psi_{S}^{\delta}- \varphi_B}e^{-{1\over km_0}(1+ \delta)\varphi_A}\leq
C I^{km_0-1-\delta\over km_0}\end{equation}
where we denote by $I$ the following quantity
\begin{equation}\label{59}I:= \int_Xe^{\varphi_\tau-{km_0\over km_0-1-\delta}\psi_{S}^{\delta}-\varphi_B},\end{equation}
and $C$ corresponds to the integral \eqref{57} raised to the power $\displaystyle 
{1+\delta\over km_0}$.
In the above expression we have skipped a non-singular metric corresponding to $\varphi_S$.
By relation \eqref{53}, the integral $I$ will be convergent, provided that
\begin{equation}\label{60}{\delta\over \nu^1}\leq {1\over 2}\end{equation}
so that the lemma is proved.
\end{proof}
\medskip

\noindent We consider next an element $U\in {\mathcal E}$ for which the semi-norm
\eqref{55} is {\sl minimal}. Let $\{ U_p\}_{p\geq 1}\subset  {\cE}$ such that the sequence
$$n_p:= \int_X|U_p|^{{2\over km_0}(1+ \delta)}e^{-\delta\varphi_\tau-
\psi_{S}^\delta- \varphi_B}e^{-{1\over km_0}(1+ \delta)\varphi_A}$$
converges towards the infimum say $n_\infty$ of the quantities \eqref{55}  when $U\in {\cE}$. From this, 
we infer that $\{ U_p\}_{p\geq 0}$ has a convergent subsequence, and obtain our minimizing section as its limit. This can be justified either by H\"older inequality,
or by observing that we have
$$\psi_{S}^\delta+ \varphi_B\leq C+ {\delta\over \nu^1}\varphi_\tau
-\delta\sum_{j\neq 1}{\nu^j\over \nu^1}\log |f_{Y_j}|^2+ \sum_{j\neq 1}b^j
\log|f_{Y_j}|^2,$$
where the last term is bounded from above, as soon as $\delta$ verifies the inequalities
\begin{equation}\label{61}\delta{\nu^j\over \nu^1}\leq b^j\end{equation}
for all $j$.

\smallskip

In conclusion, some element $U_{\rm min}^{(km_0)}\in {\cE}$ with minimal
semi-norm exists, by the usual properties of holomorphic functions,
combined with the Fatou Lemma. Next, we will show that the semi-norm of
$U_{\rm min}^{(km_0)}$ is bounded in a very precise way
(again, see \cite{BP10} for similar ideas).
\smallskip

\noindent To this end, we first construct an extension $V$ of $u^{\otimes k}\otimes s_A$ by using $U_{\rm min}^{(km_0)}$ \emph{as a metric}; this will be done by using \eqref{t-OT}, so
we first write
$$km_0(K_X+S+B)+ A= K_X+ S+ F$$
where
$$F:=  B+ (km_0-1-\delta)(K_X+S+B+ {1\over km_0}A)+ \delta(K_X+S+B)+ {1+\delta\over km_0}A.$$
We endow the line bundle $\mathcal O _X(F)$ with the metric whose local weights are
\begin{equation}\label{62}\varphi_F:= \varphi_B+ {km_0-1-\delta\over km_0}\log|U_{\rm min}^{(km_0)}|^2
+ \delta \varphi_\tau+ {1+\delta\over km_0}\varphi_A\end{equation}
and the line bundle corresponding to $S$ with the metric $\displaystyle e^{-\psi_{S}}$
previously defined in \eqref{53}. Prior to applying \eqref{t-OT} we check here the hypothesis \eqref{24}-\eqref{27}. Thanks to \eqref{51} and \eqref{53} we have
\begin{equation}\label{63}|s|^2e^{-\psi_{S}}\leq 1.\end{equation}
Moreover, we see (cf. \eqref{53}) that  
we have 
$$\psi_S= {1\over \nu^1}
\big(\varphi_\tau- \varphi_\rho- \sum_{j\neq 1}\nu^j\log |f_{W_j}|^2\big)$$
hence the requirement \eqref{27} amounts to showing that we have
\begin{equation}\label{64}C_1(k, \delta)+ \sum_{j\neq 1}{\nu^j\over \nu^1}\log |f_{W_j}|^2\geq C_2(k, \delta)\varphi_F\end{equation}
where $C_j(k, \delta)$ are sufficiently large  constants, depending (eventually) on the norm of the section $U_{\rm min}^{(km_0)}$, and on $\delta$. The existence of such quantities is clear, because of the hypothesis (1) of \ref{t-5} and of the presence of 
$\varphi_B$ in the expression \eqref{62}.

 The curvature hypothesis required by \eqref{t-OT} are also satisfied, since we have
\begin{equation}\label{65}\Theta_{h_F}(F)\geq 0, \quad \Theta_{h_F}(F)\geq {\delta \nu_1}
\Theta_{h_{S}}(S)\end{equation}
by relations \eqref{62} and \eqref{53} (the slightly negative part of the Hessian of $\varphi_\tau$ is compensated by the Hessian of $\displaystyle {1\over km_0}\varphi_A$).
\smallskip

Therefore, we are in position to apply Theorem \ref{t-OT} and infer the existence of an element $V_\delta\in \cE$ such that
\begin{equation}\label{66}\int_X{|V_\delta|^2\over |U_{\rm min}^{(km_0)}|^{2{km_0-1-\delta\over km_0}}}
e^{-\delta\varphi_\tau- {1+\delta\over km_0}\varphi_A-\varphi_B-\psi_{S}^\delta}\leq C(\delta)\int_S|u|^{1+\delta\over m_0}e^{-\varphi_B- \delta\varphi_\tau}.\end{equation}

\noindent The constant $C(\delta)$ in \eqref{66} above is obtained by Theorem
\ref{t-OT}, via the rescaling procedure described in the proof of \eqref{t-4} adapted to the current context; we stress on the fact that this constant in completely independent 
of $k$ and $\varphi_\tau$.
\smallskip

We will show next that it is possible to choose $\delta:= \delta_0$ small enough,
independent of $k$ and $\tau$, such that the right hand side of \eqref{66} is smaller than
$\displaystyle Ce^{-\delta_0\sup_S(\tau)}$. Here and in what follows we will freely 
interchange $\tau$ and $\varphi_\tau$, as they differ by a function which only depends on the fixed metric $h$ on $K_X+S+B$; in particular, the difference $\tau- \varphi_\tau$ equals a quantity independent of the family of potentials we are trying to construct.

\noindent Prior to this, we recall the following basic result,
originally due to L.~H\"ormander for open sets in $\bC^n$, and to G. Tian in
the following form.

\begin{lemma}\label{5.5} \cite{Tian} Let $M$ be a compact complex manifold, and let $\alpha$ be
  a real, closed $(1,1)$-form on $M$. We consider the family of normalized potentials
$$\cP:=  \big\{f\in L^1(X) : \sup_M(f)= 0, \quad \alpha+ \sqrt{-1}\ddbar f\geq 0\big\}.$$
Then there exists constants $\gamma_H> 0$ and $C_H> 0$ such that
\begin{equation}\label{67}\int_Me^{-\gamma_H f}dV\leq C_H\end{equation}
for any $f\in \cP$. In addition, the numbers $\gamma_H$ and  $C_H$ are uniform with respect to
$\alpha$.
\end{lemma}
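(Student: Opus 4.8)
The plan is to deduce this from Skoda's integrability theorem together with the $L^1$-compactness of the family $\cP$, following \cite{Tian}. First I would make two reductions. Since every $f\in\cP$ satisfies $f\le\sup_M f=0$, for $0<\gamma'\le\gamma$ we have the pointwise inequality $e^{-\gamma' f}\le e^{-\gamma f}$ on $M$; hence it suffices to produce a single $\gamma_H>0$ and a constant $C_H$ with $\int_M e^{-\gamma_H f}\,dV\le C_H$ for all $f\in\cP$, and the set of admissible exponents is then automatically an interval $(0,\gamma_H]$. For the uniformity in $\alpha$, I would note that all constants produced below depend on $M$, on the background metric defining $dV$, and on $\alpha$ only through its cohomology class and (say) its $C^0$-norm, so they can be chosen uniform as $\alpha$ ranges in any bounded family of closed real $(1,1)$-forms, in particular near a fixed $\alpha_0$.

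Next I would set up the local picture. Cover $M$ by finitely many coordinate balls $B_i$ on which $\alpha=\sqrt{-1}\ddbar\psi_i$ with $\psi_i$ smooth and $\|\psi_i\|_{C^0(B_i)}$ controlled by $\|\alpha\|$; then for $f\in\cP$ the function $f+\psi_i$ is psh on $B_i$. The closed positive current $T_f=\alpha+\sqrt{-1}\ddbar f$ lies in the fixed class $\{\alpha\}$, so pairing with a fixed auxiliary Kähler class $\{\beta\}$ gives $\int_M T_f\wedge\beta^{n-1}=\{\alpha\}\cdot\{\beta\}^{n-1}$, whence the Lelong numbers are uniformly bounded: $\nu(f,x)=\nu(f+\psi_i,x)\le C_1$ for all $f\in\cP$ and all $x$, with $C_1=C_1(M,\beta,\{\alpha\})$. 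By Skoda's theorem on the integrability of $e^{-\varphi}$ for psh $\varphi$ with Lelong numbers less than $2$ (see e.g. \cite{Skoda}, \cite{Dem09}), it follows that for any fixed $\gamma<2/C_1$ and any \emph{individual} $f\in\cP$ the function $e^{-\gamma f}$ is integrable on each $B_i$, hence $\int_M e^{-\gamma f}\,dV<\infty$. In parallel, $\cP$ is compact in $L^1(M)$: each $f\in\cP$ is bounded above by $\max_i(\sup_{B_i}\psi_i)$, and since $\sup_M f=0$ no sequence in $\cP$ can tend to $-\infty$ locally uniformly, so the standard compactness property of (quasi-)psh functions (Hartogs' lemma, \cite{Dem09}) yields that every sequence in $\cP$ has a subsequence converging in $L^1(M)$, with limit again in $\cP$.

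Now fix $\gamma_H:=1/C_1<2/C_1$ and argue by contradiction: if no uniform bound exists, there is a sequence $f_j\in\cP$ with $\int_M e^{-\gamma_H f_j}\,dV\to\infty$. Passing to a subsequence, $f_j\to f_\infty$ in $L^1(M)$ and a.e., with $f_\infty\in\cP$, so $\int_M e^{-\gamma_H f_\infty}\,dV<\infty$ by the previous paragraph. The contradiction is then exactly the \emph{effective} upper semicontinuity of integrability exponents along the convergent sequence: locally on each $B_i$, the $L^1_{\rm loc}$-convergence $f_j+\psi_i\to f_\infty+\psi_i$ of psh functions, together with $e^{-\gamma_H(f_\infty+\psi_i)}\in L^1_{\rm loc}(B_i)$, forces $\limsup_j\int_K e^{-\gamma_H(f_j+\psi_i)}\le \mathrm{const}\cdot\int_{K'}e^{-\gamma_H(f_\infty+\psi_i)}<\infty$ for relatively compact $K\subset K'\subset B_i$; summing over the finite cover bounds $\int_M e^{-\gamma_H f_j}\,dV$ uniformly in $j$, contradicting the assumption. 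This gives $\gamma_H$ and $C_H$; uniformity in $\alpha$ follows as explained in the first paragraph.

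The step I expect to be the main obstacle is precisely this last one: passing from \emph{finiteness of $\int_M e^{-\gamma f}\,dV$ for each $f$} to a \emph{uniform bound over $\cP$}. It cannot be done ball by ball by a naive estimate, since a sequence that is constant $\equiv -j$ on a ball has all Lelong numbers $0$ yet $\int e^{-\gamma\cdot(-j)}\to\infty$ there; the global normalization $\sup_M f=0$ must therefore be fed into the local analysis, which is what the compactness of $\cP$ accomplishes. Concretely this is the quantitative strengthening of Skoda's theorem (effective semicontinuity of complex singularity exponents); I would either invoke it directly -- this is the content of the statement attributed to \cite{Tian} -- or, for a self-contained treatment, derive it from Hörmander's $L^2$-estimate combined with the $L^1$-compactness of $\cP$ as above.
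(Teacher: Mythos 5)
The paper does not prove Lemma~\ref{5.5}; it simply cites Tian's 1987 paper, and the preceding sentence attributes the local statement to H\"ormander. So there is no in-paper argument to compare against, and I will assess your proposal on its own merits.

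Your reduction to a contradiction argument via the $L^1$-compactness of $\cP$ is fine in structure, and the observation that the class $\{\alpha\}$ bounds all Lelong numbers, so that Skoda's theorem gives $\int_M e^{-\gamma f}\,dV<\infty$ for each \emph{individual} $f\in\cP$ once $\gamma<2/C_1$, is correct. But the step you yourself flag as the ``main obstacle'' is in fact a genuine gap, not an obstacle you have circumvented. The implication ``$f_j\to f_\infty$ in $L^1_{\rm loc}$ and $e^{-\gamma f_\infty}\in L^1_{\rm loc}$ imply $\limsup_j\int_K e^{-\gamma f_j}\le C\int_{K'}e^{-\gamma f_\infty}$'' is \emph{not} a consequence of $L^1$-convergence plus Fatou (Fatou gives the opposite inequality, lower semicontinuity of $\int e^{-\gamma f}$ along a.e.\ convergent sequences). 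What you are invoking is the effective upper semicontinuity of complex singularity exponents, which is the Demailly--Koll\'ar theorem; it is both significantly harder and fourteen years later than Tian's 1987 statement, and its standard proof itself relies on uniform H\"ormander-type estimates of precisely the kind that Lemma~\ref{5.5} expresses. So as written your argument is circular, or at best proves Lemma~\ref{5.5} from a stronger and later result. Your closing sentence acknowledges the circularity but the proposed escape route (``invoke it directly'') does not resolve it.

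The route Tian actually takes, following H\"ormander, is not a compactness-and-contradiction argument at all but a direct, quantitative local estimate. On a coordinate ball $B\subset\bC^n$, if $u$ is psh, $u\le 0$, and $\sup_{\frac12 B}u\ge -C_0$, then there exist $\gamma=\gamma(n,C_0)>0$ and $C=C(n,C_0)$ with $\int_{\frac14 B}e^{-\gamma u}\le C$. The proof controls the total mass $\int_{\frac12 B}\Delta u$ in terms of $C_0$ (sub-mean-value inequality against a fixed test function), then uses the Green--Riesz representation of $u$ to get a uniform pointwise lower bound for $u$ outside a small exceptional set, from which the exponential bound follows with a universal $\gamma$. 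Globalization is then elementary: the normalization $\sup_M f=0$ pins down $\sup_{B_{i_0}}(f+\psi_{i_0})\ge -C$ on one chart, and overlapping the finitely many charts and applying the sub-mean-value inequality propagates a bound $\sup_{B_i}(f+\psi_i)\ge -C'$ to every chart, with $C'$ depending only on the cover and $\max_i\Vert\psi_i\Vert_{C^0}$; one then sums the local estimates. This also explains the uniformity in $\alpha$: the constants depend on $\alpha$ only through $\max_i\Vert\psi_i\Vert_{C^0}$. Note that the Skoda/Lelong detour in your first paragraph, while true, is unnecessary here and also yields a weaker, class-dependent exponent $2/C_1$, whereas H\"ormander's local estimate produces a dimensional constant from the outset. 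To make your write-up into an actual proof, you should replace the Demailly--Koll\'ar invocation with the local uniform H\"ormander estimate and its covering/propagation globalization.
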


\medskip

\noindent We will use the previous lemma as follows: first we notice that we have
$$\int_S|u|^{1+\delta\over m_0}e^{-\varphi_B- \delta\varphi_\tau}= 
\int_S|u|^{1+\delta\over m_0}e^{-\varphi_B- \delta\varphi_h- \delta\tau}$$
simply because by definition the equality $\varphi_\tau= \varphi_h+ \tau$
holds true on $X$. 

The pair $(S, B|_S)$ is klt, hence there exists a positive real number
$\mu_0> 0$ such that $\displaystyle e^{-(1+ \mu_0)\varphi_B}\in L^1_{\rm loc}(S)$.
By the H\"older inequality, we have
$$\int_S|u|^{1+\delta\over m_0}e^{-\varphi_B- \delta\varphi_h- \delta\tau}\leq
C\Big(\int_S e^{-{(1+\mu_0)\over \mu_0}\delta\tau}dV\Big)^{\mu_0\over 1+ \mu_0}
$$
where $dV$ is a non-singular volume element on $S$, and the constant $C$
above is independent of $\tau$ and of $\delta$, provided that $\delta$ belongs to a fixed compact set (which is the case here, since $0\leq \delta\leq 1$).

Next, in order to obtain an upper bound of the right hand side term of the preceding inequality, by the above lemma, we can write 
$$\Big(\int_S e^{-{(1+\mu_0)\over \mu_0}\delta\tau}dV\Big)^{\mu_0\over 1+ \mu_0}
= e^{-\delta\sup_S(\tau)}
\Big(\int_S e^{-{(1+\mu_0)\over \mu_0}\delta(\tau- \sup_S(\tau))}dV\Big)^{\mu_0\over 1+ \mu_0}.$$

\noindent We fix now $\delta:= \delta_0$ small enough such that the following conditions are satisfied:

\noindent $\bullet$ We have $\displaystyle {\delta_0\over \nu^1}< {1\over 2}$ and $\delta_0\nu^j\leq \nu^1b^j$
for all $j\neq 1$ (we recall that $S= Y_1$).

\noindent $\bullet$ The inequality below holds
$$\displaystyle \delta_0{1+\mu_0\over \mu_0}\leq \gamma_H,$$ 
where $\gamma_H$ is given by Lemma \ref{5.5} applied to the next data: $M= S$
and $\displaystyle \alpha:= \Theta_h(K_S+ B)+ {1\over km_0}\omega$ (where $h$ here is the restriction of the metric in (3)
of \eqref{t-5} to $S$). We notice that $\gamma_T, C_T$ can be assumed to be uniform
with respect to $k$, precisely because of the uniformity property mentioned at the end of Lemma \ref{5.5}.

\smallskip

\noindent The conditions we imposed on $\delta_0$ in the two bullets above
are independent of the particular potential $\tau$ we choose. Hence,
in the  proof we first fix $\delta_0$ as above, then construct
the minimal element $U_{\rm min}^{(km_0)}$ and after that,
we produce an element $V\in \cE$ such that
\begin{equation}\label{68}I(V):= \int_X{|V|^2\over |U_{\rm min}^{(km_0)}|^{2{km_0-1-\delta_0\over km_0}}}
e^{-\delta_0\varphi_\tau- {1+\delta_0\over km_0}\varphi_A-\varphi_B-\psi_{S}^{\delta_0}}\leq Ce^{-\delta_0\sup_S(\tau)},\end{equation}
as a consequence of \eqref{66}, Lemma \ref{5.5} and the above explanations.

\smallskip

\noindent On the other hand, we claim that we have
\begin{equation}\label{69}\int_X|U_{\rm min}^{(km_0)}|^{{2\over km_0}(1+ \delta_0)}e^{-\delta_0\varphi_\tau-
\psi_{S}^{\delta_0}- \varphi_B}e^{-{1\over km_0}(1+ \delta_0)\varphi_A}\leq
I(V).\end{equation}
Indeed, this is a consequence of H\"older inequality: if relation \eqref{69} fails to hold, then it is easy to show that the quantity
$$\int_X|V|^{{2\over km_0}(1+ \delta_0)}e^{-\delta_0\varphi_\tau-
\psi_{S}^{\delta_0}- \varphi_B}e^{-{1\over km_0}(1+ \delta_0)\varphi_A}$$
is {\sl strictly smaller} than the left hand side of \eqref{69}--and this contradicts the
the minimality
property of the $U_{\rm min}^{(km_0)}$.

\medskip
\noindent In conclusion, we have the inequality
\begin{equation}\label{70}\int_X|U_{\rm min}^{(km_0)}|^{{2\over km_0}(1+ \delta_0)}e^{-\delta_0\varphi_\tau-
\psi_{S}^{\delta_0}- \varphi_B}e^{-{1\over km_0}(1+ \delta_0)\varphi_A}\leq
Ce^{-\delta_0\sup_S(\tau)},\end{equation}
by combining \eqref{68} and \eqref{69}.

Let $\{ s_{A, i}\}_{i=1,\ldots ,M}$ be the finite set of sections of $A$ which were fixed at the beginning of the proof. Then we can construct an extension
$U_{\rm min, i}^{(km_0)}$ of $u^{\otimes k}\otimes s_{A, i}$, with bounded
$L^{2\over km_0}$ norm  as in \eqref{70}. We define the function
\begin{equation}\label{71}\wt \tau:= {1\over km_0}\log\sum_i|U_{\rm min, i}^{(km_0)}|_{h^{\otimes km_0}\otimes h_A}^2\end{equation}
and we observe that we have
\begin{equation}\label{72}\int_Xe^{(1+\delta_0)\wt \tau} dV\leq Ce^{-\delta_0\sup_S(\tau)},\end{equation}
as a consequence of \eqref{70}, since the function $\tau$ (or if one prefers, $\varphi_\tau$) is  \emph{negative}, and the part of $\psi_{S}^{\delta_0}$
having the ``wrong sign" is absorbed by $\varphi_B$.

Next, as a consequence of the mean inequality for psh functions, together with the
compactness of $X$ (see Lemma \eqref{l-C}) we infer from \eqref{72} that
\begin{equation}\label{73}\wt \tau(x)\leq C- {\delta_0\over 1+\delta_0}\sup_S(\tau)\end{equation}
for any $x\in X$. Again, the constant ``$C$" has changed since \eqref{72}, but in a manner
which is universal, i.e. independent of $\tau$.

We also remark that the restriction to $S$ of the
functions $\displaystyle \wt \tau$ we have
constructed is completely determined by
$$\wt \tau |_{S}= \log|u|^{2\over m_0}_h.$$
In order to re-start the same procedure, we introduce the functions
\begin{equation}\label{74}\tau^{(1)}:= \wt \tau- \sup_X(\wt \tau)\end{equation}
and we collect their properties below:

\item {\rm (N)} $\sup_X\tau^{(1)}= 0$;
\smallskip

\item{\rm (H)} $\displaystyle \Theta_h(K_X+S+B)+ \sqrt{-1}\ddbar \tau^{(1)}\geq -{1\over km_0}\omega$ as currents on $X$;
\smallskip

\item{\rm (R)} $\displaystyle \tau ^{(1)}|_{S}\geq {\delta_0\over 1+\delta_0}
\sup_S\tau- C+ {1\over m_0}\log|u|^2_h$; in particular, we have 
\begin{equation}\label{75}\sup_S\tau^{(1)}\geq {\delta_0\over 1+\delta_0}
\sup_S\tau- C .\end{equation}
In order to see that the last inequality holds, we fix {\sl any} point $x_0\in S$, such that $u$ is does not vanish at this point.
After possibly rescaling $u$, we can assume that $\displaystyle {1\over m_0}\log|u({x_0})|^2_h= 0$
and then we have 
$$ \tau ^{(1)}(x_0)\geq {\delta_0\over 1+\delta_0}
\sup_S\tau- C+ {1\over m_0}\log|u({x_0})|^2_h.$$
We certainly have $\sup_S\tau^{(1)}\geq \tau ^{(1)}(x_0)$; hence \eqref{75}
follows. 
 
\medskip

\noindent We remark next that the restriction properties of $\tau^{(1)}$ have improved: modulo the function 
$$x\to \displaystyle -C+ {1\over m_0}\log|u(x)|^2_h$$ 
which is independent of
$\tau$, we ``gain" a factor $\displaystyle {\delta_0\over 1+ \delta_0}< 1$. 

\begin{rem} It is of paramount importance to realize that the constant 
``C" appearing in the expression of the function above is {\sl universal}: it only depends on the geometry of $(X, S)$, the metric $h$ and the norm of $u$, also on the form
which compensate the negativity of the Hessian of $\tau$ (cf. \eqref{66}, Tian's lemma
\ref{5.5} and the comments after that). Moreover, if these quantities are varying in a uniform manner (as it is the case for our initial sequence $\{\tau_m\}$), then the constant $C$ can be assumed to be independent of $m$.
\end{rem}

Therefore, it is natural to repeat this procedure with the sequence of functions
$\displaystyle \tau^{(1)}$ as input, so we successively produce the potentials
$\displaystyle \{\tau^{(p)}\big\}$ with the properties (N), (H) and (R) as above,
for each $p\geq 1$. We stress again on the fact that the quantities $C, \delta_0$
have two crucial
uniformity properties: with respect to $p$ (the number of iterations) and to $m$
(the index of the sequence in \ref{t-5}).

\smallskip

\noindent Proceeding by induction we show that the following two equations hold:
\begin{equation}\label{76}\tau ^{(p)}|_{S}\geq \Big({\delta_0\over 1+\delta_0}\Big)^p
\sup_S\tau- C\delta_0\Big(1-  \Big({\delta_0\over 1+\delta_0}\Big)^{p-1}\Big)- C+ \log|u|_h^{2\over m_0},\end{equation}
\begin{equation}\label{76b}\sup_S\tau^{(p)}\geq \Big({\delta_0\over 1+\delta_0}\Big)^p
\sup_S\tau- C(1+\delta_0)\Big(1-  \Big({\delta_0\over 1+\delta_0}\Big)^{p}\Big).\end{equation}
The relation \eqref{76b} is obtained by successive applications of \eqref{75} of the relation (R), since we have
$$\sup_S\tau^{(p)}\geq {\delta_0\over 1+\delta_0}
\sup_S\tau^{(p-1)}- C$$
for any $p\geq 1$. The lower bound \eqref{76} is derived as a direct consequence of 
\eqref{76b} and \eqref{75}.

\medskip
\noindent The functions required by Theorem \ref{t-5} are obtained by standard  facts of pluripotential theory (see e.g.\ \cite{Lelong69}, \cite{Lelong71}, \cite{Klimek}): some subsequence 
$\{\tau_{km_0}^{(p_\nu)}\}$ of $\{\tau_{km_0}^{(p)}\}$
will converge in $L^1$ to the potential $f_{km_0}$, as upper regularized limits
$$f_{km_0}(z):= \lim\sup_{x\to z}\lim_{\nu\to\infty}\tau_{km_0}^{(p_\nu)}(x).$$
By letting $p\to \infty$, we see that the {\sl non-effective} part of the estimate \eqref{76} tends to zero, so Theorem \ref{t-5}
is proved.

\subsection{Proof of the Extension Theorem}

\noindent In this last part of the proof of \eqref{t-ext}, we first recall that by the techniques originating in Siu's seminal article \cite{Siu00}, for any fixed section $s_A\in H^0(X,\mathcal O _X(A))$, the section $u^{\otimes k}\otimes s_A$ extends to $X$, for any $k\geq 1$, (cf. \cite[6.3]{HM10}).
Indeed, by the assumption we have
$$Z_{\pi^\star(u)}+ m_0\wt E|_{\wt S}\geq m_0\Xi$$
(cf. notations in the introduction) and then the extension of $u^{\otimes k}\otimes s_A$ follows.

These extensions provide us with the potentials $\{ \tau_m\}_{m\geq 1}$
(simply by letting $\tau_m=\frac 1 m {\rm log}|U_k|^2$ where $U_k$ is the given extension of $u^{\otimes k}\otimes s_A$)
and then Theorem \ref{t-5} converts this into a quantitative statement: there exists another family of potentials
$\{ f_m\} _{m\geq 1}$ such that
\begin{equation}\label{77}\max_Xf_m= 0, \quad \Theta_h(K_X+S+B)+ \sqrt{-1}\ddbar f_m\geq -{1\over m}\omega\end{equation}
together with
$$f_m|_S\geq C+ \log |u|^{2\over m_0}$$
where $C$ is a constant independent of $m$. Under these circumstances, we invoke the
same arguments as at the end of the preceding paragraph to infer that 
some subsequence 
$\displaystyle \{f_{m_\nu}\}$ of $\{f_m\}$ will converge in $L^1$ to the potential $f_\infty$, as an upper regularized limit
$$f_\infty(z)= \lim\sup_{x\to z}\lim_{\nu\to\infty}f_{m_\nu}(x)$$
for every $z\in X$.

The properties of the limit $f_\infty$ are listed below:
\begin{equation}\label{78}f_{\infty}|_S\geq C+ \log |u|^{2\over m_0}, \quad
\Theta_h(K_X+S+B)+ \sqrt{-1}\ddbar f_\infty\geq 0.\end{equation}
We remark at this point that the metric $e^{-f_\infty}h$ constructed here
plays in the proof of Theorem \ref{t-5} the same role as the metric $h_0$ 
in the arguments we have provided for \eqref{t-4}.

\noindent The rest of the proof is routine: we write
$$m_0(K_X+S+B)= K_X+S+F$$
where we use the following notation
\begin{equation}\label{79}F:= B+ (m_0-1)(K_X+ S+B).\end{equation}
We endow the line bundles $\cO _X(F)$ and $\cO _X(S)$ respectively with the metrics
$$\varphi_{F}:= \varphi_B+ (m_0-1)\varphi_{\infty}$$
and
$$\varphi_{\infty}= \nu^1\varphi_{S}+ \varphi_\rho+ \sum_{j\neq 1}\nu^j\log|f_{W_j}|^2$$
where $h_{\infty}= e^{-\varphi_\infty}$ is the metric given by $e^{-\wt f_\infty}h$;
here we denote 
$$\wt f_\infty:= \max(f_\infty, \tau_D)$$
so that
(as usual) we assume that
\begin{equation}\label{80}\varphi_{\infty}\geq \varphi_\rho+ \sum_j\nu^j\log|f_{W_j}|^2\end{equation}
and $S= Y_1$.

\medskip
\noindent Then the requirements of \eqref{t-OT} are easily checked,  as follows:

\smallskip

\noindent $\bullet$ We have $|s|^2e^{-\varphi_{S}}\leq 1$ by relation \eqref{80} above, and moreover we have the equality
$$\varphi_S= {1\over \nu_1}\big(\varphi_\infty- \varphi_\rho-\sum_{j\neq 1}\nu^j\log|f_{W_j}|^2\big)$$
from which one can determine the hermitian bundles $(G_i, e^{-\varphi_{G_i}})$.
\smallskip

\noindent $\bullet$ There exists $\varepsilon_0> 0$ and $C$ such that
$$\varphi_F\leq \varepsilon_0(\sum_{j\neq 1}\nu^j\log|f_{W_j}|^2)+ C$$
because of the presence of the term $\varphi_B$ in the expression of 
$\varphi_F$; hence \eqref{27} is satisfied.

\noindent $\bullet$ $\displaystyle \Theta_{h_{F}}(F)\geq 0$ by property \eqref{78}, and
for $\alpha>1/\nu ^1$ we have
\begin{equation}\label{81}\Theta_{h_{F}}(F)- {1\over \alpha}\Theta_{h_{S}}(S)\geq \Big({\alpha\nu^1- 1\over \alpha\nu^1}\Big)\Theta_{h_{F}}(F),\end{equation}
and we remark that the right hand side curvature term is greater than $0$.

\noindent $\bullet$ We have
$$\int_S|u|^2e^{-\varphi_B- (m_0-1)\varphi_{\infty}}\leq C$$
by relation \eqref{78}. Indeed, as a consequence of \eqref{78} we have 
$$|u|^2e^{-\varphi_B- (m_0-1)\varphi_{\infty}}\leq |u|^{2\over m_0}e^{-\varphi_B}$$
so the convergence of the preceding integral is due to the fact that $(S, B|_S)$ 
is klt.

\medskip
\noindent Therefore, we can apply Theorem \eqref{t-OT} and obtain an extension of $u$.
Theorem \ref{t-ext} is proved.
\end{proof}

\medskip

\begin{rem} {\rm In fact, the metric \eqref{62} of the line bundle $\cO _X(F)$ has strictly positive curvature, but the amount of positivity this metric has is ${1\over km_0}\omega$, and the estimates for the extension we obtain under these circumstances are not useful,
in the sense that the constant $C(\delta)$ in \eqref{66} becomes something like $Ck^2$.
}
\end{rem}

\section{Further consequences, I}
In this section we derive a few results which are related to Theorem \ref{t-ext}.
Up to a few details (which we will try to highlight), their proof is similar to that of \eqref{t-ext}, so our presentation will be brief.

We first remark that the arguments in Sections 4 and 5 have the following consequence.

\begin{theorem}Let $\{S, Y_j\}$ be a finite set of hypersurfaces having normal crossings. Let $B=\sum b^jY_j$ where $0< b^j< 1$ is a set of rational numbers, such that:

\begin{enumerate}
\item [{\rm (i)}] The bundle $K_X+S+B$ is pseudo-effective, and $\displaystyle S\not\in
N_\sigma(K_X+S+B)$.

\smallskip

\item [{\rm (ii)}] We have $K_X+S+B\equiv \sum_j\nu^jW_j$, where $\nu^j> 0$ and
$S\subset  {\rm Supp}(\sum \nu ^jW_j)\subset {\rm Supp}(S+B)$.

\end{enumerate}

\smallskip

\noindent Then $K_X+S+B$ admits a metric $h= e^{-\phi}$ with positive curvature and well-defined restriction to $S$.

\end{theorem}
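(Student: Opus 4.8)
The plan is to re‑run the proof of Theorem~\ref{t-ext} (Sections~\ref{s-OT}--\ref{s-pot}), but to stop just before the last application of Theorem~\ref{t-OT}: there one extends a \emph{given} section of $m_0(K_S+B|_S)$, whereas here one only needs to produce the metric $e^{-\phi}$ on $K_X+S+B$ itself, i.e.\ the limit potential ``$f_\infty$'' constructed in the last subsection of the proof of \eqref{t-ext}. Note first that hypothesis (ii) forces $S$ to be one of the $W_j$, say $S=W_1$ with $\nu^1>0$, so there is nothing to reduce. Fix a sufficiently ample $A$ with $\omega=\Theta(A)$ and a smooth metric $h_0$ on $K_X+S+B$. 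By hypothesis (i), for each $m$ there is an effective $\Q$-divisor $D_m\sim_\Q K_X+S+B+\tfrac1m A$ with $S\not\subset\mathrm{Supp}(D_m)$; writing the induced metric as $e^{-\tau_m}h_0$, normalizing $\sup_X\tau_m=0$, and (exactly as in the proof of \eqref{t-5}) replacing $\tau_m$ by $\max(\tau_m,\tau_D)$ where $\tau_D$ is the potential attached to $D=\sum_j\nu^jW_j$ via the flat metric on the topologically trivial $\rho$ with $\cO_X(N(K_X+S+B))\cong\cO_X(ND)\otimes\rho^N$, one obtains potentials with $\Theta_{h_0}(K_X+S+B)+\sqrt{-1}\ddbar\tau_m\ge-\tfrac1m\omega$, with well-defined restriction $\tau_m|_S$, and with $\tau_m\ge\tau_D$.

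The one genuinely new point is that, unlike in \eqref{t-ext}, no section of $m_0(K_S+B|_S)$ is handed to us. To manufacture one I would restrict the $D_m$: since $(X,S+B)$ is plt the pair $(S,B|_S)$ is klt, and $D_m|_S\ge0$ is $\Q$-linearly equivalent to $(K_S+B|_S)+\tfrac1m A|_S$, so letting $m\to\infty$ shows $(K_X+S+B)|_S=K_S+B|_S$ is pseudo-effective on $S$. Now fix an auxiliary $A'$ on $X$ so ample that, for every $m$ divisible by $m_0$, the line bundle $\cO_S(m(K_S+B|_S)+A'|_S)$ (which is big, being pseudo-effective plus ample) has a nonzero section $u_m$, chosen general and arranged to satisfy the obstruction condition of \eqref{t-ext} --- which is possible because $A'$ is sufficiently ample and $S\not\subset N_\sigma(K_X+S+B)$. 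Each $u_m$ then extends to a section of $\cO_X(m(K_X+S+B)+A')=\cO_X\big(m(K_X+S+B+\tfrac1m A')\big)$ by the \emph{known} (ample boundary part $\tfrac1m A'$) case of the DLT extension results recalled in the introduction --- here, for once, we do use ``strict positivity'' of the boundary, but only of the vanishing error $\tfrac1mA'$. These extensions play the role of ``$u^{\otimes k}\otimes s_A$'' in Section~\ref{s-pot}: feeding them, together with the $\tau_m$ above (which supply hypothesis (3) of \eqref{t-5} with an $m$-dependent constant), into the iteration scheme of the proof of Theorem~\ref{t-5} and carrying the ample error $\tfrac1mA'$ along, one obtains a family $\{f_m\}$ with $\sup_X f_m=0$, $\Theta_{h_0}(K_X+S+B)+\sqrt{-1}\ddbar f_m\ge-\tfrac1m\omega$, and a lower bound $f_m|_S\ge C+\psi_m$, where $\psi_m$ is essentially $\tfrac1m\log$ of the squared norm of $u_m$ in a fixed metric and $C$ is independent of $m$. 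Finally, by Tian's Lemma~\ref{5.5} (with constants uniform in $m$) and the $L^1$-compactness it yields for quasi-psh functions normalized by $\sup_X(\cdot)=0$ (cf.\ the classical results in \cite{Lelong69}), some subsequence of $\{f_m\}$ converges to an upper-regularized limit $f_\infty$ with $\Theta_{h_0}(K_X+S+B)+\sqrt{-1}\ddbar f_\infty\ge0$ and, by the uniform lower bound together with the uniform integrability on $S$ coming from the $L^2$ estimates, $f_\infty|_S>-\infty$; then $h:=e^{-f_\infty}h_0$ is the required metric.

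I expect the main obstacle to be precisely this uniformity in $m$ of the lower bound for $f_m|_S$, now that there is no distinguished section to anchor it. Two points look delicate: first, one must choose the divisors $D_m$ and the sections $u_m$ compatibly so that $\tau_m|_S$ dominates $\psi_m$ up to an $m$-dependent constant (hypothesis (3) of \eqref{t-5}), which forces one to control $\mathrm{mult}_P(D_m|_S)$ along the finitely many prime divisors $P$ with $\sigma_P(\,\cdot\,)>0$ --- this is where Nakayama's theory and $S\not\subset N_\sigma(K_X+S+B)$ enter; second, once the ample error $\tfrac1m A'|_S$ is released one must check that the $\psi_m$ converge in $L^1(S)$ to a genuine (i.e.\ $\not\equiv-\infty$) quasi-psh function rather than drifting to $-\infty$, which is exactly where the uniformity statements already built into Theorems~\ref{t-OT}, \ref{t-5} and Lemma~\ref{5.5} do the work. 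Modulo these points, the argument is a verbatim repetition of Sections~\ref{s-OT}--\ref{s-pot}.
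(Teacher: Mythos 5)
Your overall strategy---re-run the machinery of Sections~\ref{s-OT}--\ref{s-pot}, replace the fixed section $u$ (and its powers $u^{\otimes k}\otimes s_A$) by a family of sections, run the iteration of Theorem~\ref{t-5}, and pass to the limit $f_\infty$---is exactly the spirit of the paper's (very terse) proof, which says one should ``replace the family of sections $u^{\otimes k}\otimes s_A$ with a family of sections approximating a closed positive current on $S$, whose existence is insured by the hypothesis (i).'' Your observation that (ii) forces $S\in\{W_j\}$ and that one only needs to stop before the final Ohsawa--Takegoshi step is also correct.

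However, the specific way you manufacture the family $\{u_m\}$ introduces a genuine gap. You construct $u_m$ as a \emph{general} section of $\cO_S(m(K_S+B|_S)+A'|_S)$ and then assert that it can be ``arranged to satisfy the obstruction condition of \eqref{t-ext}'' and hence extended by the known (big-boundary) DLT extension theorems. This does not follow from generality. The obstruction divisor $\Xi$ appearing in the hypotheses of those extension theorems is built from the \emph{restricted} asymptotic vanishing $N_\sigma(\Vert K_X+S+B\Vert_S)$ (Nakayama's restricted $\sigma$-decomposition on $X$ along $S$), which is in general strictly larger than the \emph{intrinsic} asymptotic base locus of $K_S+B|_S$ on $S$. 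A generic member of $|m(K_S+B|_S)+A'|_S|$ therefore has no reason to vanish to order $m\Xi$; indeed the whole content of the statement ``extension of pluricanonical sections'' is precisely that the restricted and intrinsic base loci may differ. You would have to impose the vanishing by hand and then verify that the resulting linear system is non-empty, which is not addressed in the proposal (and is not an immediate consequence of $A'$ being ample or of $S\not\subset N_\sigma(K_X+S+B)$).

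The detour through sections on $S$ is unnecessary and is what creates the problem. Hypothesis (i) already gives, for each $m$, a section $\sigma_m\in H^0\big(X,\cO_X(m(K_X+S+B)+A)\big)$ with $\sigma_m|_S\not\equiv 0$; these sections come \emph{pre-extended}, so there is no obstruction condition to check, and their restrictions $\sigma_m|_S$ play exactly the role that $u^{\otimes k}\otimes s_A$ played in the proof of Theorem~\ref{t-ext}: they supply the potentials $\tau_m=\tfrac1m\log|\sigma_m|^2$ (to be normalized and maxed against $\tau_D$ as in the proof of \eqref{t-5}), and the normalized zero divisors $\tfrac1m Z_{\sigma_m|_S}$ approximate a positive current in the class $(K_X+S+B)|_S$. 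Feeding this family into the iteration scheme of Theorem~\ref{t-5}, with the same uniformity arguments (Lemma~\ref{5.5}, Lemma~\ref{l-C}), produces $\{f_m\}$ with $\sup_X f_m=0$, curvature bounded below by $-\tfrac1m\omega$, and a uniform lower bound for $f_m|_S$; the upper-regularized $L^1$ limit $f_\infty$ then gives the metric $h=e^{-f_\infty}h_0$. In short: take the family on $X$ and restrict, rather than taking a family on $S$ and trying to extend it.
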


\noindent As one can see, the only modification we have to operate for the proof of \eqref{t-ext} is to replace the family of sections
$u^{\otimes k}\otimes s_A$ with a family of sections approximating a closed positive current on $S$, whose existence is insured by the hypothesis (i).

\medskip

\noindent The next statement of this section is an $\bR$-version of \eqref{t-ext}.

\begin{theorem}\label{t-6}Let $\{S, Y_j\}$ be a finite set of hypersurfaces having normal crossings. Let $0< b^j< 1$ be a set of real numbers. Consider the $\bR$-divisor $B:= \sum_jb^jY_j$, and assume that the following properties are satisfied.

\begin{enumerate}

\item [{\rm (a)}] The $\bR$-bundle $K_X+S+B$ is pseudo-effective, and 
$\displaystyle S\not\in
N_\sigma(K_X+S+B)$.

\item [{\rm (b)}] There exists an effective $\bR$-divisor $ \sum_j \nu^jW_j$, numerically equivalent with $K_X+S+B$, such that $S\subset \{W_j\}\subset {\rm Supp} (S+B)$.

\item [{\rm (c)}] The bundle $K_S+ B|_{S}$ is $\bR$-linearly equivalent to an effective divisor
say $D:= \sum_j\mu^jZ_j$,  such that $\pi^\star(D)+ \wt E|_{\wt S}\geq \Xi$
(we use here the notations and conventions of \eqref{t-ext}).
\end{enumerate}

\noindent Then $K_X+S+B$ is $\bR$-linearly equivalent to an effective divisor whose support do not contain $S$.
\end{theorem}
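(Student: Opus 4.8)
The plan is to reduce the $\mathbb R$-statement to the already-proved rational Extension Theorem \ref{t-ext} by a Diophantine approximation argument, and then to assemble the resulting effective divisors into a convex combination.

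First I would fix a common log resolution $\pi\colon\widetilde X\to X$ of $(X,\,S+\sum_jY_j+\sum_jW_j+\sum_jZ_j)$ and reduce to the log smooth case, so that $S$ is a smooth prime divisor and $(X,S+B)$ is plt; this affects neither the hypotheses nor the conclusion. Using hypothesis (c) I would write $K_S+B|_S-D=\sum_l c_l\,\mathrm{div}(g_l)$ for fixed $c_l\in\mathbb R$ and fixed $g_l\in\mathbb C(S)^\times$, and consider the finite dimensional real vector space with coordinates $(\beta_j,\delta_j,\gamma_j,\lambda_l)$, viewed as the coefficients of the divisors $B'=\sum_j\beta_jY_j$, $D'=\sum_j\delta_jZ_j$, $\sum_j\gamma_jW_j$ and of the expression $\sum_l\lambda_l\,\mathrm{div}(g_l)$. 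Inside it let $\mathcal A$ be the affine subspace cut out by the two linear conditions $K_S+B'|_S-D'=\sum_l\lambda_l\,\mathrm{div}(g_l)$ (an identity of Weil divisors on $S$, hence an integral linear system in the coefficients) and $[K_X+S+B']=\sum_j\gamma_j[W_j]$ in $N^1(X)_{\mathbb R}$ (a rational linear system, since all the classes involved are rational). The point $p_\ast=(b^j,\mu^j,\nu^j,c_l)$ lies in $\mathcal A$ by hypotheses (b) and (c), and $\mathcal A$ is defined over $\mathbb Q$, so I can pick rational points $p_1,\dots,p_N\in\mathcal A$ as close to $p_\ast$ as we like with $p_\ast=\sum_i r_ip_i$, $r_i\in\mathbb Q_{>0}$, $\sum_i r_i=1$ (if $\mathcal A=\{p_\ast\}$ then $B$ is already rational and we invoke \ref{t-ext} directly). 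Writing $B_i,D_i,\sum_j\gamma_j^{(i)}W_j$ for the divisors read off from $p_i$, one gets, once $p_i$ is close enough to $p_\ast$: $\sum_i r_iB_i=B$; $(X,S+B_i)$ plt (using $0<b^j<1$); $D_i\ge 0$ with $K_S+B_i|_S\sim_{\mathbb Q}D_i$; and $K_X+S+B_i\equiv\sum_j\gamma_j^{(i)}W_j\ge 0$ with $S\subset\{W_j\}\subseteq\mathrm{Supp}(S+B_i)$.

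Next I would check, for $p_i$ close enough to $p_\ast$, that $(X,S+B_i)$ satisfies the hypotheses of Theorem \ref{t-ext}: (1) is done; (2) holds with $\sum_j\gamma_j^{(i)}W_j$, and one notes that the proof of \ref{t-ext} (through Theorem \ref{t-5}) uses only numerical equivalence here, the $\mathrm{Pic}^0$-part being absorbed by the flat metric $h_\rho$; for the section one takes $u_i\in H^0\!\big(S,m_i(K_S+B_i|_S)\big)$ with divisor $m_iD_i$, for $m_i$ sufficiently divisible, and then $Z_{\pi^\star(u_i)}+m_i\widetilde E_i|_{\widetilde S}=m_i\big(\pi^\star(D_i)+\widetilde E_i|_{\widetilde S}\big)\ge m_i\Xi_i$ reduces to $\pi^\star(D_i)+\widetilde E_i|_{\widetilde S}\ge\Xi_i$. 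Granting this, Theorem \ref{t-ext} produces $U_i\in H^0\!\big(X,m_i(K_X+S+B_i)\big)$ with $U_i|_S=u_i\ne 0$, whence $G_i:=\frac1{m_i}\mathrm{div}(U_i)$ is effective, $G_i\sim_{\mathbb Q}K_X+S+B_i$ and $S\not\subset\mathrm{Supp}(G_i)$. Finally $G:=\sum_i r_iG_i$ is an effective $\mathbb R$-divisor with $S\not\subset\mathrm{Supp}(G)$ and $G\sim_{\mathbb R}\sum_i r_i(K_X+S+B_i)=K_X+S+\sum_i r_iB_i=K_X+S+B$, which is the assertion.

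The main obstacle is precisely the clause ``for $p_i$ close enough to $p_\ast$'' applied to the two hypotheses that involve $N_\sigma$: that $S\not\subset\mathrm{Supp}\big(N_\sigma(K_X+S+B_i)\big)$ (hypothesis (a), i.e.\ (3) of \ref{t-ext}) and that $\pi^\star(D_i)+\widetilde E_i|_{\widetilde S}\ge\Xi_i$ with $\Xi_i=N_\sigma(\Vert K_{\widetilde X}+\widetilde S+\widetilde B_i\Vert_{\widetilde S})\wedge\widetilde B_i|_{\widetilde S}$. These are not affine in the coefficients of $B_i$, and $\sigma_S$, though convex, need not vanish on a full neighbourhood of $B$ in $\mathcal A$, so one cannot simply approximate $B$ by arbitrary nearby rational points. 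I would handle this by constraining the approximation to a small rational polytope $P\ni B$ inside $\mathcal A$ on which these two conditions are shown to persist, using the behaviour of $\sigma_P(\Vert\cdot\Vert_S)$ and of $\mathbf B_-$ recalled in Section \ref{s-pre} together with the rationality/polytope arguments underlying \cite{HM10} and \cite{Paun08}; this is the step where the proof genuinely parallels, rather than merely quotes, the proof of \ref{t-ext}.
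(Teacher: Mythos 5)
Your plan starts from the same first move as the paper (Diophantine approximation of the data $(b,\mu,\nu)$ inside a rational affine space), but then diverges in a way that leaves a genuine gap, and you have correctly put your finger on exactly where it is. Your strategy is to apply Theorem~\ref{t-ext} \emph{as a black box} to each rational approximant $(X,S+B_i)$ and then average the resulting effective divisors. For this to work you must verify, for every $B_i$ in a punctured rational neighbourhood of $B$, hypothesis (3) of Theorem~\ref{t-ext} (that $S\not\subset{\rm Supp}\,N_\sigma(K_X+S+B_i)$) and the divisorial condition $Z_{\pi^\star(u_i)}+m_i\widetilde E_i|_{\widetilde S}\geq m_i\Xi_i$ with $\Xi_i= N_\sigma(\Vert K_{\widetilde X}+\widetilde S+\widetilde B_i\Vert_{\widetilde S})\wedge\widetilde B_i|_{\widetilde S}$. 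You acknowledge that neither of these is affine in the coefficients and propose to ``constrain the approximation to a small rational polytope $P\ni B$ on which these conditions are shown to persist, using \ldots the rationality/polytope arguments underlying \cite{HM10} and \cite{Paun08}.'' That is precisely the step one cannot do here. The polytope arguments in \cite{HM10} and \cite{Paun08} rely on the presence of an ample (or at least big) component in the boundary; the whole point of Theorem~\ref{t-ext}, emphasised repeatedly in the paper, is the absence of any strict positivity hypothesis on $B$. In that regime neither $\sigma_P(\Vert\cdot\Vert_S)$ nor $\mathbf B_-$ is known to be piecewise rational affine or even locally constant near $B$, and the required persistence statement is not available. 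In short, you have identified a necessary lemma, declared it plausible, and deferred it to references that do not prove it.

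The paper's proof is built precisely to avoid this obstacle, and it is worth seeing how. After producing the rational data $(b^j_\eta,\nu^j_\eta,\mu^j_\eta)$ by Diophantine approximation (with the crucial Dirichlet estimates $q_\eta\Vert b_\eta-b\Vert<\eta$, etc., which are needed for the integrability check \eqref{87}), the paper does \emph{not} invoke Theorem~\ref{t-ext} for the pairs $(X,S+B_\eta)$. Instead it extends the twisted powers $u_\eta^{\otimes k_\eta}\otimes s_{A,i}^{q_\eta}$ using the \cite{HM10} machinery with an ample contribution $q_\eta A$ (which is legitimate because the diminished-base-locus condition in hypothesis (a) is an assertion about $K_X+S+B+\varepsilon A$ and hence is stable against adding a proportional ample term); it then re-runs Theorem~\ref{t-5} to manufacture normalized potentials $f_\eta$ with $\Theta_h(K_X+S+B)+\sqrt{-1}\ddbar f_\eta\geq -\tfrac{2}{k_\eta}\omega$ and $f_\eta|_S\geq C+\log|u_\eta|^{2/q_\eta}$; and finally it passes to the limit $\eta\to 0$ to obtain a \emph{single} positively-curved singular metric $h_\infty=e^{-f_\infty}h$ on the $\mathbb R$-line bundle $K_X+S+B$ itself, whose restriction to $S$ is controlled by $\log\prod_j|f_{Z_j}|^{2\mu^j}$. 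Each rational section $u_\eta$ is then extended directly by Theorem~\ref{t-OT} (not \ref{t-ext}), with the adjoint decomposition $q_\eta(K_X+S+B_\eta)=K_X+S+B_\eta+(q_\eta-1)(K_X+S+B)+(q_\eta-1)(B_\eta-B)$ and the metric built from $f_\infty$ on the $(q_\eta-1)(K_X+S+B)$ part. The hypotheses of \ref{t-OT} are checked against $h_\infty$; no hypothesis on $N_\sigma(K_X+S+B_\eta)$ or on $\Xi_\eta$ is ever needed. The convexity argument at the end then produces the effective $\mathbb R$-divisor $\sim_{\mathbb R}K_X+S+B$ avoiding $S$, as in your final step.

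So the short version: your final assembly $(G=\sum_ir_iG_i)$ is fine, your log-resolution reduction and the rational-affine-space setup are fine, but the reduction to Theorem~\ref{t-ext} per approximant is not justified and cannot be justified by the tools you cite. The fix, following the paper, is to replace ``apply \ref{t-ext} to each $B_i$'' by ``use the Diophantine approximants to build a limit metric $h_\infty$ on $K_X+S+B$ via Theorem~\ref{t-5}, then extend each $u_\eta$ by Theorem~\ref{t-OT} with respect to $h_\infty$,'' thereby localizing all $N_\sigma$-type input to the single $\mathbb R$-divisor $K_X+S+B$ where the hypotheses are actually given.
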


\medskip

\begin{proof} By a completely standard Diophantine
approximation argument, we deduce the following fact. For any $\eta> 0$,
there exists rational numbers $b^j_\eta$, $\nu^j_\eta$ and $\mu^j_\eta$
such that the following relations are satisfied.
\begin{itemize}
\item  We have $K_X+S+ B_\eta\equiv \sum_j\nu^j_\eta W_j$, where
$B_\eta:= \sum_jb^j_\eta Y_j$;
\item  The bundle $K_S+ B_\eta|_S  $ is $\mathbb Q$-linearly equivalent to
$\sum_j\mu^j_\eta Z_j$;
\item  Let $q_\eta$ be the common denominator of $b^j_\eta$, $\nu^j_\eta$ and $\mu^j_\eta$; then we have
\begin{equation}\label{81b}q_\eta\Vert b_\eta-b\Vert< \eta, \quad q_\eta\Vert \nu_\eta-\nu\Vert< \eta, \quad
q_\eta\Vert \mu_\eta-\mu\Vert< \eta.\end{equation}\end{itemize}

\smallskip

\noindent Let $u_\eta\in H^0\big(S, \mathcal O _S(q_\eta(K_S+ B_\eta|_S))\big)$ be the section associated to the
divisor $\sum_j\mu^j_\eta Z_j$.
We invoke again the extension theorems in \cite{HM10}
(see also \cite[1.H, 1.G]{Paun08}): as a consequence, the section
$$u_\eta^{\otimes k_\eta}\otimes s_{A, i}^{q_\eta}$$
of the bundle $k_\eta q_\eta(K_S+ B_\eta)+ q_\eta A$
extends to $X$, where $k_\eta$ is a sequence of integers such that $k_\eta\to \infty$ as $\eta\to 0$.

We use the corresponding extensions $\displaystyle \{ U^{(k_\eta ,q_\eta)}_i\}_{i=1,\ldots ,M_\eta}$
in order to define a metric $h_\eta$ on
$$\displaystyle K_X+S+B_\eta+{1\over k_\eta}A,$$ with semi-positive curvature current, and whose restriction to $S$ is equivalent with
$\displaystyle \log|u_\eta|^{2\over q_\eta}$.

\noindent The proof of Theorem \ref{t-5} shows that for each 
$\eta> 0$, there exists a function $f_\eta\in L^1(X)$ such that
\smallskip
\begin{enumerate}

\item [{$(1_\eta)$}] We have $\max_Xf_\eta = 0$, as well as $\displaystyle \Theta_h(K_X+S+B)+ \sqrt{-1}\ddbar f_\eta\geq -{2\over k_\eta}\omega$.
\smallskip

\item [{$(2_\eta)$}] The restriction $f_\eta|_S$ is well-defined, and we have
\begin{equation}\label{82}f_{\eta}|_S\geq C+ \log|u_\eta|^{2\over q_\eta}.\end{equation}

\end{enumerate}

\smallskip
\noindent Passing to a subsequence we may assume that there is a limit of $f_\eta$; hence we infer the existence of a function
$f_\infty$, such that $\Theta_h(K_X+S+B)+ \sqrt{-1}\ddbar f_\infty\geq 0$,
and such that
\begin{equation}\label{83}f_{\infty}|_S\geq C+ \log\Big(\prod_j|f_{Z_j}|^{2\mu^j}\Big) .
\end{equation}

\noindent We will next use the metric $h_\infty := e^{-f_\infty}h$ in order to extend the section
$u_\eta$ above, as soon as $\eta$ is small enough. We write
\begin{equation}\label{84}q_\eta(K_X+S+B_\eta)= K_X+S+B_\eta+ (q_\eta-1)(K_X+S+B)+ (q_\eta-1)(B_\eta-B).\end{equation}
Consider a metric on $F_\eta:= B_\eta+ (q_\eta-1)(K_X+S+B)+ (q_\eta-1)(B_\eta-B)$ given by the following expression
\begin{equation}\label{85}\sum_j\big(q_\eta b^j_\eta- (q_\eta- 1)b^j\big)\log |f_{Y_j}|^2+ (q_\eta-1)\varphi_{f_\infty}.\end{equation}
In the expression above, we denote by $\displaystyle \varphi_{f_\infty}$
the local weight of the metric $h_\infty$. By the maximum procedure used e.g.
at the end of the proof of \eqref{t-5}, we can assume that
\begin{equation}\label{86}\varphi_{f_\infty}\geq \log\Big(\prod_j|f_{W_j}|^{2\nu^j}\Big).\end{equation}
The metric in \eqref{85}
has positive curvature, and one can easily check that the other curvature hypothesis are also verified (here we assume that $\eta\ll 1$, to insure
the positivity of $q_\eta b^j_\eta- (q_\eta- 1)b^j$ for each $j$). The integrability requirement \eqref{28} is satisfied, since we have
\begin{equation}\label{87}\int_S{\prod_j|f_{Z_j}|^{2q_\eta\mu^j_\eta}\over
\prod_j|f_{Y_j}|^{2q_\eta b^j_\eta- 2(q_\eta- 1) b^j}\prod_j|f_{Z_j}|^{2(q_\eta-1)\mu^j}}
<\infty\end{equation}
for all $\eta\ll 1$, by the Dirichlet conditions at the beginning of the proof. 
Hence each section
$u_\eta$ extends to $X$, and the proof of \eqref{t-6} is finished by the usual convexity argument.
\end{proof}

\medskip
\noindent Our last statement concerns a version of \eqref{t-ext} whose hypothesis are
more analytic; the proof is obtained {\sl mutatis mutandis}.
\smallskip

\begin{theorem}\label{t-6.3}Let $\{ S, Y_j\}$ be a finite set of hypersurfaces having normal crossings. Let $0< b^j< 1$ be a set of rational numbers. Consider the $\bQ$-divisor $B:= \sum_jb^jY_j$, and assume that the following properties are satisfied.

\begin{enumerate}

\item [{\rm (a)}] The bundle $K_X+S+B$ is pseudo-effective, and 
$S\not\in N_\sigma(K_X+S+B)$.
\smallskip

\item [{\rm (b)}] There exists a closed positive current $T\in \{K_X+S+B\}$ such that
\begin{enumerate}
\smallskip

\item [{\rm (b.1)}] We have $T= \nu^1[S]+ \Lambda_T$, with $\nu_1> 0$ and
$\Lambda_T$ is positive.
\smallskip

\item [{\rm (b.2)}] The following inequality holds
\begin{equation}\label{88}\varepsilon_0\varphi_{\Lambda_T}\geq \varphi_B- C
\end{equation}

where $\varepsilon_0$ and $C$ are positive real numbers.
\end{enumerate}
\smallskip

\item [{\rm (c)}] The bundle $m_0(K_S+ B)$ has a section $u$, whose zero divisor
$D$ satisfies the relation $\pi^\star(D)+ \wt E|_{\wt S}\geq \Xi$
(we use here the notations and conventions in \eqref{t-ext}).

\end{enumerate}
\smallskip
\noindent Then the section $u$ extends to $X$; in particular, the bundle
$K_X+S+B$ is $\bQ$-effective, and moreover it has a section non-vanishing identically on $S$.

\end{theorem}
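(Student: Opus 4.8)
The plan is to follow the proof of Theorem \ref{t-ext} almost verbatim, the single change being that the effective $\mathbb Q$-divisor $D=\sum_j\nu^jW_j$ used there is replaced by the closed positive current $T$ of hypothesis (b), whose Siu decomposition (Theorem \ref{t-curr}) is exactly the splitting $T=\nu^1[S]+\Lambda_T$ prescribed in (b.1). Fix a non-singular reference metric $h=e^{-\varphi_h}$ on $K_X+S+B$ and a sufficiently ample divisor $A$ with a basis of sections $\{s_{A,i}\}$. \emph{Step 1 (potentials from known extensions).} By hypotheses (a) and (c), the techniques going back to \cite{Siu00} (see \cite[6.3]{HM10} and \cite[1.G, 1.H]{Paun08}) apply: since $S\not\in N_\sigma(K_X+S+B)$ and the zero divisor of $u$ dominates $\Xi$ after pull-back, each $u^{\otimes k}\otimes s_{A,i}$ extends to a section $U_k$ of $km_0(K_X+S+B)+A$ over $X$, for every $k\ge1$. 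Setting $\tau_m:=\tfrac1m\log|U_k|^2$ for $m=km_0$ gives a family $\{\tau_m\}\subset L^1(X)$ with $\max_X\tau_m=0$, $\Theta_h(K_X+S+B)+\sqrt{-1}\ddbar\tau_m\ge-\tfrac1m\omega$, and $\tau_m|_S\ge C(m)+\log|u|^{2/m_0}$ --- that is, exactly the input required by hypothesis (3) of Theorem \ref{t-5}.

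\emph{Step 2 (the iteration of Theorem \ref{t-5}, with $T$ in place of $D$).} Let $\varphi_{\Lambda_T}$ denote the local weights of the metric attached to $\Lambda_T$ on the $\mathbb Q$-line bundle $K_X+S+B-\nu^1S$, and let $\tau_T$ be the corresponding potential on $K_X+S+B$, normalised so that $\tau_T\le0$. As in \eqref{51} I would replace each $\tau_m$ by $\max(\tau_m,\tau_T)$, which preserves all the properties of Step 1 and moreover yields $\varphi_{\tau_m}\ge\nu^1\log|f_S|^2+\varphi_{\Lambda_T}$ on $X$. I then define the weight $\psi_{S,m}$ on $\mathcal O_X(S)$ by the \emph{equality} $\varphi_{\tau_m}=\nu^1\psi_{S,m}+\varphi_{\Lambda_T}$, which replaces \eqref{52}--\eqref{53}. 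With this choice: $|f_S|^2e^{-\psi_{S,m}}\le1$ by the previous inequality (cf. Lemma \ref{l-weight}); $\Theta_{\varphi_{\tau_m}}(K_X+S+B)\ge-\tfrac1m\omega$; and $\Theta_{\varphi_{\tau_m}}(K_X+S+B)-\nu^1\Theta_{\psi_{S,m}}(\mathcal O_X(S))=\Lambda_T\ge0$. The one point that genuinely uses (b) rather than a divisor is hypothesis \eqref{27} of Theorem \ref{t-OT}: here the metric playing the role of $G_2$ has weight $\tfrac1{\nu^1}\varphi_{\Lambda_T}$, and the weights $\varphi_F$ occurring throughout the iteration (see \eqref{62}) satisfy $\varphi_F\le\varphi_B+\mathrm{const}\le\varepsilon_0\varphi_{\Lambda_T}+\mathrm{const}$ by hypothesis (b.2), so \eqref{27} holds once the small parameter $\delta$ is taken $\le\varepsilon_0\nu^1$ (this also absorbs $\varphi_B$ in the H\"older estimates, the analogue of \eqref{61}). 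With this dictionary in place, the construction of the minimising extensions $U_{\mathrm{min}}^{(km_0)}$, the uniform bound provided by Tian's Lemma \ref{5.5} and the mean-value inequality Lemma \ref{l-C}, the passage from $\tau_m$ to $\tau_m^{(1)},\tau_m^{(2)},\dots$, and the limit $p\to\infty$ all go through unchanged; this produces $\{f_m\}$ with $\max_Xf_m=0$, $\Theta_h(K_X+S+B)+\sqrt{-1}\ddbar f_m\ge-\tfrac1m\omega$ and $f_m|_S\ge C+\log|u|^{2/m_0}$ with $C$ independent of $m$.

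\emph{Step 3 (limit and final extension).} Passing to a weakly convergent subsequence and taking upper regularised limits gives $f_\infty\in L^1(X)$ with $\Theta_h(K_X+S+B)+\sqrt{-1}\ddbar f_\infty\ge0$ and $f_\infty|_S\ge C+\log|u|^{2/m_0}$; replacing $f_\infty$ by $\max(f_\infty,\tau_T)$ we obtain a positively curved singular metric $h_\infty=e^{-\varphi_\infty}h$ on $K_X+S+B$ adapted to $u$, with well-defined restriction to $S$ and satisfying $\varphi_\infty\ge\nu^1\log|f_S|^2+\varphi_{\Lambda_T}$. The extension of $u$ is then one last application of Theorem \ref{t-OT}, exactly as in the closing paragraph of the proof of Theorem \ref{t-ext}: write $m_0(K_X+S+B)=K_X+S+F$ with $F:=B+(m_0-1)(K_X+S+B)$, equip $\mathcal O_X(F)$ with $\varphi_F:=\varphi_B+(m_0-1)\varphi_\infty$ and $\mathcal O_X(S)$ with the weight $\varphi_S$ determined by $\varphi_\infty=\nu^1\varphi_S+\varphi_{\Lambda_T}$. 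Then $|s|^2e^{-\varphi_S}\le1$, the curvature conditions \eqref{26} hold because $\Theta_{h_\infty}(K_X+S+B)\ge0$ and $\Theta_{h_F}(F)\ge\tfrac1\alpha\Theta_{h_S}(\mathcal O_X(S))$ for $\alpha$ large (after the rescaling of $h_S$ by a constant used in the proof of Theorem \ref{t-4}), the integrability \eqref{28} holds because $(S,B|_S)$ is klt and $f_\infty|_S$ is bounded below by $\log|u|^{2/m_0}$ up to a constant, and \eqref{27} holds once more because $\varphi_F\le\varepsilon_0\varphi_{\Lambda_T}+\mathrm{const}$ by (b.2). Theorem \ref{t-OT} then extends $u$ to a section of $m_0(K_X+S+B)$ over $X$; in particular $K_X+S+B$ is $\mathbb Q$-effective and the resulting section does not vanish identically along $S$.

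I expect the only genuinely new ingredient --- the ``mutatis'' in ``mutatis mutandis'' --- to be the bookkeeping that lets the current $T$ play the role of the divisor $D$ everywhere. Concretely, one must check that $\varphi_{\Lambda_T}$ restricts to $S$ as an $L^1_{\mathrm{loc}}$ function, which follows from (b.2) together with $S\not\subset{\rm Supp}(B)$ (this forces $\nu^1$ to be at least the generic Lelong number of $T$ along $S$, so $\Lambda_T$ has vanishing generic Lelong number on $S$), and that (b.1)--(b.2) is precisely the analytic substitute for the divisorial condition $S\subset{\rm Supp}(D)\subset{\rm Supp}(S+B)$ of Theorem \ref{t-ext} that makes \eqref{27} available at every invocation of Theorem \ref{t-OT}. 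Once this is set up, no estimate in the proofs of Theorems \ref{t-5} and \ref{t-ext} needs to be redone.
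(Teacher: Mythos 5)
Your proposal is correct and coincides with what the paper intends: the paper's own treatment of \eqref{t-6.3} is just the two-sentence remark that the proof goes through ``mutatis mutandis,'' with hypothesis (b) replacing the divisorial condition $\{W_j\}\subset\{S,Y_j\}$ of \eqref{t-ext} precisely so that \eqref{27} remains available at every call to \eqref{t-OT}, and your dictionary $\varphi_\tau=\nu^1\psi_S+\varphi_{\Lambda_T}$ together with the use of (b.2) to absorb $\varphi_B$ is exactly that substitution. One minor inaccuracy worth flagging: the splitting $T=\nu^1[S]+\Lambda_T$ in (b.1) need not be the Siu decomposition ($\Lambda_T$ is only assumed positive, not free of codimension-one singularities), and the parenthetical ``$\nu^1$ at least the generic Lelong number of $T$ along $S$'' has the inequality backwards --- the clean argument is simply that (b.2), together with $S\not\subset\mathrm{Supp}(B)$, bounds $\varphi_{\Lambda_T}$ from below along $S$ by an $L^1_{\mathrm{loc}}$ function, forcing the generic Lelong number of $\Lambda_T$ along $S$ to vanish; neither point affects the proof.
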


\medskip

\noindent We remark here that in the proof of statement \eqref{t-6.3} we are using the full force of Theorem \ref{t-OT}. The hypothesis  above corresponds to the fact that $\{ W_j\}\subset
\{ S, Y_j\}$ in \eqref{t-ext}.

\section{Proof of Theorem \ref{t-good1}}\label{s-good}
We begin by proving \eqref{c-ext}:
\begin{proof}[Proof of \eqref{c-ext}] Let $f:X'\to X$ be a log resolution of $(X,S+B)$ and write $K_{X'}+S'+B'=f^*(K_X+S+B)+E$ where $S'$ is the strict transform of $S$, $B'$ and $E$ are effective $\mathbb Q$-divisors with no common components. 
Then $(X',S'+B'+\epsilon E)$ is also a log smooth plt pair for some rational number $0<\epsilon \ll 1$. We may also assume that the components of $B'$ are disjoint.

Since $K_X+S+B$ is pseudo-effective, so is  $K_{X'}+S'+B'+\epsilon E$.
Since $K_X+S+B$ is nef, $N_\sigma(K_X+S+B)=0$ and so $$N_\sigma(K_{X'}+S'+B'+\epsilon E)=N_\sigma(f^*(K_X+S+B)+(1+\epsilon )E)=(1+\epsilon )E.$$ 
In particular $S'$ is not contained in $N_\sigma(K_{X'}+S'+B'+\epsilon E)$ and $N_{\sigma }(|| K_{X'}+S'+B'+\epsilon E||_{S'})= (1+\epsilon )E|_{S'} $ so that
$$\Xi =(B'+\epsilon E)|_{S'}\wedge (1+\epsilon )E|_{S'}=\epsilon E|_{S'}.$$ Since there is an effective $\mathbb Q$-divisor $D\sim _{\mathbb Q} K_X+S+B$ such that $S\subset {\rm Supp}(D)\subset {\rm Supp}(S+B)$, then $D'=f^*D+(1+\epsilon )E\sim _{\mathbb Q} K_{X'}+S'+B'+\epsilon E$ is an effective $\mathbb Q$-divisor such that $S'\subset {\rm Supp} (D')\subset {\rm Supp} (S'+B'+\epsilon E)$.
By \eqref{t-ext}, $$|m(K_{X'}+S'+B'+\epsilon E)|_{S'}\supset 
|m(K_{S'}+ B'|_{S'}+E|_{S'})|+m\epsilon E$$ for any $m>0$ sufficiently divisible.
Let $\sigma \in H^0(S, \mathcal O _S(m(K_S+B_S)))$ and $\sigma '\in H^0(S',\mathcal O _{S'}(m(K_{S'}+B'|_{S'}+(1+\epsilon) E|_{S'})))$ be the corresponding section. By what we have seen above, this section lifts to a section $\tilde \sigma ' \in H^0(X',\mathcal O _{X'}(m(K_{X'}+S'+B'+(1+\epsilon ) E)))$.
Let $\tilde \sigma =f_*\tilde \sigma ' \in H^0(X,\mathcal O _{X}(m(K_{X}+S+B)))$, then $\tilde \sigma |_S=\sigma$.
\end{proof} 
Theorem \ref{t-good1} is an immediate consequence of the following:
\begin{theorem}\label{t-good}Assume \eqref{c-dlt}$_n$ and assume \eqref{c-abb}$_{n-1}$ for semi-dlt log pairs. 
Let $(X,\Delta )$ be an $n$-dimensional klt pair such that $\kappa (K_X+\Delta ) \geq 0$ then $(X,\Delta)$ has a good minimal model.\end{theorem}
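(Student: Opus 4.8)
The plan is to follow the strategy of \cite{Fujino00} by which Non-Vanishing is reduced to the existence of good minimal models, arguing by induction on $n$; so we may assume Theorem~\ref{t-good} and its semi-dlt analogue in all dimensions $<n$, and in particular that good minimal models — hence (Theorem~\ref{t-term}, \cite{Birkar09}) termination of the minimal model program with scaling — are available for klt and semi-dlt pairs of dimension $<n$ with $\kappa\ge 0$. The first step is to reduce to the case $K_X+\Delta$ nef. Since $\kappa(K_X+\Delta)\ge 0$, fix an effective $\mathbb Q$-divisor $D\sim_{\mathbb Q}K_X+\Delta$. Passing to a $\mathbb Q$-factorial dlt modification (log-resolve $(X,\Delta+D)$ and put all exceptional divisors together with the strict transform of $D$ into the boundary with coefficient one), and checking as in \cite{BCHM10}, \cite{Fujino00} that a good minimal model of the modification descends to one of $(X,\Delta)$, I would reduce to the situation where $(X,\Delta)$ is $\mathbb Q$-factorial dlt and, writing $S:=\lfloor\Delta\rfloor$, one has ${\rm Supp}(D)={\rm Supp}(S)$. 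Since ${\rm Supp}(D)\subseteq{\rm Supp}(\lfloor\Delta\rfloor)$ and the MMP with scaling terminates in dimensions $<n$, the second part of Shokurov's special termination (Theorem~\ref{t-st}) shows that a $(K_X+\Delta)$-MMP with scaling of an ample divisor terminates; as $K_X+\Delta$ is pseudo-effective there is no Fano--Mori contraction, so it ends with $K_X+\Delta$ nef, and the equality ${\rm Supp}(D)={\rm Supp}(S)$ is preserved (each contracted divisor lies in ${\rm Supp}(D)$, hence is a component of $S$). Thus it remains to prove: a $\mathbb Q$-factorial dlt pair $(X,S+B)$ with $\lfloor S+B\rfloor=S$, $K_X+S+B$ nef, $\kappa(K_X+S+B)\ge 0$, and ${\rm Supp}(D)={\rm Supp}(S)$ for some effective $D\sim_{\mathbb Q}K_X+S+B$, has $K_X+S+B$ semiample.

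For this reduced statement: if $D=0$ then $K_X+S+B\sim_{\mathbb Q}0$ is semiample, so assume $D\ne 0$; then $S\neq\emptyset$ and $S\subseteq{\rm Supp}(D)$, so $(X,S+B)$ satisfies the hypotheses of Conjecture~\ref{c-dlt}$_n$. By adjunction $(K_X+S+B)|_S=K_S+B_S$, where $(S,B_S)$ is an $(n-1)$-dimensional semi-dlt pair and $K_S+B_S$ is nef, hence pseudo-effective; by the Non-Vanishing hypothesis \eqref{c-abb}$_{n-1}$ for semi-dlt pairs we get $\kappa(K_S+B_S)\ge 0$, and then by the inductive hypothesis $(S,B_S)$ has a good minimal model, so — $K_S+B_S$ being already nef — comparing on a common (semi-)resolution shows that $K_S+B_S=(K_X+S+B)|_S$ is semiample; in particular $|m(K_S+B_S)|$ is base point free for all sufficiently divisible $m$. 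Conjecture~\ref{c-dlt}$_n$ now makes the restriction $H^0(X,\mathcal O_X(m(K_X+S+B)))\to H^0(S,\mathcal O_S(m(K_X+S+B)|_S))$ surjective for all sufficiently divisible $m$, and combining the last two facts, ${\rm Bs}\,|m(K_X+S+B)|$ is disjoint from $S$ for such $m$; hence ${\mathbf B}(K_X+S+B)$ is disjoint from $S$. On the other hand ${\mathbf B}(K_X+S+B)\subseteq{\rm Supp}(D)={\rm Supp}(S)$, so ${\mathbf B}(K_X+S+B)=\emptyset$ and, by Noetherianity, $K_X+S+B$ is semiample. Unwinding the reductions gives a good minimal model for $(X,\Delta)$; Theorem~\ref{t-good1} then follows from Theorem~\ref{t-good} by first invoking Non-Vanishing in dimension $n$ for semi-log canonical pairs to secure $\kappa\ge 0$.

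The step I expect to be the main obstacle is the reduction in the first paragraph: producing a $\mathbb Q$-factorial dlt model on which a single effective representative of $K_X+\Delta$ has \emph{exactly} the support of the reduced boundary — so that special termination applies and the restriction to $S$ is meaningful — while guaranteeing that a good minimal model of the model descends to the original klt pair, the delicate point being that raising the coefficients of ${\rm Supp}(D)$ to one is not a crepant operation and must be compensated. Keeping these requirements simultaneously satisfied, and having the inductive statements available in the precise semi-dlt generality used at each restriction (termination, abundance for nef semi-dlt pairs with $\kappa\ge 0$, and the semi-dlt form of the theorem), is the technical heart, and is where one relies on \cite{Fujino00} and \cite{Birkar09}. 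By contrast, the analytic results of this paper, Theorem~\ref{t-ext} and Corollary~\ref{c-ext}, enter only through making the \emph{plt} case of Conjecture~\ref{c-dlt} unconditional; the full dlt form \eqref{c-dlt}$_n$ is used here as a hypothesis.
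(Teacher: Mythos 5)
Your plan shares the paper's core mechanism---restrict to the reduced boundary, invoke Non-Vanishing for semi-dlt pairs in dimension $n-1$, and extend via \eqref{c-dlt}$_n$---but there are two structural differences that create genuine gaps.

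First, you try to run a single reduction to a nef $\mathbb Q$-factorial dlt pair with ${\rm Supp}(D)={\rm Supp}(\lfloor\Delta\rfloor)$ valid for \emph{all} values of $\kappa(K_X+\Delta)\ge 0$, and you flag the ``descent'' of a good minimal model back to the klt pair as the technical heart. The difficulty here is not merely technical: the paper's descent lemma (\eqref{l-red}) only makes sense when $\kappa(K_X+\Delta)=0$. Its proof uses the fact that, with $\kappa=0$, the support of the unique effective representative $D$ of $K_X+\Delta$ equals ${\bf Fix}$ and equals the exceptional locus of any good minimal model, and then a relative MMP over the minimal model forces $D=0$. When $\kappa>0$ this argument collapses: there is no distinguished $D$, ${\bf Fix}$ can be strictly smaller than the support of any particular representative, and the ``crepant compensation'' you mention has no clean analogue. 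The paper sidesteps the issue entirely: $\kappa=\dim X$ is \cite{BCHM10}, $0<\kappa<\dim X$ is \cite{Lai10}, and only the $\kappa=0$ case is treated via the reduction. You should split off those cases first and apply your reduction only for $\kappa=0$.

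Second, at the key step you prove semiampleness of $K_{X'}+S'+B'$ by first establishing that $K_{S'}+B'_{S'}$ is semiample. To do this you invoke Non-Vanishing for semi-dlt pairs to get $\kappa(K_{S'}+B'_{S'})\ge 0$, and then invoke ``the inductive hypothesis'' (and, as you note, ``the semi-dlt analogue'' of the theorem) to obtain a good minimal model for $(S',B'_{S'})$. But $(S',B'_{S'})$ is semi-dlt, not klt, and Theorem~\ref{t-good} --- the thing you are proving by induction --- is a statement about klt pairs; neither the stated hypotheses nor the paper's induction gives you abundance for semi-dlt pairs of dimension $n-1$. The paper avoids this stronger input entirely by arguing by contradiction: having reduced to $\kappa(G')=0$ with ${\rm Supp}(G')={\rm Supp}(S')$, a \emph{single} nonzero section of $m(K_{S'}+B'_{S'})$ (Non-Vanishing, nothing more) extends by \eqref{c-dlt}$_n$ and forces $S'\not\subset{\rm Supp}(G')$, which is absurd unless $S'=0$, in which case $K_{X'}+S'+B'\sim_{\mathbb Q}0$. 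This is exactly where the $\kappa=0$ reduction buys you something essential: you never need base-point-freeness on $S'$, only Non-Vanishing. In short, once you perform the $\kappa=0$ reduction first (as the paper does), your base-point-free argument becomes the paper's contradiction argument and the semi-dlt abundance input disappears.

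A small further caution: your penultimate sentence deducing Theorem~\ref{t-good1} from Theorem~\ref{t-good} and Non-Vanishing$_n$ is correct, but note that passing from $\kappa(K_X+\Delta)\ge 0$ on a resolution back to the original klt pair also requires the pseudo-effectivity hypothesis in \eqref{c-gmm} to be reconciled with nonvanishing for semi-log-canonical pairs, which is precisely what the paper's hypotheses are tuned to.
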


\begin{proof}
We proceed by induction on the dimension. In particular we may also assume that \eqref{c-gmm}$_{n-1}$ holds.

If $\kappa (K_X+\Delta )=\dim X$, then $(X,\Delta)$ has a good minimal model by \cite{BCHM10}.

If $0<\kappa (K_X+\Delta )<\dim X$, then $(X,\Delta )$ has a good minimal model by \cite{Lai10}.
 
We may therefore assume that $\kappa (K_X+\Delta )=0$. We write $K_X+\Delta \sim _{\mathbb Q}D\geq 0$. Passing to a resolution, we may assume that $(X,\Delta+D)$ is log smooth. We will need the following.
\begin{lemma}\label{l-red} If $D=\sum_{i\in I} d_iD_i$, then it suffices to show that $(X,\Delta ')$ has a  good minimal model where $\Delta'$  is a $\mathbb Q$-divisor of the form $\Delta '=\Delta +\sum g _iD_i$ 
such that $g_i\geq 0$ are positive rational numbers and either \begin{enumerate}
\item $(X,\Delta ')$ is klt, or 
\item $(X,\Delta ')$ is dlt and $g_i>0$ for all $i\in I$.\end{enumerate}
\end{lemma}
\begin{proof} If $g_i>0$ for all $i\in I$, then for any rational number $0<\epsilon \ll 1$, we have
$(1-\epsilon)(K_X+\Delta ')\sim _{\mathbb Q}K_X+\Delta '-\epsilon (D+\sum g _iD_i)$ where 
$$\Delta \leq  \Delta '':=\Delta '-\epsilon (D+\sum g _iD_i)\leq \Delta '$$ and $(X,\Delta '')$ is klt. Since $K_X+\Delta ''\sim _{\mathbb Q}(1-\epsilon)(K_X+\Delta ')$, then $(X,\Delta ')$ has a good minimal model if and only if $(X,\Delta '')$ has a good minimal model. 
Replacing $\Delta '$ by $\Delta ''$, we may therefore assume that $(X,\Delta ')$ is klt.

Note that$$K_X+\Delta \leq K_X+\Delta '\leq K_X+\Delta +gD\sim _{\mathbb Q}(1+g)(K_X+\Delta )$$ for some rational number $g>0$. Therefore, $\kappa (K_X+\Delta ')=0$. In particular, by \eqref{e-ex}, $${\bf Fix}(K_X+\Delta ')={\rm Supp }(D+\Delta '-\Delta )={\rm Supp }(D).$$ Suppose that $(X,\Delta ')$ has a good minimal model $\phi :X\dasharrow X'$. Passing to a resolution, we may assume that $\phi$ is a morphism and that $${\rm Supp }(D)={\bf Fix}(K_X+\Delta ')
=
{\rm Exc}(\phi )$$ where ${\bf Fix} $  denotes the support of the divisors contained in the stable base locus. 

We now run a $K_X+\Delta$-minimal model program with scaling over $X'$. By \cite{BCHM10} (cf. \eqref{t-term} and \eqref{r-rel}), this minimal model program terminates. Therefore, we may assume that
${\bf Fix}(K_X+\Delta /X')=0$.
Since $D$ is exceptional over $X'$, if $D\ne 0$, then by \cite[3.6.2.1]{BCHM10}, there is  a component $F$ of $D$ which is covered by curves $\Sigma$ such that $D\cdot \Sigma <0$. This implies that ${\bf Fix}(K_X+\Delta /X')={\bf Fix}(D/X')$ is non-empty; a contradiction as above. Therefore, $D=0$ and hence $K_X+\Delta \sim _{\mathbb Q}0$.
\end{proof}

We let $S=\sum S_i$ be the support of $D$ and we let $S+B=S\vee \Delta$ (i.e. ${\rm mult}_P(S+B)={\rm max}\{  {\rm mult}_P (S), {\rm mult}_P (\Delta ) \}$) and $G=D+S+B-\Delta$ so that $K_X+S+B\sim _{\mathbb Q}G\geq 0$ and ${\rm Supp}(G)={\rm Supp }(S)$. By \eqref{l-red}, it suffices to show that $(X,S+B)$ has a good minimal model.
We now run a minimal model program with scaling of a sufficiently ample 
divisor. By \eqref{c-termination}$_{n-1}$ and \eqref{t-st}$_n$, this minimal model terminates
 giving a birational contraction $\phi:X\dasharrow X'$ such $(X',S'+B':=\phi _*(S+B))$ is dlt and $K_{X'}+S'+B'$ is nef.

If $S'=0$, then $K_{X'}+S'+B'\sim _{\mathbb Q}0$ and we are done by \eqref{l-red}.
Therefore, we may assume that $S'\ne 0$. Note that if we let $K_{S'}+B'_{S'}:=(K_{X'}+S'+B')|_{S'}$ then the pair $({S'},B'_{S'})$ is semi-dlt.
By \eqref{c-abb}$_{n-1}$, we have that $H^0(S',\mathcal O _{S'}(m(K_{S'}+B'_{S'})))\ne 0$ for all sufficiently divisible integers $m>0$.
By \eqref{c-dlt}$_n$, the sections of $H^0(S',\mathcal O _{S'}(m(K_{S'}+B'_{S'})))$ extend to $H^0(X',\mathcal O _{X'}(m(K_{X'}+S'+B')))=H^0(X',\mathcal O _{X'}(mG'))$ and hence $S'\not \subset {\rm Supp}(G')$ contradicting the fact that $\kappa (G')=0$.

\end{proof}
\section{Further remarks}
The goal of this section is to show that assuming the Global ACC Conjecture (cf. \eqref{c-acc1} below), one can reduce \eqref{c-abb} to the following weaker conjecture:
\begin{conjecture}\label{c-abbw} Let $X$ be a smooth projective variety. If $K_X$ is pseudo-effective then 
$\kappa (K_X)\geq 0$.
\end{conjecture}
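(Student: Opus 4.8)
The statement \eqref{c-abbw} is the Non-Vanishing Conjecture in its hardest form --- empty boundary, no available positivity --- so what follows is an attack strategy built on the tools of this paper rather than a complete argument, and I will indicate where it must, at present, break down. I would first dispose of the easy cases: by \cite{BCHM10}, if $K_X$ is big then $\kappa(K_X)=\dim X$, so we may assume $K_X$ is pseudo-effective but not big, and (passing to a log resolution) that $N_\sigma(K_X)=\sum_j a_jY_j$ has simple normal crossings support. By \eqref{r-good} it is enough to prove $\kappa_\sigma(K_X)=\kappa(K_X)$; in particular, when $\kappa_\sigma(K_X)=0$ one has $K_X\equiv N_\sigma(K_X)$ by \cite{Nakayama04}, so the whole problem reduces to upgrading this numerical equivalence to $\mathbb Q$-linear equivalence, i.e.\ to producing a single global section of some $mK_X$.

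The main line of attack is dimension reduction via adjunction, powered by the Extension Theorem. Running a $K_X$-minimal model program with scaling --- using \eqref{t-term} and the special termination \eqref{t-st} to control the behaviour near the divisorial part --- and analysing the movable/fixed decomposition, one aims to reach a birational model carrying a genuine prime divisor $S$: e.g.\ a component of $N_\sigma$ on a suitable blow-up, or a divisor manufactured, as in Theorem \ref{t-3} and Section \ref{s-fingen}, out of the hypothetical failure of $\kappa_\sigma=\kappa$. Adjunction then produces a pair $(S,B_S)$ with $K_S+B_S=(K_X+S)|_S$ of dimension $n-1$, to which the inductive hypothesis (the known low-dimensional cases, or \eqref{c-abb} in dimension $n-1$ for semi-log canonical pairs, exactly as used in \eqref{t-good1}) applies, giving a non-zero section $u$ of $m_0(K_S+B_S)$. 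One then verifies the extension-obstruction inequality $Z_{\pi^\star(u)}+m_0\wt E|_{\wt S}\geq m_0\Xi$ --- automatic once $S$ lies in the support of the relevant effective divisor --- and invokes \eqref{t-ext} (or its consequences \eqref{c-ext}, \eqref{t-6}) to lift $u$ to $X$; this yields the desired pluricanonical section. The point of the present paper's refined Ohsawa-Takegoshi theorem \eqref{t-OT} is precisely that this lifting requires \emph{no} ample summand in $B_S$, which is what makes the plan conceivable in the boundary-free setting.

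The step I expect to be the genuine obstacle is the creation of $S$ together with the section living on it --- equivalently, the base case $\kappa_\sigma(K_X)=0$ with no boundary present, where adjunction has nothing to bite on. Here one would try the purely analytic route of Siu \cite{Siu09}: iterate the potential construction of Theorem \ref{t-5}, feeding it extensions obtained from \eqref{t-OT}, to build a singular metric on $K_X$ whose multiplier ideal sheaf forces a global section. The difficulty is exactly the ``tie-breaking'' phenomenon flagged after \eqref{r-set}: one must keep the Lelong numbers of the limiting positive current under control, so that the limit metric is not so singular that $H^0$ of the associated multiplier ideal sheaf vanishes. In the body of the paper this is made to work only thanks to hypothesis (3) of \eqref{t-ext} --- the existence of an effective $D\sim_{\mathbb Q}K_X+S+B$ --- and that input is unavailable here. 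Absent a new idea at this point, the plan establishes \eqref{c-abbw} only in the dimensions and special cases where non-vanishing is already known; carrying it out in full is the content of the conjecture.
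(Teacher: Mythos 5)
This statement is labelled a \emph{conjecture} in the paper, and the paper does not prove it: \eqref{c-abbw} appears only as a hypothesis (together with the Global ACC Conjecture and \eqref{c-gmm} in dimension $n-1$) in Theorem \ref{t-abbw}, which deduces non-vanishing for arbitrary pseudo-effective klt pairs from it. There is therefore no ``paper's own proof'' to compare against, and you were right not to pretend to have one. Your write-up correctly identifies the statement as open, correctly notes that the reduction $\kappa_\sigma=\kappa$ of Remark \ref{r-good} is the natural target, and correctly locates the obstruction: in the boundary-free, $\kappa_\sigma=0$ case there is no divisor $S$ to adjoint to and no effective $\mathbb Q$-divisor $D$ with $S\subset\mathrm{Supp}(D)\subset\mathrm{Supp}(S+B)$ to feed into Theorem \ref{t-ext}, and the tie-breaking step cannot be carried out analytically --- exactly the difficulty the authors flag at the end of Section \ref{s-fingen} (``we are unable to bypass it with purely analytic methods''). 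Your reference to Siu's announced proof in the smooth boundary-free case also matches the paper's remark that \cite{Siu09} together with \eqref{t-abbw} is expected to yield the general case. In short: your ``proposal'' is an honest and accurate diagnosis rather than a proof, which is the only defensible response, since the statement is genuinely open in the paper.
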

We will need the following:
\begin{conjecture}[Global ACC]\label{c-acc1} Let  $d\in \mathbb N$ and  $I \subset [0,1]$ be a set satisfying the ACC. Then there is a finite subset $I_0\subset I$ such that if \begin{enumerate}
\item $X$ is a projective variety of dimension $d$,
\item $(X,\Delta  )$ is log canonical,
\item $\Delta =\sum \delta _i \Delta _i $ where $\delta _i \in I$,
\item $K_X+\Delta \equiv 0$,\end{enumerate}
then $\delta _i \in I_0$.
\end{conjecture}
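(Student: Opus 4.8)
The plan is to argue by contradiction, by induction on the dimension $d$. Suppose the statement fails for some $d$ and some ACC set $I$. Since $I$ satisfies the ACC, any non-finite subset of it contains a strictly increasing sequence converging to its supremum, so we obtain projective log canonical pairs $(X_m,\Delta_m)$ of dimension $d$ with $K_{X_m}+\Delta_m\equiv 0$ and coefficients in $I$, together with a distinguished component $S_m\subset\Delta_m$ whose coefficient $a_m\in I$ satisfies $a_m<a_{m+1}$ for all $m$ and $a_m\nearrow a:=\sup_m a_m\in(0,1]$. First I would make a sequence of standard reductions: replacing each $(X_m,\Delta_m)$ by a $\mathbb{Q}$-factorial dlt modification — which preserves $K+\Delta\equiv 0$ and only adjoins the extra coefficient $1$ carried by the extracted divisors — we may assume that every $(X_m,\Delta_m)$ is $\mathbb{Q}$-factorial and dlt.

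The next step disposes of the case where $S_m$ meets $\lfloor\Delta_m\rfloor$, using adjunction and the inductive hypothesis. If $T$ is a component of $\lfloor\Delta_m\rfloor$, then by adjunction $(K_{X_m}+\Delta_m)|_T=K_T+\Delta_T$ defines a $(d-1)$-dimensional dlt pair $(T,\Delta_T)$, with $T$ normal (as recalled in the preliminaries on dlt pairs), $K_T+\Delta_T\equiv 0$, and the coefficients of the different $\Delta_T$ lying in Shokurov's derived set $D(I)=\{\,\frac{m-1+\sum_j b_j}{m}\ :\ m\in\mathbb{N},\ b_j\in I\,\}$, which again satisfies the ACC. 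By the inductive hypothesis in dimension $d-1$, these coefficients lie in a finite subset of $D(I)$; unwinding the different relation then forces the coefficients of $\Delta_m$ along the components passing through $T$ into a finite set, and combining this with a suitable run of the MMP with scaling and special termination \eqref{t-st} reduces the problem to the case in which the offending component is a klt center — that is, essentially to the klt case of the statement.

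For the klt case I do not see an elementary route, and I would import the full boundedness machinery along the lines of Hacon--McKernan--Xu: prove simultaneously, by a single interlocking induction on dimension, (a) the ACC for log canonical thresholds in dimension $\le d$, (b) the DCC for volumes $\operatorname{vol}(K_Y+\Gamma)$ of $d$-dimensional lc pairs with coefficients in a DCC set, and (c) the present statement; these three are too tightly coupled to be established in isolation. Concretely, from the sequence $(X_m,\Delta_m)$ with $a_m\nearrow a$ one forms the limit boundaries $\Theta_m:=\Delta_m+(a-a_m)S_m$, so that $K_{X_m}+\Theta_m\equiv(a-a_m)S_m$ is pseudo-effective while $(X_m,\Theta_m)$ is in general no longer log canonical; running the $(K_{X_m}+\Theta_m)$-MMP — which terminates by \cite{BCHM10} (cf. \eqref{t-er}) — and analysing its output, a Fano-type fibration or a numerically trivial model, one extracts by adjunction to the contracted locus a sequence of lower-dimensional pairs whose coefficients form a DCC set together with an accumulation of log canonical thresholds, contradicting (a) and (b) in dimension $\le d-1$. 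Making this precise requires the finiteness of minimal models and the cone and length-of-extremal-rays results quoted above, careful Diophantine bookkeeping of how coefficient sets transform under adjunction and MMP steps, and — the true crux — the boundedness input (an Alexeev--Borisov-type statement, in the weak form captured by the DCC for volumes) needed to turn ``a DCC set of volumes cannot accumulate at $0$'' into the desired contradiction.

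The main obstacle, then, is exactly this boundedness/DCC-of-volumes/ACC-of-lct package: there is no isolated proof of the Global ACC, and any honest argument must develop it in tandem with those statements, exploiting the full strength of the minimal model program. By comparison the dlt reduction, adjunction, special termination and the Diophantine approximation are routine.
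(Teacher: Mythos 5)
The statement you are asked to prove is stated in the paper as a \emph{Conjecture}: the authors explicitly do not prove Conjecture~\ref{c-acc1}, but instead take it as a hypothesis in Proposition~\ref{p-threshold} and Theorem~\ref{t-abbw}, and the accompanying remark says only that ``a proof of \eqref{c-acc1} has been announced by Hacon, M\textsuperscript{c}Kernan and Xu.'' So there is no argument in this paper to compare yours against, and your own sketch --- which, as you candidly say, cannot be completed without importing the full Hacon--M\textsuperscript{c}Kernan--Xu package (ACC for lct, DCC for volumes, Alexeev--Borisov-type boundedness, all by a single interlocking induction) --- is not a proof either. That honest diagnosis is correct: the dlt modification, adjunction to $\lfloor\Delta\rfloor$ with coefficients in Shokurov's derived set $D(I)$, and special termination are indeed the routine part, and the true obstruction is the boundedness input.

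One concrete error does need fixing. You open by writing ``Since $I$ satisfies the ACC, any non-finite subset of it contains a strictly increasing sequence converging to its supremum,'' and then extract coefficients $a_m\nearrow a$. That is the opposite of ACC: by the paper's own definition (``any non decreasing sequence is eventually constant''), an ACC set contains \emph{no} strictly increasing sequence, so you cannot extract $a_m\nearrow a$ from $I$. (You seem to have silently used the DCC hypothesis of the actual Hacon--M\textsuperscript{c}Kernan--Xu Global ACC, where such an increasing sequence is exactly what an infinite DCC subset provides.) It is worth noting that the paper's ``ACC'' hypothesis appears to be a slip for ``DCC'': with $I$ literally ACC the statement is false as written --- take $I=\{1/n: n\in\mathbb{N}\}\cup\{1\}$, $X=\mathbb{P}^1$, and $\Delta= p_0 + \sum_{i=1}^n \tfrac1n\, p_i$; every $1/n\in I$ then actually occurs as a coefficient of some lc pair with $K_X+\Delta\equiv 0$, so no finite $I_0$ works --- and the way \eqref{c-acc1} is applied in the proof of Proposition~\ref{p-threshold} likewise only makes sense under a DCC-type reading. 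So your extraction is correct for the statement the authors almost certainly intend, but you should flag the discrepancy rather than reproduce it.
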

\begin{rem} Recall that a set satisfies the ACC (i.e. the ascending chain condition) if any non decreasing sequence is eventually constant.
A proof of \eqref{c-acc1} has been announced by Hacon, M\textsuperscript{c}Kernan and Xu. They also show that \eqref{c-acc1} implies the ACC for log canonical thresholds cf. \eqref{c-acc} below.\end{rem} 
 \begin{conjecture}[ACC for LCTs]\label{c-acc} Let $d\in \mathbb N$, $\Gamma \subset [0,1]$ be a set satisfying the DCC and $S\subset \mathbb R _{\geq 0}$ be a finite set. Then the set$$\{{\rm lct}(D,X,\Delta )|\ (X,\Delta )\ {\rm is\ lc},\ \dim X=d,\ \Delta \in \Gamma,\ D\in S\}$$satisfies the ACC. Here $D$ is $\mathbb R$-Cartier and $\Delta\in \Gamma$ (resp. $\ D\in S$) means $\Delta=\sum \delta _i\Delta _i$ where $\delta _i\in \Gamma$ (resp. $D=\sum d_iD_i$ where $d_i\in S$)  and ${\rm lct}(D,X,\Delta )={\rm sup}\{ t\geq 0|(X,\Delta +tD)\ {\rm is\ lc}\}$. \end{conjecture}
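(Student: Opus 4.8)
The plan is to deduce Conjecture \eqref{c-acc} from the Global ACC Conjecture \eqref{c-acc1} (which we are entitled to assume, as it appears above), arguing by induction on $d$ and by contradiction. Suppose the set of log canonical thresholds in the statement fails the ACC; passing to a subsequence, we obtain a \emph{strictly} increasing sequence $t_i=\mathrm{lct}(D_i,X_i,\Delta_i)$ with $\dim X_i=d$, $(X_i,\Delta_i)$ lc, $\Delta_i\in\Gamma$, $D_i\in S$, and $t_i\nearrow t:=\sup_i t_i$. Since the lct is the largest coefficient of $D_i$ for which log canonicity is preserved, and since $t_i<t$ for every $i$, the pair $(X_i,\Delta_i+t_iD_i)$ is lc whereas $(X_i,\Delta_i+tD_i)$ is not. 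Hence there is a divisorial valuation $E_i$ over $X_i$ with log discrepancy $a(E_i,X_i,\Delta_i+t_iD_i)=0$, and by the finiteness of the valuations that may compute an lct (cf. \cite{BCHM10}) we may arrange that in addition $\mathrm{ord}_{E_i}(D_i)>0$.

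Next I would realize $E_i$ as an honest divisor. After replacing $(X_i,\Delta_i)$ by a $\bQ$-factorialization, \cite{BCHM10} provides a projective birational morphism $f_i\colon Y_i\to X_i$ with $Y_i$ $\bQ$-factorial on which $E_i$ appears as the unique exceptional divisor; writing $K_{Y_i}+B_i=f_i^{*}(K_{X_i}+\Delta_i+t_iD_i)$ we get $B_i\ge 0$, all coefficients $\le 1$, and $\mathrm{mult}_{E_i}B_i=1$ (here we use that $E_i$ is an lc place). Divisorial adjunction along $E_i$ then yields an lc pair $(E_i,\Theta_i)$ of dimension $d-1$ with $K_{E_i}+\Theta_i=(K_{Y_i}+B_i)|_{E_i}$; by the structure of the different, together with the DCC hypothesis on $\Gamma$, one may write $\Theta_i=\Lambda_i+t_i\Xi_i$, where $\Lambda_i\ge 0$ has coefficients in a DCC set $\Gamma'$ depending only on $\Gamma$ and $d$, while $\Xi_i\ge 0$ is supported in the image of $\mathrm{Supp}(D_i)$ and its coefficients are nonnegative integral combinations of the (finitely many) elements of $S$. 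A suitable form of inversion of adjunction shows, finally, that $t_i$ is precisely the log canonical threshold of $\Xi_i$ with respect to $(E_i,\Lambda_i)$, since log canonicity of $(E_i,\Lambda_i+s\Xi_i)$ reflects that of $(X_i,\Delta_i+sD_i)$ in a neighbourhood of the centre of $E_i$.

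The statement would now follow from the induction hypothesis in dimension $d-1$ — the $t_i=\mathrm{lct}(\Xi_i,E_i,\Lambda_i)$ could not strictly increase — \emph{provided} the coefficients of $\Xi_i$ ranged over a \emph{finite} set, rather than over the (merely DCC) set of integral combinations of $S$. Securing this boundedness is the step I expect to be the main obstacle: a priori $\mathrm{ord}_{E_i}(D_i)$ and the multiplicities of the components of the strict transform of $D_i$ along $E_i$ may grow with $i$, and the MMP alone does not control them. This is where Global ACC \eqref{c-acc1} becomes essential. When the centre $Z_i=f_i(E_i)$ has codimension $\ge 2$ in $X_i$, rather than restricting to the divisor $E_i$ directly one applies the canonical bundle formula to the lc-trivial fibration extracted from $(Y_i,B_i)$ over $Z_i$; the moduli part of this formula is governed by the very general fibres, which satisfy $K+\Delta\equiv 0$, so \eqref{c-acc1} confines their coefficients to a finite set and hence, through effective forms of the canonical bundle formula, bounds the denominators of the moduli part. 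Transporting these bounds back through the construction yields the needed uniform control on the multiplicities occurring in $\Xi_i$, and the induction on $d$ then closes. The base cases are immediate: $d=1$ is trivial, and when $Z_i$ is already a divisor on $X_i$ the reduction above only invokes the DCC of the different, so no boundedness issue arises.
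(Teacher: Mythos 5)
First, a clarification that matters for the whole exercise: the paper does not prove Conjecture \ref{c-acc}. It is stated purely as a conjecture, and the only reference to its status is the remark following Conjecture \ref{c-acc1}, where the authors note that a proof of \eqref{c-acc1} has been announced by Hacon, M\textsuperscript{c}Kernan and Xu and that those authors also show \eqref{c-acc1} implies \eqref{c-acc}. There is therefore no argument in the paper to compare yours against; within this paper the ACC for log canonical thresholds is taken as an external input, not established.

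Turning to your attempt on its own terms: the broad template — extract an lc place $E_i$ computing each threshold $t_i$, pass by divisorial adjunction to a pair $(E_i,\Theta_i)$ of dimension $d-1$, split $\Theta_i=\Lambda_i+t_i\Xi_i$, and try to invoke Global ACC — is the right shape of argument. But the step you yourself flag as the obstacle is genuinely the crux, and the repair you sketch does not work. The different produces coefficients of the form $\bigl(m_P-1+\sum(\cdot)\bigr)/m_P$ with $m_P\geq 1$ an a priori unbounded integer, so the coefficients appearing in $\Xi_i$ are of the shape $d_k/m_P$ and range over a DCC set, not a finite one; your inductive hypothesis in dimension $d-1$ therefore does not apply. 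Appealing to ``effective forms of the canonical bundle formula'' to bound the denominators of the moduli part is not available here: effective boundedness of the moduli b-divisor (the Prokhorov--Shokurov b-boundedness/b-semiampleness conjecture) is an independent and notoriously difficult statement, and the Global ACC Conjecture does not imply it. What Global ACC does give you, once you have a genuine \emph{global} log Calabi--Yau pair in dimension $<d$ with DCC coefficients strictly increasing in $i$, is an immediate contradiction; the work is to manufacture such a global pair from the local data $(Y_i,B_i)$ over $Z_i$ by running an appropriate MMP on the extraction, rather than by restricting over $Z_i$ and invoking a canonical bundle formula. As written, your argument never produces a pair to which Global ACC can be applied directly, so the contradiction does not materialise; in addition, your assertion that $t_i=\mathrm{lct}(\Xi_i,E_i,\Lambda_i)$ needs care, since inversion of adjunction only controls singularities near the centre of $E_i$, whereas the threshold on $X_i$ could a priori drop elsewhere.
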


\begin{rem}
Following \cite{Birkar07} it seems likely that \eqref{c-abb}  in dimension $n$  and \eqref{c-acc} in dimension $n-1$ imply the termination of flips for any pseudo-effective $n$-dimensional lc pair.\end{rem}
\begin{defn}Let $(X,\Delta )$ be a projective klt pair and $G$ an effective $
\Q$-Cartier divisor such that $K_X+\Delta +tG$ is pseudo-effective for some $
t\gg 0$.

Then the pseudo-effective threshold $\tau =\tau (X,\Delta;G)$ is given by
$$\tau ={\rm inf}\{ t\geq 0 | K_X+\Delta +tG\ {\rm is \ pseudo-effective}\}.$$
\end{defn}

\begin{proposition}\label{p-threshold} Assume \eqref{c-acc1}.
If  $ \tau=\tau (X,\Delta;G)$ is the pseudo-effective threshold positive, then $\tau$ is rational.\end{proposition}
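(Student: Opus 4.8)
The plan is to reduce, via the minimal model program, to a log Calabi--Yau fibration and then invoke Conjecture \ref{c-acc1} on a general fibre. Since $G\ge 0$ is pseudo-effective, if $K_X+\Delta\in\Psef(X)$ then $K_X+\Delta+tG\in\Psef(X)$ for all $t\ge0$ and $\tau=0$; so the hypothesis $\tau>0$ forces $K_X+\Delta\notin\Psef(X)$. After a small $\mathbb Q$-factorialization (\cite{BCHM10}) we may assume $X$ is $\mathbb Q$-factorial, and I would then run the $(K_X+\Delta)$-minimal model program with scaling of $G$: at the $i$-th step one contracts a $(K_{X_i}+\Delta_i)$-negative extremal ray $R_i$ with $G_i\cdot R_i>0$ and $(K_{X_i}+\Delta_i+\lambda_iG_i)\cdot R_i=0$, the $\lambda_i$ are non-increasing, and since $K_X+\Delta\notin\Psef(X)$ the program terminates (Theorem \ref{t-term}) with a birational contraction $\phi\colon X\dashrightarrow X'$ and a Fano--Mori contraction $g\colon X'\to Z$ for the terminal scaling value $\lambda'$, so that $K_{X'}+\Delta'+\lambda'G'\equiv_Z0$. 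Because each step is strictly $(K_{X_i}+\Delta_i+tG_i)$-negative for $t<\lambda'$, the contraction $\phi$ cannot create pseudo-effectivity; restricting to a general fibre $F$ of $g$, on which $G'|_F$ is ample, one reads off that $\tau=\lambda'$ and that, setting $K_F+\Delta_F:=(K_{X'}+\Delta')|_F$ and $G_F:=G'|_F$,
\[
K_F+\Delta_F+\tau G_F\equiv 0,\qquad \dim F\le n .
\]

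To promote this to rationality of $\tau$ using Conjecture \ref{c-acc1}, I would argue by contradiction. Assuming $\tau$ irrational, run the construction again after replacing $G$ by a sequence of perturbations $G^{(j)}\to G$ — for instance $G^{(j)}=G-\tfrac1jG_0$ with $0\le G_0\le G$ not proportional to $G$, so that the thresholds $\tau_j=\tau(X,\Delta;G^{(j)})$ satisfy $\tau_j\searrow\tau$ and are pairwise distinct. This yields an infinite family of log pairs $\big(F_j,\ \Delta_{F_j}+\tau_jG^{(j)}_{F_j}\big)$, each of dimension $\le n$, with $K_{F_j}+\Delta_{F_j}+\tau_jG^{(j)}_{F_j}\equiv0$, whose boundary coefficients lie in the set $I$ generated by the (finitely many, rational) coefficients of $\Delta$ and the numbers $\tau_j\cdot(\text{a coefficient of }G^{(j)})$; since $\{\tau_j\}$ is a decreasing sequence, $I$ satisfies the ascending chain condition. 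By Conjecture \ref{c-acc1} all these coefficients then lie in a \emph{finite} subset $I_0\subset I$, so only finitely many values $\tau_j$ can occur — contradicting the strict monotonicity of $\{\tau_j\}$. Hence $\tau$ is rational.

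I expect two points to need real work. First, the pairs $\big(F_j,\Delta_{F_j}+\tau_jG^{(j)}_{F_j}\big)$ must actually be log canonical for Conjecture \ref{c-acc1} to apply, so one must arrange — perhaps after passing to a suitable birational model and using that log canonical thresholds are rational — that their boundary coefficients stay $\le1$; this is not automatic, as $\tau G$ may have coefficients $>1$. Second, and this is the heart of the matter, the perturbations $G^{(j)}$ have to be chosen so that \emph{simultaneously} the thresholds $\tau_j$ are strictly monotone, the coefficients of the resulting fibre pairs lie in a set satisfying the ascending chain condition, and $\dim F_j$ is bounded by $n$; it may well be necessary to combine a perturbation enlarging $G$ (for which $\tau_j\le\tau$) with one carrying a small ample summand (for which $\tau_j$ is already rational by \cite{BCHM10}). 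The identity $\tau=\lambda'$ in the first step, though expected, likewise requires a careful use of the negativity lemma, the scaling program being negative only for $K_X+\Delta$.
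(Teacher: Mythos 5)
Your broad strategy — run the MMP to a Mori fibre space, produce a lower-dimensional log Calabi--Yau pair on a general fibre, and then invoke the Global ACC conjecture — is indeed the same as the paper's, but the two constructions you use to carry it out both differ from the paper's and both have real problems.

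\smallskip\noindent\textbf{The scaling step.} You run the $(K_X+\Delta)$-MMP with scaling of~$G$. The MMP with scaling in the sense of \cite{BCHM10} (and as recalled in Subsection~\ref{s-mmps}) requires the scaling divisor to be ample (or at least big with an ample part), and also that $K_X+\Delta+\lambda_0G$ be nef and $(X,\Delta+\lambda_0G)$ dlt for some starting value $\lambda_0$. None of this is given for a general effective $\mathbb{Q}$-Cartier $G$; in particular, $(X,\Delta+\lambda G)$ need not even be log canonical. The paper sidesteps all of this by working with $K_X+\Delta+xG$ for \emph{rational} $x<\tau$ and scaling by an ample divisor $A$, which is exactly the setting where BCHM applies. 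The number entering on the fibre is then not $\tau$ but the rational pseudo-effective threshold $y(x)=\tau(X,\Delta+xG;A)$, and on the general fibre $F_x$ of the resulting Mori fibre space one gets $K_{F_x}+\Delta_{F_x}+xG_{F_x}+y(x)A_{F_x}\equiv0$, hence some $\eta(x)\in(x,\tau]$ with $K_{F_x}+\Delta_{F_x}+\eta(x)G_{F_x}\equiv0$. Note also that if your claim $\tau=\lambda'$ were available, then $\tau$ would already be a quotient of intersection numbers of $\mathbb{Q}$-divisors on $F$ and would be rational with no need for ACC at all; that the paper needs ACC is a warning that the identity cannot be expected to hold as stated. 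What one can extract is only the one-sided bound $\eta(x)\leq\tau$, and ACC is then used precisely to rule out $\eta(x)<\tau$ for all $x$ near $\tau$.

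\smallskip\noindent\textbf{The ACC step.} Your perturbation of $G$ is the wrong thing to vary. Writing $g^{(j)}_i=g_i-\tfrac1j(G_0)_i$, the boundary coefficients of your fibre pairs are $\tau_j\,g^{(j)}_i$, and \emph{both} factors vary with $j$: $\tau_j$ is (at best weakly) decreasing while $g^{(j)}_i$ is increasing, so the products need not be monotone and do not obviously lie in any ACC set. Worse, even if Global ACC did apply and forced $\tau_j\,g^{(j)}_i$ into a finite set, this would \emph{not} force $\tau_j$ into a finite set — because the $g^{(j)}_i$ are themselves pairwise distinct — so the announced contradiction with strict monotonicity of $\tau_j$ does not follow. (And the strict monotonicity you need is itself not automatic: if the relevant face of the pseudo-effective cone is flat in the direction of $G_0$, the $\tau_j$ can all coincide.) There is also a logical problem: your contradiction, were it valid, makes no use of the assumed irrationality of $\tau$, so it would rule out the perturbation family unconditionally, which is clearly too strong. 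The paper avoids all of this by keeping $G$ \emph{fixed} and letting only the scalar $\eta(x)$ vary; the relevant set $I$ consists of the finitely many rational coefficients of $\Delta$ together with the numbers $\eta(x)\,g_i$ for the finitely many fixed rationals $g_i$, and the application of Conjecture~\ref{c-acc1} is to conclude that $\eta(x)$ takes only finitely many values, so that $\eta(x)=\tau$ for $x$ near $\tau$ and $\tau\in\mathbb{Q}$.

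\smallskip In short: the parts you flag as ``needing real work'' are precisely where the paper's argument departs from yours; replacing the scaling of $G$ by scaling of an ample $A$ applied to $K_X+\Delta+xG$, $x\in\mathbb{Q}$, and varying $x$ instead of $G$, simultaneously fixes the legitimacy of the MMP, replaces the unjustified identity $\tau=\lambda'$ by the tractable inequality $\eta(x)\leq\tau$, and puts the fibre coefficients into a set where Conjecture~\ref{c-acc1} actually bites.
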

\begin{proof} We may assume that $\tau =\tau (X,\Delta;G)>0$. Fix an ample divisor $A$ on $X$ 
and for any $0\leq x \leq \tau$ let $y=y(x)= \tau (X,\Delta+xG;A)$.
Then $y(x)$ is a continuous function such that $y(\tau )=0$ and $y(x)\in \mathbb Q$ for all  rational numbers $0\leq x <\tau$; moreover, for all $0\leq x <\tau$, the $K_X+\Delta+xG$ minimal model program 
with scaling ends with a $K_X+\Delta +xG+yA$-trivial Mori fiber space $g\circ
 f:X\dasharrow Y_x\to Z_x$ (cf. \cite{BCHM10} or \eqref{t-term}).
Let $F=F_x$ be the general fiber of $g$, then $$K_F+\Delta _F+x G_F+yA_F:=(K_{Y_x}+f_
*(\Delta+xG+yA))|_F\equiv 0$$ and $K_F+\Delta _F+\tau G_F$ is pseudo-effective.
Therefore $K_F+\Delta _F+\eta G_F\equiv 0$ for some $x<\eta =\eta(x)\leq \tau
$. By \eqref{c-acc1}, we may assume that $\eta =\eta (x)$ is constant and hence $
\eta =\tau$. In particular $\tau \in \Q$. 
\end{proof}

\begin{theorem}\label{t-abbw} Assume \eqref{c-gmm}$_{n-1}$, \eqref{c-abbw}$_n$ and \eqref{c-acc1}$_n$. 
Let $(X,\Delta)$ be an $n$-dimensional klt pair such that $K_X+\Delta$ is pseudo-effective. Then $\kappa(K_X+\Delta)\geq 0$.
\end{theorem}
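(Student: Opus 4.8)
The plan is to reduce, through a sequence of standard simplifications, to a situation controlled by \eqref{c-abbw}$_n$ and by lower‑dimensional non‑vanishing (which is a consequence of \eqref{c-gmm}$_{n-1}$), thereby isolating one genuinely hard case. First, pass to a log resolution $f:Y\to X$ and write $K_Y+\Gamma=f^*(K_X+\Delta)+E$ with $\Gamma,E\geq 0$ having no common component and $(Y,\Gamma)$ log smooth and klt; since $E$ is effective and $f$‑exceptional, $\kappa(K_Y+\Gamma)=\kappa(K_X+\Delta)$ and $K_Y+\Gamma$ is pseudo‑effective if and only if $K_X+\Delta$ is, so we may assume $X$ smooth, $\Delta$ simple normal crossing with coefficients in $[0,1)$; if $\Delta=0$ we conclude by \eqref{c-abbw}$_n$. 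Next, reduce to $\Delta$ being a $\mathbb Q$‑divisor: for a component $S$ of $\Delta$ of coefficient $s>0$, set $t_S:=\inf\{t\geq 0: K_X+(\Delta-sS)+tS\ \text{is pseudo-effective}\}\leq s$; since the pseudo‑effective cone is closed and $K_X+\Delta\geq K_X+(\Delta-sS)+t_SS$, it suffices to prove $\kappa\geq 0$ for $K_X+(\Delta-sS)+t_SS$, and $t_S$ is rational by Proposition \ref{p-threshold} (the sole use of \eqref{c-acc1}$_n$). Iterating over the finitely many components reduces us to the case where $\Delta$ is a $\mathbb Q$‑divisor.

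I would then reduce to $K_X+\Delta$ being \emph{nef}. Fix an ample divisor $A$; for every small rational $\epsilon>0$ the klt pair $(X,\Delta+\epsilon A)$ has big boundary and hence, by \cite{BCHM10}, a good minimal model. By finiteness of minimal models there is a single birational contraction $\phi$ from $X$ to a normal variety $X'$ that is a good minimal model of $(X,\Delta+\epsilon_kA)$ for a sequence $\epsilon_k\downarrow 0$; then $\phi$ is $(K_X+\Delta)$‑non‑positive, so $\kappa(K_X+\Delta)=\kappa(K_{X'}+\Delta')$ with $\Delta':=\phi_*\Delta$ by \eqref{l-eq}, and since each $K_{X'}+\Delta'+\epsilon_kA'$ is semiample, the limit $K_{X'}+\Delta'$ is nef. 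Replacing $(X,\Delta)$ by $(X',\Delta')$ we may assume $K_X+\Delta$ is nef (and a $\mathbb Q$‑divisor).

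Now I would argue by the numerical dimension $\nu:=\nu(K_X+\Delta)=\kappa_\sigma(K_X+\Delta)$. If $\nu=n$ then $K_X+\Delta$ is big, so $\kappa(K_X+\Delta)=n$. If $\nu=0$ then $K_X+\Delta\equiv 0$; being nef with $\kappa_\sigma=0$, it satisfies $\kappa(K_X+\Delta)\geq 0$ by \cite{Nakayama04}. In the remaining range $0<\nu<n$ one lowers the dimension: restricting $K_X+\Delta$ to a general fibre $F$ of a suitable fibration---the nef reduction of $K_X+\Delta$, or the Iitaka fibration of a big perturbation $K_X+\Delta+\epsilon A$---gives a pseudo‑effective divisor of vanishing numerical dimension on the lower‑dimensional $F$; then \eqref{c-abb}$_{n-1}$ (a consequence of \eqref{c-gmm}$_{n-1}$) together with Lai's fibration theorem \cite{Lai10} yields an effective section on $F$, and lifting it to $X$ via the Extension Theorem \eqref{c-ext} (itself a consequence of \eqref{t-ext}) gives $\kappa(K_X+\Delta)\geq 0$.

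The main obstacle is precisely this last case $0<\nu<n$: one must produce a fibration whose base has the correct dimension $\nu$, control the position of $N_\sigma(K_X+\Delta)$ relative to its fibres, and---above all---lift sections from a general fibre to the total space when $\Delta$ carries \emph{no ample part}. This last feature places the problem outside the reach of the extension results based on Kawamata--Viehweg vanishing, and is exactly what the singular‑metric refinement of the Ohsawa--Takegoshi theorem of Section \ref{s-OT}, and the Extension Theorem \eqref{t-ext} built upon it, are designed to overcome. Steps one through three above are essentially formal once Proposition \ref{p-threshold}, Nakayama's numerical‑dimension theory and the lower‑dimensional hypotheses are available; the content lies in the gluing performed in the case $0<\nu<n$.
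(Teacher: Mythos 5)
Your proposal diverges substantially from the paper's argument and, as you yourself note, it does not actually close: the case $0<\nu(K_X+\Delta)<n$ is left open.  The paper's route is quite different and avoids this case entirely.  Since you may assume $K_X$ is not pseudo-effective (otherwise $\kappa(K_X+\Delta)\ge\kappa(K_X)\ge0$ by \eqref{c-abbw}$_n$), the paper sets $\tau=\tau(X,0;\Delta)>0$ and runs the scaled MMP for $K_X+x\Delta$ with $x$ just below $\tau$; by \eqref{t-term} this terminates in a Mori-Fano fiber space $Y\to Z$, and by \eqref{p-threshold} together with \eqref{c-acc1}$_n$ one may take the threshold $\tau$ to be rational and realized on the general fiber, so that $K_Y+\tau f_*\Delta\equiv_Z0$.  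The dichotomy that then closes the proof is $\dim Z=0$ versus $\dim Z>0$.  When $\dim Z=0$, $Y$ is a $\mathbb Q$-factorial Fano with $\rho(Y)=1$, so numerical triviality of $K_Y+\tau f_*\Delta$ gives $\mathbb Q$-linear triviality and hence $\kappa\ge0$ after showing (via Negativity and the limit argument in the proof) that $f$ is $K_X+\tau\Delta$-non-positive.  When $\dim Z>0$, the pair is not big over $Z$, so \eqref{c-gmm}$_{n-1}$ applies to the general fiber over $Z$, \cite{Lai10} produces a good minimal model $W$ over $Z$ with associated fibration $r\colon W\to V$, and Ambro's canonical bundle formula \cite[0.2]{Ambro05} writes $K_W+\tau h_*\Delta=r^*(K_V+B_V)$ with $(V,B_V)$ klt of dimension $<n$; induction on dimension finishes.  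In other words, the paper \emph{never} needs to address a nef $K_X+\Delta$ of intermediate numerical dimension: the induction passes to the base $V$, not to a general fiber of $K_X+\Delta$ itself.

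Two concrete problems in your step for $0<\nu<n$.  First, neither of the fibrations you mention produces a base of dimension $\nu$: the nef reduction map has base dimension equal to the \emph{nef dimension}, which in general strictly exceeds $\nu=\kappa_\sigma(K_X+\Delta)$; and the Iitaka fibration of $K_X+\Delta+\epsilon A$ is trivial ($\dim Z=n$) for every $\epsilon>0$ since the perturbed divisor is big.  Second, even granting a fibration with numerically trivial restriction to a general fiber $F$, Corollary \eqref{c-ext} and Theorem \eqref{t-ext} lift sections of $m(K_X+S+B)$ from a \emph{prime boundary divisor} $S$ with $\lfloor S+B\rfloor=S$, not from a general fiber of a morphism; your appeal to \eqref{c-ext} at this step is not licensed by its hypotheses.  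So the hard case is genuinely open in your proposal, and the paper's choice of reduction --- Mori fiber space rather than nef model --- is precisely what sidesteps it.  The early steps (log resolution, reduction to nef via finiteness of good minimal models from \cite{BCHM10}, and using \cite{Nakayama04} when $\nu\in\{0,n\}$) are fine, though the ``reduce to $\mathbb Q$-coefficients'' step is vacuous since $\Delta$ is already a $\mathbb Q$-divisor for a klt pair, and the component-by-component threshold reduction, unlike the single scalar $\tau(X,0;\Delta)$ used in the paper, does not by itself hand you the Mori fiber space structure on which the paper's argument rests.
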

\begin{proof}
We may also assume that $K_X$ is not pseudo-effective and $\Delta \ne 0$. 
Replacing $X$ by a birational model, we may assume that $(X,\Delta )$ is log 
smooth.
Let $\tau = \tau (X,0;\Delta )$. Note that $\tau >0$.
By \eqref{p-threshold} and its proof, there is a birational contraction $f:X\dasharrow Y$, a rational number $\tau >0$  and a  $K_X+\tau \Delta$-trivial Mori fiber space $Y\to Z$. (Here, for $0<\tau -x \ll 1$, we have denoted $f_x$ by $f$, $Y_x$ by $Y$ and $Z_x$ by $Z$.)

Assume that $\dim X>\dim Z>0$. After possibly replacing $X$ by a resolution, we may assume that $f:X\to Y$ is a morphism.
Let $C$ be a general complete intersection curve on $F$ the general fiber of $Y\to Z$. We may assume that $C\cap f({\rm Exc}(f))=\emptyset$ and so we have an isomorphism $C'=f^{-1}(C)\to C$. Thus $$(K_X+\tau \Delta )\cdot C'=(K_Y+\tau f_* \Delta )\cdot C=0.$$
In particular $K_X+\tau \Delta$ is not big over $Z$.
Since $\dim (X/Z)<\dim X$, by \eqref{c-gmm}$_{n-1}$, we may assume that the general fiber of $(X,\tau \Delta)$ has a good minimal model over $Z$.
By \cite{Lai10}, we have a good minimal model $h:X\dasharrow W$ for $(X,\Delta )$ over $Z$. Let $$r:W\to V:={\rm Proj}_Z R(K_W+\tau h_* \Delta )$$ be the corresponding morphism over $Z$.
By \cite[0.2]{Ambro05}, we may write $K_W+\tau h_*\Delta=r^*(K_V+B_V)$ where $(V,B_V)$ 
is klt. By induction on the dimension $\kappa (K_V+B_V)\geq 0$. Therefore $$\kappa (K_X+\Delta )\geq \kappa (K_X+\tau \Delta )=\kappa (K_V+B_V)\geq 0.$$

Therefore, we may assume that $\dim Z=0$ (for all $0<\tau -x \ll 1$). We claim that we may assume that $f$ is $K_X+\tau \Delta$-non-positive. Grant this for the time being, then by the Negativity Lemma (since $K_Y+\tau f_*\Delta$ is nef) it is easy to see that $\kappa ( K_X+\tau \Delta)=\kappa (K_Y+\tau f_*\Delta)$ (cf. \eqref{l-eq}). But as $\rho (Y)=1$, we have that $\kappa (K_Y+\tau f_*\Delta)\geq 0$ as required.
To see the claim, we first of all notice that for some fixed $0<y=y(x) \ll 1$ and any $0<\tau-x'\ll \tau -x$, we have 
$${\rm Supp} (N_\sigma (K_X+\tau \Delta))= {\rm Supp} (N_\sigma (K_X+ \tau \Delta+yA))$$ $$\qquad \supset {\rm Supp} (N_\sigma (K_X+ x'\Delta+y'A))$$ (since $(\tau -x')\Delta +(y-y')A$ is ample).
It follows that we may assume that ${\rm Supp}(N_\sigma (K_X+x_i\Delta +y_iA))$ is fixed for some sequence $0<x_i<\tau$ where $y_i=y(x_i)$ and $\lim x_i=\tau$. Thus, we may assume that all $Y_i:=Y_{x_i}$ are isomorphic in codimension $1$.
As observed above, we may also assume that $\dim Z_i=0$ and hence $K_{Y_i}+{f_i}_*(x_i\Delta +y_iA)\equiv 0$.
Let $E$ be any $f$-exceptional divisor for $f:X\dasharrow Y:=Y_1$. We must show that $f$ is $K_X+\tau \Delta$-non-positive i.e. that $a(E,Y,\tau f_* \Delta)\geq a(E,X,\tau \Delta)$.
For any $i>0$, since $K_{Y_{i}}+{f_{i}}_*(x_i\Delta +y_iA)\equiv 0$, we have $$a(E,Y,  {f}_*(x_i\Delta +y_iA))= a(E,Y_{i},  {f_{i}}_*(x_i\Delta +y_iA))\leq a(E,X,x_i\Delta +y_iA).$$
Passing to the limit, we obtain the required inequality.

\end{proof}
Finally we recall the following important application of the existence of good minimal models.
\begin{theorem}\label{t-finite} Assume \eqref{c-gmm}.
Let $X$ be a smooth projective variety and $\Delta _i$ be $\mathbb Q$-divisors on $X$ such that $\sum \Delta _i$ has simple normal crossings support and
 $\lfloor \Delta _i\rfloor =0$. Let $\mathcal C\subset \{ \Delta =\sum t_i\Delta _i|0\leq t_i\leq 1\}$ be a rational polytope.

Then there are finitely many birational contractions $\phi _i:X\dasharrow X_i$ and finitely many projective morphisms $\psi _{i,j}:X_i\to Z_{i,j}$ (surjective with connected fibers) such that if $\Delta\in \mathcal C$  
and $K_X+\Delta$ is pseudo-effective, then there exists $i$ such that $\phi _i:X\dasharrow X_i$ is a good minimal model of $(X,\Delta )$ and $j$ such that $Z_{i,j}={\rm Proj}R(K_X+\Delta )$. Moreover, the closures of the sets 
$$\mathcal A_{i,j}=\{ \Delta =\sum t_i\Delta _i|K_X+\Delta \sim _{\mathbb R} \psi _{i,j}^* H_{i,j},\ {\rm with} \ H_{i,j}\ {\rm ample\ on}\ Z_{i,j}\}$$ are finite unions of rational polytopes.
\end{theorem}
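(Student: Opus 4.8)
The plan is to derive this as a version of the ``geography of good minimal models'' of \cite{BCHM10} and \cite{Birkar09}: the assumed Conjecture~\ref{c-gmm} supplies both the termination of the relevant minimal model programs, through \eqref{t-term}, and the semiampleness needed to pass from minimal models to \emph{good} minimal models, and this is precisely what compensates for the absence of an ample component in the boundary. First I would reduce to a local statement. After subdividing $\mathcal C$ we may assume $(X,\Delta)$ is klt for every $\Delta\in\mathcal C$, the klt condition being open and cut out by finitely many linear inequalities on the $t_i$. The locus $\mathcal C_{\rm psef}:=\{\Delta\in\mathcal C:K_X+\Delta\text{ pseudo-effective}\}$ is closed in the compact polytope $\mathcal C$, hence compact, so it suffices to show that every $\Delta_0\in\mathcal C_{\rm psef}$ has a relatively open neighbourhood $U\subset\mathcal C$ and finitely many birational contractions $\phi_i:X\dashrightarrow X_i$ and morphisms $\psi_{i,j}:X_i\to Z_{i,j}$ with the required property for all $\Delta\in U$; a finite subcover of $\mathcal C_{\rm psef}$ then gives the theorem (with the sets $\mathcal A_{i,j}$ treated afterwards).

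The heart of the matter is then the following. Fix $\Delta_0\in\mathcal C_{\rm psef}$, a rational polytope $P\subset\mathcal C$ containing $\Delta_0$ in its relative interior, and an ample $\mathbb Q$-divisor $A$ with $K_X+\Delta+A$ nef for all $\Delta\in P$; for each $\Delta\in P$ run the $K_X+\Delta$-minimal model program with scaling of $A$. By \eqref{t-term} — invoking \eqref{c-gmm} when $K_X+\Delta$ is pseudo-effective and case (2) of \eqref{t-term} otherwise — each such program terminates. Since every contracted extremal ray has bounded length by \eqref{t-er}, at each step the contracted ray is singled out inside the Mori cone by finitely many linear functionals with controlled denominators, so the set of $\Delta\in P$ along which a prescribed finite sequence of flips and divisorial contractions is run is a rational sub-polytope of $P$; these sub-polytopes cover $P$, and with $\dim X$, $\dim P$ and the bound $2\dim X$ all fixed, an induction on $\dim X$ as in \cite[\S7]{BCHM10} shows only finitely many such sequences occur. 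This produces finitely many $\phi_i:X\dashrightarrow X_i$ such that for every $\Delta\in P$ with $K_X+\Delta$ pseudo-effective some $\phi_i$ is a minimal model of $(X,\Delta)$; for such a $\phi_i$ the nef divisor $K_{X_i}+\phi_{i*}\Delta$ is semiample by \eqref{c-gmm} applied to the klt pair $(X_i,\phi_{i*}\Delta)$ together with finite generation of its canonical ring \cite{BCHM10}, and the induced contraction $X_i\to{\rm Proj}\,R(K_{X_i}+\phi_{i*}\Delta)$ is determined by the face of the nef cone on which $K_{X_i}+\phi_{i*}\Delta$ vanishes, hence is one of finitely many morphisms $\psi_{i,j}:X_i\to Z_{i,j}$ (again by \eqref{t-er} and rationality).

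It remains to analyse $\mathcal A_{i,j}$. Fixing $i,j$, one has $\Delta\in\mathcal A_{i,j}$ exactly when (a) $\phi_i$ is a weak log canonical model of $(X,\Delta)$, i.e.\ $a(E,X,\Delta)\le a(E,X_i,\phi_{i*}\Delta)$ for the finitely many $\phi_i$-exceptional prime divisors $E$, which are affine-linear inequalities in $\Delta$; and (b) $K_{X_i}+\phi_{i*}\Delta\equiv_{Z_{i,j}}0$ with ample pushforward to $Z_{i,j}$. Condition (b) consists of the finitely many linear equalities $(K_{X_i}+\phi_{i*}\Delta)\cdot C=0$ over the extremal curves $C$ contracted by $\psi_{i,j}$, together with an ampleness condition downstairs whose closure, intersected with the rational polytope already obtained, is cut out by adjoining finitely many linear inequalities (nefness on $Z_{i,j}$, using \eqref{t-er} there). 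Thus $\overline{\mathcal A_{i,j}}$ is a finite union of rational polytopes, and any $\Delta\in\mathcal C$ with $K_X+\Delta$ pseudo-effective lies in the $\mathcal A_{i,j}$ corresponding to one of its good minimal models $X_i$ and its canonical fibration $\psi_{i,j}$ furnished by the previous step.

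The step I expect to be the real obstacle is the finiteness assertion of the second paragraph: controlling uniformly, over the whole polytope, every run of the minimal model program with scaling. This is essentially the geography-of-models theorem of \cite{BCHM10}, and the delicate point in the present setting is that termination must be supplied not by an ample boundary component — which is missing here — but by the assumed existence of good minimal models via \eqref{t-term}, and then combined with the boundedness of extremal rays \eqref{t-er} and an induction on the dimension. Everything else (the reductions, the linear-algebra description of the $\mathcal A_{i,j}$, the passage from minimal to good minimal models) is routine given these inputs.
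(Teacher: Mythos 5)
Your global structure (reduce to a local/compactness statement, then produce finitely many models near a fixed $\Delta_0$, then describe each $\mathcal A_{i,j}$ by finitely many linear conditions) matches the paper, and your descriptions of the reductions and of the $\mathcal A_{i,j}$ are fine. But the mechanism you invoke at the heart of the argument — running the $K_X+\Delta$-MMP with scaling for \emph{every} $\Delta\in P$ and asserting that ``only finitely many such sequences occur'' by ``an induction on $\dim X$ as in \cite[\S7]{BCHM10}'' — is not what \cite{BCHM10} does and contains a gap. Termination from \eqref{t-term} is for each \emph{fixed} $\Delta$; it gives no uniform bound on the length of the runs as $\Delta$ varies in $P$, and the tree of rational sub-polytopes you describe could a priori be infinite while each branch still terminates. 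To close that gap you would need some form of semi-continuity of the number of steps in $\Delta$, which is itself essentially the statement you are trying to prove. Note also that \cite[\S7]{BCHM10} does not induct on $\dim X$ at this point; it inducts on $\dim\mathcal C$.

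The paper avoids the issue entirely by the following reduction, which is the real content and which is missing from your sketch. Fix $\Delta_0$, let $\phi:X\dashrightarrow Y$ be a good minimal model of $(X,\Delta_0)$ with $\psi:Y\to Z={\rm Proj}\,R(K_X+\Delta_0)$, so that $K_Y+\phi_*\Delta_0\sim_{\mathbb R,Z}0$. Shrinking $\mathcal C$, one arranges that $\phi$ is $K_X+\Delta'$-negative for all $\Delta'\in\mathcal C$, so minimal models of $(X,\Delta')$ and $(Y,\phi_*\Delta')$ agree, and one may replace $X$ by $Y$; then, using the length bound \eqref{t-er} and the fact that $K_X+\Delta_0\sim_{\mathbb R}\psi^*H$ with $H$ ample of minimal degree $\geq\delta>0$, one shows (again after shrinking) that nefness of $K_X+\Delta'$ is equivalent to $\psi$-nefness. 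This reduces everything to the relative problem over $Z$. Over $Z$ one has $K_X+\Delta_0\sim_{\mathbb R,Z}0$, so for any $\Delta'\ne\Delta_0$ in $\mathcal C$ one picks $\Theta$ on the boundary of $\mathcal C$ with $\Theta-\Delta_0=\lambda(\Delta'-\Delta_0)$, $\lambda>0$, and then $K_X+\Theta\sim_{\mathbb R,Z}\lambda(K_X+\Delta')$, so $(X,\Delta')$ and $(X,\Theta)$ have the same minimal models and ample models over $Z$. This pushes the whole problem to the boundary of $\mathcal C$, and one concludes by induction on $\dim\mathcal C$. That is where the finiteness actually comes from, and it entirely sidesteps the need to control the length of MMP-with-scaling runs uniformly in $\Delta$.
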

\begin{proof} The proof follows easily along the lines of the proof of \cite[7.1]{BCHM10}. We include the details for the benefit of the reader.
We may work locally in a neighborhood $\mathcal C$ of any $\Delta$ as above.
Let $\phi :X\dasharrow Y$ be a good minimal model of $K_X+\Delta$  and $\psi:Y\to Z={\rm Proj}R(K_X+\Delta )$ so that $K_Y+\phi _* \Delta \sim _{\mathbb R,Z}0$. 
Since $\phi $ is $K_X+\Delta$-negative, we may assume that the same is true for any $\Delta ' \in \mathcal C$ (after possibly replacing $\mathcal C$ by a smaller subset).
We may therefore assume that for any $\Delta '\in \mathcal C$, the minimal models of $(X,\Delta )$ and $(Y,\phi _*\Delta ')$ coincide (cf. \cite[3.6.9, 3.6.10]{BCHM10}). Therefore, we may replace $X$ by $Y$ and hence assume that $K_X+\Delta$ is nef. 
Let $K_X+\Delta \sim _{\mathbb R}\psi ^* H$ where $H$ is ample on $Z$ and $\psi :X\to Z$. 
Note that there is a positive constant $\delta$ such that $H\cdot C\geq \delta$ for any curve $C$ on $Z$.
We claim that (after possibly further shrinking $\mathcal C$), for any $\Delta '\in \mathcal C$, we have 
that $K_X+\Delta '$ is nef if and only if it is nef over $Z$.

To this end, note that if $(K_X+\Delta ')\cdot C <0$ and $(K_X+\Delta )\cdot C>0$, then $(K_X+\Delta ^*)\cdot C <0$ where $\Delta ^*=\Delta +t(\Delta '-\Delta)$ for some $t>0$ belongs to the boundary of $\mathcal C$.
But then, by \eqref{t-er}, we may assume that $-(K_X+\Delta ^*)\cdot C\leq 2\dim X$. Since $(K_X+\Delta) \cdot C=H\cdot \psi _* C\geq \delta$ it follows easily 
that this can not happen for $\Delta '$ in any sufficiently small neighborhood $\Delta \subset \mathcal C'\subset \mathcal C$. We may replace $\mathcal C$ by $\mathcal C'$ and the claim follows.

Therefore, it suffices to prove the relative version of the Theorem over $Z$ cf. \cite[7.1]{BCHM10}. By induction on the dimension of $\mathcal C$,  we may assume the theorem holds (over $Z$) for the boundary of $\mathcal C$. 
For any $\Delta \ne \Delta '\in \mathcal C$ we can choose $\Theta $ on the boundary of $\mathcal C$ such that $$\Theta -\Delta=\lambda (\Delta '-\Delta),\qquad 0<\lambda .$$
Since $K_X+\Delta \sim _{\mathbb R, Z}0$, we have $$K_X+\Theta \sim _{\mathbb R, Z}\lambda(K_X+\Delta ').$$ Therefore $K_X+\Theta$ is pseudo-effective over $Z$ if and only if $K_X+\Delta'$ is pseudo-effective over $Z$ and the minimal models over $Z$ of $K_X+\Theta$ and $K_X+\Delta'$ coincide (cf. \cite[3.6.9, 3.6.10]{BCHM10}). It is also easy to see that if $\psi ':X\to Z'$ is a morphism over $Z$, then  $K_X+\Theta\sim _{\mathbb R}{\psi '}^*H'$ for some ample divisor $H'$ on $Z'$ if and only if $K_X+\Delta'\sim _{\mathbb R}{\psi '}^*(\lambda H')$.
The theorem now follows easily. 
\end{proof}
\begin{theorem}\label{t-cox} Assume \eqref{c-gmm}.
Let $X$ be a smooth projective variety and $\Delta _i$ be $\mathbb Q$-divisors on $X$ such that $\sum \Delta _i$ has simple normal crossings support and
 $\lfloor \Delta _i\rfloor =0$. Then the adjoint ring
$$R(X;K_X+\Delta _1,\ldots , K_X+\Delta _r)$$ is finitely generated.
\end{theorem}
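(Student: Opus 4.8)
The strategy is the standard one, due originally to the analogous argument in \cite[Theorem 1.1.9]{BCHM10}: finite generation of a multigraded adjoint ring is equivalent to the rational polyhedrality of the support of the ring together with local finite generation near each vertex, and both of these will be supplied by Theorem \ref{t-finite}. First I would set up notation: let $V\subset \operatorname{Div}_{\mathbb Q}(X)$ be the finite-dimensional $\mathbb Q$-vector space spanned by $K_X$ and the components of the $\Delta_i$, and let $\mathcal C\subset V$ be the rational polytope consisting of all divisors of the form $K_X+\Delta$ with $\Delta=\sum t_i\Delta_i$, $0\le t_i\le 1$ (more precisely the polytope spanned by $K_X+\Delta_1,\dots,K_X+\Delta_r$). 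Write $\mathcal E_{\mathcal C}=\{D\in\mathcal C\mid D\text{ is pseudo-effective}\}$ for the pseudo-effective part; by standard convexity this is a closed convex subset of $\mathcal C$.

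The key geometric input is Theorem \ref{t-finite}: it provides finitely many birational contractions $\phi_k:X\dashrightarrow X_k$ and morphisms $\psi_{k,j}:X_k\to Z_{k,j}$ such that the closures $\overline{\mathcal A_{k,j}}$ are finite unions of rational polytopes and such that every pseudo-effective $K_X+\Delta\in\mathcal C$ lies in some $\mathcal A_{k,j}$ with $\phi_k$ a good minimal model and $Z_{k,j}=\operatorname{Proj}R(K_X+\Delta)$. In particular $\mathcal E_{\mathcal C}=\bigcup_{k,j}(\overline{\mathcal A_{k,j}}\cap\mathcal C)$ is a finite union of rational polytopes, hence itself a rational polytope (it is convex, being the pseudo-effective locus, and a finite union of rational polytopes). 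After subdividing, I may assume $\mathcal E_{\mathcal C}$ is covered by finitely many rational simplices on each of which a single model $(\phi_k,\psi_{k,j})$ works uniformly; on such a simplice $\sigma$, for $D=K_X+\Delta\in\sigma$ we have $R(X,D)\cong R(X_k,\phi_{k*}D)$ (Remark \ref{l-eq}), and $\phi_{k*}D\sim_{\mathbb Q}\psi_{k,j}^*H$ for an ample $\mathbb Q$-divisor $H$ on $Z_{k,j}$, so $R(X,D)$ is the section ring of an ample divisor, generated in bounded degree with a uniform bound over the rational points of $\sigma$.

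The remaining step is the passage from this "slice-wise" picture to finite generation of the full multigraded ring $\mathfrak R:=R(X;K_X+\Delta_1,\dots,K_X+\Delta_r)=\bigoplus_{(m_1,\dots,m_r)\in\mathbb N^r}H^0(X,\mathcal O_X(\sum m_i(K_X+\Delta_i)))$. Here I follow the Gordan-lemma-type argument of \cite[\S 5--6]{BCHM10} (or equivalently the argument that a "Diophantine" ring whose support is a rational polyhedral cone and which is finitely generated along each ray is finitely generated). Concretely: the support $\operatorname{Supp}\mathfrak R=\{(m_1,\dots,m_r):\sum m_i(K_X+\Delta_i)\text{ effective}\}$ is, by the above, the set of lattice points in the rational polyhedral cone $C(\mathcal E_{\mathcal C})$ over $\mathcal E_{\mathcal C}$; choose generators of this cone, triangulate into the simplices $\sigma$ above, and on each $\sigma$ use that the $\mathbb Q$-linear equivalences $\sum m_i(K_X+\Delta_i)\sim_{\mathbb Q}\psi_{k,j}^*(\text{ample})$ hold \emph{compatibly} as the $m_i$ vary (because $\phi_{k*}$ is linear and $\psi_{k,j}$ is fixed on $\sigma$), so that the corresponding summands of $\mathfrak R$ assemble into a finitely generated subalgebra; finitely many such subalgebras (one per simplice, plus the finitely many "wall" contributions) together generate $\mathfrak R$.

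**Main obstacle.** The substantive content is entirely packaged into Theorem \ref{t-finite} (which itself rests on \eqref{c-gmm}); granting that, the only real work is the combinatorial bookkeeping of the last paragraph — ensuring the $\mathbb Q$-linear (not merely numerical or $\mathbb R$-linear) equivalences can be chosen consistently across a whole simplice and across its rational lattice points, so that one genuinely gets a finitely generated \emph{algebra} and not just a ring that is finitely generated in each graded piece. This is where one must be slightly careful: one should work with a common multiple $N$ clearing denominators on a fixed triangulation, replace $K_X+\Delta_i$ by suitable positive multiples, and invoke that the section ring of an ample divisor on a fixed projective variety $Z_{k,j}$ is generated in degree $\le N_0$ for a uniform $N_0$, to conclude. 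I expect no genuine difficulty here beyond the care needed in \cite[\S 5, \S 6]{BCHM10}, which can be cited.
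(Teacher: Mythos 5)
Your proposal is essentially the paper's proof, spelled out in more detail: the paper simply invokes Theorem \ref{t-finite} and refers to the proof of \cite[1.1.9]{BCHM10}, which is precisely the route you take (decompose the pseudo-effective part of the polytope into rational subpolytopes via Theorem \ref{t-finite}, pass through good minimal models to section rings of ample divisors on the $Z_{i,j}$, and assemble the pieces by the Gordan-lemma/diophantine bookkeeping of \cite{BCHM10}). There is no substantive divergence from the paper's argument.
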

\begin{proof} 
This is an easy consequence of \eqref{t-finite}, see for example the proof of \cite[1.1.9]{BCHM10}.
\end{proof}
\begin{corollary} With the notation of \eqref{t-finite}. Let $P$ be any prime divisor on $X$ and $\mathcal C ^+$ the intersection of $\mathcal C$ with the pseudo-effective cone. Then the function $\sigma _P:\mathcal C ^+\to \mathbb R_{\geq 0}$ is continuous and piecewise rational affine linear.\end{corollary}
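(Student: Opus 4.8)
The plan is to deduce the corollary from the finiteness of minimal models established in \eqref{t-finite}, by computing $\sigma_P$ explicitly on each of the finitely many chambers. Keeping the notation of \eqref{t-finite}, I would first record that $\mathcal C^+$ is the union of the finitely many sets $\mathcal A_{i,j}$, whose closures are finite unions of rational polytopes, and that for $\Delta\in\mathcal A_{i,j}$ the birational contraction $\phi_i\colon X\dasharrow X_i$ is a $K_X+\Delta$-negative good minimal model of $(X,\Delta)$. Since $\mathcal C^+$ is then covered by finitely many rational polytopes (the pieces of the various $\ol{\mathcal A_{i,j}}\cap\mathcal C$), it suffices to show that $\Delta\mapsto\sigma_P(K_X+\Delta)$ is the restriction of a rational affine linear function on each chamber $\mathcal A_{i,j}$, and that the function so obtained on $\mathcal C^+$ is continuous.

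On a fixed chamber $\mathcal A_{i,j}$ I would pick a common log resolution $p\colon W\to X$, $q\colon W\to X_i$ of $\phi_i$ (which may be chosen independently of $\Delta$ inside the chamber) and write $p^*(K_X+\Delta)=q^*(K_{X_i}+\phi_{i*}\Delta)+F_\Delta$, where $F_\Delta=\sum_E\big(a(E,X_i,\phi_{i*}\Delta)-a(E,X,\Delta)\big)E$, the sum running over the prime divisors $E\subset W$. Since $K_{X_i}+\phi_{i*}\Delta$ is semiample, a case analysis according to whether $E$ is $p$-exceptional or $q$-exceptional shows that $F_\Delta\ge 0$ and that ${\rm Supp}(F_\Delta)$ is contained in the $q$-exceptional locus; as $q^*(K_{X_i}+\phi_{i*}\Delta)$ is nef this forces $N_\sigma\big(p^*(K_X+\Delta)\big)=F_\Delta$, which is the standard description of $N_\sigma$ of a good minimal model (compare \eqref{e-ex} and \cite{Nakayama04}). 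Because $\sigma_P(K_X+\Delta)={\rm mult}_{P'}N_\sigma\big(p^*(K_X+\Delta)\big)$ with $P'$ the strict transform of $P$ (\cite{Nakayama04}), this yields $\sigma_P(K_X+\Delta)=0$ when $P$ is not $\phi_i$-exceptional and $\sigma_P(K_X+\Delta)=a(P,X_i,\phi_{i*}\Delta)+{\rm mult}_P\Delta$ when it is. Now $\Delta\mapsto\phi_{i*}\Delta$ and $\Delta\mapsto{\rm mult}_P\Delta$ are linear, and $D\mapsto a(P,X_i,D)$ is affine linear, so $\Delta\mapsto a(P,X_i,\phi_{i*}\Delta)$ is affine linear; since $X$, $X_i$, $W$ and the relevant maps are defined over $\mathbb{Q}$, the function $\sigma_P$ is rational affine linear on $\mathcal A_{i,j}$ and extends as such to the rational polytope $\ol{\mathcal A_{i,j}}$.

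Finally I would glue the pieces: the affine functions attached to the various chambers all agree with the single function $\sigma_P$ on the loci where the chamber is actually attained, and since $\sigma_P$ depends only on the numerical class and is lower semicontinuous on the pseudo-effective cone (\cite{Nakayama04}), the affine pieces are forced to agree along the common faces of the cells. Then $\sigma_P$ coincides, on each member of a finite closed cover of $\mathcal C^+$ by rational polytopes, with a continuous affine function, and is therefore continuous and piecewise rational affine linear. This last step runs parallel to the corresponding arguments in \cite[\S 7]{BCHM10}.

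I expect the main obstacle to be the two points where one goes beyond bookkeeping: the identification $N_\sigma\big(p^*(K_X+\Delta)\big)=F_\Delta$, which rests on the negativity-lemma properties of $N_\sigma$ (the fact that $N_\sigma$ of a nef pullback plus an effective $q$-exceptional divisor equals the exceptional part), and the gluing/continuity across chamber walls, including the case in which no class of $\mathcal C^+$ is big, so that continuity cannot be obtained simply by density of the big cone. Both are handled as in \cite{BCHM10} and \cite{Nakayama04}, but they are where the real content lies.
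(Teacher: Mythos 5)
The paper's own proof is the single line ``Immediate from \eqref{t-finite},'' so there is no detailed paper argument to compare against; what you have supplied is the fleshed-out version. Your strategy --- cover $\mathcal C^+$ by the finitely many $\overline{\mathcal A_{i,j}}$, fix a common resolution $p,q$ of $\phi_i$ on each chamber, write $p^*(K_X+\Delta)=q^*(K_{X_i}+\phi_{i*}\Delta)+F_\Delta$ with $F_\Delta$ effective and $q$-exceptional, identify $N_\sigma(p^*(K_X+\Delta))=F_\Delta$ because the other summand is a nef pullback, and then read off $\sigma_P(K_X+\Delta)={\rm mult}_{P'}F_\Delta=a(P',X_i,\phi_{i*}\Delta)-a(P',X,\Delta)$ as an affine-linear function of $\Delta$ --- is exactly the right computation, and the rationality of the resulting affine function is clear since $W,X,X_i$ and the maps are fixed and defined over $\Q$. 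The fact that every $p$-exceptional divisor is $q$-exceptional (because $\phi_i$ is a birational contraction) is also correctly used to show ${\rm Supp}(F_\Delta)\subset{\rm Exc}(q)$.

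The one place where the argument as written does not close is the gluing step, and the appeal to lower semicontinuity is in the wrong direction. Lower semicontinuity of $\sigma_P$ on the pseudo-effective cone says $\liminf_{\Delta'\to\Delta_0}\sigma_P(K_X+\Delta')\geq\sigma_P(K_X+\Delta_0)$, which for $\Delta_0\in\overline{\mathcal A_{i,j}}\cap\mathcal C^+$ only yields $L_{i,j}(\Delta_0)\geq\sigma_P(K_X+\Delta_0)$, where $L_{i,j}$ is your affine function. It does not give equality, nor does it force two affine pieces $L_{i,j}$ and $L_{i',j'}$ to agree on a common face, so as stated this does not establish continuity. The clean fix, which stays entirely within the framework you set up, is to observe that the identity $N_\sigma(p^*(K_X+\Delta))=F_\Delta$ continues to hold for every $\Delta\in\overline{\mathcal A_{i,j}}\cap\mathcal C^+$, not just on the open chamber: the conditions that $K_{X_i}+\phi_{i*}\Delta$ be nef and that $\phi_i$ be $K_X+\Delta$-non-positive (so $F_\Delta\geq 0$) are closed conditions on the coefficients, hence pass to the closure, and the $q$-exceptionality of ${\rm Supp}(F_\Delta)$ is independent of $\Delta$. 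Therefore $\sigma_P(K_X+\Delta)=L_{i,j}(\Delta)$ on the whole closed set $\overline{\mathcal A_{i,j}}\cap\mathcal C^+$, these sets form a finite closed cover of $\mathcal C^+$, and continuity follows from the pasting lemma without any semicontinuity input and without worrying whether any class in $\mathcal C^+$ is big. With that substitution your proof is complete.
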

\begin{proof} Immediate from \eqref{t-finite}.\end{proof}


\begin{thebibliography}{Kawamata85}
\bibitem[Ambro04]{Ambro04}
F. Ambro, {\it Nef dimension of minimal models.}
Math. Ann. 330 (2004), no. 2, 309–322. 
\bibitem[Ambro05]{Ambro05}
F. Ambro, {\it The moduli $b$b-divisor of an lc-trivial fibration.}  Compos. Math.  141  (2005),  no. 2, 385–403. 


\bibitem[Birkar07]{Birkar07}
C. Birkar, {\it Ascending chain condition for log canonical thresholds and termination of log flips.} Duke Math. J.  136  (2007),  no. 1, 173--180.
\bibitem[Birkar09]{Birkar09}
C. Birkar, {\it On existence of log minimal models II.} arXiv:0907.4170
\bibitem[Berndtsson96]{BoB96}B. Berndtsson, \emph{On the Ohsawa-Takegoshi extension theorem} Ann.\ Inst.\ Fourier (1996).
\bibitem[BP10]{BP10}B. Berndtsson, M. P\u aun, \emph{Qualitative extensions of twisted pluricanonical forms and closed positive currents}, arXiv 2010.
\bibitem[BCHM10]{BCHM10}
C. Birkar, P. Cascini, C. Hacon, J. M$^{\rm c}$Kernan, {\it Existence of minimal models for varieties of log general type.}  J. Amer. Math. Soc.  23  (2010),  no. 2, 405--468. 
\bibitem[Claudon07]{Clod07}{B. Claudon, {\emph{Invariance for multiples of the twisted canonical bundle,}}  Ann. Inst. Fourier (Grenoble)  57,  2007.}
\bibitem[CL10]{CL10}
A. Corti, V. Lazi\'c, {\it New outlook on Mori theory, II.} ArXiv:1005.0614
\bibitem[dFH09]{dFH09}
T. de Fernex, C. Hacon, {\it Deformations of canonical pairs and Fano varieties. } arXiv:0901.0389
\bibitem[Demailly90]{Dem90}J.-P. Demailly, \emph{Singular hermitian metrics on positive line bundles,} Proc. Conf. Complex
algebraic varieties (Bayreuth, April 2�6, 1990), edited by K. Hulek, T. Peternell,
M. Schneider, F. Schreyer, Lecture Notes in Math., Vol. 1507, Springer-Verlag, Berlin,
1992.
\bibitem[Demailly97]{Dem97}J.-P. Demailly, \emph{On the Ohsawa-Takegoshi-Manivel  
extension theorem,} Proceedings of the Conference in honour of the 85th birthday of Pierre Lelong, 
Paris, September 1997.
\bibitem[Demailly06]{Dem06}J.-P. Demailly, \emph{K\"ahler manifolds and transcendental techniques in algebraic geometry,} Plenary talk and Proceedings of the Internat. Congress of Math., Madrid 2006, volume I.
\bibitem[Demailly09]{Dem09}J.-P. Demailly, \emph{Analytic methods in algebraic geometry,}
on the web page of the author, December 2009.
\bibitem[EP07]{EP07} L. Ein, M. Popa, \emph{Adjoint ideals and extension theorems,} personal communication june 2007, arXiv: 0811.4290.
\bibitem[Fujino00]{Fujino00}
O. Fujino, {\it Abundance theorem for semi log canonical threefolds.}  Duke Math. J.  102  (2000),  no. 3, 513–532.
\bibitem[Fujino07]{Fujino07}
O. Fujino, {\it Special termination and reduction to pl flips.}  Flips for 3-folds and 4-folds,  63--75, Oxford Lecture Ser. Math. Appl., 35, Oxford Univ. Press, Oxford, 2007.
\bibitem[Fukuda02]{Fukuda02} 
S. Fukuda, {\it Tsuji’s numerically trivial fibrations and abundance,} Far East
Journal of Mathematical Sciences, Volume 5, Issue 3, 2002, 247–257.
\bibitem[GL10]{GL10}
Y. Gongyo, B Lehmann, {\it Reduction maps and minimal model theory}. Preprint.
\bibitem[HK10]{HK10}
C. Hacon, S. Kovacs, {\it
Classification of higher dimensional algebraic varieties.} Oberwolfach Seminars, 41. Birkh\"auser Verlag, Basel, 2010. x+208 pp.
\bibitem[HM07]{HM07}
C. Hacon, J. M$^{\rm c}$Kernan, {\it Boundedness of pluricanonical maps of varieties of general type.}  Invent.\ Math.\ 166 (2006),  no. 1, 1–25.
\bibitem[HM10]{HM10}
C. Hacon, J. M$^{\rm c}$Kernan, {\it Existence of minimal models for varieties of log general type. II.}  J.\ Amer.\ Math.\ Soc.\ 23  (2010),  no. 2, 469--490.

\bibitem[Kawamata85a]{Kawamata85a}
Y. Kawamata, {\it Pluricanonical systems on minimal algebraic varieties.}
Invent. Math. 79 (1985), no. 3, 567–588. 
\bibitem[Kawamata85b]{Kawamata85}
Y. Kawamata, {\it The Zariski decomposition of log-canonical divisors,} Algebraic Geometry Bowdoin, Proc. Symp. Pure Math. 46 (1987), 1985, pp. 425--433.
\bibitem[Kawamata92]{Kawamata92}Y. Kawamata, \emph{Abundance theorem for minimal threefolds.}
Invent. Math. 108 (1992), no. 2, 229–246. 
\bibitem[Kawamata97]{Kawamata97}Y. Kawamata, \emph{On the cone of divisors of Calabi-Yau fiber spaces.} Internat. J. Math. 8 (1997), no. 5, 665�687. 
\bibitem[Kawamata98]{Kawamata98}Y. Kawamata, \emph{On the extension problem of pluricanonical forms.} Algebraic geometry:
Hirzebruch 70 (Warsaw, 1998), Contemp. Math., vol. 241, Amer. Math. Soc.,
Providence, RI, 1999.
\bibitem[Kawamata08]{Kawamata08}
Y. Kawamata, {\it Flops connect minimal models.}  Publ. Res. Inst. Math. Sci.  44  (2008),  no. 2, 419–423. 
\bibitem[KMM94]{KMM94}
S. Keel, K. Matsuki, J M$^{\rm c}$Kernan,
{\it Log abundance theorem for threefolds.}
Duke Math. J. 75 (1994), no. 1, 99--119.
\bibitem[Kim08]{KIM}D. Kim, \emph{$L^2$ extension of adjoint line bundle sections,} arXiv:0802.3189, to appear in Ann.\ Inst.\ Fourier.
\bibitem[Klimek91]{Klimek} M. Klimek, \emph{Pluripotential Theory}, London Mathematical Society Monographs. New Series, 6. Oxford Science Publications. The Clarendon Press, Oxford University Press, New York, 1991, xiv+266~pp.
\bibitem[KM98]{KM98}
J. Koll\'ar and S. Mori, {\it Birational geometry of algebraic varieties,} Cambridge tracts in math- ematics, vol. 134, Cambridge University Press, 1998. 
\bibitem[Koll{\'a}retal92]{Kollaretal92}
J. Koll\'ar et al., {\it Flips and abundance for algebraic threefolds,} Soci\'et\'e Math\'ematique de France, Paris, 1992, Papers from the Second Summer Seminar on Algebraic Geometry held at the University of Utah, Salt Lake City, Utah, August 1991, Ast\'erisque No. 211 (1992).
\bibitem[Lai10]{Lai10} C.-J. Lai, {\it Varieties fibered by good minimal models} arXiv:0912.3012, to appear in Math. Ann.
\bibitem[Lelong69]{Lelong69} P. Lelong, {\it Plurisubharmonic Functions and Positive 
Differential Forms}, Gordon and Breach, Dunod, New York, Paris, 1969.
\bibitem[Lelong71]{Lelong71} P. Lelong, {\it
\'El\'ements extr\'emaux dans le c\^one des courants positifs ferm\'es de type $(1,\,1)$ et fonctions plurisousharmoniques extr\'emales},
C.~R.\ Acad.\ Sci.\ Paris S\'er. A--B 273 (1971), A665--A667. 
\bibitem[Manivel93]{Man93}
L. Manivel, \emph{Un th\'eor\`eme de prolongement $L^2$ de sections holomorphes d'un 
 fibr\'e hermitien} Math.\ Zeitschrift, 1993.
 \bibitem[MV07]{MV07} J. McNeal, D. Varolin; \emph{Analytic inversion of adjunction: $L\sp 2$ extension theorems with gain}, Ann.\ Inst.\ Fourier (Grenoble)  {\bf 57}  (2007),  no. 3, 703--718.
\bibitem[Miyaoka88]{Miyaoka88}
Y. Miyaoka, {\it Abundance conjecture for minimal threefolds: $\nu = 1$ case.} Comp. Math. 68, 203–220 (1988)
\bibitem[Nakayama04]{Nakayama04}
N. Nakayama, {\it Zariski-decomposition and abundance,} MSJ Memoirs, vol. 14, Mathematical Society of Japan, Tokyo, 2004.
\bibitem[OT87]{OT87} T. Ohsawa, K. Takegoshi, \emph{On the extension of $L^2$
holomorphic functions} Math.\ Z., 1987.
\bibitem[Ohsawa03]{Oh03} T. Ohsawa, \emph{On the extension of $L\sp 2$ holomorphic functions. VI. A limiting case, }
Contemp.\ Math.,  Amer. Math. Soc., Providence, 2003.
\bibitem[Ohsawa04]{Oh04} T. Ohsawa, \emph{Generalization of a precise $L^2$ division theorem,} Complex analysis in several
variables, Memorial Conference of Kiyoshi Oka's Centennial Birthday, 249--261, Adv.
Stud. Pure Math., 42, Math. Soc. Japan, Tokyo, 2004.
\bibitem[Paun07]{Paun07}
M. P\u aun, {\it Siu's invariance of plurigenera: a one-tower proof,} J. Diff.\ Geom.\, 76 
(2007), 485--493. 
\bibitem[Paun08]{Paun08}
M. P\u aun, {\it Relative critical exponents, non-vanishing and metrics with minimal singularities.} arXiv:0807.3109.
\bibitem[Shokurov09]{Shokurov09}
V.V. Shokurov, {\it Letters of a bi-rationalist. VII. Ordered termination.} (Russian)  Tr. Mat. Inst. Steklova  264  (2009),  Mnogomernaya Algebraicheskaya Geometriya, 184--208.
\bibitem[Siu74]{Siu74} Y.T. Siu, \emph{Analyticity of sets associated to Lelong numbers and the extension of closed 
positive currents,} Invent.\ Math.\, 27 (1974), 53--156. 
\bibitem[Siu76]{Siu76} Y. T. Siu, \emph{Every Stein subvariety admits a Stein neighborhood}, Invent.\ Math.\ 38 (1976) 89--100.
\bibitem[Siu98]{Siu98} Y.-T. Siu, {\it Invariance of plurigenera.} Invent. Math. 134, 661--673 (1998).
\bibitem[Siu00]{Siu00} Y.-T. Siu, {\it Extension of twisted pluricanonical sections with plurisubharmonic weight and invariance of semipositively twisted plurigenera for manifolds not necessarily of general type.} Complex geometry (G\"ottingen, 2000), 223--277, Springer, Berlin, 2002. 
\bibitem[Siu08]{Siu08} Y.-T. Siu, {\it Finite generation of canonical ring by analytic method. }
Sci. China Ser. A 51 (2008), no. 4, 481--502. 
\bibitem[Siu09]{Siu09} Y.-T. Siu, {\it Abundance conjecture} arXiv:09120576, 2010.
\bibitem[Skoda72]{Skoda}
H. Skoda, {\it Applications des techniques 
$L^2$ 
\`a la th\'eorie des id\'eaux d'une alg\`ebre de 
fonctions holomorphes avec poids}, Ann. Scient. Ec. Norm. Sup. 4e S\'erie, 5 (1972), 545--579. 
\bibitem[Takayama06]{Taka06} S. Takayama, \emph{Pluricanonical systems on algebraic varieties of general type,} Invent.\ 
Math, 165 (2006), 551--587. 
\bibitem[Takayama07]{Taka07} S. Takayama, \emph{On the invariance and lower semi-continuity of plurigenera of algebraic 
varieties,} J.\ Algebraic Geom, 16 (2007), 1�18. 
\bibitem[Tian87]{Tian}G. Tian, \emph{
On K\"ahler-Einstein metrics on certain K\"ahler manifolds with $c_1 (M ) > 0$}, 
Invent. Math, 89 (1987), 225--246. 
\bibitem[Tsuji05]{Tsuji}H. Tsuji, \emph{Extension of log pluricanonical forms from subvarieties} math.CV/0511342.
\bibitem[Var08]{Var08}D. Varolin, \emph{A Takayama-type extension theorem,} Compos. Math. 144 (2008), no. 2, 522--540. 
\end{thebibliography}
\end{document}